\documentclass[11pt, reqno]{amsart}

\usepackage{graphicx}
\usepackage{epstopdf}
\usepackage{subcaption}
\usepackage[utf8]{inputenc}
\usepackage[T1]{fontenc}
\usepackage[english]{babel}
\usepackage{amsmath, amsthm}
\usepackage{ae}
\usepackage{icomma}
\usepackage{units}
\usepackage{color}
\usepackage{graphicx}
\usepackage{bbm}
\usepackage{caption}
\usepackage{array}
\usepackage[hmarginratio=1:1]{geometry}
\usepackage[hyphens]{url}
\usepackage[pdfpagelabels=false, hidelinks]{hyperref}
\usepackage{mathrsfs}
\usepackage{amssymb, amsfonts}
\usepackage{placeins} 
\usepackage{tikz}
\usetikzlibrary{patterns}
\usepackage{float}
\usepackage[nodayofweek]{datetime}
\usepackage{enumitem}
\usepackage{mathtools}
\usepackage[numbers, sort]{natbib}
\usepackage{dsfont}
\usepackage{comment}

\newcommand{\R}{\ensuremath{\mathbb{R}}}
\newcommand{\N}{\ensuremath{\mathbb{N}}}

\newcommand{\Haus}{\ensuremath{\mathcal{H}}}

\newcommand{\1}{\ensuremath{\mathds{1}}}

\DeclareMathOperator{\Tr}{Tr}

\newtheorem{theorem}{Theorem}[section]

\newtheorem{lemma}[theorem]{Lemma}

\newtheorem{corollary}[theorem]{Corollary}
\newtheorem{conjecture}[theorem]{Conjecture}
\numberwithin{theorem}{section}
\theoremstyle{remark}
\newtheorem{remark}[theorem]{Remark}

\newcommand{\limplus}{{\mathchoice{\vcenter{\hbox{$\scriptstyle +$}}}
  {\vcenter{\hbox{$\scriptstyle +$}}}
  {\vcenter{\hbox{$\scriptscriptstyle +$}}}
  {\vcenter{\hbox{$\scriptscriptstyle +$}}}
}}
\newcommand{\limminus}{{\mathchoice{\vcenter{\hbox{$\scriptstyle -$}}}
  {\vcenter{\hbox{$\scriptstyle -$}}}
  {\vcenter{\hbox{$\scriptscriptstyle -$}}}
  {\vcenter{\hbox{$\scriptscriptstyle -$}}}
}}
\newcommand{\limpm}{{\mathchoice{\vcenter{\hbox{$\scriptstyle \pm$}}}
  {\vcenter{\hbox{$\scriptstyle \pm$}}}
  {\vcenter{\hbox{$\scriptscriptstyle \pm$}}}
  {\vcenter{\hbox{$\scriptscriptstyle \pm$}}}
}}

\begin{document}

\title[Optimizing Riesz means of Robin Laplace operators]{Optimizing Riesz means of Robin Laplace operators\\ on cuboids in a semiclassical limit}

\author{Matthias Baur}
\address{\textnormal{(M. Baur)} Institute of Analysis, Dynamics and Modeling, Department of Mathematics, University of
Stuttgart, Pfaffenwaldring 57, 70569 Stuttgart, Germany.}
\email{\href{mailto:matthias.baur@mathematik.uni-stuttgart.de}{matthias.baur@mathematik.uni-stuttgart.de}}

\author{Simon Larson}
\address{\textnormal{(S. Larson)} Mathematical Sciences, Chalmers University of Technology and the University of Gothenburg, SE-412 96 G\"{o}teborg, Sweden. }
\email{\href{mailto:larsons@chalmers.se}{larsons@chalmers.se}}

\begin{abstract}
    We study asymptotic shape optimization for Riesz means of Robin Laplacian eigenvalues among cuboids of fixed measure. Our focus is the regime where the Robin parameter is proportional to the square root of the spectral parameter defining the Riesz means. Here, a transition emerges based on the precise ratio between the two parameters: as the spectral parameter tends to infinity, sequences of maximizers shift from converging to the unit cube to lacking convergent subsequences entirely. Key tools include two-term spectral asymptotics and uniform inequalities for the Riesz means. Notably, the transition point governing the behavior of optimizers may differ from the point at which the second asymptotic term changes sign. This shows that heuristics based solely on asymptotics for a fixed domain fail to accurately predict the asymptotic behavior of maximizers.
\end{abstract}

\maketitle


\section{Introduction and main results}

In this paper, we consider a family of spectral shape optimization problems for Laplace operators with Robin boundary conditions. More specifically, we consider Riesz means of the eigenvalues of these operators in a joint limit where the spectral parameter defining the Riesz mean and the Robin parameter tend to infinity simultaneously. 

Let $\Omega \subset \R^d$ be a bounded open set with Lipschitz regular boundary. For $\beta\in \R$ we define the operator
$-\Delta_\Omega^\beta$
on $L^2(\Omega)$ as the unique self-adjoint operator associated to the quadratic form 
$$
  u \mapsto\int_\Omega |\nabla u(x)|^2\, dx + \beta\int_{\partial \Omega} |u(x)|^2\, d\Haus^{d-1}(x)
$$ 
with form domain $H^1(\Omega)$ (see, e.g., \cite[Section~3.1]{FrankLaptevWeidl}). The assumptions on $\partial\Omega$ ensure that this quadratic form is lower semibounded and closed. Moreover, the quadratic form $u \mapsto \beta\int_{\partial\Omega}|u|^2\, d\Haus^{d-1}(x)$ is compact in $H^1(\Omega)$. Therefore, Rellich's compactness theorem ensures that the operator $-\Delta_\Omega^\beta$ has discrete spectrum. We denote its eigenvalues in non-decreasing order by $\{\lambda_k(-\Delta_\Omega^{\beta})\}_{k\geq 1}$, so that
\begin{equation*}
    \lambda_1(-\Delta_\Omega^\beta)\leq \lambda_2(-\Delta_\Omega^\beta)\leq \lambda_3(-\Delta_\Omega^\beta)\leq \ldots \to \infty\, .
\end{equation*}

In this paper we only consider $\beta>0$. In this case the Robin Laplace operators naturally lie in between the corresponding Dirichlet and Neumann operators, and in a certain sense form a continuous family interpolating between the two. We denote the Dirichlet and Neumann Laplace operators in $L^2(\Omega)$ by $-\Delta_\Omega^{\rm D}$ and $-\Delta_{\Omega}^{\rm N}$, respectively. The Neumann case is obtained by taking $\beta = 0$ in the definition above. The Dirichlet Laplacian is defined through the quadratic form 
$
  u \mapsto\int_\Omega |\nabla u(x)|^2\, dx
$
with form domain $H_0^1(\Omega)$. Formally, $-\Delta_\Omega^{\rm D}$ can be thought of as corresponding to the limiting case $\beta=\infty$. 

Our interest in this paper is primarily focused on the asymptotic regime of large eigenvalues. We study this regime by considering so-called Riesz means. Let $H$ be a self-adjoint operator whose spectrum consists of eigenvalues which we denote by $\{\lambda_k(H)\}_{k\geq 1}$. For $\gamma >0, \lambda\in \R$ the Riesz mean of order $\gamma$ of $H$ is given by
\begin{equation*}
    \Tr(H-\lambda)_\limminus^\gamma := \sum_{k\geq 1}(\lambda-\lambda_k(H))_\limplus^\gamma\, , 
\end{equation*}
here and in what follows we write $x_\limpm := (|x|\pm x)/2$.
The Riesz mean of order $0$ is defined to be the classical counting function. To unify notation, we write
\begin{equation*}
    \Tr(H-\lambda)_\limminus^0 := \#\{k:\lambda_k(H)\leq \lambda\}\, .
\end{equation*}
Note that we define the counting function to count eigenvalues $\leq \lambda$ and not $<\lambda$. The latter convention is perhaps more commonly used in the literature, but in most settings the choice of convention has little or no effects. However, we shall be considering optimization problems where the aim is to maximize either Riesz means or a counting function. In this setting, the convention for the counting function chosen here has the advantage of being upper semicontinuous as a function of both $\lambda$ and the eigenvalues of $H$.

\subsection{An asymptotic shape optimization problem} The main topic of this paper is the asymptotic behavior of maximizers of the Riesz mean $\Tr(-\Delta_\Omega^\beta-\lambda)_\limminus^\gamma$ among cuboids of unit measure in limits where the parameters $\lambda$ and $\beta$ both tend to infinity. Here and in what follows, a cuboid $R\subset \R^d$ is an open set defined by $R = \prod_{i=1}^d (0, l_i)$ with $l_1, \ldots, l_d >0$. Before stating our results, we need to introduce the following notation. For $d\geq 2, \gamma\geq0, \beta>0, \lambda \geq 0$, define
\begin{equation*}
    M_{\gamma, d}(\lambda, \beta) := \sup\bigl\{\Tr(-\Delta_R^\beta-\lambda)_\limminus^\gamma: R\subset \R^d \mbox{ a cuboid with }|R|=1\bigr\}\, .
\end{equation*}
For any $d\geq 2, \gamma \geq 0, \beta>0, \lambda \geq 0$ there exists at least one cuboid of unit measure that realizes the above supremum (see Lemma~\ref{lem: existence of optimizer}).
By considering a sequence of collapsing cuboids one finds that for any $\lambda>0, \gamma\geq 0$ the corresponding infimum is zero. If the Robin parameter $\beta$ is non-positive the situation would be reversed; the minimization problem is well posed and the corresponding supremum is infinite. 

Our main result is contained in the following theorem and describes the asymptotic behavior of cuboids that realize the supremum defining $M_{\gamma, d}$.
\begin{theorem}\label{thm: main theorem intro}
    Fix $d\geq 2, \gamma>0$, and $\beta>0$. Let $\{\lambda_j\}_{j\geq 1}$ be a sequence of positive numbers and $\{R_j\}_{j\geq 1}$ a sequence of cuboids in $\R^d$. Assume that $\lim_{j\to \infty}\lambda_j=\infty$, and, for each~$j\geq 1$, 
    \begin{equation*}
        |R_j|=1 \, , \quad \mbox{and}\quad \Tr(-\Delta_{R_j}^{\beta\sqrt{\lambda_j}}-\lambda_j)_\limminus^\gamma =M_{\gamma, d}(\lambda_j, \beta \sqrt{\lambda_j})\, .
    \end{equation*}
    There exists a $\beta^*>0$, depending only on $\gamma$ and $d$, such that
    \begin{enumerate}
        \item if $\beta<\beta^*$, then the sequence $\{R_j\}_{j\geq 1}$ has no converging subsequences.
        \item if $\beta>\beta^*$, then the sequence $\{R_j\}_{j\geq 1}$ converges to the unit cube as $j\to \infty$.
    \end{enumerate}
\end{theorem}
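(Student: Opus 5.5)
The plan is to compare the two competing candidates for a maximizer at the leading order $\lambda^{\gamma+d/2}$: cuboids staying in a compact set of shapes, and cuboids with some side collapsing at the scale $\lambda^{-1/2}$. The Robin parameter $\beta\sqrt\lambda$ is exactly the one that keeps the boundary layer at scale $\lambda^{-1/2}$. The transition value $\beta^*$ will be defined through a one-dimensional variational quantity, not through the sign of the second Weyl term.

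\emph{Step 1 (fixed cuboids).} For a fixed cuboid $R$ I would use the two-term asymptotics
\begin{equation*}
\Tr(-\Delta_R^{\beta\sqrt\lambda}-\lambda)_\limminus^\gamma = L_{\gamma,d}|R|\lambda^{\gamma+d/2} + c_{\gamma,d}(\beta)\,\Haus^{d-1}(\partial R)\,\lambda^{\gamma+(d-1)/2} + o(\lambda^{\gamma+(d-1)/2}),
\end{equation*}
where $c_{\gamma,d}(\beta)$ comes from the half-line Robin problem with parameter $\beta$ at energy $1$. I would need these asymptotics to be locally uniform in the side lengths. Then along any subsequence converging to a cuboid $R$, the maximizers must maximize $c_{\gamma,d}(\beta)\Haus^{d-1}(\partial R)$ among unit-volume cuboids. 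Hence the limit is the cube whenever $c_{\gamma,d}(\beta)<0$. I would also handle the trivial alternative, namely sides becoming unbounded, which forces another side to vanish because $|R|=1$.

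\emph{Step 2 (separation of variables and collapse).} Write $R=(0,l_1)\times R'$ and order the sides so that $l_1$ is the smallest. The spectrum of $-\Delta_R^{\beta\sqrt\lambda}$ is the sum of the Robin spectrum of $(0,l_1)$ and that of $-\Delta_{R'}^{\beta\sqrt\lambda}$. Therefore
\begin{equation*}
\Tr(-\Delta_R^{\beta\sqrt\lambda}-\lambda)_\limminus^\gamma=\sum_k \Tr(-\Delta_{R'}^{\beta\sqrt\lambda}-(\lambda-\mu_k))_\limminus^\gamma ,
\end{equation*}
where $\mu_k$ are the interval eigenvalues. Using the uniform Berezin--Li--Yau/Kröger type inequalities for Robin cuboids, which bound $R'$ by its Weyl term, and scaling $l_1=t\lambda^{-1/2}$, I get the bound
\begin{equation*}
\lambda^{-\gamma-d/2}\,\Tr(\cdots)\le \Phi_\beta(t)+o(1), \qquad \Phi_\beta(t):=\frac{L_{\gamma+1/2,d-1}}{t}\sum_k\bigl(1-\mu_k((0,t),\beta)\bigr)_\limplus^{\gamma+(d-1)/2}.
\end{equation*}
Here $\mu_k((0,t),\beta)$ are the eigenvalues of the interval $(0,t)$ with Robin parameter $\beta$. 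Conversely, thin cuboids with $l_1=t\lambda^{-1/2}$ and the remaining sides large attain $\Phi_\beta(t)$ asymptotically. One has $\Phi_\beta(t)\to L_{\gamma,d}$ as $t\to\infty$, and $\Phi_\beta$ is nonincreasing in $\beta$ because the Robin eigenvalues increase in $\beta$. Collapse at a slower rate, $\lambda^{-1/2}\ll l_1\to 0$, corresponds to $t\to\infty$. There the second term, of size $c_{\gamma,d}(\beta)\lambda^{\gamma+(d-1)/2}/l_1$, dominates the fixed-cuboid correction.

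\emph{Step 3 (definition of $\beta^*$ and conclusion).} Set $\beta^*:=\inf\{\beta>0:\ \Phi_\beta(t)<L_{\gamma,d}\ \text{for all } t\}$. For $\beta<\beta^*$, monotonicity and continuity in $\beta$ give some $t$ with $\Phi_\beta(t)>L_{\gamma,d}$. The thin cuboid with that $t$ then beats every fixed cuboid at leading order, so no subsequence of maximizers can converge. Note that $\beta<\beta^*$ does not require $c_{\gamma,d}(\beta)>0$, which is why the two transition points may differ.

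For $\beta>\beta^*$, suppose $l_1\to0$ along a subsequence. If $t=l_1\sqrt\lambda$ stays bounded, then $\Phi_\beta(t)\le L_{\gamma,d}-\epsilon$, which contradicts optimality because the cube gives $L_{\gamma,d}+o(1)$. If $t\to\infty$, I would use a two-term refinement of the Step 2 bound that keeps the boundary contribution of the interval. Since $\beta>\beta^*$ forces $c_{\gamma,d}(\beta)\le 0$ (otherwise $\Phi_\beta(t)>L_{\gamma,d}$ for large $t$), this refinement shows a loss of order $\lambda^{\gamma+(d-1)/2}/l_1$. That loss is much larger than the cube's second-order term, again a contradiction. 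So the maximizers stay in a compact set, and Step 1 yields convergence to the cube.

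\emph{Main obstacle.} The main obstacle is the intermediate regime $t\to\infty$ together with the borderline case $c_{\gamma,d}(\beta)=0$. Both need remainder estimates in Step 2 that are uniform in the side lengths down to the scale $\lambda^{-1/2}$. I would try to obtain them by a one-dimensional two-term bound for Robin intervals, $\sum_k(1-\mu_k)_\limplus^\sigma\le \tfrac{t}{\pi}\int_0^1(1-s^2)^\sigma ds + c_\sigma(\beta)+O(t^{-1})$, uniformly in $t$. I would then apply this bound iteratively in each coordinate direction.
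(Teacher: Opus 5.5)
There is a genuine gap in Step~2, and it changes the value of $\beta^*$ when $d\ge 3$. You bound the factor $R'$ by its Weyl term using a ``uniform Berezin--Li--Yau/Kr\"oger'' inequality, but no such bound is available at this point. Kr\"oger's inequality is a lower bound (for Neumann). The upper bound $\Tr(-\Delta_{R'}^{\beta\sqrt{\lambda}}-\mu)_\limminus^\gamma\le L^{\rm sc}_{\gamma,d-1}|R'|\mu^{\gamma+(d-1)/2}$, uniformly over cuboids $R'\subset\R^{d-1}$ and $0<\mu\le\lambda$, is exactly Theorem~\ref{thm: Robin BLY} in dimension $d-1$ at order $\gamma$. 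It holds only for $\beta\ge\beta(\gamma,d-1)$, and $\beta(\gamma,d-1)\ge\beta(\gamma+1/2,d-1)$.

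This is not a technicality, because for $d\ge3$ several sides can sit at the wavelength scale at once. Take $R_j=(\lambda_j^{-1/2}R'')\times(0,\lambda_j^{(d-1)/2}/|R''|)$ with a fixed cuboid $R''\subset\R^{d-1}$. Lemma~\ref{lem: collapsing Weyl law} gives the normalized limit $\Tr(-\Delta_{R''}^{\beta}-1)_\limminus^{\gamma+1/2}/(L^{\rm sc}_{\gamma+1/2,d-1}|R''|)$, whose supremum over $R''$ is $r_{\gamma+1/2,d-1}(\beta)$. Your one-dimensional $\Phi_\beta$ does not see these configurations (its prefactor, incidentally, should be $L^{\rm sc}_{\gamma,d-1}$). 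By scaling, your $\beta^*$ equals $\beta(\gamma+(d-1)/2,1)$, whereas these sequences force the threshold $\beta(\gamma+1/2,d-1)$. Corollary~\ref{cor: beta ineqs} only gives $\beta(\gamma+(d-1)/2,1)\le\beta(\gamma+1/2,d-1)$, with equality automatic only when $d=2$. If the inequality is strict, then for $\beta$ in between the multi-directionally collapsed cuboids beat the cube at leading order. Your bound $\lambda^{-\gamma-d/2}\Tr(\cdots)\le\Phi_\beta(t)+o(1)$ is then false, and your conclusion (2) fails in that range.

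The missing idea is to never apply a uniform inequality to a partially collapsed factor. Pass to a subsequence along which $m$ of the sides satisfy $l_i^{(j)}\sqrt{\lambda_j}\to l_i'\in(0,\infty)$ and the other $d-m$ satisfy $l_i^{(j)}\sqrt{\lambda_j}\to\infty$. In the long directions use only the non-uniform quantitative Weyl law (Lemma~\ref{lem: quantitative Weyl law}). This yields the exact limit $\Tr(-\Delta_{R'}^{\beta}-1)_\limminus^{\gamma+(d-m)/2}/(L^{\rm sc}_{\gamma+(d-m)/2,m}|R'|)$ with $R'=\prod_{i\le m}(0,l_i')$, which is Lemma~\ref{lem: collapsing Weyl law}. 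The uniform inequality is then needed only for this limiting $m$-dimensional problem at the raised order $\gamma+(d-m)/2$. Lemma~\ref{lem: r bounds} shows that $m=d-1$ is the worst case, which is why the paper's $\beta^*$ is $\beta(\gamma+1/2,d-1)$. The case $l_1^{(j)}\sqrt{\lambda_j}\to0$ is handled separately by Lemma~\ref{lem: super-critical collapse}. For $d=2$ your threshold is correct, and Step~2 can be repaired this way since the second side is long on the wavelength scale.

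What you call the main obstacle is comparatively mild. Once $l_1^{(j)}\sqrt{\lambda_j}\to\infty$, use the two-term law of Theorem~\ref{thm: two-term Weyl}, whose remainder $\Haus^{d-1}(\partial R)\lambda^{\gamma+(d-1)/2}\bigl((\min_i l_i\sqrt{\lambda})^{-\kappa_{\gamma,d}}+(\min_i l_i\sqrt{\lambda})^{1-d}\bigr)$ is uniform in the shape. Comparing with the unit cube $Q$ gives $\Haus^{d-1}(\partial R_j)\le\Haus^{d-1}(\partial Q)(1+o(1))$ directly, which covers the compact and the slowly collapsing cases together. The borderline $L_{\gamma,d-1}(\beta)=0$ cannot occur for $\beta>\beta^*$, since $\beta^*\ge\beta_W(\gamma,d-1)$ and $L_{\gamma,d-1}$ is strictly decreasing.
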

In Theorem~\ref{thm: geometric convergence general} we provide a slightly more general version of Theorem~\ref{thm: main theorem intro} where we also give a characterization of the transition point $\beta^*$ in terms of $\gamma$ and $d$.

Shape optimization problems in spectral theory have a long history, for an extensive overview of this classic topic we refer to \cite{Henrot_06, Henrot_17} and references therein. Asymptotic problems in spectral shape optimization have been the topic of a number of papers in recent years. The Dirichlet and Neumann analogues of the shape optimization among cuboids in Theorem~\ref{thm: main theorem intro} were obtained in \cite{GittinsLarson_17} in arbitrary dimensions, after initially having been solved in low dimensions in \cite{AntunesFreitas_13, vdBergBucurGittins_16, vdBergGittins_17}. These results also cover the case $\gamma=0$, which corresponds to optimizing individual eigenvalues. For Riesz means and optimizing among all convex sets of fixed measure, analogous results were obtained in \cite{Larson_JST19, FrankLarson_Crelle20, FrankLarson_CPAM26} for certain ranges of the parameter $\gamma>0$. In these results the limit of optimizing sets is given by the ball, whenever such a limit exists. In fact, the strategy we follow here is very close to that developed in \cite{FrankLarson_CPAM26}. For further results pertaining to asymptotic spectral shape optimization we refer the reader to \cite{BucurFreitas_13, vdBerg_15, AntunesFreitas_16, Freitas_17, LarsonAFM, FreitasKennedy_19, Lagace20, BuosoFreitas_20, Freitas_etal_21, Farrington_25}. 

Most of the existing results on the topic of asymptotic spectral shape optimization pertain to Dirichlet or Neumann Laplace operators. Two exceptions to this are \cite{AntunesFreitasKennedy_13, FreitasKennedy_19} which concerns minimizing $\lambda_k(-\Delta_\Omega^\beta)$ for a fixed Robin parameter $\beta>0$ in the limit as the eigenvalue index $k$ tends to infinity. In \cite{AntunesFreitasKennedy_13}, the authors show that $\inf_{\Omega: |\Omega|=1} \lambda_k(-\Delta_\Omega^\beta) \lesssim k^{1/d}$ and thus the asymptotic behavior of the minimal eigenvalues differs from that suggested by Weyl's law. Furthermore, the authors conjecture that there exists a $\beta_k\sim k^{1/d}$ so that for $\beta <\beta_k$ the optimizer of the $k$-th eigenvalue is given exactly by $k$ disjoint balls of equal size. The main results in~\cite{FreitasKennedy_19} concern the corresponding problem when the optimization takes place among either rectangles or disjoint unions of rectangles of fixed total area. Their results show that sequences of minimizers for these problems do not converge to any set as $k \to \infty$. In fact, among disjoint unions of rectangles they show that the optimum for $k$ sufficiently large, depending only on $\beta$, consist of $k$ squares of equal size, thus confirming the analogue of the conjecture in \cite{AntunesFreitasKennedy_13} in this restricted geometric setting. The purpose of the current paper is to show that the nature of these problems changes drastically in the regime where the Robin parameter goes to infinity as a multiple of $\sqrt{\lambda}$.

\subsection{Two-term asymptotics for Robin Riesz means}
The two most crucial ingredients in the proof of Theorem~\ref{thm: main theorem intro} are the validity of a two-term Weyl law for $\Tr(-\Delta_{R}^{\beta \sqrt{\lambda}}-\lambda)_\limminus^\gamma$ and uniform inequalities for these Riesz means. In this and the following subsection, we introduce these ingredients and explain how they relate to our main result. In order to state the results, we need some additional notation. 

For $d\geq 0, \gamma \geq 0$ define
\begin{equation*}
    L_{\gamma, d}^{\rm sc} := \frac{\Gamma(1+\gamma)}{(4\pi)^{d/2}\Gamma(1+\gamma+d/2)}\, , 
\end{equation*}
and for $\beta \in (0, \infty)$ let
\begin{equation*}
    L_{\gamma, d}(\beta) := L_{\gamma, d}^{\rm sc}\biggl(\frac{4}{\pi}\int_0^1(1-s^2)^{\gamma+d/2}\frac{\beta}{\beta^2+s^2}\, ds -1\biggr)\, .
\end{equation*}
As will be shown below (see Lemma \ref{lem: properties of L}) the function $\beta \mapsto L_{\gamma, d}(\beta)$ is smooth, decreasing, and satisfies
\begin{equation*}
    \lim_{\beta \to 0^\limplus} L_{\gamma, d}(\beta) = L_{\gamma, d}^{\rm sc} \quad \mbox{and} \quad \lim_{\beta \to \infty} L_{\gamma, d}(\beta) = -L_{\gamma, d}^{\rm sc}\, .
\end{equation*}
In particular, the equation $L_{\gamma, d}(\beta)=0$ has a unique solution which we denote by $\beta_W(\gamma, d)$.

The two-term asymptotic expansion for Riesz means that we shall prove can now be stated as follows. 
\begin{theorem}\label{thm: sc two-term Weyl intro}
    Let $d\geq 1, \gamma>0$, and $\beta>0$. If $R \subset \R^d$ is a cuboid, then
    \begin{align*}
        \Tr(-\Delta_{R}^{\beta\sqrt{\lambda}}-\lambda)_\limminus^\gamma = 
        L_{\gamma, d}^{\rm sc}|R|\lambda^{\gamma+d/2}+\frac{1}{4}L_{\gamma, d-1}(\beta) \Haus^{d-1}(\partial R)\lambda^{\gamma+(d-1)/2}+ o(\lambda^{\gamma+(d-1)/2})
    \end{align*}
    as $\lambda \to \infty$.
\end{theorem}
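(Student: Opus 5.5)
Since $R=\prod_{i}(0,l_i)$, the Robin Laplacian separates: $-\Delta_R^{\beta\sqrt\lambda}$ is unitarily equivalent to $\sum_i I\otimes\cdots\otimes(-\Delta^{\beta\sqrt\lambda}_{(0,l_i)})\otimes\cdots\otimes I$. Its eigenvalues are therefore the sums $\mu_{k_1}^{(1)}+\dots+\mu_{k_d}^{(d)}$ of one-dimensional Robin eigenvalues. On $(0,l)$ with parameter $b=\beta\sqrt\lambda$, the eigenvalues are $\mu=k^2$, where $k>0$ solves the secular equation
\begin{equation*}
    kl=\pi(n-1)+2\arctan(b/k),\qquad n\ge1 .
\end{equation*}
This follows from the ansatz $u=\cos(kx-\theta)$ with $\tan\theta=b/k$ at both ends. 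So $k_n=\frac{\pi}{l}\bigl(n-1+\tfrac{2}{\pi}\arctan(b/k_n)\bigr)$, which is an implicit shifted lattice. Because the shift $\frac{2}{\pi}\arctan(\beta\sqrt\lambda/k)$ depends on $k/\sqrt\lambda$, I rescale all frequencies by $\sqrt\lambda$. The eigenvalues then form a deformed lattice $\{\xi:\ \sqrt\lambda\, l_i\xi_i=\pi(n_i-1)+2\arctan(\beta/\xi_i)\}$. The Riesz mean becomes $\lambda^\gamma\sum_{\xi}(1-|\xi|^2)_+^\gamma$ over this lattice, with spacing $\pi/(\sqrt\lambda\, l_i)$.

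The core step is a one-dimensional summation formula. Write $h=\pi/(\sqrt\lambda\, l)$ and define $\phi(s)=s-\frac{2h}{\pi}\arctan(\beta/s)$, so that the rescaled points $s_n$ satisfy $\phi(s_n)=(n-1)h$. For a function $f$ of bounded variation that vanishes for large $s$, I aim to show
\begin{equation*}
    \sum_n f(s_n)=\frac1h\int_0^\infty f(s)\phi'(s)\,ds+\tfrac12 f(0)+o(1)\quad (h\to0).
\end{equation*}
The first term is $\frac1h\int_0^\infty f+\frac{2}{\pi}\int_0^\infty f(s)\frac{\beta}{\beta^2+s^2}\,ds$, using $\frac{d}{ds}\bigl(-\arctan(\beta/s)\bigr)=\frac{\beta}{\beta^2+s^2}$. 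The endpoint term arises because $\phi(0^+)=-h$, so the first lattice point sits at $\phi=0$, i.e. an Euler--Maclaurin endpoint, which gives $+\tfrac12f(0)$ relative to the shifted integral. Combining the endpoint contributions with the integral from $\phi=-h$ yields a net correction of $\frac{2}{\pi}\int f\frac{\beta}{\beta^2+s^2}-\tfrac12 f(0)$. I will fix these signs by checking the limits $\beta\to0$ (Neumann, $+\tfrac12f(0)$) and $\beta\to\infty$ (Dirichlet, $-\tfrac12f(0)$). This matches the factor $\frac4\pi\int\frac{\beta}{\beta^2+s^2}-1$ in $L_{\gamma,d}(\beta)$, up to the factor $2$ from extending to $s\in\mathbb R$.

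I then apply this formula in each coordinate direction, one at a time. The main term comes from replacing every sum by $\frac1h\int$, which gives $L^{\rm sc}_{\gamma,d}|R|\lambda^{\gamma+d/2}$. Each single correction in direction $i$ multiplies the $(d-1)$-dimensional integral of $(1-s_i^2-|\xi'|^2)_+^\gamma$ by $\prod_{j\ne i}l_j\sqrt\lambda/\pi$. Integrating out $\xi'$ gives $(1-s^2)^{\gamma+(d-1)/2}$ times semiclassical constants, and this produces $\frac14 L_{\gamma,d-1}(\beta)\cdot 2\prod_{j\neq i}l_j$. Summing over $i$ gives $\Haus^{d-1}(\partial R)$.

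The main obstacle is controlling the error uniformly once the inner sums are no longer smooth. Since $\gamma>0$ only, $(1-|\xi|^2)_+^\gamma$ has limited regularity at the sphere. The cleanest route is induction on $d$: sum exactly in the last coordinate using the 1D formula for $f(s)=(\mu-s^2)_+^{\gamma'}$ with $\gamma'\ge\gamma$. In one dimension the $o(1)$ holds for any $\gamma>0$, because $f$ is monotone and continuous, and the error in the Riemann sum with the implicit map $\phi$ is controlled by a modulus of continuity. The remaining sums over the other coordinates then act on functions that are again Riesz-type means of the rescaled parameter. I will also have to handle that the error must be $o(h^{-(d-1)})$ after multiplying by the outer counting, which requires the 1D remainder to be uniform in $\mu\in[0,1]$. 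Uniform continuity in $\mu$ gives this.
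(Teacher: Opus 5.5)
Your route is viable, and it is genuinely different from the paper's. The paper never uses the secular equation exactly. Lemma~\ref{lem: arctan bounds lambdak} sandwiches $\sqrt{\lambda_k}$ between $\pi k-2\arctan(\pi k/\beta)$ and $\pi k-2\arctan(\pi(k-1)/\beta)$, and Lemma~\ref{lem: unif bound Riesz arctan} shows that the two sandwiching Riesz sums differ by $O(\lambda^{\gamma/2})$. The upper sum is then expanded around the Dirichlet sum by a binomial series and converted to an integral by monotone quadrature; the case $\gamma>1$ is reached from $\gamma=1$ via Aizenman--Lieb. In $d$ dimensions the paper inducts by separation of variables as you do. It identifies the sum of one-dimensional boundary terms with $L_{\gamma,d-1}$, i.e.\ \eqref{eq: second term representations agree}, by a trick: it applies the inductive estimate in two different directions and inserts an artificial side length $l_1=l_2(l_2\sqrt{\lambda})^{-\alpha}$.

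Your change of variables $t=\phi(s)$ turns the rescaled Robin frequencies into the exact progression $\{0,h,2h,\dots\}$. This gives the one-dimensional two-term formula for all $\gamma>0$ at once, and, once made explicit as below, it handles the boundary sum more directly than the paper's trick. What the paper's route buys is quantitative remainders, uniform in the Robin parameter and explicit in $\min_i l_i\sqrt{\lambda}$ (Theorems~\ref{thm: two-term Weyl 1D} and~\ref{thm: two-term Weyl}). These are what Section~\ref{sec: shape optimization} needs along sequences of cuboids of varying shape. Your argument gives only $o(\lambda^{\gamma+(d-1)/2})$ for a fixed cuboid and fixed $\beta$, which is, however, all this statement asks.

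Three points must be made precise.

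\emph{The boundary sum (most important).} After summing exactly in the last coordinate you are left with the correction $F(\mu)=\frac12 L_{\gamma,0}(\beta/\sqrt{\mu})\,\mu^\gamma$ at $\mu=1-|\xi'|^2$. The resulting $\xi'$-sum is not a Riesz mean, so neither your 1D formula nor the induction hypothesis directly gives $\sum_{\xi'}F(1-|\xi'|^2)=\prod_{j<d}h_j^{-1}\int_{\R^{d-1}_+}F(1-|\xi'|^2)\,d\xi'+o(h^{-(d-1)})$. Write $F(\mu)=\frac{2}{\pi}\int_0^1\frac{\beta}{\beta^2+s^2}(\mu-s^2)_\limplus^\gamma\,ds-\frac12\mu_\limplus^\gamma$. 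Then the $\xi'$-sum becomes a weighted $s$-integral of $(d-1)$-dimensional Robin Riesz means at energies $\lambda(1-s^2)$ with Robin parameter $\beta\sqrt{\lambda}$, minus half of the one at energy $\lambda$. The induction hypothesis is not uniform in $s$, since the effective coupling $\beta/\sqrt{1-s^2}$ varies. But only a leading-order Weyl law is needed, and that holds with $O(h^{-(d-2)})$ error uniformly in energy and Robin parameter by Dirichlet--Neumann bracketing (Lemma~\ref{lem: quantitative Weyl law}). This yields exactly $\frac12 L_{\gamma,d-1}(\beta)\prod_{j<d}l_j\,\lambda^{\gamma+(d-1)/2}$.

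\emph{A sign.} Your displayed 1D formula has the wrong sign: relative to $\frac1h\int_0^\infty f\phi'$ the endpoint term is $-\frac12 f(0)$. The net correction you state next is right.

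\emph{The $o(1)$ and its uniformity.} Monotonicity and continuity alone do not make the trapezoidal error $\sum_{m\ge0}g(mh)-\frac12 g(0)-\frac1h\int_0^\infty g$ tend to zero. For $g=1-c$ on $[0,1]$, with $c$ the Cantor function, it equals $\frac14$ for $h=2\cdot3^{-n}$. Instead use that $f_\mu\circ\phi^{-1}$ is smooth away from the zero of $f_\mu$ and has small variation near it. Nor does uniformity in $\mu$ follow from ``uniform continuity in $\mu$''. Obtain it from the scaling $s=\sqrt{\mu}\,t$, which turns $f_\mu$ into $\mu^\gamma(1-t^2)_\limplus^\gamma$ on a lattice of spacing $h/\sqrt{\mu}$ and coupling $\beta/\sqrt{\mu}\geq\beta$. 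Combine this with the trivial $O(\mu^\gamma)$ bound on the error for small $\mu$. This is precisely the uniformity in the Robin parameter that Theorem~\ref{thm: two-term Weyl 1D} supplies.
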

In proving Theorem~\ref{thm: main theorem intro} we shall need a more precise statement which provides explicit control of the little-$o$ remainder term in terms of the geometric properties of the cuboid $R$. A precise statement appears in Section~\ref{sec: d-dim spectral asymptotics}.

For $\gamma=1$, the analogue two-term expansions for $\Tr(-\Delta_{\Omega}^{\beta\sqrt{\lambda}}-\lambda)_\limminus$ were obtained by Frank and Geisinger in~\cite{FrankGeisinger_12} for any $\Omega \subset \R^d$ with $C^1$-regular boundary. To our knowledge Theorem~\ref{thm: sc two-term Weyl intro} is the first result where such an expansion is obtained for $\gamma <1$, although only for the class of cuboids. The result in \cite{FrankGeisinger_12} also covers Laplace operators with Robin boundary condition parametrized by a function $\beta \colon \partial\Omega \to \R$ which is not necessarily positive and satisfies some modest continuity assumptions. When the Laplace operator is independent of the spectral parameter, more is known about Riesz means asymptotics. For the Dirichlet and Neumann Laplace operators corresponding two-term asymptotics in arbitrary Lipschitz sets and any $\gamma>0$ were recently obtained in \cite{FrankLarson_Inventiones25}. In this setting $L_{\gamma, d-1}(\beta)$ should be replaced by $L_{\gamma, d-1}^{\rm sc}$ in the Neumann case and $-L_{\gamma, d-1}^{\rm sc}$ for Dirichlet. In \cite{FrankLarson_FAA25}, the corresponding result was extended to the case of a fixed Robin Laplacian under mild conditions on the function parameterizing the boundary condition, in this case the second term matches that obtained for the Neumann Laplacian. For particular geometries (for instance, rectangles or cuboids) two-term spectral asymptotics for the case $\gamma=0$ go back as far as to Weyl himself. In larger generality, such two-term asymptotics were famously obtained by Ivrii~\cite{Ivrii_80} under appropriate assumptions on the regularity of the underlying set $\Omega$, the function parameterizing the Robin boundary condition, and the billiard dynamics in $\Omega$. We conjecture that analogues of the asymptotics in Theorem~\ref{thm: sc two-term Weyl intro} extend to the case $\gamma=0$ and also to more general geometries and Robin boundary conditions, but to our knowledge this is unknown.

A heuristic argument based only on Theorem~\ref{thm: sc two-term Weyl intro} suggests that if one wants to maximize $\Tr(-\Delta_{R}^{\beta\sqrt{\lambda}}-\lambda)_\limminus^\gamma$ for large $\lambda$ among cuboids $R$ of fixed measure, then, as the first order term is always the same by the volume constraint, one should choose $R$ to make the second order term, $L_{\gamma, d-1}(\beta)\Haus^{d-1}(\partial R)$, large. If $L_{\gamma, d-1}(\beta)>0$ it becomes unreasonable to expect maximizers to converge since it appears favorable to have as large a perimeter as possible. However, if instead $L_{\gamma, d-1}(\beta)<0$, then it appears to be favorable to asymptotically minimize perimeter and one expects convergence to the unit cube, since this is the unique solution of the isoperimetric problem among cuboids. Consequently, it is tempting to conjecture that the transition point $\beta^*$ in Theorem~\ref{thm: main theorem intro} should coincide with $\beta_W(\gamma, d-1)$ as this is the point at which $L_{\gamma, d-1}(\beta)$ changes sign. However, this expectation based on heuristics from two-term asymptotics turns out to be too naive. Indeed, we shall show in Section~\ref{sec: the critical beta are different} that, in general, the transition point $\beta^*$ does not coincide with $\beta_W(\gamma, d-1)$. The failure of the heuristic argument is a phenomenon which we find interesting and to our knowledge has not previously been observed. In~\cite{Freitas_etal_21, FrankLarson_CPAM26}, corresponding heuristics for optimization problems in the setting of Dirichlet and Neumann Laplace operators are shown to fail, but only under the assumption that certain inequalities conjectured by P\'olya are false.

\subsection{Semiclassical inequalities for Robin Riesz means} In order to explain what actually determines the transition point in Theorem~\ref{thm: main theorem intro}, it is necessary to discuss the second key ingredient in our proof. Namely, uniform upper bounds for Robin Riesz means with Robin parameter growing with $\sqrt{\lambda}$.

\begin{theorem}\label{thm: Robin BLY intro}
    Let $d\geq 1, \gamma> 0$. There exists a $\beta(\gamma, d)>0$ such that for $\beta\geq\beta(\gamma, d)$, any cuboid $R \subset \R^d$, and $\lambda \geq 0$, 
    \begin{equation*}
        \Tr(-\Delta_{R}^{\beta\sqrt{\lambda}}-\lambda)_\limminus^\gamma \leq L_{\gamma, d}^{\rm sc}|R|\lambda^{\gamma+d/2}\, .
    \end{equation*}
    Moreover, if $\beta>\beta(\gamma, d)$ then the inequality is strict for any cuboid $R$ and $\lambda>0$.
\end{theorem}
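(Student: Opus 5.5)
The plan is to reduce the estimate to one dimension by separation of variables, and then lift it back to $d$ dimensions with the Aizenman--Lieb/Laptev lifting argument for Riesz means.

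\emph{Separation of variables.} For $R=\prod_{i=1}^d(0,l_i)$ the operator $-\Delta_R^{b}$ separates. Its eigenvalues are the sums $\mu_{k_1}(l_1,b)+\dots+\mu_{k_d}(l_d,b)$, where $\mu_k(l,b)$ are the eigenvalues of the one-dimensional Robin Laplacian on $(0,l)$ with parameter $b$ at both endpoints. These are $\mu_k=\kappa_k^2$, with $\kappa_k$ the roots of $\tan(\kappa l)=2b\kappa/(\kappa^2-b^2)$, so that $\kappa_k\in((k-1)\pi/l,k\pi/l)$.

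\emph{Lifting in dimension.} The key observation is that the Robin parameter $b=\beta\sqrt\lambda$ stays fixed when we lift. Write
\begin{equation*}
  \Tr(-\Delta_R^{b}-\lambda)_\limminus^\gamma=\sum_{k}\Tr\bigl(-\Delta_{R'}^{b}-(\lambda-\mu_k(l_d,b))\bigr)_\limminus^\gamma ,
\end{equation*}
where $R'=\prod_{i<d}(0,l_i)$. Inductively we want a bound for $R'$ at spectral level $\lambda'=\lambda-\mu_k\le\lambda$ with Robin parameter $b=\beta\sqrt\lambda\ge\beta\sqrt{\lambda'}$. So the induction hypothesis must be stated monotonically in the parameter. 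Robin eigenvalues increase in $b$, so it suffices to prove the following statement $P(d)$: for $\beta\ge\beta(\gamma,d)$, all $b\ge\beta\sqrt\lambda$ and all $\gamma>0$,
\begin{equation*}
  \Tr(-\Delta_R^{b}-\lambda)_\limminus^\gamma\le L_{\gamma,d}^{\rm sc}|R|\lambda^{\gamma+d/2}.
\end{equation*}
By monotonicity it is enough to take $b=\beta\sqrt\lambda$. Applying $P(d-1)$ with exponent $\gamma+(d-1)/2$ in place of $\gamma$ is not quite right. Instead, apply $P(d-1)$ with the same $\gamma$ to each summand. This bounds the sum by $L^{\rm sc}_{\gamma,d-1}|R'|\sum_k(\lambda-\mu_k)_\limplus^{\gamma+(d-1)/2}$. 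The remaining factor is a one-dimensional Riesz mean of order $\gamma+(d-1)/2$ with parameter $b\ge\beta\sqrt\lambda$. The one-dimensional case $P(1)$ at that order then gives $L^{\rm sc}_{\gamma+(d-1)/2,1}\,l_d\,\lambda^{\gamma+d/2}$. The identity $L^{\rm sc}_{\gamma,d-1}L^{\rm sc}_{\gamma+(d-1)/2,1}=L^{\rm sc}_{\gamma,d}$ closes the induction, with the threshold
\begin{equation*}
  \beta(\gamma,d)=\max_{0\le j<d}\beta\bigl(\gamma+j/2,1\bigr).
\end{equation*}
(A cleverer choice of the order of lifting could give a sharper constant, but it is not needed.) Strictness propagates too. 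If the one-dimensional inequality is strict for $\beta>\beta(\cdot,1)$ and $\lambda>0$, then the final one-dimensional step is strict. The summands with $\mu_k<\lambda$ exist, since $\mu_1(l,b)<\lambda$ fails only when the trace vanishes, and then the strict inequality is trivial because the right side is positive.

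\emph{The one-dimensional estimate (main obstacle).} We must show $\sum_k(\lambda-\kappa_k^2)_\limplus^{\gamma}\le L^{\rm sc}_{\gamma,1}\,l\,\lambda^{\gamma+1/2}$ for all $l>0$ and $\lambda>0$ when $b\ge\beta\sqrt\lambda$ with $\beta$ large. By scaling, set $\lambda=1$; $\beta$ then enters only through $b\ge\beta$. The Dirichlet analogue, with $\kappa_k=k\pi/l$, is the known Berezin--Li--Yau inequality in one dimension for $\gamma\ge1$. For $0<\gamma<1$, however, it fails for Dirichlet at certain $l$, so the claim must hold only thanks to large but finite $b$.

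Here is the plan. Since $\kappa\mapsto\kappa_k$ depends only on $bl$, the Robin roots satisfy $\kappa_k\ge (k-1)\pi/l+\theta(\kappa_k)/l$, where $\theta(\kappa)=2\arctan(\kappa/b)$ is the phase shift, so $\kappa_k l=(k-1)\pi+\pi-2\arctan(\kappa_k/b)$. The sum is then compared with the integral $\frac{l}{\pi}\int_0^1(1-\kappa^2)^\gamma\,d\kappa=L^{\rm sc}_{\gamma,1}\,l$. We treat $g(\kappa)=(1-\kappa^2)_\limplus^\gamma$ with a sum-versus-integral comparison, $\sum_k g(\kappa_k)\le \frac{l}{\pi}\int g + E(l,b)$. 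For large $l$ the Euler--Maclaurin/Poisson-type error is governed by the boundary term $\approx\frac12 g(0)-\frac{1}{\pi}\int_0^1\theta\,(-g')\,d\kappa$, which is negative for large $b$ uniformly. This is exactly the sign of $L_{\gamma,0}(\beta)$, which is negative for large $\beta$. For small and moderate $l$, only a few eigenvalues lie below $1$. There, direct estimates are needed: there is no eigenvalue below $1$ unless $l\gtrsim$ const, and the first root is pushed above its Dirichlet-with-shift position.

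The genuinely hard step is making the large-$l$ oscillatory remainder uniformly dominated, since $g$ is only Hölder at $\kappa=1$ when $\gamma<1$. I would first try Poisson summation in the variable $\phi(\kappa)=\kappa l+2\arctan(\kappa/b)$, which makes the roots exactly $\phi=k\pi$. This expresses the sum as the integral plus $\sum_{m\ne0}\int g(\kappa)e^{2im\phi(\kappa)}\phi'(\kappa)\,d\kappa/\pi$. One then uses the endpoint asymptotics of these oscillatory integrals, of size $l^{-\gamma-1}$ at $\kappa=1$ and $l^{-1}$ at $\kappa=0$. These are to be beaten by the negative constant-order gain, and the finitely many remaining $l$ are handled by compactness and continuity in $l$. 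Finally, strict monotonicity in $b$ gives the strict inequality for $\beta>\beta(\gamma,1)$.
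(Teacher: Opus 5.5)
Your reduction to one dimension matches the paper's. It is the upper bound $r_{\gamma,d}(\beta)\le r_{\gamma,d-m}(\beta)\,r_{\gamma+(d-m)/2,m}(\beta)$ of Lemma~\ref{lem: r bounds}, iterated, with threshold $\max_{0\le j<d}\beta(\gamma+j/2,1)$. Your small-$l$ and large-$l$ regimes also mirror Step~1 of the paper's proof.

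\textbf{The gap is the middle range} $l\in[l_1,l_0]$. This is a compact interval, not ``finitely many $l$'', and compactness and continuity in $l$ prove no inequality by themselves. You need a limiting problem that satisfies the bound \emph{strictly} at every $l$ of this interval, so that there is a uniform margin.

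The paper uses the Dirichlet problem. As $b\to\infty$ we have $\mu_k(l,b)\uparrow\pi^2k^2/l^2$, and only the finitely many $k\le l_0/\pi+1$ contribute. By Dini (or Lemma~\ref{lem: lambdak derivative bounds} and Arzel\`a--Ascoli), the Robin Riesz mean therefore converges uniformly on $[l_1,l_0]$ to $\sum_{k\ge1}(1-\pi^2k^2/l^2)_\limplus^\gamma$. That limit is strictly below $L^{\rm sc}_{\gamma,1}l$ for every $l>0$, so the bound holds on $[l_1,l_0]$ for all sufficiently large $b$. For this to work, $l_1$ and $l_0$ must first be fixed at a single value $\beta_*>\beta_W(\gamma,0)$ and then transferred to all $b\ge\beta_*$ by monotonicity in $b$. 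In particular, you never need uniformity in $b$ of the large-$l$ remainder.

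Your proposal cannot supply this step, because you assert the opposite: that the one-dimensional Dirichlet bound fails for $0<\gamma<1$. That is false. With $g(\kappa)=(1-\kappa^2)_\limplus^\gamma$ non-increasing, $\sum_{k\ge1}g(\pi k/l)$ is a right-endpoint Riemann sum of step $\pi/l$. Hence it is at most $\frac{l}{\pi}\int_0^1 g=L^{\rm sc}_{\gamma,1}l$, strictly when $\gamma>0$. Equivalently, P\'olya's inequality $\lfloor x\rfloor\le x$ for the interval, lifted by Aizenman--Lieb.

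Your inference from that claim is also backwards. Robin eigenvalues lie below Dirichlet ones, so Robin Riesz means dominate Dirichlet ones. A Dirichlet failure at some $l$ would make the Robin bound fail there for every $b$, and the theorem would be false.

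This middle regime is not a technicality. By Lemma~\ref{lem: betak limit}, for small $\gamma$ the bound fails on a fixed bounded $\lambda$-window, i.e.\ at moderate $l$, unless $b$ is huge. That is far above $\beta_W(\gamma,0)\le 1$, which is all the large-$l$ analysis you single out as the main obstacle can deliver.

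Minor: the boundary correction entering the sum is $-\tfrac12 g(0)+\tfrac1\pi\int_0^1\theta(-g')\,d\kappa=\tfrac12 L_{\gamma,0}(b)$. You wrote its negative, which is positive for large $b$.
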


For Riesz means of the Dirichlet Laplacian $-\Delta_\Omega^{\rm D}$, bounds of this form were obtained independently by Berezin~\cite{Berezin} and Li--Yau \cite{LiYau_83} for any open $\Omega\subset \R^d$ of finite measure as soon as $\gamma \geq 1$. Also for $\gamma \geq 1$, Kr\"oger~\cite{Kroger} proved that the reverse inequality holds for Riesz means of the Neumann Laplacian $-\Delta_\Omega^{\rm N}$. A well-known, and unresolved, conjecture of P\'olya~\cite{Polya1954} posits that these inequalities remain valid for $\gamma=0$ and any $\Omega$. For domains which tile $\R^d$, and thus in particular for cuboids, P\'olya himself proved the conjecture~\cite{Polya1961}. 

As far as we are aware, Theorem~\ref{thm: Robin BLY intro} is the first result concerning such uniform inequalities when the Robin parameter is coupled to the spectral parameter. By the monotonicity of Riesz means with respect to the Robin parameter, the validity of these inequalities for a fixed $\beta>0$ imply their validity for all larger values of $\beta$, and in particular the validity for the Dirichlet Laplacian. We believe that the inequalities of Theorem~\ref{thm: Robin BLY intro} extend to much larger classes of sets, but in what geometric generality and what range of $\gamma$ they remain valid is far from clear. The most ambitious conjecture would be that these inequalities remain valid for all open $\Omega \subset \R^d, d\geq 2, $ of finite measure and all $\gamma\geq 0$ as long as $\beta$ is sufficiently large. Note that independently of $\beta$ the stated bounds fail when $d=1$ and $\gamma=0$, in contrast to the case of the Dirichlet Laplace operator.

The two-term asymptotics in Theorem~\ref{thm: sc two-term Weyl intro} imply that $\beta(\gamma, d)\geq \beta_W(\gamma, d-1)$. Furthermore, we shall show below that it is the numbers $\beta(\gamma, d)$ from Theorem~\ref{thm: Robin BLY intro} that determine the transition point $\beta^*$ in Theorem~\ref{thm: main theorem intro} and in fact, $\beta^* = \beta(\gamma+1/2, d-1)$. This might appear a bit surprising, however, this naturally emerges from the way that the uniform inequalities in Theorem~\ref{thm: Robin BLY intro} enter the proof of our main result. Specifically, they enter in order to exclude that sequences of maximizers collapse to a lower dimensional cuboid after appropriate rescaling. In the relevant collapsing regime, the collapse results in an effective Robin spectral problem where the dimension is reduced and the order of Riesz mean increased. A similar `dimension drop' is observed in \cite{FrankLarson_CPAM26} in the setting of Dirichlet and Neumann Laplace operators on convex sets. 

From the definition of $L_{\gamma, d}(\beta)$ it is clear that $\beta_W(\gamma, d)$ only depends on the parameters $\gamma$ and $d$ through the quantity $\gamma+d/2$. Thus, the claim that $\beta^*$ differs from the point where the second term in the asymptotics changes sign, i.e.~$\beta(\gamma+1/2, d-1) >\beta_W(\gamma, d-1)$, is equivalent to $\beta(\gamma+1/2, d-1)>\beta_W(\gamma+1/2, d-2)$. In other words, the transition point $\beta^*$ is correctly predicted by the heuristic argument based on two-term asymptotics if and only if the inequality in Theorem~\ref{thm: Robin BLY intro} is valid for all $\beta\geq \beta_W(\gamma, d-1)$, i.e.\ as soon as the inequality does not fail due to the sign of the second term in the two-term asymptotics.

\subsection{Outline of the strategy and structure of the paper}
We end this introduction by outlining the structure of the paper and our overall strategy. As mentioned earlier, the strategy runs parallel to that developed in \cite{FrankLarson_CPAM26}. An idea introduced in that paper is to divide the analysis into two regimes: (i) a \emph{semiclassical regime} where uniform two-term spectral asymptotics are valid and 
(ii) a \emph{collapsing regime} in which the shortest side length of a sequence of cuboids is not larger than the natural wavelength $1/\sqrt{\lambda}$ as $\lambda \to \infty$. In this second regime, semiclassical approximations are not valid, but, as was observed in \cite{FrankLarson_CPAM26}, one can still understand the limiting behavior as a partial semiclassical limit (see also \cite{CarlenFrankLarson_LMP25}). The conclusion of Theorem~\ref{thm: main theorem intro} is then obtained by analyzing which of the two regimes is favorable for sequences of cuboids that asymptotically maximize the relevant Riesz means.

A large part of our results rests on understanding the Robin eigenvalue problem on intervals and lifting the analysis to cuboids by using their product structure. As such, a key element of our analysis is to understand spectral properties of the one-dimensional Robin Laplacian. The required analysis of the one-dimensional problem is the topic of Section~\ref{sec: 1D problem}. The main result is a two-term asymptotic expansion of Riesz means of the Robin problem on an interval with a quantitatively controlled remainder estimate. In Section~\ref{sec: d-dim spectral asymptotics}, the results from Section~\ref{sec: 1D problem} are used to prove a version of Theorem~\ref{thm: sc two-term Weyl intro} which is valid in the semiclassical regime explained above. The topic of Section~\ref{sec: collapsing cuboids} is to study the asymptotically collapsing regime. In Section~\ref{sec: uniform inequalities} we prove Theorem~\ref{thm: Robin BLY intro}. With Theorems~\ref{thm: sc two-term Weyl intro} and~\ref{thm: Robin BLY intro} in hand we turn in Section~\ref{sec: shape optimization} to the applications of these and analyze the asymptotic behavior of cuboids that maximize $M_{\gamma, d}$. We close the paper by showing that for some $\gamma$ and $d$, the transition point $\beta^*$ in Theorem~\ref{thm: main theorem intro} indeed differs from the transition point $\beta_W(\gamma, d-1)$ where the second term in the Weyl asymptotics changes sign. We also show numerical evidence that supports the conjecture that they never coincide.

\subsection*{Acknowledgment}
Financial support through the Swedish Research Council grant no.~2023-03985 (S.~L.) is acknowledged. S. L. would like to thank Timo Weidl for his hospitality during a visit to the University of Stuttgart during which this project was initiated. S.~L. would also like to thank the Isaac Newton Institute for Mathematical Sciences, Cambridge, for support and hospitality during the programme `Geometric spectral theory and applications', where work on this paper was undertaken. The programme was supported by EPSRC grant EP/Z000580/1. 

\section{One-dimensional considerations}
\label{sec: 1D problem}

In this section we consider the one-dimensional Robin eigenvalue problem. Specifically, for $l, \beta >0$ we are interested in the eigenvalue problem for $-\Delta_{(0, l)}^\beta$, which corresponds to 
\begin{equation}\label{eq: 1D eigenvalue equation}
  \begin{cases}
    -u''(x) = \lambda u(x) & \mbox{ for } x \in (0, l)\, , \\
    -u'(0)+ \beta u(0) =0\\
    u'(l)+\beta u(l) =0\, .
  \end{cases}
\end{equation}
In Section~\ref{sec: basic properties 1D problem} we derive transcendental equations for the eigenvalues in terms of $k, l, \beta$ which will allow us to deduce some basic properties of the eigenvalues. In Section~\ref{sec: 1D Riesz means} we use our conclusions for the eigenvalues to obtain various bounds for their Riesz means.

\subsection{Basic properties of Robin eigenvalues on an interval}
\label{sec: basic properties 1D problem}
From the scaling of equation \eqref{eq: 1D eigenvalue equation}, we see that $\lambda_k(-\Delta_{(0, l)}^\beta) = l^{-2}\lambda_k(-\Delta_{(0, 1)}^{\beta l})$ for each $k\geq 1$. As such, we can assume without loss of generality that $l=1$.

If $u$ is a non-trivial solution of \eqref{eq: 1D eigenvalue equation} with $l=1$ then there exist $a, b \in \R$ so that $u(x) = a \sin(\sqrt{\lambda}x)+ b \cos(\sqrt{\lambda}x)$. In terms of $a, b$ the boundary conditions read
\begin{equation*}
  -a \sqrt{\lambda}+b\beta = 0 \quad \mbox{and} \quad \sqrt{\lambda}(a\cos(\sqrt{\lambda})-b \sin(\sqrt{\lambda}))+\beta (a\sin(\sqrt{\lambda})+b\cos(\sqrt{\lambda}))=0\, .
\end{equation*}
From the first equation we see that neither $a$ nor $b$ can be zero (unless $\beta =0$ which we exclude), and the parameters are related by $b = \frac{a}{\beta}\sqrt{\lambda}$. Upon inserting this relation into the second equation and dividing by $a$, we find that $\lambda$ is an eigenvalue if and only if
\begin{equation}\label{eq: betalambda equation}
  2\sqrt{\lambda}\cos(\sqrt{\lambda})+\Bigl(\beta-\frac{\lambda}{\beta}\Bigr)\sin(\sqrt{\lambda})=0\, .
\end{equation}
Each positive solution of this equation yields a simple eigenvalue. Solving \eqref{eq: betalambda equation} for $\beta$ we find two possible solutions:
\begin{equation}\label{eq: transcendental equations}
  \beta = \sqrt{\lambda}\tan(\sqrt{\lambda}/2) \quad \mbox{and} \quad \beta = - \sqrt{\lambda}\tan(\sqrt{\lambda}/2)^{-1}\, .
\end{equation}

By the variational principle, the eigenvalues are monotone increasing with respect to $\beta$ and thus for any $\beta >0$ and each $k\geq 1$, 
\begin{equation*}
  \lambda_k(-\Delta_{(0, 1)}^{\rm N})=\pi^2 (k-1)^2  <\lambda_k(-\Delta_{(0, 1)}^\beta) < \lim_{\beta \to \infty}\lambda_k(-\Delta_{(0, 1)}^\beta)=\pi^2 k^2 = \lambda_k(-\Delta_{(0, 1)}^{\rm D})\, .
\end{equation*}
For $\lambda \in (\pi^2 (k-1)^2, \pi^2k^2)$, $k\in \N$, only one of the solutions $\beta$ in \eqref{eq: transcendental equations} is positive. Consequently, for $\beta>0$ we have that
\begin{equation}\label{eq: implicit equations odd/even}
\begin{aligned}
  \beta &= \begin{cases}
      \sqrt{\lambda_k(-\Delta_{(0, 1)}^\beta)}\tan\Bigl(\sqrt{\lambda_k(-\Delta_{(0, 1)}^\beta)}/2\Bigr) & \mbox{if } k \mbox{ is odd}\, , \\
  -\sqrt{\lambda_k(-\Delta_{(0, 1)}^\beta)}\tan\Bigl(\sqrt{\lambda_k(-\Delta_{(0, 1)}^\beta)}/2\Bigr)^{-1} & \mbox{if } k \mbox{ is even}\, .
  \end{cases} 
\end{aligned}
\end{equation}
From \eqref{eq: implicit equations odd/even} and the implicit function theorem it follows that $\beta \mapsto \lambda_k(-\Delta_{(0, 1)}^\beta)$ is a smooth function on $(0, \infty)$. In particular, by implicit differentiation and Taylor's theorem, one obtains the following lemmas which will be of use later on.
\begin{lemma}\label{lem: lambda1 asymptotics}
    As $\beta \to 0^\limplus$, we have
    \begin{equation*}
        \lambda_1(-\Delta_{(0, 1)}^{\beta}) = 2\beta +O(\beta^2)\, .
    \end{equation*}
\end{lemma}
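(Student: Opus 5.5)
The plan is to apply the implicit function theorem to the odd-$k$ branch of \eqref{eq: implicit equations odd/even}, using $\mu := \sqrt{\lambda_1(-\Delta_{(0,1)}^\beta)}$ as the unknown. For $k=1$ we have $\mu \in (0,\pi)$ and
\begin{equation*}
  \beta = F(\mu) := \mu\tan(\mu/2).
\end{equation*}
The function $F$ is smooth on $[0,\pi)$ and satisfies $F(0)=0$. Its derivative is
\begin{equation*}
  F'(\mu)=\tan(\mu/2)+\frac{\mu}{2}\sec^2(\mu/2) > 0 \quad \text{on } (0,\pi),
\end{equation*}
so $F$ is a strictly increasing bijection $(0,\pi)\to(0,\infty)$. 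Hence $\mu = F^{-1}(\beta)$, which identifies $\lambda_1 = \mu^2$ as the unique solution and shows $\mu \to 0^+$ as $\beta\to0^+$.

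The derivative vanishes at the origin, $F'(0)=0$, so the implicit function theorem cannot be applied to $\mu$ directly. I would instead work with $t=\mu^2=\lambda_1$ and write
\begin{equation*}
  \beta = G(t) := \sqrt{t}\tan(\sqrt{t}/2).
\end{equation*}
Since $\tan$ is odd, $\sqrt t\tan(\sqrt t/2)$ is an even function of $\sqrt t$. It therefore extends to a real-analytic function of $t$ near $0$, with Taylor expansion
\begin{equation*}
  G(t) = \sqrt t\Bigl(\frac{\sqrt t}{2}+\frac{t^{3/2}}{24}+\dots\Bigr) = \frac{t}{2} + \frac{t^2}{24} + O(t^3).
\end{equation*}
In particular $G(0)=0$ and $G'(0)=1/2\neq 0$.

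By the inverse function theorem, $G^{-1}$ is smooth near $0$ with $(G^{-1})'(0)=2$. Taylor's theorem then gives
\begin{equation*}
  \lambda_1(-\Delta_{(0,1)}^\beta)=G^{-1}(\beta)=2\beta+O(\beta^2).
\end{equation*}
More precisely, the second-order term is $-\beta^2/3$, though only the $O(\beta^2)$ bound is needed.

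The only delicate point is the degeneracy $F'(0)=0$, which is handled by the change of variable to $t=\lambda$. The remaining step is to make sure the local inverse coincides with the actual first eigenvalue for small $\beta$. This follows from the global monotonicity of $F$ and the fact that $\lambda_1\in(0,\pi^2)$, so the branch near $t=0$ is the correct one.
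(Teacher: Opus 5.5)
Your proof is correct and follows essentially the paper's route: you treat $\beta=\sqrt{\lambda}\tan(\sqrt{\lambda}/2)$ as an equation in $\lambda$ rather than in $\sqrt{\lambda}$, invert it, and conclude with Taylor's theorem. The only difference is at the endpoint: you extend $G$ analytically across $t=0$ so that the inverse function theorem applies there directly, whereas the paper stays on $(0,\pi^2)$ and uses explicit implicit differentiation to check that $\lambda'(\beta)\to 2$ and that $\lambda''(\beta)$ extends continuously to $\beta=0$; your coefficient $-\beta^2/3$ agrees with the paper's limit $\lambda''(0^+)=-2/3$.
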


\begin{proof}
    Define
    \begin{equation*}
        F_1: (0, \infty)\times (0, \pi^2)\to \R, \quad (\beta, \lambda)\mapsto \beta- \sqrt{\lambda}\tan(\sqrt{\lambda}/2)\, , 
    \end{equation*}
    so that $\lambda=\lambda_1(-\Delta_{(0, 1)}^\beta)$ is the unique solution to the equation $F_1(\beta, \lambda)=0$ for any $\beta>0$.
    Since $F_1$ is smooth and
    \begin{equation*}
        \frac{d}{d\lambda}F_1(\beta, \lambda) = -\frac{\sqrt{\lambda}+\sin(\sqrt{\lambda})}{2\sqrt{\lambda}(1+\cos(\sqrt{\lambda}))}<0
    \end{equation*}
    for all $(\beta, \lambda)\in (0, \infty)\times (0, \pi^2)$ the implicit function theorem implies that $\beta \mapsto \lambda_1(-\Delta_{(0, 1)}^\beta)$ is a smooth function on $(0, \infty)$.

    To simplify notation let $\lambda(\beta)= \lambda_1(-\Delta_{(0, 1)}^\beta)$. By implicitly differentiating the equation $F_1(\beta, \lambda(\beta))=0$ twice, 
    \begin{align*}
        \lambda'(\beta) &= \frac{2\sqrt{\lambda(\beta)}(1+\cos(\sqrt{\lambda(\beta})))}{\sqrt{\lambda(\beta)}+\sin(\sqrt{\lambda(\beta)})}\, , \\
        \lambda''(\beta) &= -\frac{2(1+\cos(\sqrt{\lambda(\beta)}))^2\bigl[\lambda(\beta)\tan(\sqrt{\lambda(\beta)}/2)+\sqrt{\lambda(\beta)}-\sin(\sqrt{\lambda(\beta)})\bigr]}{(\sqrt{\lambda(\beta)}+\sin(\sqrt{\lambda(\beta)}))^3}\, .
    \end{align*}
    As $\lim_{\beta \to 0^\limplus}\lambda(\beta) =0$ it follows that
    \begin{equation*}
        \lim_{\beta \to 0^\limplus}\lambda'(\beta) = 2\, .
    \end{equation*}
    Moreover, using that $\tan(x) = x+O(x^3)$ and $\sin(x) = x+ O(x^3)$ as $x \to 0$ one concludes that $\lambda''(\beta)$ extends continuously to $0$. Taylor's theorem thus yields the desired asymptotic expansion as $\beta \to 0^\limplus$.
\end{proof}

\begin{lemma}\label{lem: lambdak derivative bounds}
    For every $k\geq 1$ and $\beta >0$, 
    \begin{equation*}
        0\leq \frac{d}{d\beta}\lambda_k(-\Delta_{(0, 1)}^{\beta})\leq 4\, . 
    \end{equation*}
\end{lemma}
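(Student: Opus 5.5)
Writing $\lambda=\lambda_k(-\Delta_{(0,1)}^\beta)$ and $\mu=\sqrt{\lambda}$, I would get an explicit formula for $\frac{d\lambda}{d\beta}$ by implicit differentiation, exactly as in the proof of Lemma~\ref{lem: lambda1 asymptotics}, and then bound that formula directly.

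\textbf{Step 1: derivative formula.} Nonnegativity already follows from the variational principle, since the eigenvalues are monotone in $\beta$; the work is the upper bound. Implicit differentiation is cleaner in the variable $\mu$. For odd $k$, \eqref{eq: implicit equations odd/even} reads $\beta=\mu\tan(\mu/2)$, so
\begin{equation*}
\frac{d\beta}{d\mu}=\tan(\mu/2)+\frac{\mu}{2\cos^2(\mu/2)}=\frac{\sin\mu+\mu}{1+\cos\mu}\, ,
\end{equation*}
which is positive on the relevant interval. Hence
\begin{equation*}
\frac{d\lambda}{d\beta}=2\mu\frac{d\mu}{d\beta}=\frac{2\mu(1+\cos\mu)}{\mu+\sin\mu}\, ,
\end{equation*}
in agreement with the formula for $\lambda'(\beta)$ in Lemma~\ref{lem: lambda1 asymptotics}. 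For even $k$, \eqref{eq: implicit equations odd/even} reads $\beta=-\mu\cot(\mu/2)$, and the same computation gives
\begin{equation*}
\frac{d\lambda}{d\beta}=\frac{2\mu(1-\cos\mu)}{\mu-\sin\mu}\, .
\end{equation*}
Both expressions hold for $\mu\in(\pi(k-1),\pi k)$ with $\mu>0$. The denominators are positive there because $|\sin\mu|<\mu$ for $\mu>0$, which also confirms the lower bound $0$.

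\textbf{Step 2: the bound $4$.} Let $\epsilon=+1$ for odd $k$ and $\epsilon=-1$ for even $k$. The inequality $\frac{2\mu(1+\epsilon\cos\mu)}{\mu+\epsilon\sin\mu}\le 4$ is equivalent to
\begin{equation*}
\mu(1+\epsilon\cos\mu)\le 2\mu+2\epsilon\sin\mu\, , \quad\text{that is,}\quad \mu(\epsilon\cos\mu-1)\le 2\epsilon\sin\mu\, .
\end{equation*}
Substituting $t=\mu/2$ and using $1-\cos\mu=2\sin^2 t$, $1+\cos\mu=2\cos^2 t$ and $\sin\mu=2\sin t\cos t$, the two cases become:
\begin{itemize}
\item odd $k$: $-2t\sin^2 t\le 2\sin t\cos t$, i.e.\ $\sin t\,(t\sin t+\cos t)\ge 0$;
\item even $k$: $-2t\cos^2 t\le -2\sin t\cos t$, i.e.\ $\cos t\,(t\cos t-\sin t)\le 0$.
\end{itemize}

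\textbf{Step 3: signs on the allowed range.} This is the main obstacle. One cannot just use $t>0$; the eigenvalue equation itself has to be brought in.
\begin{itemize}
\item Odd $k$: $\beta=\mu\tan t>0$ forces $\tan t>0$, so $\sin t$ and $\cos t$ have the same sign. Then $\sin t\,(t\sin t+\cos t)=t\sin^2 t+\sin t\cos t\ge 0$.
\item Even $k$: $\beta=-\mu\cot t>0$ forces $\sin t\cos t<0$. Then $\cos t\,(t\cos t-\sin t)=t\cos^2 t-\sin t\cos t\ge 0$. This has the wrong sign, so I recheck the inequality.
\end{itemize}
For even $k$ the claim is $2\mu(1-\cos\mu)\le 4(\mu-\sin\mu)$, i.e.\ $-\mu(1+\cos\mu)\le -2\sin\mu$, i.e.\ $\mu(1+\cos\mu)\ge 2\sin\mu$. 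In the variable $t$ this is $4t\cos^2 t\ge 4\sin t\cos t$, i.e.\ $\cos t\,(t\cos t-\sin t)\ge 0$, which holds by the sign condition above. The earlier sign error came from the manipulation in Step 2, so that step should be written out carefully in the final proof. With this correction, both parities give an upper bound of $4$, completing the argument.
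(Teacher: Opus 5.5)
Your proof is correct and follows the paper's route. You use the same implicit-differentiation formulas and the same key sign fact $(-1)^{k+1}\sin\sqrt{\lambda}\ge 0$; you extract that fact from $\beta>0$ via $\sin\mu=2\sin t\cos t$, whereas the paper reads it off from $\sqrt{\lambda}\in(\pi(k-1),\pi k)$.

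The half-angle detour, which is where your even-case sign slip crept in, is unnecessary. In $\mu(\epsilon\cos\mu-1)\le 2\epsilon\sin\mu$ the left side is already $\le 0$ and the right side is $\ge 0$. This is exactly the paper's observation: the numerator $2(1+\epsilon\cos\mu)$ is at most $4$, and the denominator $1+\epsilon\sin\mu/\mu$ is at least $1$.
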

\begin{proof}
    The lower bound is a direct consequence of the variational principle. Therefore, it remains to prove the upper bound. To simplify notation, let $\lambda_k(\beta)=\lambda_k(-\Delta_{(0, 1)}^\beta)$ throughout this proof. 
    
    We define for $k\in\mathbb{N}$ the functions $F_k\colon (0, \infty)\times ((k-1)^2\pi^2, k^2\pi^2)\to \R$ by
    \begin{align*}
        F_k(\beta, \lambda) = \begin{cases}\beta-\sqrt{\lambda}\tan(\sqrt{\lambda}/2) & \mbox{if }k \mbox{ is odd\, , }\\
         \beta+\sqrt{\lambda}\tan(\sqrt{\lambda}/2)^{-1} & \mbox{if }k \mbox{ is even\, .}
         \end{cases}
    \end{align*}
    Then $\lambda_k(\beta)$ is the unique solution of the equation $F_k(\beta, \lambda)=0$.

    For $k$ odd we have that 
    \begin{equation*}
        \frac{d}{d\lambda}F_k(\beta, \lambda) = - \frac{\sqrt{\lambda}+\sin(\sqrt{\lambda})}{2\sqrt{\lambda}(1+\cos(\sqrt{\lambda}))}<0 \quad \mbox{for all }\lambda \in (\pi^2(k-1)^2, \pi^2k^2)\, , 
    \end{equation*}
    and similarly, for $k$ even it holds that
    \begin{equation*}
        \frac{d}{d\lambda}F_k(\beta, \lambda) = - \frac{\sqrt{\lambda}-\sin(\sqrt{\lambda})}{2\sqrt{\lambda}(1-\cos(\sqrt{\lambda}))}<0 \quad \mbox{for all }\lambda \in (\pi^2(k-1)^2, \pi^2k^2)\, .
    \end{equation*}
    Thus, the use of the implicit function theorem is justified. The smoothness of the functions $F_k$ implies that each mapping $\beta \mapsto \lambda_k(\beta)$ is a smooth function.
    
    By implicit differentiation of the equation $F_k(\beta, \lambda_k(\beta))=0$, we find that
    \begin{align*}
        \lambda_k'(\beta) = \begin{cases}
            \dfrac{2\sqrt{\lambda_k(\beta)}(1+\cos(\sqrt{\lambda_k(\beta})))}{\sqrt{\lambda_k(\beta)}+\sin(\sqrt{\lambda_k(\beta)})} & \mbox{for }k \mbox{ odd}\, , \\[2ex]
        \dfrac{2\sqrt{\lambda_k(\beta)}(1-\cos(\sqrt{\lambda_k(\beta})))}{\sqrt{\lambda_k(\beta)}-\sin(\sqrt{\lambda_k(\beta)})} & \mbox{for }k \mbox{ even}\, .
        \end{cases} 
    \end{align*}
    Using the fact that $\lambda_k(\beta) \in (\pi^2(k-1)^2, \pi^2k^2)$ and $(-1)^{k+1}\sin(x) = |\sin(x)|$ for $x\in[\pi(k-1), \pi k]$, we conclude that
    \begin{equation*}
        \lambda_k'(\beta) = \frac{2(1+(-1)^{k+1}\cos(\sqrt{\lambda_k(\beta)}))}{1+(-1)^{k+1}{\sin(\sqrt{\lambda_k(\beta)})}/{\sqrt{\lambda_k(\beta)}}} = \frac{2(1+(-1)^{k+1}\cos(\sqrt{\lambda_k(\beta)}))}{1+|{\sin(\sqrt{\lambda_k(\beta)})}/{\sqrt{\lambda_k(\beta)}}|} \leq 4\, .
    \end{equation*}
    This completes the proof of the lemma.
\end{proof}

Finally, in the next subsection, we shall rely heavily on the following two-sided bounds for the one-dimensional Robin eigenvalues.
\begin{lemma}\label{lem: arctan bounds lambdak}
    For each $k\geq 1$ and any $\beta>0$, 
    \begin{equation*}
        \Bigl(\pi k -2\arctan\Bigl(\frac{\pi k}{\beta}\Bigr)\Bigr)^2 < \lambda_k(-\Delta_{(0, 1)}^\beta)< \Bigl(\pi k- 2\arctan\Bigr(\frac{\pi (k-1)}{\beta}\Bigr)\Bigr)^2\, .
    \end{equation*}
\end{lemma}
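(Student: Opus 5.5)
Write $\mu=\sqrt{\lambda_k(-\Delta_{(0,1)}^\beta)}$, which lies in $(\pi(k-1),\pi k)$. The plan is to turn the transcendental equation \eqref{eq: betalambda equation} into the single exact identity
\begin{equation*}
  \mu = \pi k - 2\arctan\Bigl(\frac{\mu}{\beta}\Bigr),
\end{equation*}
and then obtain both bounds from the monotonicity of $\arctan$ together with $\pi(k-1)<\mu<\pi k$.

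\textbf{Step 1: the identity.} Write $\mu/2=\pi k/2-\theta$. The constraint $\mu\in(\pi(k-1),\pi k)$ is equivalent to $\theta\in(0,\pi/2)$. We treat the two parities using \eqref{eq: implicit equations odd/even}.
\begin{itemize}
  \item For $k$ odd, $\tan(\mu/2)=\tan(\pi(k-1)/2+\pi/2-\theta)=\cot\theta$, since $\pi(k-1)/2$ is a multiple of $\pi$. The relation $\beta=\mu\tan(\mu/2)$ then becomes $\tan\theta=\mu/\beta$.
  \item For $k$ even, $\tan(\mu/2)=\tan(\pi k/2-\theta)=-\tan\theta$, since $\pi k/2$ is a multiple of $\pi$. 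The relation $\beta=-\mu\tan(\mu/2)^{-1}$ again becomes $\tan\theta=\mu/\beta$.
\end{itemize}
In both cases $\theta\in(0,\pi/2)$, so $\theta=\arctan(\mu/\beta)$. This gives the identity, equivalently $\mu=\pi k-2\theta$.

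\textbf{Step 2: the bounds.} The function $\arctan$ is strictly increasing and $\beta>0$. From $\mu<\pi k$ we get $\arctan(\mu/\beta)<\arctan(\pi k/\beta)$, hence
\begin{equation*}
  \mu>\pi k-2\arctan(\pi k/\beta).
\end{equation*}
From $\mu>\pi(k-1)$ we get $\arctan(\mu/\beta)>\arctan(\pi(k-1)/\beta)$, hence
\begin{equation*}
  \mu<\pi k-2\arctan(\pi(k-1)/\beta).
\end{equation*}
The lower bound $\pi k-2\arctan(\pi k/\beta)$ is positive, because it exceeds $\pi k-\pi\ge0$. Both sides of each inequality are therefore nonnegative, and squaring preserves the strict inequalities. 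This gives the lemma.

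\textbf{Main obstacle.} The only delicate point is Step 1. One has to track the branch of $\tan$ for each parity of $k$ to confirm that $\theta$ lies in $(0,\pi/2)$, so that $\arctan$ inverts $\tan$ correctly. One also has to use the strict inclusion $\mu\in(\pi(k-1),\pi k)$, which was established earlier in the section from the variational principle.
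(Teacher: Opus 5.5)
Your proof is correct and follows the paper's argument. Your $2\theta$ is exactly the paper's $\delta_k(\beta)=\pi k-\sqrt{\lambda_k(-\Delta_{(0,1)}^\beta)}$, and your identity $\theta=\arctan(\mu/\beta)$ is the paper's fixed-point relation $\delta_k(\beta)=2\arctan\bigl((\pi k-\delta_k(\beta))/\beta\bigr)$; both bounds then follow from the monotonicity of $\arctan$ and $\mu\in(\pi(k-1),\pi k)$, and your explicit parity computation just spells out the rewriting step the paper states without detail.
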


\begin{proof}
    Since $\arctan(x)< \pi/2$ and $k\geq 1$, the expressions in the squares in the upper and lower bounds are both non-negative. Consequently, the claimed bounds are equivalent to that
    \begin{equation*}
        2\arctan\Bigl(\frac{\pi (k-1)}{\beta}\Bigr) < \pi k -\sqrt{\lambda_k(-\Delta_{(0, 1)}^\beta)}< 2\arctan\Bigl(\frac{\pi k}{\beta}\Bigr)\, .
    \end{equation*}

    To prove this inequality, we begin by rewriting the relations \eqref{eq: implicit equations odd/even} somewhat. Define $\delta_k(\beta) := \pi k -\sqrt{\lambda_k(-\Delta_{(0, 1)}^\beta)}$. Since $\lambda_k(-\Delta_{(0, 1)}^\beta) \in (\pi^2(k-1)^2, \pi^2k^2)$, we have that $\delta_k(\beta) \in (0, \pi)$ for each $k\geq 1$ and $\beta>0$. The relations in \eqref{eq: implicit equations odd/even} can be written in terms of $\delta_k$ as
    \begin{equation*}
        \beta = (\pi k-\delta_k(\beta))\tan(\delta_k(\beta)/2)^{-1}
    \end{equation*}
    which is equivalent to
    \begin{equation}\label{eq: implicit equation delta}
        \delta_k(\beta) = 2\arctan\Bigl(\frac{\pi k-\delta_k(\beta)}{\beta}\Bigr)\, .
    \end{equation}
    The desired bound follows from \eqref{eq: implicit equation delta} combined with the monotonicity of $x\mapsto \arctan(x)$ and that $\delta_k(\beta) \in (0, \pi)$.
\end{proof}

\subsection{Riesz means of Robin Laplace operators on an interval}
\label{sec: 1D Riesz means}

In this section we proceed to use the bounds of Lemma~\ref{lem: arctan bounds lambdak} to deduce bounds for Riesz means of $-\Delta_{(0, 1)}^\beta$. As we shall see, the bounds of Lemma~\ref{lem: arctan bounds lambdak} are sufficiently precise to imply two-term asymptotic expansions of $\Tr(-\Delta_{(0, 1)}^\beta-\lambda)_\limminus^\gamma$ as $\lambda \to \infty$ uniformly in the Robin parameter. When we study shape optimization problems later in this paper, such bounds will be crucial as we will then consider limits where $\lambda$ and $\beta$ tend to infinity simultaneously. 

The main result that we shall prove in this section is contained in the next theorem.
\begin{theorem}\label{thm: two-term Weyl 1D}
    Let $\gamma>0$ and $\kappa_{\gamma, 1}=\min\{\gamma, 1\}$. There exists a constant $C_\gamma>0$ so that, for any $\beta>0$ and any $ \lambda >0$, 
    \begin{align*}
        \biggl| \Tr(-\Delta_{(0, 1)}^{\beta}-\lambda)_\limminus^\gamma - L_{\gamma, 1}^{\rm sc}\lambda^{\gamma+1/2}- \frac{1}{2}L_{\gamma, 0}(\beta/\sqrt{\lambda}) \lambda^\gamma \biggr|\leq  C_\gamma \lambda^{\gamma-\kappa_{\gamma, 1}/2} \, .
    \end{align*}
\end{theorem}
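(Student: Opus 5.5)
The plan is to turn the implicit relation behind Lemma~\ref{lem: arctan bounds lambdak} into an \emph{exact} parametrization of the eigenvalues, and then to apply a trapezoidal-rule (Euler--Maclaurin type) comparison between a lattice sum and an integral.

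\textbf{Step 1: exact parametrization.} Write $s_k := \sqrt{\lambda_k(-\Delta_{(0,1)}^\beta)}$ and $\delta_k = \pi k - s_k$. Equation \eqref{eq: implicit equation delta} says $\delta_k = 2\arctan(s_k/\beta)$. Define
\begin{equation*}
  \varphi(s) := s + 2\arctan(s/\beta)\, , \qquad s\geq 0\, .
\end{equation*}
This function is smooth and strictly increasing, with $\varphi(0)=0$ and
\begin{equation*}
  \varphi'(s) = 1+\frac{2\beta}{\beta^2+s^2}\geq 1\, .
\end{equation*}
The eigenvalues are then characterized exactly by $\varphi(s_k)=\pi k$.

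Set $g(s) := (\lambda-s^2)_\limplus^\gamma$ and $G := g\circ\varphi^{-1}$ on $[0,\infty)$. Then
\begin{equation*}
  \Tr(-\Delta_{(0,1)}^\beta-\lambda)_\limminus^\gamma = \sum_{k\geq 1} G(\pi k)\, .
\end{equation*}

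\textbf{Step 2: identify the main terms.} Compare the sum with $\frac{1}{\pi}\int_0^\infty G(t)\,dt - \frac12 G(0)$. Substituting $t=\varphi(s)$ gives
\begin{equation*}
  \frac1\pi\int_0^{\sqrt\lambda}(\lambda-s^2)^\gamma\Bigl(1+\frac{2\beta}{\beta^2+s^2}\Bigr)ds\, .
\end{equation*}
The term with $1$ gives $L^{\rm sc}_{\gamma,1}\lambda^{\gamma+1/2}$. Rescaling $s=\sqrt\lambda\, u$ and writing $b=\beta/\sqrt\lambda$, the second term equals $\frac{2}{\pi}\lambda^\gamma\int_0^1(1-u^2)^\gamma\frac{b}{b^2+u^2}\,du$. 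Combined with $-\frac12G(0)=-\frac12\lambda^\gamma$, and using $L^{\rm sc}_{\gamma,0}=1$, this is exactly $\frac12 L_{\gamma,0}(\beta/\sqrt\lambda)\lambda^\gamma$. So the whole task reduces to bounding the trapezoidal error
\begin{equation*}
  E := \sum_{k\geq1}G(\pi k)-\frac1\pi\int_0^\infty G\,dt+\frac12 G(0)
\end{equation*}
by $C_\gamma\lambda^{\gamma-\kappa_{\gamma,1}/2}$, uniformly in $\beta$.

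For $\lambda\leq 1$ the claim is trivial. Each term on the left is $O(\lambda^{\gamma})$ or smaller, because the number of $k$ with $s_k\leq \sqrt\lambda$ is bounded, as $\varphi(s)\geq s$ forces $\pi k\leq \varphi(\sqrt\lambda)\leq \sqrt\lambda+\pi$. So we may assume $\lambda\geq 1$.

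\textbf{Step 3: split the error.} Let $T_0 := \varphi(\sqrt\lambda-1)$, and cut $[0,\infty)$ at the lattice point $\pi m$ just below $T_0$.

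On the tail $[\pi m,\infty)$, $G$ is monotone and bounded by $g(\sqrt\lambda-1)\lesssim\lambda^{\gamma/2}$. The tail interval contains only $O(1)$ lattice points, since $\varphi(\sqrt\lambda)-\varphi(\sqrt\lambda-1)\leq 1+\pi$. So both the tail sum and the tail integral are $O(\lambda^{\gamma/2})$, which is at most $\lambda^{\gamma-\kappa_{\gamma,1}/2}$.

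On $[0,\pi m]$, use the second-order trapezoidal rule. Since $G'(0)=g'(0)/\varphi'(0)=0$, the error there is bounded by
\begin{equation*}
  C\Bigl(\int_0^{T_0}|G''(t)|\,dt+\sup_{[0,T_0]}|G'|\Bigr)\, .
\end{equation*}

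\textbf{Step 4 (main obstacle): bounds uniform in $\beta$.} For small $\beta$ the map $\varphi$ is very distorted near $0$, with $\varphi'(0)=1+2/\beta$, so $G''$ cannot be bounded pointwise uniformly. Instead bound its integral in the $s$-variable. From $G'=g'/\varphi'$ one gets
\begin{equation*}
  \int|G''|\,dt=\int\Bigl|\bigl(g'/\varphi'\bigr)'\Bigr|\,ds\leq\int|g''|\,ds+\int|g'|\,\Bigl|\Bigl(\frac{1}{\varphi'}\Bigr)'\Bigr|\,ds\, ,
\end{equation*}
with both integrals taken over $s\in[0,\sqrt\lambda-1]$.

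The key observation is that $1/\varphi'$ is monotone with values in $(0,1]$, so its total variation is at most $1$ regardless of $\beta$. Hence the second integral is at most $\sup_{s\leq\sqrt\lambda-1}|g'(s)|$. Likewise $\sup_{[0,T_0]}|G'|\leq\sup_{s\leq \sqrt\lambda-1}|g'(s)|$, because $\varphi'\geq1$.

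An elementary computation with $g'(s)=-2\gamma s(\lambda-s^2)^{\gamma-1}$ bounds $\int_0^{\sqrt\lambda-1}|g''|\,ds$ and $\sup_{s\leq\sqrt\lambda-1}|g'(s)|$ by $C_\gamma\lambda^{\gamma-1/2}$ if $\gamma\geq1$ and by $C_\gamma\lambda^{\gamma/2}$ if $\gamma<1$. In the latter case the supremum is attained at the cut $s=\sqrt\lambda-1$, where $|g'|\asymp \lambda^{1/2}\cdot\lambda^{(\gamma-1)/2}=\lambda^{\gamma/2}$. In both cases the bound is $\lambda^{\gamma-\kappa_{\gamma,1}/2}$.

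Combining Steps 2--4 gives the estimate in Theorem~\ref{thm: two-term Weyl 1D} with $C_\gamma$ independent of $\beta$.
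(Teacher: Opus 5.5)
Your proof is correct, and it takes a genuinely different route from the paper's.

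\textbf{How the paper argues.} The paper uses the implicit relation \eqref{eq: implicit equation delta} only to obtain the two-sided bounds of Lemma~\ref{lem: arctan bounds lambdak}. For $\gamma\le 1$ it then:
\begin{enumerate}
\item shows that the two bracketing sums differ by $O(\lambda^{\gamma/2})$;
\item Taylor-expands the upper sum around the Dirichlet Riesz mean, isolating the correction $4\pi\gamma\sum_k k(\lambda-\pi^2k^2)^{\gamma-1}\arctan(\pi k/\beta)$;
\item converts that sum into an integral by a monotone quadrature argument;
\item identifies the integral with $L_{\gamma,0}$ via the Fubini identity of Lemma~\ref{lem: properties of L};
\item imports the explicit Dirichlet two-term asymptotics.
\end{enumerate}
The case $\gamma>1$ is handled by Aizenman--Lieb lifting from $\gamma=1$. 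This requires a separate check that $L_{1,0}$ lifts to $L_{\gamma,0}$.

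\textbf{How your argument differs.} You use the relation exactly, $\varphi(\sqrt{\lambda_k})=\pi k$. The Riesz mean then becomes a single lattice sum $\sum_{k\ge1}G(\pi k)$. Both leading terms come out of $\frac1\pi\int G-\frac12 G(0)$ after one change of variables, already in the form used to define $L_{\gamma,0}$. The entire remainder is one trapezoidal error.

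Your key observation is that $1/\varphi'$ is monotone with values in $(0,1]$ and that $\varphi'\ge 1$. This is exactly what makes the trapezoidal bound uniform in $\beta$, including the nearly-Neumann regime where $\varphi$ is badly distorted near $0$. The argument treats all $\gamma>0$ at once and gives the same rate $\lambda^{\gamma-\kappa_{\gamma,1}/2}$, with no Aizenman--Lieb step and no Dirichlet input.

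You can streamline Step 4 slightly. Since $g''$ changes sign at most once on $[0,\sqrt\lambda)$, you have $\int|g''|\le 3\sup|g'|$, so everything reduces to $\sup_{s\le\sqrt\lambda-1}|g'(s)|$.

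\textbf{What each approach buys.} The paper's route is more modular. It only needs inequalities for the eigenvalues plus known Dirichlet asymptotics, so it would adapt to settings with two-sided eigenvalue bounds but no exact parametrization. Your route is shorter and more unified.

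\textbf{One correction to the tail step.} On $[\pi m,T_0]$ you actually have $G\ge g(\sqrt\lambda-1)$, so the bound as written is reversed. Instead use that $\varphi^{-1}$ is $1$-Lipschitz. This gives $\varphi^{-1}(\pi m)\ge \sqrt\lambda-1-\pi$, hence on the tail
\begin{equation*}
G\le G(\pi m)\le g(\sqrt\lambda-1-\pi)\lesssim\lambda^{\gamma/2}.
\end{equation*}
The case $\lambda<(1+\pi)^2$ is trivial. Also account for the boundary term $\frac12G(\pi m)$ left over from the trapezoidal rule on $[0,\pi m]$; it obeys the same bound. With these adjustments the conclusion is unaffected.
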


The proof of Theorem~\ref{thm: two-term Weyl 1D} will occupy what remains of this section. We split the proof into a number of lemmas.
We first focus on proving the theorem for $\gamma  \leq 1$. The case $\gamma > 1$ is deduced by lifting $\gamma = 1$ with the Aizenman--Lieb identity \cite{AizenmanLieb}. Our method of proof for $\gamma \leq 1$ can be made to work also for $\gamma>1$, however the error estimates that we obtain in this manner are worse than those that we get via the Aizenman--Lieb argument.

To prove Theorem \ref{thm: two-term Weyl 1D}, we start from Lemma~\ref{lem: arctan bounds lambdak}. A direct consequence of Lemma~\ref{lem: arctan bounds lambdak} is that for any $\gamma \geq 0, \beta>0, $ and $\lambda \geq 0$ we have
\begin{align*}
    \sum_{k\geq 1}\Bigl(\lambda- \Bigl(\pi k-2\arctan\Bigl(\frac{\pi (k-1)}{\beta}\Bigr)\Bigr)^2\Bigr)_\limplus^\gamma
    &\leq
    \Tr(-\Delta_{(0, 1)}^\beta-\lambda)_\limminus^\gamma\\
    &\leq
    \sum_{k\geq 1}\Bigl(\lambda- \Bigl(\pi k-2\arctan\Bigl(\frac{\pi k}{\beta}\Bigr)\Bigr)^2\Bigr)_\limplus^\gamma\, .
\end{align*}
The next lemma quantifies that these upper and lower bounds are precise enough to deduce asymptotics as $\lambda \to \infty$ up to an acceptable remainder. 
\begin{lemma}\label{lem: unif bound Riesz arctan} 
    Let $\gamma \in (0, 1]$. There exists a constant $C_\gamma>0$ so that, for any $\beta>0$ and any $\lambda >0$, 
    \begin{equation*}
        \biggl|\sum_{k\geq 1}\biggl[\Bigl(\lambda- \Bigl(\pi k-2\arctan\Bigl(\frac{\pi k}{\beta}\Bigr)\Bigr)^2\Bigr)_\limplus^\gamma-\Bigl(\lambda- \Bigl(\pi k-2\arctan\Bigl(\frac{\pi (k-1)}{\beta}\Bigr)\Bigr)^2\Bigr)_\limplus^\gamma\biggr]\biggr| \leq C_\gamma \lambda^{\gamma/2}\, .
    \end{equation*}
\end{lemma}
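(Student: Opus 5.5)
The plan is to exploit two structural facts about the sequences
\[
a_k := \pi k - 2\arctan\Bigl(\frac{\pi k}{\beta}\Bigr), \qquad b_k := \pi k - 2\arctan\Bigl(\frac{\pi (k-1)}{\beta}\Bigr),
\]
namely that the total shift $\sum_k (b_k-a_k)$ is bounded independently of $\beta$, and that the sequence $a_k$ cannot linger near $\sqrt\lambda$. Write $f(x) := (\lambda-x^2)_\limplus^\gamma$ for $x\geq 0$. Since $0<a_k<b_k$, each summand $f(a_k)-f(b_k)$ is nonnegative, so it suffices to bound the sum from above by $C_\gamma\lambda^{\gamma/2}$.

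\emph{Step 1: bookkeeping for the sequences.}
\begin{itemize}
\item By telescoping,
\[
\sum_{k\geq1}(b_k-a_k)=2\sum_{k\geq1}\Bigl[\arctan\Bigl(\frac{\pi k}{\beta}\Bigr)-\arctan\Bigl(\frac{\pi(k-1)}{\beta}\Bigr)\Bigr]\leq \pi .
\]
\item One has $b_k=a_{k-1}+\pi$, with the convention $a_0=0$.
\item The increments satisfy $a_k-a_{k-1}=\pi-2\bigl[\arctan(\pi k/\beta)-\arctan(\pi(k-1)/\beta)\bigr]\in(0,\pi]$.
\item The arctan increments are positive and sum to at most $\pi/2$. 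Hence at most one of them exceeds $\pi/4$, and so $a_k-a_{k-1}\geq \pi/2$ for all but at most one $k$.
\end{itemize}
Consequently $a_k$ is increasing, and any interval of length $L$ contains at most $2L/\pi+2$ of the $a_k$. Since $b_k-a_k<\pi$, the same count holds for the $b_k$ up to a constant.

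\emph{Step 2: bulk terms.} Consider $k$ with $b_k\leq \sqrt\lambda-1$. By the mean value theorem,
\[
f(a_k)-f(b_k)\leq (b_k-a_k)\sup_{x\in[a_k,b_k]}|f'(x)|, \qquad |f'(x)| = 2\gamma x(\lambda-x^2)^{\gamma-1}.
\]
Because $\gamma\leq1$, the factor $(\lambda-x^2)^{\gamma-1}$ is increasing in $x$. Using $\lambda-b_k^2\geq \sqrt\lambda(\sqrt\lambda-b_k)\geq\sqrt\lambda$, we get
\[
|f'(x)|\leq 2\gamma\sqrt\lambda\,\lambda^{(\gamma-1)/2}=2\gamma\lambda^{\gamma/2} \quad\text{on } [a_k,b_k].
\]
Summing and using Step 1 gives a contribution of at most $2\pi\gamma\lambda^{\gamma/2}$.

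\emph{Step 3: edge terms.} The remaining nonzero terms have $a_k<\sqrt\lambda$ and $b_k>\sqrt\lambda-1$. For these $a_k>\sqrt\lambda-1-\pi$, so by Step 1 there are $O(1)$ such $k$, uniformly in $\beta$ and $\lambda$. For each of them, when $\sqrt\lambda\geq 1+\pi$,
\[
0\leq f(a_k)-f(b_k)\leq f(a_k)=(\sqrt\lambda-a_k)^\gamma(\sqrt\lambda+a_k)^\gamma\leq (1+\pi)^\gamma(2\sqrt\lambda)^\gamma .
\]
If instead $\sqrt\lambda<1+\pi$, then trivially $f\leq\lambda^\gamma\leq C\lambda^{\gamma/2}$. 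Combining Steps 2 and 3 yields the claim.

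The main obstacle is uniformity in $\beta$. For small $\beta$ the first few gaps $a_k-a_{k-1}$ can be tiny, so one cannot simply assume the $a_k$ are spaced by about $\pi$. The counting argument in Step 1 (at most one exceptional gap) is what makes the number of edge terms bounded regardless of $\beta$.
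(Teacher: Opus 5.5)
Your proof is correct and follows essentially the paper's argument: the bulk terms are controlled by the derivative bound $|f'|\lesssim_\gamma \lambda^{\gamma/2}$ away from $\sqrt\lambda$ (which uses $\gamma\le 1$) times the telescoped total arctan shift $\le\pi$, and the boundedly many edge terms are each bounded trivially by $\lesssim\lambda^{\gamma/2}$. The paper differs only cosmetically, splitting by the index $k$ relative to $\sqrt\lambda/\pi-M$ and differentiating in the arctan argument $s\in[k-1,k]$ rather than applying the mean value theorem in $x$. The ``main obstacle'' you flag is not really one: since $\pi(k-1)<a_k<b_k\le\pi k$ for every $\beta$, your edge conditions $a_k<\sqrt\lambda$, $b_k>\sqrt\lambda-1$ already confine $k$ to an interval of length $1+1/\pi$. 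The exceptional-gap counting in Step 1, though valid, can therefore be dropped.
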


As $\lambda^{\gamma/2}$ is of lower order than the second term in the two-term asymptotics claimed in Theorem~\ref{thm: two-term Weyl 1D}, the lemma implies that it suffices to prove asymptotics for the sum with the eigenvalues replaced by one of the expressions given in terms of $\arctan$.

\begin{proof}
  Since $[0, \infty)\ni y \mapsto \arctan(y)$ is increasing and satisfies $0<\arctan(y) <\pi/2$, it follows that each term in the sum is non-negative. Consequently, we can drop the the absolute values.

  Since $2\arctan(y)\in [0, \pi]$ for all $y\geq 0$ the expression
  $
    \lambda- (\pi k-2\arctan(y))^2
  $
  is non-negative if $k \leq \sqrt{\lambda}/\pi$ and non-positive if $k \geq \sqrt{\lambda}/\pi +1$. Therefore, any $k$ for which the corresponding term in the sum is non-zero satisfies $k \leq \sqrt{\lambda}/\pi +1$.

  Let $M\geq 2$ be a constant to be specified below. 

  We begin with proving the bound for $\lambda \leq 4\pi^2 M^2$.  As noted above the sum contains at most $\sqrt{\lambda}/\pi + 1$ non-zero terms. Each of these terms is bounded by $\lambda^\gamma$. Therefore, for all $\lambda \geq 0$ we have
   \begin{equation*}
    \sum_{k\geq 1}\Bigl(\lambda- \Bigl(\pi k-2\arctan\Bigl(\frac{\pi k}{\beta}\Bigr)\Bigr)^2\Bigr)_\limplus^\gamma-\sum_{k\geq 1}\Bigl(\lambda- \Bigl(\pi k-2\arctan\Bigl(\frac{\pi (k-1)}{\beta }\Bigr)\Bigr)^2\Bigr)_\limplus^\gamma \leq \lambda^\gamma\Bigl(\frac{\sqrt{\lambda}}\pi +1\Bigr)\, .
   \end{equation*}
   The claimed bound for $\lambda \leq 4\pi^2 M^2$ follows from noting that $\lambda^\gamma(\sqrt{\lambda}/\pi +1)\lesssim_{\gamma, M}\lambda^{\gamma/2}$ for all $\lambda \leq 4\pi^2 M^2$ \footnote{Throughout the paper, we use the notation $\lesssim / \gtrsim$ to mean that the left-hand side is bounded from above/below by a positive constant times the right-hand side, where the implied constant is independent of the relevant parameters. Subscripts are used to emphasize that the implicit constant only depends on the parameters that appear in the subscript.}. It thus remains to prove the bound for $\lambda > 4\pi^2 M^2$.
  
  For any $\sqrt{\lambda}/\pi-M \leq k \leq \sqrt{\lambda}/\pi+1$ we have that
  \begin{equation}\label{eq: bound Riesz summand large lambda}
  \begin{aligned}
    \lambda- (\pi k-2\arctan(y))^2
    &=\lambda- \pi^2 k^2+4k\pi \arctan(y)-4\arctan(y)^2\\
    &\leq \lambda - \pi^2 (\sqrt{\lambda}/\pi-M)^2+2(\sqrt{\lambda}/\pi+1)\pi^2\\
    &= 2\pi M\sqrt{\lambda}-\pi^2 M^2 +2\pi \sqrt{\lambda}+2\pi^2\\
    &\leq 4\pi M \sqrt{\lambda}
  \end{aligned}
  \end{equation}
  where the last inequality used that $M\geq 2$. 
  Consequently, for any $M\geq 2$ and $\lambda \geq 4\pi^2M^2$, 
  \begin{align}
    &\sum_{k\geq 1}\Bigl(\lambda- \Bigl(\pi k-2\arctan\Bigl(\frac{\pi k}{\beta}\Bigr)\Bigr)^2\Bigr)_\limplus^\gamma-\sum_{k\geq 1}\Bigl(\lambda- \Bigl(\pi k-2\arctan\Bigl(\frac{\pi (k-1)}{\beta}\Bigr)\Bigr)^2\Bigr)_\limplus^\gamma \notag\\ 
    &\ \leq
    \sum_{1 \leq k \leq \sqrt{\lambda}/\pi -M}\Biggl[\Bigl(\lambda- \Bigl(\pi k-2\arctan\Bigl(\frac{\pi k}{\beta}\Bigr)\Bigr)^2\Bigr)^\gamma-\Bigl(\lambda- \Bigl(\pi k-2\arctan\Bigl(\frac{\pi (k-1)}{\beta}\Bigr)\Bigr)^2\Bigr)^\gamma\Biggr] \notag\\
    &\qquad + (4\pi)^\gamma (M+1) M^{\gamma}\lambda^{\gamma/2}\, . \label{eq: first bound diff arctan sum}
\end{align}
To bound the terms in the remaining sum we note that
\begin{equation}\label{eq: diff of arctan as integral}
\begin{aligned}
    \Bigl(\lambda- \Bigl(\pi k-2&\arctan\Bigl(\frac{\pi k}{\beta}\Bigr)\Bigr)^2\Bigr)^\gamma-\Bigl(\lambda- \Bigl(\pi k-2\arctan\Bigl(\frac{\pi (k-1)}{\beta}\Bigr)\Bigr)^2\Bigr)^\gamma\\
    &= 
    \int_{k-1}^{k} \frac{d}{ds}\Bigl(\lambda- \Bigl(\pi k-2\arctan\Bigl(\frac{\pi s}{\beta}\Bigr)\Bigr)^2\Bigr)^\gamma\, ds\\
    &=
    4\pi \beta \gamma \int_{k-1}^{k} \frac{\bigl(\lambda- \bigl(\pi k-2\arctan\bigl(\frac{\pi s}{\beta }\bigr)\bigr)^2\bigr)^{\gamma-1}\bigl(\pi k-2\arctan\bigl(\frac{\pi s}{\beta}\bigr)\bigr)}{\beta^2 + \pi^2s^2}\, ds\, .
\end{aligned}
\end{equation}

Now, for any $1 \leq k \leq \sqrt{\lambda}/\pi-M$, 
\begin{align*}
    \lambda- (\pi k-2\arctan(y))^2
    &=\lambda- \pi^2 k^2+4k\pi \arctan(y)-4\arctan(y)^2\\
    &\geq \lambda - \pi^2 (\sqrt{\lambda}/\pi-M)^2-\pi^2\\
    &= 2\pi M\sqrt{\lambda}-\pi^2 M^2 -\pi^2\\
    &\geq \pi M \sqrt{\lambda}\, , 
\end{align*}
where the last step used that $M \geq 2$ and $\lambda \geq 4\pi^2M^2$.
Thus, we have for all $1 \leq k \leq \sqrt{\lambda}/\pi-M$ and $s \in [k-1, k]$, 
\begin{align*}
  \Bigl|\lambda- \Bigl(\pi k-2\arctan\Bigl(\frac{\pi s}{\beta}\Bigr)\Bigr)^2\Bigr| \geq \pi M \sqrt{\lambda}\quad \mbox{and}\quad
  \Bigl|\pi k- 2\arctan\Bigl(\frac{\pi s}{\beta}\Bigr)\Bigr| \leq \sqrt{\lambda}\, .
\end{align*}
Hence, 
\begin{equation*}
\begin{aligned}
    4\pi \beta \gamma &\int_{k-1}^{k} \frac{\bigl(\lambda- \bigl(\pi k-2\arctan\bigl(\frac{\pi s}{\beta }\bigr)\bigr)^2\bigr)^{\gamma-1}\bigl(\pi k-2\arctan\bigl(\frac{\pi s}{\beta}\bigr)\bigr)}{\beta^2 + \pi^2s^2}\, ds\\
    &\leq
    4\pi^{\gamma}\beta \gamma M^{\gamma-1}\lambda^{\gamma/2}  \int_{k-1}^{k} \frac{1}{\beta^2 + \pi^2s^2}\, ds\, .
  \end{aligned}
  \end{equation*}
Inserting this bound into \eqref{eq: diff of arctan as integral} and \eqref{eq: first bound diff arctan sum} yields
\begin{equation*}
\begin{aligned}
    \sum_{k\geq 1}\Bigl(\lambda- \Bigl(&\pi k-2\arctan\Bigl(\frac{\pi k}{\beta}\Bigr)\Bigr)^2\Bigr)_\limplus^\gamma-\Bigl(\lambda- \Bigl(\pi k-2\arctan\Bigl(\frac{\pi (k-1)}{\beta}\Bigr)\Bigr)^2\Bigr)_\limplus^\gamma\\
    &\leq 
    4\pi^{\gamma}\beta \gamma M^{\gamma-1}\lambda^{\gamma/2} \Biggl( \sum_{1 \leq k \leq \sqrt{\lambda}/\pi-M} \int_{k-1}^{k} \frac{1}{\beta^2 + \pi^2s^2}\, ds \Biggr)
    +2(4\pi)^\gamma M^{\gamma+1}\lambda^{\gamma/2}\\
    &\leq
    4\pi^{\gamma}\beta \gamma M^{\gamma-1}\lambda^{\gamma/2} \int_{0}^{\sqrt{\lambda}/\pi} \frac{1}{\beta^2 + \pi^2s^2}\, ds
    +(4\pi)^\gamma (M+1)M^{\gamma}\lambda^{\gamma/2}\\
    &=
    4\pi^{\gamma-1}\gamma M^{\gamma-1}\lambda^{\gamma/2} \arctan\Bigl(\frac{\sqrt{\lambda}}{\beta}\Bigr)
    +(4\pi)^\gamma (M+1)M^{\gamma}\lambda^{\gamma/2}\, .
\end{aligned}
\end{equation*}
Fixing $M \geq 2$ and using that $\arctan(y)\leq \pi/2$ for all $y$ completes the proof.
\end{proof}

The next lemma shows that the approximation of Riesz means of $-\Delta_{(0, 1)}^\beta$ deduced from Lemma~\ref{lem: arctan bounds lambdak} behaves essentially like the Riesz mean of $-\Delta_{(0, 1)}^{\rm D}$ plus an explicit sum.

\begin{lemma}\label{lem: Riesz arctan sum estimate 1}
    Let $\gamma \in (0, 1]$ and $M\geq 2$. There exists a constant $C_{\gamma, M}>0$ so that, for any $\beta>0 $ and $\lambda >0$, 
    \begin{align*}
        \biggl|\sum_{k\geq 1}\Bigl(\lambda- \Bigl(\pi k-&2\arctan\Bigl(\frac{\pi k}{\beta}\Bigr)\Bigr)^2\Bigr)_\limplus^\gamma-\Tr(-\Delta_{(0, 1)}^{\rm D}-\lambda)_\limminus^\gamma\\
        &- 4\pi \gamma \sum_{1\leq k \leq \sqrt{\lambda}/\pi-M}k(\lambda-\pi^2k^2)^{\gamma-1}\arctan\Bigl(\frac{\pi k}{\beta}\Bigr)\biggr|\leq C_{\gamma, M} \lambda^{\gamma/2}\, .
    \end{align*}
\end{lemma}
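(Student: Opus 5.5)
The plan is to compare, term by term, the sum with $\arctan$ corrections to the Dirichlet Riesz mean $\Tr(-\Delta_{(0,1)}^{\rm D}-\lambda)_\limminus^\gamma=\sum_{k\ge1}(\lambda-\pi^2k^2)_\limplus^\gamma$. We split the range of $k$ exactly as in the proof of Lemma~\ref{lem: unif bound Riesz arctan}. Write $a_k:=2\arctan(\pi k/\beta)\in(0,\pi)$.

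First, we treat small $\lambda$ and the terms near the threshold. For $\lambda\le 4\pi^2M^2$, every quantity in the statement is bounded by a constant depending on $\gamma,M$ times $\lambda^\gamma(\sqrt\lambda+1)$, which is $\lesssim_{\gamma,M}\lambda^{\gamma/2}$.
\begin{itemize}
\item The explicit sum is empty once $\lambda<\pi^2(M+1)^2$.
\item Otherwise each of its terms satisfies $\lambda-\pi^2k^2\ge \pi M\sqrt\lambda$, so it is bounded by $4\pi k\,(\pi M\sqrt\lambda)^{\gamma-1}\pi$.
\end{itemize}
For $\lambda>4\pi^2M^2$, the terms with $\sqrt\lambda/\pi-M<k\le\sqrt\lambda/\pi+1$ in both Riesz sums are each at most $(4\pi M\sqrt\lambda)^\gamma$ by \eqref{eq: bound Riesz summand large lambda}. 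There are at most $M+1$ such $k$, so together they contribute $O_{\gamma,M}(\lambda^{\gamma/2})$. Terms with larger $k$ vanish.

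Second, we take $1\le k\le\sqrt\lambda/\pi-M$ and Taylor expand. Set $f(t)=t^\gamma$, $x_k=\lambda-\pi^2k^2$ and
\[
h_k=(\lambda-(\pi k-a_k)^2)-x_k=2\pi k a_k-a_k^2 .
\]
Then $f(x_k+h_k)-f(x_k)=\gamma x_k^{\gamma-1}h_k+E_k$ with
\[
|E_k|\le \tfrac{\gamma(1-\gamma)}{2}\,\xi^{\gamma-2}h_k^2 .
\]
Here $\xi$ lies between $x_k$ and $x_k+h_k$, both of which are $\ge \pi M\sqrt\lambda/2$ by the lower bound computed in the proof of Lemma~\ref{lem: unif bound Riesz arctan}. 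Moreover $x_k+h_k\ge x_k-\pi^2$, so $\xi\gtrsim x_k$ since $x_k\ge \pi M\sqrt\lambda\ge 2\pi^2 M^2$. The first-order term is exactly $\gamma x_k^{\gamma-1}\cdot 2\pi k a_k=4\pi\gamma k x_k^{\gamma-1}\arctan(\pi k/\beta)$, which is the explicit sum in the statement. The remaining piece $-\gamma x_k^{\gamma-1}a_k^2$ is bounded by $\pi^2\gamma x_k^{\gamma-1}$.

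Third, we sum the errors. Put $j=\lfloor\sqrt\lambda/\pi\rfloor-k$, so that $x_k\gtrsim \sqrt\lambda\,(j+1)$ for $j\ge M-1$.
\begin{itemize}
\item We have $\sum_k x_k^{\gamma-1}\lesssim \sum_{j\ge M-1}(\sqrt\lambda\, j)^{\gamma-1}$. For $\gamma<1$ this is $\lesssim \lambda^{(\gamma-1)/2}(\sqrt\lambda)^{\gamma}=\lambda^{\gamma-1/2}$, and for $\gamma=1$ the term vanishes anyway (also $E_k=0$ when $\gamma=1$). Since $\gamma\le1$ gives $\gamma-1/2\le\gamma/2$, this is fine. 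Alternatively one bounds it directly by comparison with $\int_0^{\sqrt\lambda/\pi}(\lambda-\pi^2s^2)^{\gamma-1}ds\asymp\lambda^{\gamma-1/2}$.
\item Using $h_k^2\lesssim k^2\lesssim\lambda$, the error terms satisfy
\[
\sum_k |E_k|\lesssim \lambda\sum_{j\ge M-1}(\sqrt\lambda\, j)^{\gamma-2}\lesssim \lambda^{\gamma/2}\sum_j j^{\gamma-2}\lesssim_{\gamma,M}\lambda^{\gamma/2},
\]
and the sum over $j$ converges because $\gamma<1$.
\end{itemize}

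The main obstacle is the second-order Taylor error near the top of the spectrum, where $x_k$ is only of order $\sqrt\lambda$ while $h_k$ is of order $\sqrt\lambda$. The cutoff at distance $M$ and the summability of $j^{\gamma-2}$ are exactly what keep this error at $O(\lambda^{\gamma/2})$. We also need care that the lower bound $\xi\gtrsim x_k$ holds uniformly in $\beta$; this is ensured because $a_k\in(0,\pi)$ independently of $\beta$.
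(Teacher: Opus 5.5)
Your proof is correct, and it takes a genuinely different and more direct route than the paper.

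\textbf{The paper's route.} The paper writes $\bigl(\lambda-(\pi k-a_k)^2\bigr)^\gamma$ as a binomial series in $(\pi k-a_k)^2/\lambda$. It then expands each $(1-a_k/(\pi k))^{2j}$ to first order in $a_k/(\pi k)$. The remainder, which also absorbs the $a_k^2$ contribution, is controlled through $|(1-x)^{2j}-1+2jx|\le 2j^2x^2$ and the coefficient bound $\lesssim_\gamma j^{-\gamma-1}$. This leaves a double sum over $j$ and $k$.

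\textbf{Your route.} You Taylor expand $t\mapsto t^\gamma$ to second order at $x_k=\lambda-\pi^2k^2$ with Lagrange remainder. This gives per-$k$ errors $\lesssim_\gamma x_k^{\gamma-1}+\lambda x_k^{\gamma-2}$. The summation is then transparent, via $x_k\gtrsim\sqrt{\lambda}\,(\sqrt{\lambda}/\pi-k)$ and the summability of $j^{\gamma-2}$ for $\gamma<1$.

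\textbf{Why the difference matters.} The paper sums over $k$ first, using $\sum_{k\le N}k^{2j-2}\le \frac{\sqrt{\lambda}}{\pi}N^{2j-2}$. It then asserts
\[
\sum_{j}j^{1-\gamma}(1-\pi M/\sqrt{\lambda})^{2j-2}\le\sum_j j\,2^{2-2j}.
\]
This would need $1-\pi M/\sqrt{\lambda}\le 1/2$, whereas $\lambda\ge 4\pi^2M^2$ gives the opposite. In fact that series grows like $\lambda^{1-\gamma/2}$. The loss comes from discarding the factor $1/(2j-1)$ available in $\sum_{k\le N}k^{2j-2}\le (N+1)^{2j-1}/(2j-1)$. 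Keeping it repairs the series argument and recovers the same $\lambda x_k^{\gamma-2}$-type bound that your approach produces directly.

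\textbf{One small slip.} At $\gamma=1$ the term $-\gamma x_k^{\gamma-1}a_k^2=-a_k^2$ does not vanish; only $E_k$ does. It is still harmless, since there are $\lesssim\sqrt{\lambda}=\lambda^{\gamma/2}$ such terms, each bounded by $\pi^2$. Your integral comparison $\int_0^{\sqrt{\lambda}/\pi}(\lambda-\pi^2s^2)^{\gamma-1}\,ds\asymp\lambda^{\gamma-1/2}$ also covers this case.
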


\begin{proof} 
    As in the previous proof, the proof is divided into two cases depending on $\lambda$ and $M$. 
    
    \medskip
    \noindent\emph{Case 1:} $\lambda \geq 4\pi^2 M^2$.

  For any $\gamma \in \R, x\in [0, 1)$ we have
  \begin{equation*}
    (1-x)^\gamma = 1+\sum_{j\geq 1}\frac{\prod_{i=0}^{j-1}(i-\gamma)}{j!}x^j\, .
  \end{equation*}
  For each $k \in [1, \sqrt{\lambda}/\pi-M]$ we thus have
  {\allowdisplaybreaks
  \begin{align*}
    &\Bigl(\lambda- \Bigl(\pi k-2\arctan\Bigl(\frac{\pi k}{\beta}\Bigr)\Bigr)^2\Bigr)^\gamma\\
    &=\lambda^\gamma\Bigl(1- \frac{1}{\lambda}\Bigl(\pi k-2\arctan\Bigl(\frac{\pi k}{\beta}\Bigr)\Bigr)^2\Bigr)^\gamma\\
    &=
    \lambda^\gamma\biggl(1+\sum_{j=1}^\infty\frac{\prod_{i=0}^{j-1}(i-\gamma)}{j!}\Bigl(\frac{\pi^{2}k^{2}}{\lambda}\Bigr)^j\Bigl(1-\frac{2}{\pi k}\arctan\Bigl(\frac{\pi k}{\beta}\Bigr)\Bigr)^{2j}\biggr)\\
     &=
    \lambda^\gamma\biggl(1+\sum_{j=1}^\infty\frac{\prod_{i=0}^{j-1}(i-\gamma)}{j!}\Bigl(\frac{\pi^{2}k^{2}}{\lambda}\Bigr)^j\biggr) -\frac{4\lambda^\gamma}{\pi k}\sum_{j=1}^\infty\frac{\prod_{i=0}^{j-1}(i-\gamma)}{(j-1)!}\Bigl(\frac{\pi^{2}k^{2}}{\lambda}\Bigr)^j\arctan\Bigl(\frac{\pi k}{\beta}\Bigr)\\
    &\quad +
   \lambda^\gamma \sum_{j=1}^\infty\frac{\prod_{i=0}^{j-1}(i-\gamma)}{j!}\Bigl(\frac{\pi^{2}k^{2}}{\lambda}\Bigr)^j\biggl(\Bigl(1-\frac{2}{\pi k}\arctan\Bigl(\frac{\pi k}{\beta}\Bigr)\Bigr)^{2j}-1+\frac{4j}{\pi k}\arctan\Bigl(\frac{\pi k}{\beta}\Bigr)\biggr)\\
    &=
    (\lambda-\pi^2k^2)^\gamma +4\pi \gamma k (\lambda-\pi^2k^2)^{\gamma-1}\arctan\Bigl(\frac{\pi k}{\beta}\Bigr)\\
    &\quad +
   \lambda^\gamma\sum_{j=1}^\infty\frac{\prod_{i=0}^{j-1}(i-\gamma)}{j!}\lambda^{-j}\pi^{2j}k^{2j}\biggl(\Bigl(1-\frac{2}{\pi k}\arctan\Bigl(\frac{\pi k}{\beta}\Bigr)\Bigr)^{2j}-1+\frac{4j}{\pi k}\arctan\Bigl(\frac{\pi k}{\beta}\Bigr)\biggr)\, .
  \end{align*}}
  Therefore, we obtain
  \begin{equation}\label{eq: lem26-divide}
  \begin{aligned}
       \biggl|&\sum_{k\geq 1}\Bigl(\lambda- \Bigl(\pi k -2\arctan\Bigl(\frac{ \pi k}{\beta}\Bigr)\Bigr)^2\Bigr)^\gamma_\limplus - \sum_{k\geq 1}(\lambda-\pi^2k^2)^\gamma_\limplus\\
    &\quad - 4\pi \gamma\sum_{1\leq k \leq \sqrt{\lambda}/\pi-M}k (\lambda-\pi^2k^2)_\limplus^{\gamma-1}\arctan\Bigl(\frac{\pi k}{\beta}\Bigr)\biggr|\\
    &\leq \sum_{k>\sqrt{\lambda}/\pi -M}\Bigl(\lambda- \Bigl(\pi k-2\arctan\Bigl(\frac{\pi k}{\beta}\Bigr)\Bigr)^2\Bigr)_\limplus^\gamma + \sum_{k>\sqrt{\lambda}/\pi-M}(\lambda-\pi^2k^2)_\limplus^\gamma\\
    &+
   \biggl|\lambda^\gamma\!\!\!\!\sum_{1\leq k\leq \sqrt{\lambda}/\pi-M}\sum_{j=1}^\infty\frac{\prod_{i=0}^{j-1}(i-\gamma)}{j!}\Bigl(\frac{\pi^2k^2}{\lambda}\Bigr)^j\biggl(\Bigl(1-\frac{2\arctan(\frac{\pi k}\beta)}{\pi k}\Bigr)^{2j}\!-1+\frac{4j\arctan(\frac{\pi k}\beta)}{\pi k}\biggr)\biggr|\, .
  \end{aligned}
  \end{equation}
  Since $\sum_{k\geq 1}(\lambda-\pi^2k^2)_\limplus^\gamma = \Tr(-\Delta_{(0, 1)}^{\rm D}-\lambda)_\limminus^\gamma$, the proof is completed if we can suitably bound the three terms on the right-hand side of \eqref{eq: lem26-divide}.

  For the first sum, recall from the proof of Lemma~\ref{lem: unif bound Riesz arctan} that any $k$ for which $\lambda-(\pi k -2\arctan(\pi k/\beta))^2>0$ satisfies $k \leq \sqrt{\lambda}/\pi +1$ and by \eqref{eq: bound Riesz summand large lambda}, it holds 
    \begin{equation*}
        \lambda - (\pi k-2\arctan(y))^2 \leq 4\pi M \sqrt{\lambda}\, , 
    \end{equation*}
    for all $y \geq 0$ and $\sqrt{\lambda}/\pi-M \leq k \leq \sqrt{\lambda}/\pi+1$. Consequently, it follows that
    \begin{align*}
        \sum_{k>\sqrt{\lambda}/\pi -M}\Bigl(\lambda- \Bigl(\pi k-2\arctan\Bigl(\frac{\pi k}{\beta}\Bigr)\Bigr)^2\Bigr)_\limplus^\gamma
        \leq (4\pi)^\gamma (M+1)M^{\gamma} \lambda^{\gamma/2}\, .
    \end{align*}

  The second sum on the right-hand side we can bound by
  \begin{align*}
    \sum_{k>\sqrt{\lambda}/\pi -M} (\lambda-\pi^2k^2)^\gamma_\limplus 
    &\leq (M+1)(\lambda-\pi^2(\sqrt{\lambda}/\pi-M)^2)^\gamma\\[-10pt]
    &= (M+1)(2\pi M \sqrt{\lambda} - \pi^2 M^2)^\gamma \\
    &\leq (M+1)(2\pi M)^\gamma \lambda^{\gamma/2}  , 
  \end{align*}
  where in the last step we used the assumption that $\lambda \geq 4\pi^2 M^2$. 
  
  It remains to bound the third sum on the right-hand side of \eqref{eq: lem26-divide}. By Taylor's theorem it holds that
    \begin{equation*}
        |(1-x)^{2j}-1+2jx| \leq 2j^2 x^2
    \end{equation*}
    for all $x \in [0, 1]$ and $j \in \mathbb{N}$. When combined with $\Bigl|\frac{\prod_{i=0}^{j-1}(i-\gamma)}{j!}\Bigr| = \Bigl|\frac{\gamma\Gamma(j-\gamma)}{\Gamma(j+1)\Gamma(1-\gamma)}\Bigr| \lesssim_\gamma j^{-\gamma-1}$, one finds
    {\allowdisplaybreaks
    \begin{align*}
        \biggl|\lambda^\gamma\sum_{1\leq k \leq \sqrt{\lambda}/\pi-M}&\sum_{j=1}^\infty\frac{\prod_{i=0}^{j-1}(i-\gamma)}{j!}\Bigl(\frac{\pi^2k^2}{\lambda}\Bigr)^j\biggl(\Bigl(1-\frac{2\arctan(\frac{\pi k}\beta)}{\pi k}\Bigr)^{2j}-1+\frac{4j\arctan(\frac{\pi k}\beta)}{\pi k}\biggr)\biggr|\\
        &\lesssim_\gamma
        \lambda^\gamma\sum_{1\leq k\leq \sqrt{\lambda}/\pi-M}\sum_{j=1}^\infty \frac{j^{1-\gamma}}{k^2}\Bigl(\frac{\pi^2k^2}{\lambda}\Bigr)^j\\
        &\leq
        \lambda^\gamma\sum_{j=1}^\infty {j^{1-\gamma}}\Bigl(\frac{\pi^{2}}{\lambda}\Bigr)^j\sum_{1\leq k\leq \sqrt{\lambda}/\pi-M}k^{2j-2}\\
        &\leq 
        \lambda^\gamma\sum_{j=1}^\infty j^{1-\gamma}\Bigl(\frac{\pi^{2}}{\lambda}\Bigr)^j\frac{\sqrt{\lambda}}{\pi}\Bigl(\frac{\sqrt{\lambda}}{\pi}-M\Bigr)^{2j-2}\\
        &=
        \pi \lambda^{\gamma-1/2}\sum_{j=1}^\infty j^{1-\gamma}\Bigl(1-\frac{\pi M}{\sqrt{\lambda}}\Bigr)^{2j-2}\, .
    \end{align*}
    }
    Since by assumption $\lambda \geq 4\pi^2M^2$ and $\gamma>0$, we have
    \begin{equation*}
        \sum_{j=1}^\infty j^{1-\gamma}\Bigl(1-\frac{\pi M}{\sqrt{\lambda}}\Bigr)^{2j-2}\leq \sum_{j=1}^\infty j 2^{2-2j} = \frac{1}{(1-1/4)^2} < \infty \, .
    \end{equation*}
    This proves the desired bound under the assumption that $\lambda \geq 4\pi^2 M^2$.

    \medskip
    \noindent\emph{Case 2:} $\lambda < 4\pi^2 M^2$. In this regime we simply show that in absolute value each of the three terms in Lemma \ref{lem: Riesz arctan sum estimate 1} are $\lesssim_{\gamma,M} \lambda^\gamma$, i.e.~we show
    \begin{align} \label{eq: lem2-6case2uppbnd1}
        \biggl|\sum_{k\geq 1}\Bigl(\lambda- \Bigl(\pi k-2\arctan\Bigl(\frac{\pi k}{\beta}\Bigr)\Bigr)^2\Bigr)_\limplus^\gamma \biggr| \lesssim_{\gamma,M} \lambda^\gamma \, ,  \qquad         \bigl|\Tr(-\Delta_{(0, 1)}^{\rm D}-\lambda)_\limminus^\gamma \bigr|  \lesssim_{\gamma,M} \lambda^\gamma \, 
    \end{align}
    and
    \begin{align}    \label{eq: lem2-6case2uppbnd2}
        \biggl|4\pi \gamma \sum_{1\leq k \leq \sqrt{\lambda}/\pi-M}k(\lambda-\pi^2k^2)^{\gamma-1}\arctan\Bigl(\frac{\pi k}{\beta}\Bigr) \biggr| &\lesssim_{\gamma,M} \lambda^\gamma \, .
    \end{align}
    Since $\lambda^\gamma\lesssim_{\gamma, M}\lambda^{\gamma/2}$ for all $0\leq \lambda \leq 4\pi^2 M^2$, the bound claimed in the lemma then follows by the triangle inequality.
    
    For the terms
    \begin{equation*}
        \sum_{k\geq 1}\Bigl(\lambda- \Bigl(\pi k-2\arctan\Bigl(\frac{\pi k}\beta\Bigr)\Bigr)^2\Bigr)^\gamma_\limplus \quad \mbox{and}\quad \Tr(-\Delta_{(0, 1)}^{\rm D}-\lambda)_\limminus^\gamma = \sum_{k\geq 1}(\lambda-\pi^2k^2)_\limplus^\gamma
    \end{equation*}
    we just note that each term in the respective sum is non-negative and bounded from above by $\lambda^\gamma$, and that there are no more than $\sqrt{\lambda}/\pi+1 \leq 2M +1\leq 3M$ non-zero terms in each sum. This shows \eqref{eq: lem2-6case2uppbnd1}.

    It remains to verify \eqref{eq: lem2-6case2uppbnd2}. Note that the sum is trivially zero if $\lambda \leq \pi^2 M^2$, thus we may restrict our attention to $\lambda \in (\pi^2 M^2, 4\pi^2M^2)$.
    Again we use that there are no more than $3M$ terms in the sum combined with a suitable bound for each term.

    Since $\gamma \in (0, 1]$, $\lambda\in (\pi^2 M^2, 4\pi^2M^2)$, and $k \in [1, \sqrt{\lambda}/\pi-M]$, we can bound
    \begin{align*}
        k(\lambda-\pi^2k^2)^{\gamma-1}\arctan\Bigl(\frac{\pi k}{\beta}\Bigr) 
        &\leq 
            \frac{\sqrt{\lambda}}{\pi}(\lambda- \pi^2(\sqrt{\lambda}/\pi-M)^2)^{\gamma-1} \cdot \frac{\pi}{2}\\
        &=\frac{\sqrt{\lambda}}{2} (2\pi M\sqrt{\lambda}- \pi^2 M^2)^{\gamma-1}\\
        &\leq
            \pi M  (3 \pi^2 M^2)^{\gamma-1}\\
        &\lesssim_\gamma  M^{2\gamma-1}\, .
    \end{align*}
    In particular, we conclude that
    \begin{align*}
        4\pi \gamma \sum_{1\leq k \leq \sqrt{\lambda}/\pi-M}k(\lambda-\pi^2k^2)^{\gamma-1}\arctan\Bigl(\frac{\pi k}{\beta}\Bigr) 
        \lesssim_\gamma M^{2\gamma} \lesssim_\gamma \lambda^{\gamma/2}\, .
    \end{align*}
    This completes the proof of Lemma~\ref{lem: Riesz arctan sum estimate 1}.
\end{proof}

The explicit sum from the previous lemma can be replaced by a suitable integral expression up to an acceptable error. This is shown in the next lemma.

\begin{lemma} \label{lem: arctan sum to integral estimate}
    Let $\gamma \in (0, 1]$ and $M\geq 2$. There exists a constant $C_{\gamma, M}>0$ so that, for any $\beta>0$ and $\lambda >0$, 
    \begin{equation*}
        \Biggl|\sum_{1 \leq k \leq \sqrt{\lambda}/\pi-M} \!\!\!\!\! k(\lambda-\pi^2k^2)^{\gamma-1}\arctan\Bigl(\frac{\pi k}{\beta}\Bigr) -  \frac{\lambda^\gamma}{\pi^2}\int_0^1 \!\! x(1-x^2)^{\gamma-1}\arctan\Bigl(\frac{x\sqrt{\lambda}}{\beta}\Bigr)\, dx\Biggr|\leq  C_{\gamma, M} \lambda^{\gamma/2}\, .
    \end{equation*}
\end{lemma}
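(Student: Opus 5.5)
The plan is to compare the sum to the integral by a Riemann-sum argument, handling the singularity of $(1-x^2)^{\gamma-1}$ near $x=1$ separately. We substitute $x=\pi k/\sqrt\lambda$ and set $h=\pi/\sqrt\lambda$. Then
\begin{equation*}
k(\lambda-\pi^2k^2)^{\gamma-1}\arctan\Bigl(\frac{\pi k}{\beta}\Bigr) = \frac{\lambda^{\gamma}}{\pi^2}\, h\, f(x_k), \qquad f(x):=x(1-x^2)^{\gamma-1}\arctan\Bigl(\frac{x\sqrt\lambda}{\beta}\Bigr),
\end{equation*}
with $x_k=kh$. The sum therefore becomes $\frac{\lambda^\gamma}{\pi^2}\sum_{1\le k\le 1/h-M} h f(kh)$, a Riemann sum for $\frac{\lambda^\gamma}{\pi^2}\int_0^{1-Mh} f$. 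The target is an error of order $h^{\gamma}$ in the rescaled quantity, since $\lambda^\gamma h^\gamma\sim\lambda^{\gamma/2}$. As in the previous lemmas, when $\lambda<4\pi^2M^2$ we only need to bound each term by $\lesssim_{\gamma,M}\lambda^\gamma\lesssim\lambda^{\gamma/2}$. For the sum this follows exactly as in Case 2 of Lemma~\ref{lem: Riesz arctan sum estimate 1}. For the integral it follows from $\arctan\le\pi/2$ and $\int_0^1x(1-x^2)^{\gamma-1}dx=1/(2\gamma)$. So we may assume $\lambda\ge4\pi^2M^2$, i.e.\ $Mh\le1/2$.

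\emph{Tail of the integral.} The piece $\frac{\lambda^\gamma}{\pi^2}\int_{1-Mh}^1 f$ is bounded by $\lambda^\gamma\int_{1-Mh}^1(1-x)^{\gamma-1}dx\lesssim_\gamma \lambda^\gamma (Mh)^\gamma\lesssim_{\gamma,M}\lambda^{\gamma/2}$. The integral is also cut off at its other end: the first grid point is $h$, and $\int_0^h f\le h$ is harmless. It therefore remains to compare $\sum_k hf(kh)$ with $\int f$ over $[h,\lfloor 1/h-M\rfloor h]$, up to another harmless boundary interval.

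\emph{Main step.} The key point is that this comparison must be uniform in $\beta$, because $\arctan(x\sqrt\lambda/\beta)$ can jump steeply when $\beta\ll\sqrt\lambda$. The fix is to use monotonicity rather than derivative bounds. Write $f=g\cdot a$ with $g(x)=x(1-x^2)^{\gamma-1}$ and $a(x)=\arctan(x\sqrt\lambda/\beta)$. Since $\gamma\le1$, $g$ is nonnegative and increasing on $[0,1)$, and $a$ is increasing with values in $[0,\pi/2]$. Hence $f$ is increasing on $[0,1)$. For an increasing function, $\int_{(k-1)h}^{kh}f\le hf(kh)\le\int_{kh}^{(k+1)h}f$. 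Summing over $k$ telescopes, so the difference between the Riemann sum and the integral is bounded by the contribution of one cell at the endpoint, namely $h f(x_{\max}+h)$ with $x_{\max}+h\le 1-(M-1)h$. That contribution is at most $h\,(1-x)^{\gamma-1}\pi/2\lesssim h\,((M-1)h)^{\gamma-1}\lesssim_{\gamma,M}h^\gamma$. Multiplying by $\lambda^\gamma/\pi^2$ gives $\lesssim_{\gamma,M}\lambda^{\gamma/2}$, independently of $\beta$.

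The step needing most care is exactly this uniformity in $\beta$. Monotonicity of $f$, which holds precisely because $\gamma\le1$ and $\arctan$ is increasing, removes the need for any derivative estimate. The remaining endpoint bookkeeping near $x=1$, where the cutoff $M\ge2$ keeps us a distance $\gtrsim h$ from the singularity, is routine.
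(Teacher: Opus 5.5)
Your proposal is correct and follows essentially the paper's proof. You rescale to a Riemann sum with step $h=\pi/\sqrt{\lambda}$, use that $f$ is nonnegative and nondecreasing because $\gamma\le 1$ (which gives uniformity in $\beta$), sandwich the sum between integrals, and bound the leftover piece near $x=1$ by $\int (1-x)^{\gamma-1}\,dx\lesssim h^{\gamma}$. The paper's bookkeeping is a bit more compact: it writes $\int_0^{1-(M+1)h} f \le h\sum_k f(kh) \le \int_0^1 f$ directly, so the whole error is the single tail $\int_{1-(M+1)h}^1 f$, and it handles small $\lambda$ simply by noting that the sum is empty for $\lambda<\pi^2(M+1)^2$.
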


\begin{proof}
For $\lambda < \pi^2 (M+1)^2$, the sum on the left-hand side is empty, hence the claimed bound follows by estimating 
\begin{align*} 
    \Biggl|  \frac{\lambda^\gamma}{\pi^2}\int_0^1 x(1-x^2)^{\gamma-1}\arctan\Bigl(\frac{x\sqrt{\lambda}}{\beta}\Bigr)\, dx\Biggr| 
        \lesssim_\gamma \lambda^\gamma\lesssim_{\gamma, M}   \lambda^{\gamma/2} \, .
\end{align*}
    
In the regime of $\lambda \geq \pi^2 (M+1)^2$, the sum can be rewritten as
\begin{align*}
    \sum_{1\leq k \leq \sqrt{\lambda}/\pi-M}&k (\lambda-\pi^2k^2)^{\gamma-1}\arctan\Bigl(\frac{\pi k}{\beta}\Bigr) \\
    &= \frac{\lambda^{\gamma-1/2}}{\pi} \sum_{1\leq k \leq \sqrt{\lambda}/\pi-M} \frac{\pi k}{ \sqrt{\lambda}} \biggl(1-\biggl( \frac{\pi k}{ \sqrt{\lambda}}\biggr)^2 \biggr)_\limplus^{\gamma-1}\arctan\Bigl(\frac{\pi k}{\beta}\Bigr) \\
    &= \frac{ \lambda^{\gamma}}{\pi^2} \cdot \frac{\pi }{ \sqrt{\lambda}} \sum_{1\leq k \leq \sqrt{\lambda}/\pi-M} f_{\lambda, \beta, \gamma}\biggl( \frac{\pi k}{ \sqrt{\lambda}}\biggr)
\end{align*}
where $f_{\lambda, \beta, \gamma}(x)=x (1-x^2)^{\gamma-1}_+ \arctan(x\sqrt{\lambda}/ \beta)$. The remaining sum is a quadrature of the integral 
\begin{align*}
    \int_0^1 f_{\lambda, \beta, \gamma}(x) \, dx\, , 
\end{align*}
up to $M$ missing terms on the right endpoint of the interval.
We claim that the quadrature expression converges to the integral as $\lambda \to \infty$ and that 
\begin{equation}
     \biggl|\frac{\pi }{ \sqrt{\lambda}} \sum_{1\leq k \leq \sqrt{\lambda}/\pi-M} f_{\lambda, \beta, \gamma}\biggl( \frac{\pi k}{ \sqrt{\lambda}}\biggr) -  \int_0^1 f_{\lambda, \beta, \gamma}(x) \, dx\biggr|\lesssim_{\gamma, M} \lambda^{-\gamma/2}
     \label{eq:quadrature_error_term}
\end{equation}
for $\lambda \geq \pi^2 (M+1)^2$. To complete the proof of the lemma, it remains to prove \eqref{eq:quadrature_error_term}.

For $\gamma \in (0, 1]$ the functions $x \mapsto x(1-x^2)^{\gamma-1}$ and $x\mapsto x \arctan(x\sqrt{\lambda}/\beta)$ are both non-negative and non-decreasing. Thus, as a product of non-negative and non-decreasing functions, $f_{\lambda, \beta, \gamma}$ is non-negative and non-decreasing. Since $M\geq 2$ and $f_{\lambda, \beta, \gamma}(0)=0$ it follows that
\begin{equation*}
    \int_0^{1-\pi(M+1)/\sqrt{\lambda}} f_{\lambda, \beta, \gamma}(x) \, dx\leq \frac{\pi }{ \sqrt{\lambda}} \sum_{1\leq k \leq \sqrt{\lambda}/\pi-M} f_{\lambda, \beta, \gamma}\biggl( \frac{\pi k}{ \sqrt{\lambda}}\biggr) \leq   \int_0^1 f_{\lambda, \beta, \gamma}(x) \, dx
\end{equation*}
provided that $\lambda \geq \pi^2(M+1)^2$. Since $0\leq f_{\lambda, \beta, \gamma}(x)\leq \frac{\pi}{2} (1-x^2)^{\gamma-1}$ for all $\beta>0, \lambda>0, \gamma>0, $ and $x\in [0, 1]$ it therefore holds that
\begin{align*}
    \biggl|\frac{\pi }{ \sqrt{\lambda}} \sum_{1\leq k \leq \sqrt{\lambda}/\pi-M} f_{\lambda, \beta, \gamma}\biggl( \frac{\pi k}{ \sqrt{\lambda}}\biggr)-\int_0^{1} f_{\lambda, \beta, \gamma}(x) \, dx\biggr|
    &\leq \frac{\pi}{2}\int_{1-\pi(M+1)/\sqrt{\lambda}}^1  (1-x^2)^{\gamma-1}\, dx\\
    &\leq \pi 2^{\gamma-2}\int_{1-\pi(M+1)/\sqrt{\lambda}}^1  (1-x)^{\gamma-1}\, dx\\
    &= \frac{\pi^{\gamma+1}2^{\gamma-2}}{\gamma}(M+1)^\gamma \lambda^{-\gamma/2}\, .
\end{align*}
This justifies \eqref{eq:quadrature_error_term} and hence completes the proof of Lemma~\ref{lem: arctan sum to integral estimate}.
\end{proof}

The next lemma shows that the integral that appears in Lemma~\ref{lem: arctan sum to integral estimate} can be rewritten to match that in the definition of $L_{\gamma, 0}(\beta)$. We also take the opportunity to prove the properties of the function $\beta \mapsto L_{\gamma, d}(\beta)$ mentioned in the introduction.
\begin{lemma}\label{lem: properties of L}
    Let $d\geq 0, \gamma \geq 0$. The function $(0, \infty) \ni\beta \mapsto L_{\gamma, d}(\beta)$ is smooth, bounded, decreasing, and satisfies
    \begin{equation*}
        \lim_{\beta \to 0^\limplus} L_{\gamma, d}(\beta)= L_{\gamma, d}^{\rm sc} \quad \mbox{and}\quad \lim_{\beta \to \infty}L_{\gamma, d}(\beta) =-L_{\gamma, d}^{\rm sc}\, .
    \end{equation*}
    Moreover, for any $\beta>0$, 
    \begin{equation*}
        L_{\gamma, d}(\beta) = L_{\gamma, d}^{\rm sc}\biggl[\frac{8}{\pi}\Bigl(\gamma+\frac{d}{2}\Bigr)\int_0^1 x(1-x^2)^{\gamma+d/2-1}\arctan\Bigl(\frac{x}{\beta}\Bigr)\, dx - 1\biggr]\, .
    \end{equation*}
\end{lemma}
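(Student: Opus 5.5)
Write $\alpha := \gamma + d/2 \geq 0$ and $I(\beta) := \int_0^1 (1-s^2)^{\alpha}\frac{\beta}{\beta^2+s^2}\, ds$, so that $L_{\gamma, d}(\beta) = L_{\gamma, d}^{\rm sc}\bigl(\tfrac{4}{\pi}I(\beta)-1\bigr)$. Every claimed property will be read off from this formula.

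\emph{The integral identity.} I would start by writing $\frac{\beta}{\beta^2+s^2} = \frac{d}{ds}\arctan(s/\beta)$ and integrating by parts. The boundary terms vanish: at $s=0$ because $\arctan(0)=0$, and at $s=1$ because $(1-s^2)^\alpha = 0$ when $\alpha>0$. This gives
\begin{equation*}
  I(\beta) = \int_0^1 2\alpha s(1-s^2)^{\alpha-1}\arctan(s/\beta)\, ds\, .
\end{equation*}
Multiplying by $4/\pi$ yields exactly the stated formula with the prefactor $\frac{8}{\pi}(\gamma+d/2)$. The one place needing care is $\alpha = 0$, which forces $\gamma=d=0$. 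There the boundary term at $s=1$ is $\arctan(1/\beta)$ rather than zero, while the right-hand integral is $0\cdot\int_0^1 s(1-s^2)^{-1}\arctan(s/\beta)\,ds$, which is divergent times zero. So the identity has to be read with the convention that $\alpha\int_0^1 s(1-s^2)^{\alpha-1}f\,ds \to \tfrac{1}{2}f(1)$ as $\alpha\to 0^+$. I would either remark this, or state the identity for $\alpha>0$ and treat $\alpha=0$ directly, where $I(\beta) = \arctan(1/\beta)$ explicitly. This degenerate case is the only real obstacle I expect.

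\emph{Smoothness, boundedness, monotonicity.} For $\beta$ in a compact subset of $(0,\infty)$, every $\beta$-derivative of $\beta/(\beta^2+s^2)$ is bounded uniformly in $s\in[0,1]$. Differentiating under the integral sign therefore gives smoothness. For boundedness, use $\int_0^\infty \frac{\beta}{\beta^2+s^2}\,ds = \pi/2$, so $0 < I(\beta) < \pi/2$ and hence $|L_{\gamma, d}(\beta)| < L_{\gamma, d}^{\rm sc}$. For monotonicity I would use the arctan form, or $I=\arctan(1/\beta)$ when $\alpha=0$: $\arctan(s/\beta)$ is strictly decreasing in $\beta$ for $s>0$ and the weight $s(1-s^2)^{\alpha-1}$ is positive, so $I$ is strictly decreasing.

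\emph{Limits.} As $\beta\to\infty$, $\arctan(s/\beta)\to 0$ and is bounded by $\pi/2$, so dominated convergence gives $I\to0$ and $L_{\gamma, d}(\beta)\to -L_{\gamma, d}^{\rm sc}$. As $\beta\to0^+$, $\arctan(s/\beta)\to\pi/2$ pointwise on $(0,1]$, again dominated, so
\begin{equation*}
  I(\beta) \to \frac{\pi}{2}\cdot 2\alpha\int_0^1 s(1-s^2)^{\alpha-1}\,ds = \frac{\pi}{2}\, ,
\end{equation*}
because the last integral equals $1/(2\alpha)$. This gives $L_{\gamma, d}(\beta)\to L_{\gamma, d}^{\rm sc}$. (Equivalently, substituting $s=\beta t$ shows that $\beta/(\beta^2+s^2)$ concentrates as $\tfrac{\pi}{2}\delta_0$.) In the case $\alpha=0$ both limits follow directly from $I(\beta)=\arctan(1/\beta)$.
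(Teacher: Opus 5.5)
Your proof is correct and follows the paper's route. The paper obtains the identity by writing $\arctan(x/\beta)=\int_0^x \frac{\beta}{\beta^2+s^2}\,ds$ and applying Fubini, which is the same computation as your integration by parts; it then reads off smoothness, monotonicity and the limits from the two representations, using dominated convergence and $4\alpha\int_0^1 x(1-x^2)^{\alpha-1}\,dx=2$. Your remark on the degenerate case $\gamma=d=0$ is a valid point that the paper's proof passes over: there the identity is formally $0\cdot\infty$, and one should use $I(\beta)=\arctan(1/\beta)$ directly.
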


\begin{proof}
    We begin by proving the alternative integral representation. To do this we must show that
    \begin{equation*}
        \frac{8}{\pi}\Bigl(\gamma+\frac{d}{2}\Bigr)\int_0^1 x(1-x^2)^{\gamma+d/2-1}\arctan\Bigl(\frac{x}{\beta}\Bigr)\, dx = \frac{4}{\pi}\int_0^1(1-s^2)^{\gamma+d/2}\frac{\beta}{\beta^2+s^2}\, ds\, .
    \end{equation*}
    By writing $\arctan$ as an integral and using Fubini's theorem we have
    \begin{align*}
        \int_0^1 x(1-x^2)^{\gamma+d/2-1}\arctan\Bigl(\frac{x}{\beta}\Bigr)\, dx 
        &= \int_0^1 x(1-x^2)^{\gamma+d/2-1}\biggl(\int_0^{x} \frac{\beta}{\beta^2+s^2}\, ds\biggr)\, dx\\
        &= \int_0^1\biggl(\int_s^1 x(1-x^2)^{\gamma+d/2-1}\, dx\biggr) \frac{\beta}{\beta^2+s^2}\, ds\\
        &= \frac{1}{2\gamma+d}\int_0^1 (1-s^2)^{\gamma+d/2} \frac{\beta}{\beta^2+s^2}\, ds\, .
    \end{align*}
    This proves the desired identity.

    The claimed boundedness, smoothness, and monotonicity follow from the proved representation of $L_{\gamma, d}(\beta)$ and the corresponding properties of $\arctan$. The claimed limiting properties follow from the dominated convergence theorem, $\lim_{\beta \to \infty}\arctan(x/\beta)=0$, $\lim_{\beta \to 0^\limplus}\arctan(x/\beta)=\pi/2$ for any $x\in (0, 1)$, and the fact that
    \begin{equation*}
        4\Bigl(\gamma+\frac{d}{2}\Bigr)\int_0^1 x(1-x^2)^{\gamma+d/2-1}\, dx = 2\, .
    \end{equation*}
    This completes the proof of Lemma~\ref{lem: properties of L}.
\end{proof}

The final ingredient in our proof of Theorem~\ref{thm: two-term Weyl 1D} is a two-term asymptotic expansion for Riesz means of the Dirichlet Laplacian.

\begin{lemma} \label{lem: Dirichlet Riesz mean asymptotics}
    Let $\gamma \in (0, 1]$. There exists a constant $C_\gamma>0$ such that, for all $\lambda >0$, 
    \begin{equation*}
        \Bigl|\Tr(-\Delta_{(0, 1)}^{\rm D}-\lambda)_\limminus^\gamma-L_{\gamma, 1}^{\rm sc}\lambda^{\gamma+1/2}+\frac{1}{2}\lambda^\gamma\Bigr| \leq C_\gamma \lambda^{\gamma/2}\, .
    \end{equation*}
\end{lemma}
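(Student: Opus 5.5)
We need $\sum_{k\ge1}(\lambda-\pi^2k^2)_+^\gamma = L^{\rm sc}_{\gamma,1}\lambda^{\gamma+1/2}-\tfrac12\lambda^\gamma+O(\lambda^{\gamma/2})$ for $\gamma\in(0,1]$. The plan is to write $f(t)=(\lambda-\pi^2t^2)_+^\gamma$, which is even, non-increasing on $[0,\infty)$, and supported on $[0,\sqrt\lambda/\pi]$. First we check that $\int_0^\infty f(t)\,dt = \frac{\sqrt\lambda}{\pi}\lambda^\gamma\int_0^1(1-x^2)^\gamma\,dx = L^{\rm sc}_{\gamma,1}\lambda^{\gamma+1/2}$; this is a routine Beta-function identity, $\int_0^1(1-x^2)^\gamma dx=\frac{\sqrt\pi\,\Gamma(1+\gamma)}{2\Gamma(3/2+\gamma)}$. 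Since $f(0)=\lambda^\gamma$, the claim is equivalent to
\begin{equation*}
\Bigl|\sum_{k\ge 1} f(k) - \int_0^\infty f(t)\,dt + \tfrac12 f(0)\Bigr|\lesssim_\gamma \lambda^{\gamma/2},
\end{equation*}
i.e.\ a trapezoidal-rule error estimate for $f$ on $[0,\infty)$.

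For small $\lambda$, say $\lambda\le 4\pi^2$, every term is $\lesssim\lambda^\gamma(1+\sqrt\lambda)\lesssim_\gamma \lambda^{\gamma/2}$, as in the earlier lemmas. For $\lambda$ large, we split at $T=\sqrt\lambda/\pi-1$. On $[0,T]$ the function $f$ is smooth, and we use the trapezoidal rule with Euler--Maclaurin remainder (or simply the concavity structure). On each interval $[k-1,k]$ the error is $\tfrac12(f(k-1)+f(k))-\int_{k-1}^k f$, which is bounded by $\tfrac1{8}\sup_{[k-1,k]}|f''|$-type terms. Better, we write the total error as $\int_0^{N}(\{t\}-\tfrac12)f'(t)\,dt$ plus boundary terms, and integrate by parts once more to get $\tfrac12\int B_2(\{t\})f''(t)\,dt$ plus endpoint terms $\propto f'(0)=0$ and $f'(N)$. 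Here $f'(t)=-2\pi^2\gamma t(\lambda-\pi^2t^2)^{\gamma-1}$. Near the edge $t\approx \sqrt\lambda/\pi$, with $s=\sqrt\lambda/\pi - t$, we have $\lambda-\pi^2t^2\approx 2\pi\sqrt\lambda\, s$, so $|f'(t)|\approx \sqrt\lambda\,(\sqrt\lambda s)^{\gamma-1}$ and $|f''|\lesssim \lambda(\sqrt\lambda s)^{\gamma-2}+(\sqrt\lambda s)^{\gamma-1}$. Then $\int_{s\ge1}|f''|\lesssim \lambda^{\gamma/2}+\lambda^{\gamma/2}\cdot(\text{at most }\log)$. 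The second piece needs care: $(\sqrt\lambda s)^{\gamma-1}$ integrated over $s\in[1,\sqrt\lambda/\pi]$ gives $\lambda^{(\gamma-1)/2}\lambda^{\gamma/2}$ for $\gamma<1$, which is fine, while for $\gamma=1$ we have $f''$ constant, so that piece is $O(1)$. The endpoint term $|f'(N)|\lesssim \lambda^{\gamma/2}$ at $s\sim1$.

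On the last stretch $[T,\sqrt\lambda/\pi]$, of length $\le 1$ plus one, we bound crudely. At most two lattice points lie there, each with $f\le (2\pi\sqrt\lambda\cdot 2)^\gamma\lesssim\lambda^{\gamma/2}$, and the integral over the stretch is likewise $\lesssim\lambda^{\gamma/2}$, mirroring the estimate \eqref{eq: bound Riesz summand large lambda}.

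The main obstacle is the edge region, where $f$ is only Hölder of order $\gamma$ and Euler--Maclaurin degenerates. The plan is to cut at distance $\sim1$ from the edge, so that the crude bound gives exactly $\lambda^{\gamma/2}$, and to verify that $\int |f''|$ on the bulk yields no worse than $\lambda^{\gamma/2}$. If the second-order integration by parts becomes messy, a fallback is available: since $f$ is monotone on $[0,\infty)$, pairing $\sum_{k\ge1}f(k)$ against $\int$ gives a first-order error of $f(0)$. To refine this to $\tfrac12 f(0)+O(\lambda^{\gamma/2})$, we use that $f$ is concave on $[0,t_*]$ where $f''$ changes sign, with $t_*$ at distance $\gtrsim\sqrt\lambda$ from the edge; there the trapezoidal errors have one sign and telescope to $\lesssim |f'(t_*)|\lesssim\lambda^{\gamma/2}$. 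The remaining convex part is handled the same way.
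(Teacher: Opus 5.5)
Your argument is correct, but it takes a different route from the paper. The paper never analyzes the summand $f(t)=(\lambda-\pi^2t^2)_\limplus^\gamma$ for $\gamma\in(0,1)$. It records two endpoint facts. The first is $|\Tr(-\Delta_{(0,1)}^{\rm D}-\lambda)_\limminus^0-\sqrt{\lambda}/\pi|\le 1$, which is immediate from $\lambda_k=\pi^2k^2$. The second is an exact closed-form evaluation of the $\gamma=1$ sum in terms of the fractional part of $\sqrt{\lambda}/\pi$. It then obtains all intermediate orders by the interpolation argument Frank and Larson use for their two-term asymptotics on Lipschitz domains.

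That route is abstract and reusable, since it only needs an order-zero and an order-one estimate. It also never deals with the loss of smoothness of $f$ at the turning point $t=\sqrt{\lambda}/\pi$. Your route is a self-contained trapezoidal/Euler--Maclaurin estimate, cut at unit distance from the turning point. It is more hands-on but equally rigorous.

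Your route can be streamlined. For $\gamma\in(0,1]$, $f$ is concave on all of $[0,\sqrt{\lambda}/\pi]$, being an increasing concave power of the concave function $\lambda-\pi^2t^2$. Hence $\int_0^N|f''|\,dt=-\int_0^N f''\,dt=|f'(N)|$, using $f'(0)=0$. The whole second-order remainder $\frac{1}{12}f'(N)-\int_0^N\psi_2 f''$ is then at most $\frac16|f'(N)|\lesssim\lambda^{\gamma/2}$. You do not need the piecewise bounds on $f''$, nor the hedge about a logarithm. No logarithm arises anyway: the $\lambda(\sqrt{\lambda}s)^{\gamma-2}$ piece carries the factor $\gamma(1-\gamma)$, which cancels the $1/(1-\gamma)$ coming from $\int_1^\infty s^{\gamma-2}\,ds$.

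By the same concavity, the inflection point $t_*$ in your fallback does not exist when $\gamma\le 1$; it appears only for $\gamma>1$. This does not affect your main argument. In the fallback, the one-signed, telescoping trapezoidal errors simply run over all of $[0,N]$.
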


\begin{proof}
    The claim is deduced as in the proof of \cite[Theorem 1.1]{FrankLarson_Inventiones25} from the estimates
    \begin{equation}\label{eq: 1D Dirichlet counting estimate}
        \Bigl|\Tr(-\Delta_{(0, 1)}^{\rm D}-\lambda)_\limminus^0 - \frac{1}\pi\sqrt{\lambda}\Bigr| \leq 1
    \end{equation}
    and
    \begin{equation}\label{eq: 1D Dirichlet Riesz 1 estimate}
        \Bigl|\Tr(-\Delta_{(0, 1)}^{\rm D}-\lambda)_\limminus - \frac{2}{3\pi}\lambda^{3/2}+ \frac{1}{2}\lambda\Bigr| \lesssim \lambda^{1/2}\, . 
    \end{equation}
    
    The estimate \eqref{eq: 1D Dirichlet counting estimate} follows directly from $\lambda_k(-\Delta_{(0, 1)}^{\rm D})=\pi^2k^2$. The estimate in \eqref{eq: 1D Dirichlet Riesz 1 estimate} follows almost as directly, by explicitly calculating the sum defining the Riesz mean: We have
    \begin{align*}
        \Tr(-\Delta_{(0, 1)}^{\rm D}-\lambda)_\limminus 
        &=
        \sum_{1 \leq k < \sqrt{\lambda}/\pi}(\lambda-\pi^2k^2)\\
        &= \frac{2}{3\pi}\lambda^{3/2}- \frac{1}{2}\lambda+\pi \sqrt{\lambda}\Bigl\{\frac{\sqrt{\lambda}}\pi\Bigr\}\Bigl(1-\Bigl\{\frac{\sqrt{\lambda}}\pi\Bigr\}\Bigr)\\
        &\quad \frac{\pi^2}{6}\Bigl\{\frac{\sqrt{\lambda}}{\pi}\Bigr\}\Bigl(1-3\Bigl\{\frac{\sqrt{\lambda}}{\pi}\Bigr\}+2\Bigl\{\frac{\sqrt{\lambda}}{\pi}\Bigr\}^2\Bigr)\, , 
    \end{align*}
    where $\{x\} := x - \lfloor x\rfloor$ denotes the fractional part of $x \in \R$.
    The claimed bound now follows from noting that $0\leq \{x\}\leq \min\{1, x\}$ for all $x\geq 0$.
\end{proof}

We are finally ready to prove Theorem~\ref{thm: two-term Weyl 1D}.
\begin{proof}[Proof of Theorem~\ref{thm: two-term Weyl 1D}]

We treat the cases $0<\gamma \leq 1$ and $\gamma > 1$ separately.

\medskip
\noindent\textit{Case 1: $0<\gamma \leq 1$.} Using the representation for $L_{\gamma, 0}(\beta)$ in Lemma~\ref{lem: properties of L} we have
{\allowdisplaybreaks
\begin{align*}
    \Tr(-\Delta_{(0, 1)}^{\beta}&-\lambda)_\limminus^\gamma - L_{\gamma, 1}^{\rm sc}\lambda^{\gamma+1/2}- \frac{1}{2}L_{\gamma, 0}(\beta/\sqrt{\lambda})\lambda^\gamma\\
    & =\Biggl( \Tr(-\Delta_{(0, 1)}^{\beta}-\lambda)_\limminus^\gamma - \sum_{k\geq 1}\Bigl(\lambda- \Bigl(\pi k-2\arctan\Bigl(\frac{\pi k}{\beta}\Bigr)\Bigr)^2\Bigr)_\limplus^\gamma \Biggr)\\
    & \quad +\Biggl(\, \sum_{k\geq 1}\Bigl(\lambda- \Bigl(\pi k-2\arctan\Bigl(\frac{\pi k}{\beta}\Bigr)\Bigr)^2\Bigr)_\limplus^\gamma-\Tr(-\Delta_{(0, 1)}^{\rm D}-\lambda)_\limminus^\gamma\\
  & \qquad \quad - 4\pi \gamma \sum_{1\leq k \leq \sqrt{\lambda}/\pi-M}k(\lambda-\pi^2k^2)^{\gamma-1}\arctan\Bigl(\frac{\pi k}{\beta}\Bigr)\Biggr) \\
  & \quad + 4\pi \gamma\Biggl(\, \sum_{1 \leq k \leq \sqrt{\lambda}/\pi-M}  k(\lambda-\pi^2k^2)^{\gamma-1}\arctan\Bigl(\frac{\pi k}{\beta}\Bigr)\\
  &\qquad \quad -  \frac{\lambda^\gamma}{\pi^2}\int_0^1 x(1-x^2)^{\gamma-1}\arctan\Bigl(\frac{x\sqrt{\lambda}}{\beta}\Bigr)\, dx\Biggr) \\
  & \quad + \biggl(\Tr(-\Delta_{(0, 1)}^{\rm D}-\lambda)_\limminus^\gamma - L_{\gamma, 1}^{\rm sc}\lambda^{\gamma+1/2} + \frac{1}{2} \lambda^\gamma \biggr)\, .
\end{align*}
}
By Lemmas~\ref{lem: Riesz arctan sum estimate 1}, \ref{lem: arctan sum to integral estimate} and \ref{lem: Dirichlet Riesz mean asymptotics}, respectively, the absolute values of the expressions in the last three parentheses can each be estimated $\lesssim_\gamma\lambda^{\gamma/2}$. Due to the two-sided bounds of Lemma~\ref{lem: arctan bounds lambdak} combined with Lemma~\ref{lem: unif bound Riesz arctan}, we have for the quantity in the first parentheses
\begin{align*}
    \biggl| \Tr(&-\Delta_{(0, 1)}^{\beta}-\lambda)_\limminus^\gamma - \sum_{k\geq 1}\Bigl(\lambda- \Bigl(\pi k-2\arctan\Bigl(\frac{\pi k}{\beta}\Bigr)\Bigr)^2\Bigr)_\limplus^\gamma \biggr| \\
    &\leq \biggl|\sum_{k\geq 1}\biggl[\Bigl(\lambda- \Bigl(\pi k-2\arctan\Bigl(\frac{\pi k}{\beta}\Bigr)\Bigr)^2\Bigr)_\limplus^\gamma-\Bigl(\lambda- \Bigl(\pi k-2\arctan\Bigl(\frac{\pi (k-1)}{\beta}\Bigr)\Bigr)^2\Bigr)_\limplus^\gamma\biggr]\biggr| \\
    &\lesssim_\gamma \lambda^{\gamma/2}\, .
\end{align*}
Collecting these estimates, one arrives at
\begin{align*}
    \biggl| \Tr(-\Delta_{(0, 1)}^{\beta}-\lambda)_\limminus^\gamma - L_{\gamma, 1}^{\rm sc}\lambda^{\gamma+1/2}- \frac{1}{2}L_{\gamma, 0}(\beta/\sqrt{\lambda})\lambda^\gamma \biggr| \lesssim_{\gamma}\lambda^{\gamma/2}\, , 
\end{align*}
which is the desired bound.

\medskip

\noindent\textit{Case 2: $\gamma > 1$.} We treat the case $\gamma>1$ with the Aizenman--Lieb identity. In the previous case, we have proven for $\gamma=1$ that
    \begin{align} \label{eq:Riesz_gamma1_restterm}
         \Tr(-\Delta_{(0, 1)}^{\beta}-\lambda)_\limminus = L_{1, 1}^{\rm sc}\lambda^{3/2}+ \frac{1}{2}L_{1, 0}(\beta/\sqrt{\lambda}) \lambda + R(\lambda)
    \end{align}
    where $|R(\lambda)| \lesssim \sqrt{\lambda}$ for all $\beta>0$ and $\lambda \geq 0$. A particular case of the Aizenman--Lieb identity \cite{AizenmanLieb} states that, for $\gamma>1$ and $\lambda >0$, 
\begin{align*}
     \Tr(-\Delta_{(0, 1)}^{\beta}-\lambda)_\limminus^\gamma = \gamma (\gamma-1) \int_0^\lambda (\lambda- \tau)^{\gamma-2} \Tr(-\Delta_{(0, 1)}^{\beta}-\tau)_\limminus \, d\tau\, .
\end{align*}
Inserting \eqref{eq:Riesz_gamma1_restterm} allows us to compute the corresponding bounds for $\gamma>1$.

First, a standard calculation shows
\begin{align*}
 \gamma (\gamma-1) \int_0^\lambda (\lambda- \tau)^{\gamma-2}   L_{1, 1}^{\rm sc}\tau^{3/2}  \, d\tau = L_{\gamma, 1}^{\rm sc}\lambda^{\gamma +1/2}
\end{align*}
for the first term. To lift the second term, we need to show that
\begin{equation}\label{eq: AL lift second term}
    \gamma (\gamma-1)\int_0^\lambda (\lambda- \tau)^{\gamma-2} L_{1, 0}(\beta/\sqrt{\tau}) \tau \, d\tau = L_{\gamma, 0}(\beta/\sqrt{\lambda})\lambda^\gamma\, .
\end{equation}
For this, we use the representation for $L_{\gamma, d}(\beta)$ provided by Lemma~\ref{lem: properties of L}, 
\begin{align*}
    \gamma (\gamma-1) \int_0^\lambda& (\lambda- \tau)^{\gamma-2} L_{1, 0}(\beta/\sqrt{\tau}) \tau \, d\tau\\
    &=
    \gamma (\gamma-1) \int_0^\lambda (\lambda- \tau)^{\gamma-2} \biggl[\frac{8}{\pi}\int_0^1 x\arctan\Bigl(\frac{x \sqrt{\tau}}{\beta}\Bigr)\, dx -1\biggr] \tau \, d\tau.
\end{align*}

Since $L_{\gamma, 0}^{\rm sc}= 1$ and
\begin{align*}
    \gamma (\gamma-1) \int_0^\lambda (\lambda- \tau)^{\gamma-2} \tau \, d\tau = \lambda^\gamma\, , 
\end{align*}
it remains to show that
\begin{equation}\label{eq: lifting of L integral}
    (\gamma-1)\int_0^\lambda \tau (\lambda- \tau)^{\gamma-2} \int_0^1 x \arctan\Bigl(\frac{x \sqrt{\tau}}{\beta} \Bigr) \, dx d\tau = \lambda^\gamma \int_0^1 x(1-x^2)^{\gamma-1}\arctan\Bigl(\frac{x\sqrt{\lambda}}{\beta}\Bigr)\, dx\, .
\end{equation}
Changing variables in the inner integral on the left-hand side by setting $s= x\sqrt{\tau}$ and defining 
\begin{equation*}
    G(\eta) = \int_0^\eta s\arctan\Bigl(\frac{s}{\beta}\Bigr)\, ds 
\end{equation*}
gives
\begin{align*}
    (\gamma-1)\int_0^\lambda \tau (\lambda- \tau)^{\gamma-2} \int_0^1 x \arctan\Bigl(\frac{x \sqrt{\tau}}{\beta} \Bigr) \, dx d\tau =(\gamma-1)\int_0^\lambda (\lambda- \tau)^{\gamma-2} G(\sqrt{\tau}) d\tau \, .
\end{align*}
Since $G(0)=0$ and $(\gamma-1)(\lambda-\tau)^{\gamma-2}=-\frac{d}{d\tau}(\lambda-\tau)^{\gamma-1}$ an integration by parts and the fundamental theorem of calculus yields
\begin{align*}
    (\gamma-1)\int_0^\lambda \tau (\lambda- \tau)^{\gamma-2} \int_0^1 x \arctan\Bigl(\frac{x \sqrt{\tau}}{\beta} \Bigr) \, dx d\tau 
    &=
        \frac{1}{2}\int_0^\lambda (\lambda- \tau)^{\gamma-1} \frac{G'(\sqrt{\tau})}{\sqrt{\tau}}\, d\tau\\
    &=
        \frac{1}{2}\int_0^\lambda (\lambda- \tau)^{\gamma-1} \arctan\Bigl(\frac{\sqrt{\tau}}{\beta}\Bigr)\, d\tau\, .
\end{align*}
Substituting $\tau=\lambda x^2$, yields \eqref{eq: lifting of L integral} and thus completes the proof of \eqref{eq: AL lift second term}.

Finally, having shown that the first two asymptotic terms transform correctly, we have
{\allowdisplaybreaks
\begin{align*}
    \biggl| \Tr(-\Delta_{(0, 1)}^{\beta}-\lambda)_\limminus^\gamma - L_{\gamma, 1}^{\rm sc}\lambda^{\gamma+1/2}- \frac{1}{2}L_{\gamma, 0}(\beta/\sqrt{\lambda}) \lambda^\gamma \biggr|
    & \leq \gamma (\gamma-1) \int_0^\lambda (\lambda- \tau)^{\gamma-2}  | R(\tau) |  \, d\tau \\
    & \lesssim_\gamma \int_0^\lambda (\lambda- \tau)^{\gamma-2}    \sqrt{\tau}  \, d\tau \\
    &  \lesssim_\gamma \lambda^{\gamma-1/2}\, , 
\end{align*}
}
for all $\gamma>1$ and $\lambda \geq 0$. This concludes the proof of Theorem~\ref{thm: two-term Weyl 1D}. 
\end{proof}

\section{Spectral asymptotics for Riesz means on cuboids}
\label{sec: d-dim spectral asymptotics}

In this section we argue that our estimate obtained in Theorem~\ref{thm: two-term Weyl 1D} implies corresponding estimates for Riesz means on cuboids in any dimension.

\begin{theorem}\label{thm: two-term Weyl}
    Let $d\geq 2, \gamma>0$, and $\kappa_{\gamma, d} = \min\bigl\{\frac{\gamma}{1+\gamma}, \frac{1}{d}\bigr\}$. There exists a constant $C_{\gamma, d}>0$ such that for all $\lambda \geq 0$, $\beta>0$, and any cuboid $R\subset \R^d$ with side lengths $l_1, \ldots, l_d>0$, 
    \begin{align*}
        \biggl|\Tr(-\Delta_{R}^{\beta}-\lambda)_\limminus^\gamma &- L_{\gamma, d}^{\rm sc}|R|\lambda^{\gamma+d/2} -\frac{1}{4}L_{\gamma, d-1}(\beta/\sqrt{\lambda}) \Haus^{d-1}(\partial R)\lambda^{\gamma+(d-1)/2}\biggr|\\
        &\leq C_{\gamma, d}\Haus^{d-1}(\partial R)\lambda^{\gamma+(d-1)/2}\Bigl((\min_i l_i\sqrt{\lambda})^{-\kappa_{\gamma, d}}+(\min_i l_i\sqrt{\lambda})^{1-d}\Bigr)\, .
    \end{align*}
\end{theorem}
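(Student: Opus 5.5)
The plan is to use the product structure of $R$ and lift the one-dimensional estimate of Theorem~\ref{thm: two-term Weyl 1D} to dimension $d$ by induction (or directly via iterated Riesz-mean convolution). The eigenvalues of $-\Delta_R^\beta$ are the sums $\sum_i \lambda_{k_i}(-\Delta_{(0,l_i)}^{\beta})$. By scaling, $\lambda_k(-\Delta_{(0,l)}^\beta)=l^{-2}\lambda_k(-\Delta_{(0,1)}^{\beta l})$, so Theorem~\ref{thm: two-term Weyl 1D} gives for each interval
\begin{equation*}
\Tr(-\Delta_{(0,l)}^\beta-\mu)_\limminus^{\sigma}=L^{\rm sc}_{\sigma,1}l\mu^{\sigma+1/2}+\tfrac12 L_{\sigma,0}(\beta/\sqrt{\mu})\mu^\sigma+O\bigl(l^{-\kappa_{\sigma,1}}\mu^{\sigma-\kappa_{\sigma,1}/2}\bigr).
\end{equation*}
The scaling works because $\beta l/\sqrt{l^2\mu}=\beta/\sqrt\mu$.

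We then write the $d$-dimensional Riesz mean as a one-dimensional Riesz mean applied to a $(d-1)$-dimensional one. Let $R=R'\times(0,l_d)$ and choose $l_d=\min_i l_i$ (the ordering is ours to pick). The exact identity is
\begin{equation*}
\Tr(-\Delta_R^\beta-\lambda)_\limminus^\gamma=\sum_k \Tr\bigl(-\Delta_{R'}^\beta-(\lambda-\lambda_k(-\Delta_{(0,l_d)}^\beta))\bigr)_\limminus^\gamma .
\end{equation*}
We insert the inductive two-term expansion for $R'$ inside the sum. Each main term then becomes a sum over the interval eigenvalues of functions $(\lambda-\lambda_k)_+^{\gamma+(d-1)/2}$ or $(\lambda-\lambda_k)_+^{\gamma+(d-2)/2}L_{\gamma,d-2}(\beta/\sqrt{\lambda-\lambda_k})$. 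These are one-dimensional Riesz means of higher order, to which Theorem~\ref{thm: two-term Weyl 1D} applies with $\sigma=\gamma+(d-1)/2\ge 1/2$, giving errors $O(\lambda^{\gamma+(d-1)/2}\,l_d^{-\kappa})$. Alternatively, one can use the symmetric formulation: the $d$-fold convolution of the one-dimensional counting measures against $(\lambda-\cdot)_+^\gamma$, each factor replaced by its two-term approximation.

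The main terms are handled by explicit integrals. The product of leading terms gives $L^{\rm sc}_{\gamma,d}|R|\lambda^{\gamma+d/2}$. The terms with exactly one boundary factor $\tfrac12 L_{\cdot,0}$ and $d-1$ Weyl factors must combine to $\tfrac14 L_{\gamma,d-1}(\beta/\sqrt\lambda)\Haus^{d-1}(\partial R)\lambda^{\gamma+(d-1)/2}$. This is a lifting identity analogous to \eqref{eq: AL lift second term}: convolving $\tfrac12 L_{\sigma,0}(\beta/\sqrt\mu)\mu^\sigma$ against Weyl terms in $d-1$ extra variables yields $\tfrac12 L_{\gamma,d-1}(\beta/\sqrt\lambda)\lambda^{\gamma+(d-1)/2}\prod_{j\ne i}l_j$. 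The factor $2\prod_{j\ne i}l_j$ summed over $i$ gives $\Haus^{d-1}(\partial R)$, which accounts for the coefficient $\tfrac14$. The identity should follow from the integral representation in Lemma~\ref{lem: properties of L}, since $\beta_W$ depends only on $\gamma+d/2$; it amounts to a Beta-function computation in the variable $x$. Products of two or more boundary terms are of size $\lambda^{\gamma+(d-2)/2}\prod_{j\ne i,i'}l_j$. Relative to the perimeter term this is $\lesssim (\min_i l_i\sqrt\lambda)^{-1}$, which is absorbed by the right-hand side.

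The main obstacle is the error terms, in particular obtaining the exponent $\kappa_{\gamma,d}=\min\{\gamma/(1+\gamma),1/d\}$ and controlling the small-$\gamma$ case. For $\gamma<1$ the remainder $O(\mu^{\gamma-\kappa/2})$ is not uniform enough to sum naively. My first attempt is a Riesz-mean interpolation (smoothing) argument: bound the $\gamma$-Riesz mean between Riesz means of order $\gamma$ evaluated at $\lambda\pm\varepsilon\lambda$, averaged by a Riesz mean of order $\gamma+1$ (e.g.\ via the Aizenman--Lieb identity run backwards, or the standard inequality $\Tr(H-\lambda)_-^\gamma\le \varepsilon^{-1}\int_\lambda^{\lambda+\varepsilon}\ldots$). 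Optimizing $\varepsilon$ gives precisely a loss $\gamma/(1+\gamma)$, as in \cite{FrankLarson_Inventiones25}. The term $(\min_i l_i\sqrt\lambda)^{1-d}$ is intended to cover the regime $\min l_i\sqrt\lambda\lesssim1$. There the statement is almost trivial: one bounds both sides by $\lambda^{\gamma}$ times the number of lattice points, $\lesssim\prod_i(1+l_i\sqrt\lambda)$, and checks this against $\Haus^{d-1}(\partial R)\lambda^{\gamma+(d-1)/2}(\min l_i\sqrt\lambda)^{1-d}$. The $1/d$ cap in $\kappa_{\gamma,d}$ arises from counting the $O(1)$ error per lattice slice over the $\approx\prod_{j\ne i}l_j\sqrt\lambda$ slices and comparing with the perimeter term.
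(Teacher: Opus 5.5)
Your decomposition is genuinely different from the paper's and it does lead to a proof. However, the obstacle you single out in your last paragraph is not real, and neither smoothing nor lattice-slice counting is where $\kappa_{\gamma,d}$ comes from.

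\textbf{The paper's route.} It separates $R=(0,l_i)\times R_i'$ the other way round. It applies Theorem~\ref{thm: two-term Weyl 1D} to the interval for each eigenvalue of $R_i'$, and the induction hypothesis, at order $\gamma+1/2$, to $\Tr(-\Delta_{R_i'}^\beta-\lambda)_\limminus^{\gamma+1/2}$. What remains is the sum $S_i=\sum_k L_{\gamma,0}\bigl(\beta\mu_k^{-1/2}\bigr)\mu_k^\gamma$, with $\mu_k=(\lambda-\lambda_k(-\Delta_{R_i'}^\beta))_\limplus$, taken over the $(d-1)$-dimensional spectrum. The paper never evaluates $S_i$ directly. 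Instead it:
\begin{enumerate}
\item writes the expansion for $i=1$ and $i=2$ and subtracts;
\item bounds $S_2$ crudely by Lemma~\ref{lem: Neumann Riesz mean bound};
\item uses that $S_1$ does not depend on $l_1$ to replace $l_1$ by $l_2(l_2\sqrt\lambda)^{-\alpha}$.
\end{enumerate}
Optimizing $\min\{\alpha,(1-\alpha)\min\{\gamma,\frac{1}{d-1}\}\}$ over $\alpha$ is exactly what produces $\kappa_{\gamma,d}$.

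\textbf{Your route, completed.} In your ordering the corresponding weighted sum runs over the spectrum of $(0,l_d)$, and it can simply be computed.
\begin{itemize}
\item It is not a Riesz mean, so Theorem~\ref{thm: two-term Weyl 1D} does not apply to it as you claim. But substituting $s=u/\sqrt\tau$ in the definition of $L_{\gamma,d-2}$ gives $\tau^{\sigma}L_{\gamma,d-2}(\beta/\sqrt\tau)=L^{\rm sc}_{\gamma,d-2}\bigl[\frac4\pi\int_0^\infty(\tau-u^2)_\limplus^{\sigma}\frac{\beta}{\beta^2+u^2}\,du-\tau^{\sigma}\bigr]$ with $\sigma=\gamma+\frac{d-2}{2}$.
\item Hence your sum equals $L^{\rm sc}_{\gamma,d-2}\bigl[\frac4\pi\int_0^\infty\frac{\beta}{\beta^2+u^2}\Tr(-\Delta_{(0,l_d)}^\beta-(\lambda-u^2))_\limminus^{\sigma}\,du-\Tr(-\Delta_{(0,l_d)}^\beta-\lambda)_\limminus^{\sigma}\bigr]$.
\item The one-term bound of Lemma~\ref{lem: quantitative Weyl law} with $d=1$ then gives exactly your lifting identity, $l_dL_{\gamma,d-1}(\beta/\sqrt\lambda)\lambda^{\gamma+(d-1)/2}$, with error $O(\lambda^{\sigma})$. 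After multiplication by $\frac14\Haus^{d-2}(\partial R')\le\Haus^{d-1}(\partial R)/(4l_d)$, this error has relative size $(l_d\sqrt\lambda)^{-1}$.
\item The summed inductive remainders are $\Haus^{d-2}(\partial R')$ times Riesz means of $(0,l_d)$ of non-negative orders (order $\gamma-\kappa_{\gamma,1}/2\ge\gamma/2$ when $d=2$). Lemma~\ref{lem: Neumann Riesz mean bound} bounds them directly, just as the paper bounds its own remainder $l_i^{-\kappa_{\gamma,1}}\Tr(-\Delta_{R_i'}^\beta-\lambda)_\limminus^{\gamma-\kappa_{\gamma,1}/2}$.
\end{itemize}
So naive summation does work for $\gamma<1$, and the smoothing step should be dropped.

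\textbf{What each route buys.} Run inductively this way, your route gives relative error $(\min_il_i\sqrt\lambda)^{-\min\{\gamma,1\}}$ when $\min_il_i\sqrt\lambda\ge1$. Since $\min\{\gamma,1\}\ge\kappa_{\gamma,d}$, this implies the stated bound; the regime $\min_il_i\sqrt\lambda\lesssim1$ is handled trivially, as you say and as in the paper. The paper's two-direction trick avoids computing the weighted sum over the $(d-1)$-dimensional spectrum, at the cost of a weaker exponent.
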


For the Riesz means of Robin Laplace operators with Robin parameter proportional to $\sqrt{\lambda}$, Theorems \ref{thm: two-term Weyl 1D} and \ref{thm: two-term Weyl} imply the following quantitative version of Theorem~\ref{thm: sc two-term Weyl intro}. 
\begin{corollary}\label{cor: sc two-term Weyl}
    Let $d\geq 1, \gamma>0$. There exist constants $C_{\gamma, d}>0$ and $\kappa_{\gamma, d}>0$ such that for all $\lambda \geq 0$, $\beta>0$, and any cuboid $R \subset \R^d$ with side lengths $l_1, \ldots, l_d>0$, it holds that
    \begin{align*}
        \biggl|\Tr(-\Delta_{R}^{\beta\sqrt{\lambda}}-\lambda)_\limminus^\gamma &- L_{\gamma, d}^{\rm sc}|R|\lambda^{\gamma+d/2} -\frac{1}{4}L_{\gamma, d-1}(\beta) \Haus^{d-1}(\partial R)\lambda^{\gamma+(d-1)/2}\biggr|\\
        &\leq C_{\gamma, d}\Haus^{d-1}(\partial R)\lambda^{\gamma+(d-1)/2}\Bigl((\min_i l_i\sqrt{\lambda})^{-\kappa_{\gamma, d}}+\1_{d\geq 2}(\min_i l_i\sqrt{\lambda})^{1-d}\Bigr)\, .
    \end{align*}
\end{corollary}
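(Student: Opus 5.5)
The corollary follows by substituting $\beta\sqrt{\lambda}$ for $\beta$ in Theorem~\ref{thm: two-term Weyl} when $d\geq 2$, and in Theorem~\ref{thm: two-term Weyl 1D} after scaling when $d=1$. Since both theorems hold for every $\beta>0$ and $\lambda\geq 0$ with constants independent of $\beta$, the substitution is legitimate. The second-term coefficient then becomes $L_{\gamma,d-1}(\beta\sqrt{\lambda}/\sqrt{\lambda})=L_{\gamma,d-1}(\beta)$, which is exactly the term in the corollary. The case $\lambda=0$ is trivial, since both sides vanish (all eigenvalues are positive for $\beta>0$).

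\emph{Case $d\geq 2$.} Apply Theorem~\ref{thm: two-term Weyl} with Robin parameter $\beta\sqrt{\lambda}$ and take $\kappa_{\gamma,d}=\min\{\gamma/(1+\gamma),1/d\}$. The remainder there is $C_{\gamma,d}\Haus^{d-1}(\partial R)\lambda^{\gamma+(d-1)/2}\bigl((\min_i l_i\sqrt\lambda)^{-\kappa_{\gamma,d}}+(\min_i l_i\sqrt\lambda)^{1-d}\bigr)$, which is the claimed bound with $\1_{d\geq 2}=1$.

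\emph{Case $d=1$.} Here $R=(0,l)$ and $\Haus^{0}(\partial R)=2$, so the target second term is $\frac12 L_{\gamma,0}(\beta)\lambda^\gamma$. Use the scaling $\lambda_k(-\Delta_{(0,l)}^{b})=l^{-2}\lambda_k(-\Delta_{(0,1)}^{bl})$ with $b=\beta\sqrt\lambda$. This gives
\[
\Tr(-\Delta_{(0,l)}^{\beta\sqrt\lambda}-\lambda)_\limminus^\gamma=l^{-2\gamma}\Tr(-\Delta_{(0,1)}^{\beta l\sqrt\lambda}-l^2\lambda)_\limminus^\gamma .
\]
Now apply Theorem~\ref{thm: two-term Weyl 1D} with spectral parameter $\mu=l^2\lambda$ and Robin parameter $\beta l\sqrt\lambda=\beta\sqrt{\mu}$. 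The main terms become $l^{-2\gamma}L^{\rm sc}_{\gamma,1}(l^2\lambda)^{\gamma+1/2}=L^{\rm sc}_{\gamma,1}\,l\,\lambda^{\gamma+1/2}$ and $l^{-2\gamma}\frac12 L_{\gamma,0}(\beta)(l^2\lambda)^{\gamma}=\frac12 L_{\gamma,0}(\beta)\lambda^\gamma$, as required. The error is
\[
l^{-2\gamma}C_\gamma(l^2\lambda)^{\gamma-\kappa_{\gamma,1}/2}=C_\gamma\lambda^\gamma(l\sqrt\lambda)^{-\kappa_{\gamma,1}}=\tfrac{C_\gamma}{2}\Haus^0(\partial R)\lambda^{\gamma}(l\sqrt\lambda)^{-\kappa_{\gamma,1}} .
\]
So the corollary holds with $\kappa_{\gamma,1}=\min\{\gamma,1\}$ and no indicator term.

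There is no real obstacle here. The only step needing care is checking that the bounds in Theorems~\ref{thm: two-term Weyl 1D} and~\ref{thm: two-term Weyl} are uniform in $\beta$, which is what permits substituting a $\lambda$-dependent Robin parameter. Both theorems state this uniformity explicitly, since the constants depend only on $\gamma$ and $d$.
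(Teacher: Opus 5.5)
Your proposal is correct and is essentially the paper's own argument: substitute the Robin parameter $\beta\sqrt{\lambda}$ into Theorem~\ref{thm: two-term Weyl} for $d\geq 2$, and into the rescaled form of Theorem~\ref{thm: two-term Weyl 1D} for $d=1$ (this rescaled form is the bound \eqref{eq: 1D induction step} in the paper), which is legitimate because the constants there do not depend on $\beta$. One small slip: at $\lambda=0$ the Robin parameter $\beta\sqrt{\lambda}$ is zero, so the operator is the Neumann Laplacian, which has the eigenvalue $0$, not a Robin operator with positive eigenvalues; the Riesz mean still vanishes there, since its eigenvalues are nonnegative and $\gamma>0$.
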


In our proof of Theorem~\ref{thm: two-term Weyl}, we shall use the following a priori bound for the Riesz means of a cuboid.
\begin{lemma}\label{lem: Neumann Riesz mean bound}
    Let $d\geq 1, \gamma \geq 0$. There exists a constant $C_{d}>0$ so that for all $\lambda \geq 0$, $\sharp \in \{\rm D, \rm N, \beta\}$ with $\beta>0$, and any cuboid $R\subset \R^d$ with side lengths $l_1, \ldots, l_d>0$, 
    \begin{equation*}
        \Tr(-\Delta_{R}^\sharp-\lambda)_\limminus^\gamma\leq C_d|R|\lambda^{\gamma+d/2}(1+(\min_{i}l_i\sqrt{\lambda})^{-d})\, .
    \end{equation*}
\end{lemma}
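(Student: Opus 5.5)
By monotonicity in the boundary condition, $-\Delta_R^{\rm N}\leq -\Delta_R^\beta\leq -\Delta_R^{\rm D}$ in the sense of quadratic forms (the form domains are nested and $\beta>0$). The variational principle then gives $\lambda_k(-\Delta_R^{\rm N})\leq\lambda_k(-\Delta_R^\sharp)$ for all $k$ and every $\sharp\in\{\rm D,\rm N,\beta\}$. Consequently $\Tr(-\Delta_R^\sharp-\lambda)_\limminus^\gamma\leq \Tr(-\Delta_R^{\rm N}-\lambda)_\limminus^\gamma$, including for $\gamma=0$ with the $\leq$ counting convention. It therefore suffices to prove the bound for the Neumann Laplacian.

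For the Neumann case I use separation of variables. The eigenvalues of $-\Delta_R^{\rm N}$ are $\sum_{i=1}^d \pi^2 n_i^2/l_i^2$ with $n\in\mathbb{N}_0^d$. Each term of the Riesz mean is at most $\lambda^\gamma$, so
\begin{equation*}
  \Tr(-\Delta_R^{\rm N}-\lambda)_\limminus^\gamma\leq \lambda^\gamma\,\#\Bigl\{n\in\mathbb{N}_0^d: \sum_{i=1}^d \frac{\pi^2 n_i^2}{l_i^2}\leq\lambda\Bigr\}\leq \lambda^\gamma\prod_{i=1}^d\Bigl(\frac{l_i\sqrt{\lambda}}{\pi}+1\Bigr).
\end{equation*}
The last step bounds the ellipsoid count by the count in the box $\{n\in\mathbb{N}_0^d: n_i\leq l_i\sqrt{\lambda}/\pi\}$, and each coordinate has at most $l_i\sqrt\lambda/\pi+1$ admissible values.

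Write $m=\min_i l_i\sqrt{\lambda}$. Since $l_i\sqrt\lambda\geq m$, we have $\frac{l_i\sqrt\lambda}{\pi}+1\leq l_i\sqrt\lambda\,(1+m^{-1})$ for each $i$. Multiplying over $i$ gives the bound $\lambda^\gamma |R|\lambda^{d/2}(1+m^{-1})^d$. Finally, $(1+m^{-1})^d\leq 2^{d-1}(1+m^{-d})$ by convexity (or by splitting into the cases $m\geq1$ and $m<1$). This yields the claim with $C_d=2^{d-1}$; the constant is independent of $\gamma$, as required.

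No step is a real obstacle. The only points needing care are that the form comparison is legitimate across the different form domains $H_0^1\subset H^1$, and that the case $\gamma=0$ is covered because the counting-function convention is compatible with the eigenvalue-wise comparison.
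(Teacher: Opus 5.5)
Your proof is correct, but it takes a more direct route than the paper's. After the same variational reduction to the Neumann case, the paper argues by induction on $d$. It uses separation of variables to write the $d$-dimensional Neumann Riesz mean as a sum, over the eigenvalues $\pi^2k^2/l_d^2$ of the last factor, of $(d-1)$-dimensional Riesz means on $R'=\prod_{i=1}^{d-1}(0,l_i)$. Applying the induction hypothesis to these produces one-dimensional Riesz means of orders $\gamma+(d-1)/2$ and $\gamma$ on $(0,l_d)$. The one-dimensional estimate $\lambda^\gamma(l\sqrt{\lambda}/\pi+1)$ then finishes the step, with constant $C_d=3^{d-1}$.

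You instead discard the Riesz weights at once (each term is at most $\lambda^\gamma$) and bound the lattice-point count in the ellipsoid by the count in the enclosing box. In effect you multiply $d$ copies of the paper's one-dimensional base case in a single step. This avoids the induction and gives the slightly better constant $2^{d-1}$. Dropping the weights costs nothing here, since the target bound is only of Weyl order with a crude correction factor.

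The paper's inductive scheme mirrors the separation-of-variables induction it uses again in Lemma~\ref{lem: quantitative Weyl law} and Theorem~\ref{thm: two-term Weyl}. For this particular lemma, though, your product estimate is the more economical proof.
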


\begin{proof}
    By the variational principle we have $$\Tr(-\Delta_R^{\rm D}-\lambda)_\limminus^\gamma\leq \Tr(-\Delta_R^\beta-\lambda)_\limminus^\gamma \leq \Tr(-\Delta_R^{\rm N}-\lambda)_\limminus^\gamma\, .$$ Thus it suffices to prove the claimed bound for the Neumann Laplacian.

    By invariance of the Laplacian under orthogonal transformations we may assume without loss of generality that $l_1 = \min_i l_i$. We shall prove that we can take $C_d=3^{d-1}$ and argue by induction in $d$.

    For $d=1$ we have 
    \begin{equation*}
        \Tr(-\Delta_{(0, l_1)}^{\rm N}-\lambda)_\limminus^\gamma = \sum_{k\geq 0}(\lambda-\pi^2k^2/l_1^2)^\gamma_\limplus \leq \lambda^\gamma \Bigl(\frac{l_1\sqrt{\lambda}}{\pi}+1\Bigr) \leq l_1 \lambda^{\gamma+1/2}(1+(l_1 \sqrt{\lambda})^{-1})\, , 
    \end{equation*}
    which proves the claim for $d=1$ with $C_1=1=3^0$.

    We now prove the claim in dimension $d$ assuming its validity in dimension $d-1$. Let $R'=\prod_{i=1}^{d-1}(0, l_i)\subset \R^{d-1}$. First, by the induction hypothesis in dimension $d-1$, it follows that
    \begin{align*}
        \Tr(-\Delta_{R}^{\rm N}-\lambda)_\limminus^\gamma 
        &=
        \sum_{k\geq 0}\Tr(-\Delta_{R'}^{\rm N}-(\lambda-\pi^2k^2/l_d^2))_\limminus^\gamma\\
        &\leq 3^{d-2} |R'|   \sum_{k\geq 0} (\lambda-\pi^2k^2/l_d^2)^{\gamma + (d-1)/2} (1+l_1^{-(d-1)} (\lambda-\pi^2k^2/l_d^2)^{-(d-1)/2})\\
        &=
        3^{d-2} |R'| \Bigl(\Tr(-\Delta_{(0, l_d)}^{\rm N}-\lambda)_\limminus^{\gamma+(d-1)/2}+
         l_1^{-d+1}\Tr(-\Delta_{(0, l_d)}^{\rm N}-\lambda)_\limminus^{\gamma} \Bigr)\,.
    \end{align*}
    We then apply the bound in dimension $d=1$ to both remaining Riesz means to obtain
     \begin{align*}    
        \Tr(-\Delta_{R}^{\rm N}-\lambda)_\limminus^\gamma &\leq 
        3^{d-2} |R'| \left(l_d \lambda^{\gamma+d/2}(1+(l_d\sqrt{\lambda})^{-1})+
         l_1^{-d+1}l_d\lambda^{\gamma+1/2}(1+(l_d\sqrt{\lambda})^{-1}) \right)\\
        &=
        3^{d-2} |R| \lambda^{\gamma+d/2}(1+(l_d\sqrt{\lambda})^{-1}+(l_1\sqrt{\lambda})^{-d+1}+(l_1\sqrt{\lambda})^{-d+1}(l_d\sqrt{\lambda})^{-1})\\
        &\leq
        3^{d-2} |R| \lambda^{\gamma+d/2}(1+(l_1\sqrt{\lambda})^{-1}+(l_1\sqrt{\lambda})^{-d+1}+(l_1\sqrt{\lambda})^{-d})\\
        &\leq
        3^{d-1} |R| \lambda^{\gamma+d/2}(1+(l_1\sqrt{\lambda})^{-d})
    \end{align*}
    where in the last two inequalities we have used that $l_1 \leq l_d$ and $(l_1\sqrt{\lambda})^{-1}+(l_1\sqrt{\lambda})^{-d+1}\leq 2 (1+(l_1\sqrt{\lambda})^{-d})$. This completes the proof of Lemma~\ref{lem: Neumann Riesz mean bound}. 
\end{proof}

\begin{proof}[Proof of Theorem~\ref{thm: two-term Weyl}]
    We divide the proof into two cases depening on the size of $\min_i l_i\sqrt{\lambda}$.

    \smallskip

    \noindent{\it Case 1: $\min_i l_i \sqrt{\lambda} \lesssim 1$.}
    In this case we estimate each of the terms on the left-hand side of the bound in Theorem~\ref{thm: two-term Weyl} individually. By Lemma~\ref{lem: Neumann Riesz mean bound} and since $L_{\gamma, d}(\beta)\lesssim_{\gamma, d}1, |R|\lesssim_d l_i\Haus^{d-1}(\partial R)$, 
    \begin{align*}
        \Tr(-\Delta_\Omega^{\beta}-\lambda)_\limminus^\gamma \lesssim_d |R|\lambda^{\gamma+d/2}(\min_i l_i\sqrt{\lambda})^{-d} &\lesssim_{\gamma, d}\Haus^{d-1}(\partial R)\lambda^{\gamma+(d-1)/2}(\min_i l_i\sqrt{\lambda})^{1-d}\, , \\
        L_{\gamma, d}^{\rm sc}|R|\lambda^{\gamma+d/2} \lesssim_d \Haus^{d-1}(\partial R)\lambda^{\gamma+(d-1)/2}(l_i\sqrt{\lambda})&\lesssim_{\gamma, d}\Haus^{d-1}(\partial R)\lambda^{\gamma+(d-1)/2}(\min_i l_i\sqrt{\lambda})^{1-d}\, , \\
        |L_{\gamma, d-1}(\beta/\sqrt{\lambda})|\Haus^{d-1}(\partial R)\lambda^{\gamma+(d-1)/2}&\lesssim_{\gamma, d}\Haus^{d-1}(\partial R)\lambda^{\gamma+(d-1)/2}(\min_i l_i\sqrt{\lambda})^{1-d}\, .
    \end{align*}
    Therefore, by the triangle inequality, 
    \begin{align*}
        \Bigl|\Tr(-\Delta_\Omega^{\beta}-\lambda)_\limminus^\gamma& -L_{\gamma, d}^{\rm sc}|R|\lambda^{\gamma+d/2} - \frac{1}{4}L_{\gamma, d-1}(\beta/\sqrt{\lambda})\Haus^{d-1}(\partial R)\lambda^{\gamma+(d-1)/2}\Bigr|\\
        &\lesssim_{\gamma, d}\Haus^{d-1}(\partial R)\lambda^{\gamma+(d-1)/2}(\min_i l_i\sqrt{\lambda})^{1-d}\, .
    \end{align*}
    This completes the proof of Theorem \ref{thm: two-term Weyl} when $\min_i l_i \sqrt{\lambda} \lesssim 1$.

    \smallskip

    \noindent{\it Case 2: $\min_i l_i \sqrt{\lambda} \gtrsim 1$.} In this case we argue by induction in $d$. The base case for our induction is the corresponding asymptotics in dimension $d=1$ which, up to rescaling, is the content of Theorem~\ref{thm: two-term Weyl 1D}. Specifically, Theorem~\ref{thm: two-term Weyl 1D} gives the bound
    \begin{equation} \label{eq: 1D induction step}  
        \biggl| \Tr(-\Delta_{(0, l_1)}^{\beta}-\lambda)_\limminus^\gamma - L_{\gamma, 1}^{\rm sc}l_1\lambda^{\gamma+1/2}- \frac{1}{4}L_{\gamma, 0}(\beta/\sqrt{\lambda})\Haus^{0}(\partial(0, l_1)) \lambda^\gamma \biggr|
        \lesssim_\gamma  \lambda^{\gamma}(l_1\sqrt{\lambda})^{-\kappa_{\gamma, 1}} \, , 
    \end{equation}
    for all $l_1>0, \beta>0, \lambda\geq 0$, with $\kappa_{\gamma, 1}= \min\{\gamma, 1\}$.

    Let us now consider the $d$-dimensional case and carry out the induction step. For $i \in \{1, \ldots, d\}$, let $R_i'= \prod_{j\neq i}^d (0, l_j) \subset \R^{d-1}$. For any choice of $i\in \{1, \ldots, d\}$, separation of variables yields that
    \begin{align*}
        \Tr(-\Delta_R^{\beta} -\lambda)_\limminus^\gamma 
        &=
        \sum_{k, m\geq 1} \bigl(\lambda-\lambda_k(-\Delta_{R'_i}^\beta)-\lambda_{m}(-\Delta_{(0, l_i)}^{\beta})\bigr)^\gamma_\limplus\\
        &=
        \sum_{k\geq 1} \Tr\bigl(-\Delta_{(0, l_i)}^{\beta}-\bigl(\lambda-\lambda_{k}(-\Delta_{R'_i}^{\beta})\bigr)_\limplus\bigr)^\gamma_\limminus\, \, .
    \end{align*}
    Therefore, by \eqref{eq: 1D induction step}, 
    \begin{align*}
        \Biggl|\Tr(-\Delta_R^{\beta} -\lambda)_\limminus^\gamma &-L_{\gamma, 1}^{\rm sc}l_i\Tr(-\Delta_{R'_i}^{\beta}-\lambda)_\limminus^{\gamma+1/2}\\
        &\quad -\frac{1}{2}\sum_{k \geq 1}L_{\gamma, 0}\bigl(\beta(\lambda-\lambda_k(-\Delta_{R'_i}^\beta))_\limplus^{-1/2}\bigr)\bigl(\lambda-\lambda_{k}(-\Delta_{R'_i}^{\beta})\bigr)_\limplus^\gamma\Biggr| \\
        &\lesssim_\gamma 
        l_i^{-\kappa_{\gamma, 1}}\Tr(-\Delta_{R'_i}^{\beta}-\lambda)_\limminus^{\gamma-\kappa_{\gamma, 1}/2}\, .
    \end{align*}
    We now apply the induction hypothesis to replace $\Tr(-\Delta_{R'_i}^{\beta}-\lambda)_\limminus^{\gamma}$ by its asymptotic expansion. Together with the a priori estimate of Lemma~\ref{lem: Neumann Riesz mean bound} and the fact that $L_{\gamma, 1}^{\rm sc}L_{\gamma+1/2, d-1}^{\rm sc}= L_{\gamma, d}^{\rm sc}$, this yields
    
    \begin{align}
        \Biggl|\Tr(-&\Delta_R^{\beta} -\lambda)_\limminus^\gamma -L_{\gamma, d}^{\rm sc}|R|\lambda^{\gamma+d/2}-\frac{1}{4}L_{\gamma, 1}^{\rm sc}L_{\gamma+1/2, d-2}(\beta/\sqrt{\lambda})l_i\Haus^{d-2}(\partial R_i')\lambda^{\gamma+(d-1)/2} \notag\\
        &\quad -\frac{1}{2}\sum_{k \geq 1}L_{\gamma, 0}(\beta (\lambda-\lambda_k(-\Delta_{R'_i}^\beta))_\limplus^{-1/2})\bigl(\lambda-\lambda_{k}(-\Delta_{R'_i}^{\beta})\bigr)_\limplus^\gamma\Biggr|\label{eq: induction step} \\
        &\lesssim_{\gamma, d} 
        l_i^{-1}|R|\lambda^{\gamma+(d-1)/2}(l_i\sqrt{\lambda})^{-\kappa_{\gamma, 1}} +  (\min_{j\neq i}l_j)^{-1}|R|\lambda^{\gamma+(d-1)/2} (\min_{j\neq i}l_j\sqrt{\lambda})^{-\kappa_{\gamma+1/2, d-1}}\, , \notag
    \end{align}
    where we used that $\min_j l_j \sqrt{\lambda} \gtrsim 1$ and that $d-2\geq 1 \geq \kappa_{\gamma+1/2, d-1}$ if $d\geq 3$.
    
    Note that
    \begin{equation*}
        \Haus^{d-1}(\partial R) = 2 \sum_{j=1}^d \frac{|R|}{l_j} \,, \qquad \Haus^{d-2}(\partial R_i') = 2\sum_{j\neq i} \frac{|R_i'|}{l_j} = l_i^{-1}\Haus^{d-1}(\partial R) - 2 \frac{|R|}{l_i} \,,
    \end{equation*}
    and that $L_{\gamma, 1}^{\rm sc}L_{\gamma+1/2, d-2}(\beta/\sqrt{\lambda}) = L_{\gamma, d-1}(\beta/\sqrt{\lambda})$, since $L_{\gamma, 1}^{\rm sc}L_{\gamma+1/2, d-2}^{\rm sc}= L_{\gamma, d-1}^{\rm sc}$. Thus~\eqref{eq: induction step} is equivalent to the estimate
    {\allowdisplaybreaks
    \begin{align}
        \Biggl|&\!\Tr(-\Delta_R^{\beta} -\lambda)_\limminus^\gamma -L_{\gamma, d}^{\rm sc}|R|\lambda^{\gamma+d/2}-\frac{1}{4}L_{\gamma, d-1}(\beta/\sqrt{ \lambda})\Haus^{d-1}(\partial R)\lambda^{\gamma+(d-1)/2} \notag\\
        &\!+\frac{1}{2}\biggl(\!L_{\gamma, d-1}(\beta/\sqrt{\lambda})|R|l_i^{-1}\lambda^{\gamma+(d-1)/2}
        -\sum_{k \geq 1}L_{\gamma, 0}\bigl(\beta(\lambda-\lambda_k(-\Delta_{R'_i}^\beta))_\limplus^{-1/2}\bigr)\bigl(\lambda-\lambda_{k}(-\Delta_{R'_i}^{\beta})\bigr)_\limplus^\gamma\biggr)\!\Biggr|  \notag\\
        &\!\lesssim_{\gamma, d} 
        l_i^{-1}|R|\lambda^{\gamma+(d-1)/2}(l_i\sqrt{\lambda})^{-\kappa_{\gamma, 1}} +  (\min_{j\neq i}l_j)^{-1}|R|\lambda^{\gamma+(d-1)/2} (\min_{j\neq i}l_j\sqrt{\lambda})^{-\kappa_{\gamma+1/2, d-1}} \, . \label{eq: induction step 2-term Weyl d dim 2}
    \end{align}
    }
    
    To complete the proof we wish to show that the terms in the parentheses in the second line of \eqref{eq: induction step 2-term Weyl d dim 2} are small. Note that $|R|/l_i = \prod_{j\neq i}l_j$, and therefore the terms in these parentheses are independent of $l_i$. We claim that, for any $i = 1, \ldots, d$ and for $\min_j l_j \sqrt{\lambda}\gtrsim 1$, 
    \begin{align}
        \Biggl|\sum_{k \geq 1}&L_{\gamma, 0}\bigl(\beta(\lambda-\lambda_k(-\Delta_{R'_i}^\beta))_\limplus^{-1/2}\bigr)\bigl(\lambda-\lambda_{k}(-\Delta_{R'_i}^{\beta})\bigr)_\limplus^\gamma -L_{\gamma, d-1}(\beta/\sqrt{\lambda})\biggl(\prod_{j\neq i}l_j\biggr)\lambda^{\gamma+(d-1)/2}\Biggr| \notag \\
        &\lesssim_{\gamma, d}
        \biggl(\prod_{j\neq i}l_j\biggr)\lambda^{\gamma+(d-1)/2}(\min_{j\neq i}l_j \sqrt{\lambda})^{-\kappa_{\gamma, d}} \, .\label{eq: second term representations agree}
    \end{align}

    Before proving the validity of \eqref{eq: second term representations agree}, we show how to complete the proof of Theorem~\ref{thm: two-term Weyl} with this estimate in hand. Combining \eqref{eq: induction step 2-term Weyl d dim 2}, \eqref{eq: second term representations agree} (for any choice of $i$), and the triangle inequality, it follows that
    \begin{align*}
        \biggl|&\Tr(-\Delta_R^{\beta} -\lambda)_\limminus^\gamma -L_{\gamma, d}^{\rm sc}|R|\lambda^{\gamma+d/2}-\frac{1}{4}L_{\gamma, d-1}(\beta/\sqrt{\lambda})\Haus^{d-1}(\partial\Omega)\lambda^{\gamma+(d-1)/2}\biggr| \\
        &\lesssim_{\gamma, d} \Haus^{d-1}(\partial R)\lambda^{\gamma+(d-1)/2}\Bigl(
        (l_i\sqrt{\lambda})^{-\kappa_{\gamma, 1}} +  (\min_{j\neq i}l_j\sqrt{\lambda})^{-\kappa_{\gamma+1/2, d-1}} + (\min_{j\neq i}l_j \sqrt{\lambda})^{-\kappa_{\gamma, d}}\Bigr)\\
        &\lesssim_{\gamma, d} \Haus^{d-1}(\partial R)\lambda^{\gamma+(d-1)/2}(\min_{j}l_j \sqrt{\lambda})^{-\kappa_{\gamma, d}}\, , 
    \end{align*}
    where we used that $l_j^{-1} |R|\leq \frac{1}{2}\Haus^{d-1}(\partial R)$ for each $j= 1, \ldots, d$, the assumption $\min_j l_j \sqrt{\lambda}\gtrsim 1$, and that $\kappa_{\gamma, d} \leq \min\{\kappa_{\gamma, 1}, \kappa_{\gamma+1/2, d-1}\}$. This is the claimed estimate and, hence, to complete the proof of Theorem~\ref{thm: two-term Weyl}, it only remains to justify \eqref{eq: second term representations agree}.
    
    \smallskip

    In proving \eqref{eq: second term representations agree} we may assume without loss of generality that $i=1$ and that $l_2 \leq l_3 \leq \ldots \leq l_d$. 
    
    Note that we have proved \eqref{eq: induction step 2-term Weyl d dim 2} above for any $l_1, \ldots, l_d>0$ under the assumption that $\min_j l_j \sqrt{\lambda}\gtrsim 1$. By applying \eqref{eq: induction step 2-term Weyl d dim 2} with $i=1$ and $i=2$, we deduce from the triangle inequality that for any $\lambda>0$ and $\beta>0$, 
    \begin{align*}
        \Biggl|&\sum_{k \geq 1}L_{\gamma, 0}\bigl(\beta(\lambda-\lambda_k(-\Delta_{R'_1}^\beta))_\limplus^{-1/2}\bigr)\bigl(\lambda-\lambda_{k}(-\Delta_{R'_1}^{\beta})\bigr)_\limplus^\gamma
        -L_{\gamma, d-1}(\beta/\sqrt{\lambda})\biggl(\prod_{j\neq 1}l_j\biggr)\lambda^{\gamma+(d-1)/2}\Biggr| \\
        &\lesssim_{\gamma, d}\sum_{k \geq 1}\bigl|L_{\gamma, 0}\bigl(\beta(\lambda-\lambda_k(-\Delta_{R'_2}^\beta))_\limplus^{-1/2}\bigr)\bigr|\bigl(\lambda-\lambda_{k}(-\Delta_{R'_2}^{\beta})\bigr)_\limplus^\gamma+|L_{\gamma, d-1}(\beta/\sqrt{\lambda})||R_2'|\lambda^{\gamma+(d-1)/2}\\
        &\quad +
        l_1^{-1}|R|\lambda^{\gamma+(d-1)/2}(l_1\sqrt{\lambda})^{-\kappa_{\gamma, 1}} +  l_2^{-1}|R|\lambda^{\gamma+(d-1)/2} (l_2\sqrt{\lambda})^{-\kappa_{\gamma+1/2, d-1}}\\
        &\quad +
        l_2^{-1}|R|\lambda^{\gamma+(d-1)/2}(l_2\sqrt{\lambda})^{-\kappa_{\gamma, 1}} +  \min\{l_1, l_3\}^{-1}|R|\lambda^{\gamma+(d-1)/2} (\min\{l_1, l_3\}\sqrt{\lambda})^{-\kappa_{\gamma+1/2, d-1}}\, .
    \end{align*}
    if $\min_j l_j \sqrt{\lambda}\gtrsim 1$. Since $L_{\gamma, 0}(\beta)\lesssim_{\gamma, d} 1$ and $\min_{j}l_j\sqrt{\lambda}\gtrsim 1$, Lemma~\ref{lem: Neumann Riesz mean bound} implies that
    \begin{align*}
        \sum_{k \geq 1}\bigl|L_{\gamma, 0}\bigl(\beta(\lambda-\lambda_k(-\Delta_{R'_2}^\beta))_\limplus^{-1/2}\bigr)\bigr|\bigl(\lambda-\lambda_{k}(-\Delta_{R'_2}^{\beta})\bigr)_\limplus^\gamma &\lesssim_{\gamma, d} \Tr(-\Delta_{R'_2}^\beta-\lambda)_\limminus^\gamma\\
        &\lesssim_{\gamma, d} |R_2'|\lambda^{\gamma+(d-1)/2}\, .
    \end{align*}
    Therefore, using again that $L_{\gamma, 0}(\beta)\lesssim_{\gamma, d} 1$, we have shown that
    \begin{align}
        \Biggl|\sum_{k \geq 1}L_{\gamma, 0}&\bigl(\beta(\lambda-\lambda_k(-\Delta_{R'_1}^\beta))_\limplus^{-1/2}\bigr)\bigl(\lambda-\lambda_{k}(-\Delta_{R'_1}^{\beta})\bigr)_\limplus^\gamma
        -L_{\gamma, d-1}(\beta/\sqrt{\lambda})\biggl(\prod_{j\neq 1}l_j\biggr)\lambda^{\gamma+(d-1)/2}\Biggr|\notag \\
        &\lesssim_{\gamma, d} \biggl(\prod_{j\neq 1}l_j\biggr)\lambda^{\gamma+(d-1)/2}\Bigl(l_1l_2^{-1} +
        (l_1\sqrt{\lambda})^{-\kappa_{\gamma, 1}} +  l_1l_2^{-1} (l_2\sqrt{\lambda})^{-\kappa_{\gamma+1/2, d-1}} \label{eq: R1 bound}\\
        &\quad +
        l_1l_2^{-1}(l_2\sqrt{\lambda})^{-\kappa_{\gamma, 1}} +  l_1\min\{l_1, l_3\}^{-1} (\min\{l_1, l_3\}\sqrt{\lambda})^{-\kappa_{\gamma+1/2, d-1}}\Bigr)\, .\notag
    \end{align}
    
    Note that the expression on the left-hand side of \eqref{eq: R1 bound} is independent of $l_1$. Since the proof of this bound is valid for any $l_1>0$, we are free to choose $l_1$ in order to make the right-hand side small. Specifically, we can take $l_1 = l_2 /(l_2 \sqrt{\lambda})^\alpha$ for some $\alpha \in (0, 1)$ to be chosen later. By the assumptions $l_2\sqrt{\lambda}\gtrsim 1$, $l_2\leq l_3$, we have $l_2 /(l_2 \sqrt{\lambda})^\alpha\lesssim l_3$. This yields
    \begin{align*}
        \Biggl|\sum_{k \geq 1}L_{\gamma, 0}&\bigl(\beta(\lambda-\lambda_k(-\Delta_{R'_1}^\beta))_\limplus^{-1/2}\bigr)\bigl(\lambda-\lambda_{k}(-\Delta_{R'_1}^{\beta})\bigr)_\limplus^\gamma
        -L_{\gamma, d-1}(\beta/\sqrt{\lambda})\biggl(\prod_{j\neq 1}l_j\biggr)\lambda^{\gamma+(d-1)/2}\Biggr| \\
        &\lesssim_{\gamma, d} \biggl(\prod_{j\neq 1}l_j\biggr)\lambda^{\gamma+(d-1)/2}\Bigl((l_2\sqrt{\lambda})^{-\alpha} +
        (l_2\sqrt{\lambda})^{-(1-\alpha)\kappa_{\gamma, 1}} +  (l_2\sqrt{\lambda})^{-\kappa_{\gamma+1/2, d-1}-\alpha}\\
        &\quad +
        (l_2\sqrt{\lambda})^{-\kappa_{\gamma, 1}-\alpha} +   (l_2\sqrt{\lambda})^{-(1-\alpha)\kappa_{\gamma+1/2, d-1}}\Bigr)\, .
    \end{align*}
    Since $l_2\sqrt{\lambda}\gtrsim 1$, we arrive at
    \begin{align*}
        \Biggl|\sum_{j \geq 1}&L_{\gamma, 0}\bigl(\beta(\lambda-\lambda_j(-\Delta_{R'_1}^\beta))_\limplus^{-1/2}\bigr)\bigl(\lambda-\lambda_{j}(-\Delta_{R'_1}^{\beta})\bigr)_\limplus^\gamma
        -L_{\gamma, d-1}(\beta/\sqrt{\lambda})\biggl(\prod_{i\neq 1}l_i\biggr)\lambda^{\gamma+(d-1)/2}\Biggr|\\
        &\lesssim_{\gamma, d}
        \biggl(\prod_{j \neq 1 }l_j\biggr)\lambda^{\gamma+(d-1)/2}(l_2\sqrt{\lambda})^{-\tilde\kappa}\, 
    \end{align*}
    where
    \begin{align*}
        \tilde\kappa &= \min\bigl\{\alpha, (1-\alpha)\kappa_{\gamma, 1}, \kappa_{\gamma+1/2, d-1}+\alpha, \kappa_{\gamma, 1}+\alpha, (1-\alpha)\kappa_{\gamma+1/2, d-1}\bigr\}\\
        &= \min\bigl\{\alpha, (1-\alpha)\min\{\kappa_{\gamma, 1}, \kappa_{\gamma+1/2, d-1}\}\bigr\}.
    \end{align*}
    We claim that $\min\{\kappa_{\gamma, 1}, \kappa_{\gamma+1/2, d-1}\} =\min\bigl\{\gamma, \frac{1}{d-1}\bigr\}$ for $d\geq 2$. Indeed, if $d=2$, then $$\min\{\kappa_{\gamma, 1}, \kappa_{\gamma+1/2, d-1}\} =\min\{\gamma, \gamma+1/2, 1\}=\min\{\gamma, 1\}$$ as claimed. If instead $d\geq 3$, we find that 
    $$\min\{\kappa_{\gamma, 1}, \kappa_{\gamma+1/2, d-1}\}= \min\Bigl\{1, \gamma, \frac{\gamma+1/2}{\gamma+3/2}, \frac{1}{d-1}\Bigr\} = \min\Bigl\{\gamma, \frac{1}{d-1}\Bigr\}\, , $$ 
    where the last step used that $\gamma \leq (\gamma+1/2)/(\gamma+3/2)$ for $\gamma\leq 1/(d-1)\leq 1/2$. Therefore, 
    \begin{align*}
    \tilde\kappa= \min\Bigl\{\alpha, (1-\alpha)\min\Bigl\{\gamma, \frac{1}{d-1}\Bigr\}\Bigr\}\, .
    \end{align*}
    Optimizing $\tilde \kappa$ with respect to the choice of $\alpha \in (0, 1)$ leads to
    \begin{equation*}
        \tilde \kappa = \alpha = \frac{\min\{\gamma, \frac{1}{d-1}\}}{1+\min\{\gamma, \frac{1}{d-1}\}} = \min\Bigr\{\frac{\gamma}{1+\gamma}, \frac{1}{d}\Bigr\}=\kappa_{\gamma, d}\, .
    \end{equation*}
    This completes the proof of \eqref{eq: second term representations agree} and thus also the proof of Theorem~\ref{thm: two-term Weyl}.
\end{proof}

Theorem~\ref{thm: two-term Weyl} provides a quantitative error estimate in a two-term Weyl law as soon as $\gamma>0$. Later in the paper we require a corresponding error estimate for the leading order Weyl law also when $\gamma=0$. This is the content of the next lemma.
\begin{lemma}\label{lem: quantitative Weyl law}
    Let $d\geq 1, \gamma \geq 0$. There exists a constant $C_{\gamma, d}>0$ so that for any $\lambda \geq 0$, $\sharp \in \{\rm D, \rm N, \beta\}$ with $\beta>0$, and any cuboid $R\subset \R^d$ with side lengths $l_1, \ldots, l_d>0$, 
    \begin{equation*}
        |\Tr(-\Delta_R^{\sharp}-\lambda)_\limminus^\gamma-L_{\gamma, d}^{\rm sc}|R|\lambda^{\gamma+d/2}|\leq C_{ \gamma, d}\Haus^{d-1}(\partial R)\lambda^{\gamma+(d-1)/2}(1+(\min_i l_i \sqrt{\lambda})^{1-d})\, .
    \end{equation*}
\end{lemma}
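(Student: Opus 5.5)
We reduce to the Dirichlet and Neumann cases and then use separation of variables with one-dimensional counting estimates. By the variational principle (as in the proof of Lemma~\ref{lem: Neumann Riesz mean bound}),
\begin{equation*}
\Tr(-\Delta_R^{\rm D}-\lambda)_\limminus^\gamma\leq \Tr(-\Delta_R^\beta-\lambda)_\limminus^\gamma\leq \Tr(-\Delta_R^{\rm N}-\lambda)_\limminus^\gamma .
\end{equation*}
It therefore suffices to prove the upper bound $\Tr(-\Delta_R^{\rm N}-\lambda)_\limminus^\gamma-L^{\rm sc}_{\gamma,d}|R|\lambda^{\gamma+d/2}\lesssim \Haus^{d-1}(\partial R)\lambda^{\gamma+(d-1)/2}(1+(\min_i l_i\sqrt\lambda)^{1-d})$ and the corresponding lower bound for the Dirichlet Riesz mean.

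In the collapsed regime $\min_i l_i\sqrt\lambda\leq 1$, we argue as in Case~1 of the proof of Theorem~\ref{thm: two-term Weyl}, using only Lemma~\ref{lem: Neumann Riesz mean bound} and $|R|\leq l_i\Haus^{d-1}(\partial R)/2$. Both $\Tr(-\Delta^{\rm N}_R-\lambda)^\gamma_\limminus$ and $L^{\rm sc}_{\gamma,d}|R|\lambda^{\gamma+d/2}$ are then bounded by a constant times $\Haus^{d-1}(\partial R)\lambda^{\gamma+(d-1)/2}(\min_i l_i\sqrt\lambda)^{1-d}$. Positivity of the Dirichlet mean gives the lower bound trivially. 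This regime needs nothing new.

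In the regime $\min_i l_i\sqrt\lambda\geq 1$, we induct on $d$ and let $\sharp\in\{\rm D,\rm N\}$. For $d=1$ the eigenvalues are explicit, $\pi^2k^2/l^2$ with $k\geq1$ (Dirichlet) or $k\geq0$ (Neumann), so the counting function differs from $l\sqrt\lambda/\pi$ by at most $1$. Integrating this through the representation
\begin{equation*}
\Tr(H-\lambda)^\gamma_\limminus=\gamma\int_0^\lambda(\lambda-\tau)^{\gamma-1}\Tr(H-\tau)^0_\limminus\,d\tau
\end{equation*}
for $\gamma>0$ gives $|\Tr(-\Delta^\sharp_{(0,l)}-\lambda)^\gamma_\limminus-L^{\rm sc}_{\gamma,1}l\lambda^{\gamma+1/2}|\leq\lambda^\gamma$ for all $\gamma\geq0$; this is the claim with $\Haus^0(\partial(0,l))=2$.

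For the induction step, we write $R=R'\times(0,l_d)$ with $l_d=\max_i l_i$, so that $R'$ has the same smallest side. Separation of variables gives
\begin{equation*}
\Tr(-\Delta_R^\sharp-\lambda)^\gamma_\limminus=\sum_k\Tr\bigl(-\Delta^\sharp_{R'}-(\lambda-\mu_k)\bigr)^\gamma_\limminus ,
\end{equation*}
where $\mu_k$ are the eigenvalues on $(0,l_d)$. We apply the $(d-1)$-dimensional statement to each summand. The main terms sum to $L^{\rm sc}_{\gamma,d-1}|R'|\Tr(-\Delta^\sharp_{(0,l_d)}-\lambda)^{\gamma+(d-1)/2}_\limminus$, and the one-dimensional bound turns this into $L^{\rm sc}_{\gamma,d}|R|\lambda^{\gamma+d/2}+O(|R'|\lambda^{\gamma+(d-1)/2})$, which is admissible since $|R'|\leq\Haus^{d-1}(\partial R)$. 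The error terms sum to
\begin{equation*}
\Haus^{d-2}(\partial R')\sum_k\Bigl((\lambda-\mu_k)_\limplus^{\gamma+(d-2)/2}+(\min_{i<d} l_i)^{2-d}(\lambda-\mu_k)_\limplus^{\gamma}\Bigr).
\end{equation*}
We bound these with the a priori one-dimensional estimate $\Tr(-\Delta_{(0,l_d)}^\sharp-\lambda)^{\alpha}_\limminus\leq\lambda^\alpha(l_d\sqrt\lambda/\pi+1)\lesssim l_d\lambda^{\alpha+1/2}$, valid because $l_d\sqrt\lambda\geq1$. This yields $l_d\Haus^{d-2}(\partial R')\lambda^{\gamma+(d-1)/2}(1+(\min_i l_i\sqrt\lambda)^{2-d})$, which is at most $\Haus^{d-1}(\partial R)\lambda^{\gamma+(d-1)/2}(1+(\min_il_i\sqrt\lambda)^{1-d})$ because $\min_il_i\sqrt\lambda\geq1$.

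The main obstacle is the subtlety that for $\lambda-\mu_k$ small the inner cuboid $R'$ is "collapsed" relative to the level $\lambda-\mu_k$. Then the $(d-1)$-dimensional error carries the factor $(\min_{i<d} l_i\sqrt{\lambda-\mu_k})^{2-d}$, which is large rather than controlled by $(\min_i l_i\sqrt\lambda)^{2-d}$. Expanding it produces the power $(\lambda-\mu_k)^{\gamma}$ multiplied by $(\min_{i<d} l_i)^{2-d}$, as written above. One must check that this term, summed over $k$ (at most $l_d\sqrt\lambda/\pi+1$ terms), still fits the target. It does: it gives $\Haus^{d-2}(\partial R')\,l_d\lambda^{\gamma+1/2}(\min_i l_i)^{2-d}$, which equals $\Haus^{d-2}(\partial R')\,l_d\lambda^{\gamma+(d-1)/2}(\min_il_i\sqrt\lambda)^{2-d}$. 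The $\gamma=0$ case needs no separate treatment, since every step uses only counting bounds and positivity.
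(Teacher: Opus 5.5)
Your proof is correct. It shares the paper's overall scheme: reduction to Dirichlet and Neumann by monotonicity, a one-dimensional bound obtained from the counting estimate via the integral representation of Riesz means, and induction on $d$ through separation of variables. The induction step, however, is organized the other way around.

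The paper writes $R=(0,l_1)\times R'$ and applies the one-dimensional estimate to the interval factor at each level $\lambda-\lambda_k(-\Delta_{R'}^\sharp)$. This costs a multiple of $\Tr(-\Delta_{R'}^\sharp-\lambda)^\gamma_\limminus$, which Lemma~\ref{lem: Neumann Riesz mean bound} then controls. The paper invokes the induction hypothesis only once, for $R'$ at the fixed level $\lambda$ but at the raised order $\gamma+1/2$. The price is that the hypothesis must be carried for all orders of Riesz means; the gain is that the collapse of $R'$ at low levels never appears and no case distinction is needed.

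You keep the order $\gamma$ fixed and apply the $(d-1)$-dimensional statement at every level $\lambda-\mu_k$. You therefore need its full form, including the factor $(\min_{i<d}l_i\sqrt{\lambda-\mu_k})^{2-d}$. You handle this correctly by expanding it into $(\min_{i<d}l_i)^{2-d}(\lambda-\mu_k)_\limplus^{\gamma}$ and summing with the explicit one-dimensional count. As a result, Lemma~\ref{lem: Neumann Riesz mean bound} is needed only in your collapsed regime.

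Your regime split is not actually essential. Drop the assumption $\min_i l_i\sqrt\lambda\geq 1$ and keep the bound $\lambda^\alpha(l_d\sqrt\lambda/\pi+1)$. The $l_d\sqrt\lambda/\pi$ part still gives your term, with $(\min_i l_i\sqrt\lambda)^{2-d}\leq 1+(\min_i l_i\sqrt\lambda)^{1-d}$. The extra $+1$ part contributes at most a constant times $\Haus^{d-1}(\partial R)\lambda^{\gamma+(d-1)/2}\bigl((\min_i l_i\sqrt\lambda)^{-1}+(\min_i l_i\sqrt\lambda)^{1-d}\bigr)$. Both are admissible for $d\geq 2$.
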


\begin{proof}
    Since 
    \begin{equation*}
        \Tr(-\Delta_{R}^{\rm D}-\lambda)_\limminus^\gamma \leq \Tr(-\Delta_{R}^\beta-\lambda)_\limminus^\gamma \leq \Tr(-\Delta_R^{\rm N}-\lambda)_\limminus^\gamma\, , 
    \end{equation*}
    it suffices to prove the claimed statement for $\sharp\in \{\rm D, \rm N\}$.

    For $d=1$, let $R=(0, l_1)$ with $l_1 >0$. We know from the explicit formulas for the Dirichlet and Neumann eigenvalues that
    \begin{equation}\label{eq: d=1 Weyl counting bound}
        |\Tr(-\Delta_{(0, l_1)}^{\sharp}-\lambda)_\limminus^0 -L_{0, 1}^{\rm sc} l_1 \lambda^{1/2}| \leq 1\, .
    \end{equation}
    By the Aizenman--Lieb principle, we have for $\gamma >0$ that
    \begin{align*}
        \Tr(-\Delta_{(0, l_1)}^\sharp-\lambda)_\limminus^\gamma= L_{\gamma, 1}^{\rm sc}l_1 \lambda^{\gamma+1/2}+ \gamma\int_0^\lambda (\lambda-\tau)^{\gamma-1}\big(\Tr(-\Delta_{(0, l_1)}^\sharp-\tau)_\limminus^0-L_{0, 1}^{\rm sc} l _ 1\tau^{1/2}\big)\, d\tau\, .
    \end{align*}
    It thus follows from \eqref{eq: d=1 Weyl counting bound} that 
    \begin{equation}\label{eq: d=1 Weyl Riesz bound}
        |\Tr(-\Delta_{(0, l_1)}^{\sharp}-\lambda)_\limminus^\gamma-L_{\gamma, 1}^{\rm sc}l_1 \lambda^{\gamma+1/2}|\lesssim_{\gamma}\lambda^{\gamma}
    \end{equation}
    for all $\lambda \geq 0, \gamma \geq 0$. This completes the proof for $d=1$.

    To prove the claim for $d\geq 2$, we argue by induction in $d$, the base case being the bounds for $d=1$ that we have just shown. Fix $d\geq 2, \gamma \geq 0$ and assume that we have proved the lemma in all lower dimensions and all orders of Riesz means. We first use the product structure of $R = (0, l_1)\times R'$ and the triangle inequality to obtain the estimate
    \begin{equation} \label{eq: lem3-4_split}
    \begin{aligned}
             |\Tr(-\Delta_R^{\sharp}-\lambda)_\limminus^\gamma-L_{\gamma, d}^{\rm sc}|R|\lambda^{\gamma+d/2}| &\leq  |\Tr(-\Delta_R^\sharp-\lambda)_\limminus^\gamma-L_{\gamma, 1}^{\rm sc}l_1 \Tr(-\Delta_{R'}^\sharp-\lambda)_\limminus^{\gamma+1/2}|\\
         & \qquad +|L_{\gamma, 1}^{\rm sc}l_1 \Tr(-\Delta_{R'}^\sharp-\lambda)_\limminus^{\gamma+1/2}-L_{\gamma, d}^{\rm sc}|R|\lambda^{\gamma+d/2}|
    \end{aligned}
    \end{equation}
    With the triangle inequality and \eqref{eq: d=1 Weyl Riesz bound}, we can bound the first term now by
    \begin{align*}
        |\Tr(-&\Delta_R^\sharp-\lambda)_\limminus^\gamma-L_{\gamma, 1}^{\rm sc}l_1 \Tr(-\Delta_{R'}^\sharp-\lambda)_\limminus^{\gamma+1/2}|\\
        &= \biggl|\sum_{k \geq 1}\Tr(-\Delta_{(0, l_1)}^\sharp-(\lambda-\lambda_k(-\Delta_{R'}^\sharp))_\limplus)_\limminus^\gamma-L_{\gamma, 1}^{\rm sc}l_1 \Tr(-\Delta_{R'}^\sharp-\lambda)_\limminus^{\gamma+1/2}\biggr|\\
        &\lesssim_{\gamma}\Tr(-\Delta_{R'}^\sharp-\lambda)_\limminus^\gamma\, .
    \end{align*}
    For the Riesz mean appearing on the right-hand side Lemma~\ref{lem: Neumann Riesz mean bound} yields
    \begin{align*}
        \Tr(-\Delta_{R'}^\sharp-\lambda)_\limminus^\gamma&\lesssim_d |R'|\lambda^{\gamma+(d-1)/2}(1+(\min_i l_i \sqrt{\lambda})^{1-d})\,.
    \end{align*}
    By the induction hypothesis, the second term on the right-hand side of \eqref{eq: lem3-4_split} satisfies
    \begin{align*}
        |\Tr(-\Delta_{R'}^\sharp&-\lambda)_\limminus^{\gamma+1/2}-L_{\gamma+1/2, d-1}^{\rm sc}|R'|\lambda^{\gamma+d/2}| \\
        &\lesssim_{\gamma, d} \Haus^{d-2}(\partial R') \lambda^{\gamma+(d-1)/2}(1+(\min_i l_i \sqrt{\lambda})^{2-d})\, .
    \end{align*}
    Combining the two previous estimates with the facts that $L_{\gamma, 1}^{\rm sc}L_{\gamma+1/2, d-1}^{\rm sc}=L_{\gamma, d}^{\rm sc}$ and $|R|=l_1|R'|$, we conclude that
     \begin{align*}
        |\Tr(-\Delta_R^\sharp-\lambda)_\limminus^\gamma-L_{\gamma, d}^{\rm sc}|R|\lambda^{\gamma+d/2}| &\lesssim_{\gamma, d} |R'|\lambda^{\gamma+(d-1)/2}(1+(\min_i l_i \sqrt{\lambda})^{1-d}) \\
        & \qquad + l_1 \Haus^{d-2}(\partial R')\lambda^{\gamma+(d-1)/2}(1+(\min_i l_i \sqrt{\lambda})^{2-d})\, .
    \end{align*} 
    We finally note that $(\min_{i}l_i \sqrt{\lambda})^{2-d}\leq 1+ (\min_{i}l_i \sqrt{\lambda})^{1-d}$ and $\Haus^{d-1}(\partial R) = l_1 \Haus^{d-2}(\partial R') + 2 |R'|$, thus giving the estimates $|R'| \leq \Haus^{d-1}(\partial R)$ and $l_1 \Haus^{d-2}(\partial R')\leq \Haus^{d-1}(\partial R) $. Using these estimates completes the proof of the lemma.
\end{proof}

\section{Asymptotics along collapsing sequences of cuboids}
\label{sec: collapsing cuboids}

The next lemma will be an important ingredient in our analysis of the asymptotic shape optimization problem. Specifically, the lemma will allow us to understand the asymptotic behavior of Riesz means along sequences of $\lambda_j$ tending to infinity and cuboids $R_j$ that fail to be precompact. 

\begin{lemma}\label{lem: collapsing Weyl law}
    Let $d\geq 2$, $\gamma \geq 0$. Let $\{\lambda_j\}_{j\geq 1}$, $\{\beta_j\}_{j\geq 1}$ be positive sequences and $\{R_j\}_{j\geq 1}$ be a sequence of cuboids in $\R^d$ with side lengths $l_1^{(j)}, \ldots, l_d^{(j)}>0$. Assume that
    \begin{equation*}
       \lim_{j\to \infty}|R_j|^{2/d}\lambda_j=\infty\, , \qquad \lim_{j\to \infty} \beta_j /\sqrt{\lambda_j}=\beta'\geq 0\, , 
    \end{equation*}
    and
    \begin{equation*}
        0 < \inf_{j\geq 1}\min_i l_i^{(j)} \sqrt{\lambda_j} \leq \sup_{j\geq 1} \min_i l_i^{(j)}\sqrt{\lambda_j}<\infty\, .
    \end{equation*}
    Then there exists an integer $m \in \{1, \ldots, d-1\}$ and a cuboid $R' \subset \R^m$ such that
    \begin{equation*}
        \limsup_{j\to \infty} \frac{\Tr(-\Delta_{R_j}^{\beta_j}-\lambda_j)_\limminus^\gamma}{L_{\gamma, d}^{\rm sc}|R_j|\lambda_j^{\gamma+d/2}} = \frac{\Tr(-\Delta_{R'}^{\beta'}-1)^{\gamma+(d-m)/2}_\limminus}{L_{\gamma+(d-m)/2, m}^{\rm sc}|R'|}\, .
    \end{equation*}
    If $\lim_{j\to \infty}\beta_j/\sqrt{\lambda_j}=\infty$ then the corresponding statement holds with $-\Delta_{R'}^{\beta'}$ on the right-hand side replaced by $-\Delta_{R'}^{\rm D}$. 
    
    Furthermore, if there is an integer $m' \in \{1, \ldots, d-1\}$ such that 
    \begin{equation*}
        \lim_{j\to \infty}l_i^{(j)}\sqrt{\lambda_j} = l_i' \quad \mbox{for }i \leq m' \quad \mbox{and} \quad \lim_{j\to \infty}l_i^{(j)}\sqrt{\lambda_j} = \infty \quad \mbox{for }i >m'\, , 
    \end{equation*}
    then the claim holds with $m=m'$ and $R' = \prod_{i=1}^m (0, l_i')$.
\end{lemma}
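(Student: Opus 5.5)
The plan is to reduce to the case in the final part of the statement by passing to a subsequence, and then to separate variables in the bounded directions.

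\textbf{Reduction to the structured case.} Reorder coordinates for each $j$ so that $l_1^{(j)}\le \dots\le l_d^{(j)}$; the Riesz mean is unaffected. Pick a subsequence realizing the $\limsup$. Along a further subsequence, $l_i^{(j)}\sqrt{\lambda_j}\to l_i'\in(0,\infty]$ for every $i$. The assumption on $\min_i l_i^{(j)}\sqrt{\lambda_j}$ gives $l_1'\in(0,\infty)$. The assumption $|R_j|\lambda_j^{d/2}=\prod_i l_i^{(j)}\sqrt{\lambda_j}\to\infty$ forces $l_d'=\infty$. Let $m$ be the number of finite limits; then $1\le m\le d-1$. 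It therefore suffices to prove the structured statement, namely that the limit of the ratio exists and equals the claimed expression with $R'=\prod_{i\le m}(0,l_i')$. This also handles the general claim, although the resulting $R'$ depends on the chosen subsequence. If the intended reading is that the limit is independent of the subsequence, one notes that the structured claim computes the limit along any subsequence of this form.

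\textbf{Scaling and separation of variables.} After rescaling by $\sqrt{\lambda_j}$, and using $\lambda_k(-\Delta^{\beta}_{(0,l)})=l^{-2}\lambda_k(-\Delta^{\beta l}_{(0,1)})$ and its $d$-dimensional analogue, we may assume $\lambda_j=1$. The Robin parameter then becomes $b_j=\beta_j/\sqrt{\lambda_j}\to\beta'$, and the side lengths become $L_i^{(j)}=l_i^{(j)}\sqrt{\lambda_j}$. Write $R_j=P_j\times Q_j$ with $P_j=\prod_{i\le m}(0,L_i^{(j)})$ and $Q_j\subset\R^{d-m}$, whose side lengths all tend to infinity. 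Separating variables gives
\begin{equation*}
\Tr(-\Delta_{R_j}^{b_j}-1)_\limminus^\gamma=\sum_k \Tr\bigl(-\Delta_{Q_j}^{b_j}-(1-\mu_k^{(j)})_\limplus\bigr)_\limminus^\gamma ,
\end{equation*}
where $\mu_k^{(j)}=\lambda_k(-\Delta_{P_j}^{b_j})$. Apply Lemma~\ref{lem: quantitative Weyl law} to $Q_j$ with spectral parameter $\tau=(1-\mu_k^{(j)})_\limplus\le 1$. The error is bounded by $\Haus^{d-m-1}(\partial Q_j)\,\tau^{\gamma+(d-m-1)/2}\bigl(1+(\min L\sqrt\tau)^{1-(d-m)}\bigr)$. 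The case $\tau$ very small needs care. Both the main term and the Riesz mean are monotone in $\tau$ and vanish as $\tau\to0$, so one can split at $\tau=\delta$ and use the crude bound of Lemma~\ref{lem: Neumann Riesz mean bound} below $\delta$. For $d-m=1$ the lemma applies directly. The result is
\begin{equation*}
\Tr(-\Delta_{R_j}^{b_j}-1)_\limminus^\gamma = L^{\rm sc}_{\gamma,d-m}|Q_j|\,\Tr(-\Delta_{P_j}^{b_j}-1)_\limminus^{\gamma+(d-m)/2}\,\bigl(1+o(1)\bigr)+O\bigl(|Q_j|\,\varepsilon_j\bigr),
\end{equation*}
where $\varepsilon_j\to0$, because $\Haus^{d-m-1}(\partial Q_j)/|Q_j|\to0$. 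The number of $k$ with $\mu_k^{(j)}<1$ is uniformly bounded, since $P_j$ has bounded sides.

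\textbf{Convergence of the eigenvalue sums.} It remains to show
\begin{equation*}
\Tr(-\Delta_{P_j}^{b_j}-1)_\limminus^{\gamma+(d-m)/2}\to\Tr(-\Delta_{R'}^{\beta'}-1)_\limminus^{\gamma+(d-m)/2}.
\end{equation*}
Since $P_j$ is itself a product, each $\mu_k^{(j)}$ is a sum of one-dimensional eigenvalues $(L_i^{(j)})^{-2}\lambda_n\bigl(-\Delta^{b_jL_i^{(j)}}_{(0,1)}\bigr)$. These are continuous in the side length and in the Robin parameter by the smoothness established in Section~\ref{sec: 1D problem}, and Lemma~\ref{lem: lambdak derivative bounds} even gives Lipschitz control. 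When $\beta'=0$ the limit is Neumann, which is continuous by Lemma~\ref{lem: lambda1 asymptotics} and the derivative bound. When $b_j\to\infty$ the eigenvalues converge to Dirichlet ones by Lemma~\ref{lem: arctan bounds lambdak}. The exponent $\gamma+(d-m)/2\ge1/2>0$, so the function $t\mapsto(1-t)_\limplus^{\gamma+(d-m)/2}$ is continuous. Combined with finitely many relevant eigenvalues, this gives convergence of the sum.

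Finally, divide by $L^{\rm sc}_{\gamma,d}|R_j|=L^{\rm sc}_{\gamma,d-m}|Q_j|\,L^{\rm sc}_{\gamma+(d-m)/2,m}|P_j|$. This product identity follows from the Gamma-function definition, as in the earlier identity $L_{\gamma,1}^{\rm sc}L_{\gamma+1/2,d-1}^{\rm sc}=L_{\gamma,d}^{\rm sc}$. Using $|P_j|\to|R'|$ completes the argument.

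\textbf{Main obstacle.} I expect the hardest step to be uniformity of the Weyl remainder for $Q_j$ near $\tau\to0$ when $d-m\ge2$. I would handle it through the $\delta$-splitting described above.
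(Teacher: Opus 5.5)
Your proposal is correct and follows the paper's proof essentially step by step. You sort the sides, pass to a subsequence along which the rescaled side lengths converge, separate variables into a bounded factor and a diverging factor, and apply Lemma~\ref{lem: quantitative Weyl law} to the diverging factor to obtain $L_{\gamma,d-m}^{\rm sc}|Q_j|\Tr(-\Delta_{P_j}^{b_j}-1)_\limminus^{\gamma+(d-m)/2}$. You then conclude by continuity of the finitely many relevant eigenvalues; the paper normalizes $|R_j|=1$ rather than $\lambda_j=1$, which is immaterial.

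The only superfluous part is your $\delta$-splitting, because the small-$\tau$ issue you flag as the main obstacle does not arise. The remainder in Lemma~\ref{lem: quantitative Weyl law} equals $C\,\Haus^{d-m-1}(\partial Q_j)\bigl(\tau^{\gamma+(d-m-1)/2}+(\min_{i>m}L_i^{(j)})^{1-(d-m)}\tau^{\gamma}\bigr)$, which is already uniform for $\tau\in[0,1]$. The paper simply sums this over $k$, giving error terms $\Tr(-\Delta_{P_j}^{b_j}-1)_\limminus^{\gamma+(d-m-1)/2}$ and $\Tr(-\Delta_{P_j}^{b_j}-1)_\limminus^{\gamma}$ multiplied by $\Haus^{d-m-1}(\partial Q_j)$. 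These vanish after normalization since $\Haus^{d-m-1}(\partial Q_j)/|Q_j|\to 0$.
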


\begin{proof}
    By the invariance of the Laplacian under orthogonal transformations we may without loss of generality assume that $0<l_1^{(j)}\leq l_2^{(j)} \leq \ldots \leq l_d^{(j)}$, for each $j\geq 1$. 
    
    We next reduce to the case $|R_j|=1$ for all $j\geq 1$. If $\lambda_j, \beta_j, R_j$ satisfy the assumptions of the lemma then defining $\tilde \lambda_j=|R_j|^{2/d}\lambda_j, \tilde \beta_j = \beta_j |R_j|^{1/d}, $ and $\tilde R_j = |R_j|^{-1/d}R_j$ these new sequences also satisfy the assumptions with the same values of $\beta', l_i'$, since for each $j\geq 1$ we have
    \begin{equation*}
        |\tilde R_j|^{2/d}\tilde\lambda_j=|R_j|^{2/d}\lambda_j\, , \quad \tilde\beta_j /\sqrt{\tilde\lambda_j}=\beta_j/\sqrt{\lambda_j}\, , \quad \tilde l_i^{(j)}\sqrt{\tilde \lambda_j} =l_i^{(j)}\sqrt{\lambda_j} \, .
    \end{equation*}
    Furthermore, by the behavior of Robin eigenvalues under rescaling it holds that
    \begin{equation*}
        \frac{\Tr(-\Delta_{R_j}^{\beta_j}-\lambda_j)_\limminus^\gamma}{L_{\gamma, d}^{\rm sc}|R_j|\lambda_j^{\gamma+d/2}}= \frac{\Tr(-\Delta_{\tilde R_j}^{\tilde \beta_j}-\tilde \lambda_j)_\limminus^\gamma}{L_{\gamma, d}^{\rm sc}|\tilde R_j|\tilde\lambda_j^{\gamma+d/2}}\, .
    \end{equation*}
    Consequently, we may assume without loss of generality that $|R_j|=1$ for each $j\geq 1$.
    
    By passing to a subsequence we may assume without loss of generality that
    \begin{align*}
        \limsup_{j\to \infty} \frac{\Tr(-\Delta_{R_j}^{\beta_j}-\lambda_j)_\limminus^\gamma}{L_{\gamma, d}^{\rm sc}\lambda_j^{\gamma+d/2}} &= \lim_{j\to \infty} \frac{\Tr(-\Delta_{R_j}^{\beta_j}-\lambda_j)_\limminus^\gamma}{L_{\gamma, d}^{\rm sc}\lambda_j^{\gamma+d/2}}\, , 
    \end{align*}
    and that there exists an integer $m\in \{1, \ldots, d\}$ such that
    \begin{equation*}
        \lim_{j\to \infty}l_i^{(j)}\sqrt{\lambda_j} = l_i'>0 \quad \mbox{for }i \leq m
    \qquad
    \mbox{and}
    \qquad 
        \lim_{j\to \infty}l_i^{(j)}\sqrt{\lambda_j} = \infty \quad \mbox{for }i > m\, .
    \end{equation*}
    Note that since $|R_j| = \prod_{i=1}^d l_i^{(j)}=1$ for each $j\geq 1$, it follows from the assumption that $\sup_{j\geq 1}l_{1}^{(j)}\sqrt{\lambda_j}<\infty$ that we must have $\lim_{j\to \infty}l_d^{(j)}\sqrt{\lambda_j}=\infty$. We shall prove that the claimed limit holds with $R'=\prod_{i=1}^m (0, l_i')$. Note that this will justify also the second statement of the lemma.

    We define $R_j^\parallel=\prod_{i=1}^m(0, l_i^{(j)})$ and $R_j^\perp=\prod_{i=m+1}^d(0, l_i^{(j)})$.
    By the product structure of the cuboid, we have
    \begin{align*}
        \Tr(-\Delta_{R_j}^{\beta_j}-\lambda_j)_\limminus^\gamma = \sum_{k\geq 1}\Tr(-\Delta_{R_j^\perp}^{\beta_j}-(\lambda_j -\lambda_k(-\Delta_{R_j^{\parallel}}))_\limplus)^\gamma_\limminus \, .
    \end{align*}
    Applying Lemma~\ref{lem: quantitative Weyl law} to the Riesz means on the right-hand side, we obtain
    \begin{align*}
        &\Bigl| \Tr(-\Delta_{R_j}^{\beta_j}-\lambda_j)_\limminus^\gamma - L_{\gamma, d-m}^{\rm sc} |R_j^\perp| \Tr(-\Delta_{R_j^\parallel}^{\beta_j}-\lambda_j)_\limminus^{\gamma+(d-m)/2}  \Bigr| \\
        &\quad \lesssim_{\gamma, d, m} \Haus^{d-m-1}(\partial R_j^\perp) \Big( \Tr(-\Delta_{R_j^\parallel}^{\beta_j}-\lambda_j)_\limminus^{\gamma+(d-m-1)/2} + (l_{m+1}^{(j)})^{-(d-m-1)} \Tr(-\Delta_{R_j^\parallel}^{\beta_j}-\lambda_j)_\limminus^{\gamma} \Big) \, . 
    \end{align*}
    Dividing the above inequality by $L_{\gamma, d}^{\rm sc} |R_j| \lambda_j^{\gamma+d/2}$ and using $L_{\gamma, d}^{\rm sc}=L_{\gamma, d-m}^{\rm sc}L_{\gamma+(d-m)/2, m}^{\rm sc}$ and $|R_j| = |R_j^\parallel| \, |R_j^\perp|$ gives 
    \begin{equation} \label{eq: proof4-1estim}
    \begin{aligned}
         &\Biggl| \frac{ \Tr(-\Delta_{R_j}^{\beta_j}-\lambda_j)_\limminus^\gamma }{L_{\gamma, d}^{\rm sc} |R_j| \lambda_j^{\gamma+d/2}} - \frac{  \Tr(-\Delta_{R_j^\parallel}^{\beta_j}-\lambda_j)_\limminus^{\gamma+(d-m)/2}}{L_{\gamma+(d-m)/2, m}^{\rm sc} |R_j^\parallel| \lambda_j^{\gamma+d/2}}  \Biggr| \\
        &\hspace{50pt} \lesssim_{\gamma, d, m} \frac{\Haus^{d-m-1}(\partial R_j^\perp)}{|R_j| \lambda_j^{\gamma+d/2}} \Bigl( \Tr(-\Delta_{R_j^\parallel}^{\beta_j}-\lambda_j)_\limminus^{\gamma+(d-m-1)/2} \\
        & \hspace{165pt}+ (l_{m+1}^{(j)})^{-(d-m-1)} \Tr(-\Delta_{R_j^\parallel}^{\beta_j}-\lambda_j)_\limminus^{\gamma} \Bigr) \, .
    \end{aligned}
    \end{equation}
    Let us first consider the second term on the left-hand side. Defining $ R_j ' = \sqrt{\lambda_j}R_j^\parallel$ and $\beta_j' = \beta_j/\sqrt{\lambda_j}$ and using the scaling of Robin eigenvalues, we have for any $\gamma>0$
    \begin{align*}
        \Tr(-\Delta_{R_j^\parallel}^{\beta_j}-\lambda_j)_\limminus^{\gamma} = \lambda_j^\gamma \, \Tr(-\Delta_{R_j'}^{\beta_j'}-1)_\limminus^{\gamma}, 
    \end{align*}
    therefore, 
    \begin{align*}
        \frac{  \Tr(-\Delta_{R_j^\parallel}^{\beta_j}-\lambda_j)_\limminus^{\gamma+(d-m)/2}}{L_{\gamma+(d-m)/2, m}^{\rm sc} |R_j^\parallel| \lambda_j^{\gamma+d/2}}  = \frac{\Tr(-\Delta_{R_j'}^{\beta_j '}-1)_\limminus^{\gamma+(d-m)/2}}{L_{\gamma+(d-m)/2, m}|R_j'|}.   
    \end{align*}
    The side lengths of $R_j'$ are given by $l_i^{(j)}\sqrt{\lambda_j}$, for $i=1, \ldots, m$. By our definition of $m$ and $l_i'$, $R_j'$ converges to the $m$-dimensional cuboid $R' = \prod_{i=1}^m (0, l_i')$. Since furthermore $\gamma+(d-m)/2>0$, the continuity of the individual eigenvalues with respect to the cuboid and the Robin parameter yields 
    \begin{equation} \label{eq: proof4-1_rescaledlimit}
    \begin{aligned}
                \lim_{j\to \infty}\frac{\Tr(-\Delta_{R_j'}^{\beta_j '}-1)_\limminus^{\gamma+(d-m)/2}}{L_{\gamma+(d-m)/2, m}|R_j'|}= 
        \begin{cases}
        \displaystyle\frac{\Tr(-\Delta_{R'}^{\beta'}-1)_\limminus^{\gamma+(d-m)/2}}{L_{\gamma+(d-m)/2, m}^{\rm sc}|R'|} &\mbox{if }\lim_{j\to \infty}\beta_j' = \beta'\geq 0\, , \\[12pt]
        \displaystyle\frac{\Tr(-\Delta_{R'}^{\rm D}-1)_\limminus^{\gamma+(d-m)/2}}{L_{\gamma+(d-m)/2, m}^{\rm sc}|R'|} &\mbox{if }\lim_{j\to \infty}\beta_j' = \infty\, , \\
        \end{cases}
    \end{aligned}
    \end{equation}
    Hence, the statement of the lemma follows, if we can show that the right-hand side of \eqref{eq: proof4-1estim} goes to zero as $j \to \infty$. To prove that this is the case we rewrite the expression as
    \begin{align*}
        \frac{ \Haus^{d-m-1}(\partial R_j^\perp)}{ |R_j| \lambda_j^{\gamma+d/2}} \Bigl( \Tr(-\Delta_{R_j^\parallel}^{\beta_j}-\lambda_j)_\limminus^{\gamma+(d-m-1)/2} + (l_{m+1}^{(j)})^{-(d-m-1)} \Tr(-\Delta_{R_j^\parallel}^{\beta_j}-\lambda_j)_\limminus^{\gamma} \Bigr) \\
        =\frac{ \Haus^{d-m-1}(\partial R_j^\perp)}{ |R_j^\perp|} \Biggl(  \frac{\Tr(-\Delta_{R_j'}^{\beta_j'}-1)_\limminus^{\gamma+(d-m-1)/2}  }{|R_j'|  \lambda_j^{1/2} }+ \frac{ \Tr(-\Delta_{R_j'}^{\beta_j'}-1)_\limminus^{\gamma} }{|R_j'|(l_{m+1}^{(j)})^{d-m-1}  \lambda_j^{(d-m)/2}} \Biggr) \, .
    \end{align*}
    From the estimate
    \begin{align*}
        \frac{ \Haus^{d-m-1}(\partial R_j^\perp)}{ |R_j^\perp|} = 2 \sum_{i=m+1}^d \frac{1}{l_{i}^{(j)}} \leq \frac{2(d-m)}{l_{m+1}^{(j)}}
    \end{align*}
    one concludes that
    \begin{align*}
        &\frac{ \Haus^{d-m-1}(\partial R_j^\perp)}{ |R_j| \lambda_j^{\gamma+d/2}} \Bigl( \Tr(-\Delta_{R_j^\parallel}^{\beta_j}-\lambda_j)_\limminus^{\gamma+(d-m-1)/2} + (l_{m+1}^{(j)})^{-(d-m-1)} \Tr(-\Delta_{R_j^\parallel}^{\beta_j}-\lambda_j)_\limminus^{\gamma} \Bigr) \\
       \qquad &\leq 2(d-m) \Biggl(  \frac{\Tr(-\Delta_{R_j'}^{\beta_j'}-1)_\limminus^{\gamma+(d-m-1)/2}  }{|R_j'| l_{m+1}^{(j)} \lambda_j^{1/2} }+ \frac{ \Tr(-\Delta_{R_j'}^{\beta_j'}-1)_\limminus^{\gamma} }{|R_j'| (l_{m+1}^{(j)}  \lambda_j^{1/2} )^{(d-m)}} \Biggr) \, .
    \end{align*}
    Since $\lim_{j\to \infty} l_{m+1}^{(j)}  \sqrt{\lambda_j} =\infty$ and the terms of the form $\Tr(-\Delta_{R_j'}^{\beta_j'}-1)_\limminus^{\gamma} / |R_j'|$ converge as in~\eqref{eq: proof4-1_rescaledlimit}, we conclude that this expression tends to zero as $j \to \infty$. This completes the proof.
\end{proof}

While Lemma~\ref{lem: collapsing Weyl law} allows us to understand the behavior of Riesz means along a sequence of cuboids that collapses at a rate proportional to the characteristic wave length $1/\sqrt{\lambda}$, we also need to understand what happens along sequences collapsing at an even faster rate. To this end we shall prove the following lemma.
\begin{lemma}\label{lem: super-critical collapse}
     Let $d\geq 2$, $\gamma \geq 0$. Let $\{\lambda_j\}_{j\geq 1}$, $\{\beta_j\}_{j\geq 1}$ be positive sequences, and $\{R_j\}_{j\geq 1}$ be a sequence of cuboids in $\R^d$ with side lengths $l_1^{(j)}, \ldots, l_d^{(j)}>0$. Assume that
    \begin{equation*}
       \lim_{j\to \infty}|R_j|^{2/d}\lambda_j=\infty\, , \quad \lim_{j\to \infty}\min_i l_i^{(j)} \sqrt{\lambda_j} =0\, , \qquad \mbox{and}\qquad \liminf_{j\to \infty} \frac{\beta_j}{\min_i l_i^{(j)}\lambda_j}\geq\frac{1}{2}\, .
    \end{equation*}
    Then
    \begin{equation*}
        \lim_{j\to \infty} \frac{\Tr(-\Delta_{R_j}^{\beta_j}-\lambda_j)_\limminus^\gamma}{L_{\gamma, d}^{\rm sc}|R_j|\lambda_j^{\gamma+d/2}} = 0\, .
    \end{equation*}
\end{lemma}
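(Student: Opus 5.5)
We assume, as in the proof of Lemma~\ref{lem: collapsing Weyl law}, that $l_1^{(j)}\le \dots\le l_d^{(j)}$ and, after rescaling, that $|R_j|=1$. The rescaling $\tilde\lambda_j=|R_j|^{2/d}\lambda_j$, $\tilde\beta_j=|R_j|^{1/d}\beta_j$ leaves $\min_i l_i\sqrt{\lambda}$ and $\beta_j/(\min_i l_i\lambda_j)$ invariant, so the hypotheses survive. The idea is that the shortest direction $(0,l_1^{(j)})$ forces a large lowest eigenvalue. Concretely, we aim to show
\[
\lambda_1\bigl(-\Delta^{\beta_j}_{(0,l_1^{(j)})}\bigr) \ge \lambda_j
\]
for all large $j$. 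If this holds, every eigenvalue of $-\Delta_{R_j}^{\beta_j}$, which by separation of variables is a sum of one-dimensional Robin eigenvalues, is at least $\lambda_j$. The Riesz mean then vanishes for $\gamma>0$. For $\gamma=0$ we need the strict inequality $>\lambda_j$, because of the $\le$ convention in the counting function.

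To bound the first interval eigenvalue we use scaling: $\lambda_1(-\Delta^{\beta}_{(0,l)})=l^{-2}\lambda_1(-\Delta^{\beta l}_{(0,1)})$. Write $b_j=\beta_j l_1^{(j)}$. By hypothesis, $b_j \ge (\tfrac12-\epsilon)(l_1^{(j)})^2\lambda_j$, and $(l_1^{(j)})^2\lambda_j\to 0$, although $b_j$ itself could be large or small. If $b_j\to 0$ along a subsequence, Lemma~\ref{lem: lambda1 asymptotics} gives $\lambda_1(-\Delta^{b_j}_{(0,1)})=2b_j+O(b_j^2)$. This yields $\lambda_1\ge (l_1^{(j)})^{-2}(2b_j-Cb_j^2)\approx (1-2\epsilon)\lambda_j$, which only barely fails to reach $\lambda_j$.

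This borderline is the \emph{main obstacle}. The constant $\tfrac12$ is exactly critical, so the first eigenvalue alone reaches only about $\lambda_j$, and we must exploit the smallness of the remaining window. Since the bound is not quite $\ge\lambda_j$, we instead estimate the Riesz mean as
\[
\Tr(-\Delta_{R_j}^{\beta_j}-\lambda_j)_-^\gamma \le \sum_{k}\Tr\bigl(-\Delta_{R_j^\perp}^{\beta_j}-(\lambda_j-\lambda_k(-\Delta^{\beta_j}_{(0,l_1^{(j)})}))_+\bigr)_-^\gamma .
\]
Only $k=1$ contributes, because $\lambda_2\ge \pi^2/(l_1^{(j)})^2\gg\lambda_j$ (the Neumann bound). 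The effective spectral parameter is then $\mu_j=(\lambda_j-\lambda_1)_+\le 2\epsilon\lambda_j+o(\lambda_j)$, using $b_j^2/(l_1^{(j)})^2\lesssim (l_1^{(j)})^2\lambda_j^2=o(\lambda_j)$. If $b_j$ stays bounded away from $0$, then $\lambda_1\gtrsim (l_1^{(j)})^{-2}\gg\lambda_j$ (monotonicity in $\beta$ together with $\lambda_1(-\Delta^{b}_{(0,1)})>0$), and the mean simply vanishes.

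Lemma~\ref{lem: Neumann Riesz mean bound} applied on $R_j^\perp\subset\R^{d-1}$ then bounds the Riesz mean by $C|R_j^\perp|\mu_j^{\gamma+(d-1)/2}\bigl(1+(\min_{i\ge2}l_i^{(j)}\sqrt{\mu_j})^{1-d}\bigr)$. Dividing by $|R_j|\lambda_j^{\gamma+d/2}=l_1^{(j)}|R_j^\perp|\lambda_j^{\gamma+d/2}$ gives a factor of order $(l_1^{(j)}\sqrt{\lambda_j})^{-1}$, which blows up. This suggests the plain counting route is too weak, so a sharper treatment is needed at the borderline. What I would try first is to sharpen the expansion of Lemma~\ref{lem: lambda1 asymptotics} to include the second-order term. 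Using $\beta=\sqrt{\lambda}\tan(\sqrt{\lambda}/2)\ge \lambda/2+\lambda^2/24$, we get $\lambda_1(-\Delta^{b}_{(0,1)})\le 2b$. The estimate is therefore truly borderline, and the natural remedy is to assume the hypothesis gives $\ge\lambda_j$ up to lower-order corrections, then redo the separation with $\mu_j=o(\lambda_j)$ and bound $\Tr(-\Delta^{\beta_j}_{R_j^\perp}-\mu_j)_-^\gamma$ more carefully. Here the Robin parameter $\beta_j$ is large relative to $\sqrt{\mu_j}$, so the Dirichlet-like behaviour on $R_j^\perp$ (Lemma~\ref{lem: quantitative Weyl law}) gives an extra decay. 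I expect closing exactly this borderline case to be where the real work lies.
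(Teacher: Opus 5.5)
You have a genuine gap at the borderline you flag at the end, and it cannot be closed: under the hypothesis $\liminf_j \beta_j/(\min_i l_i^{(j)}\lambda_j)\ge\frac12$ the statement is false. So no sharper bound on $\Tr(-\Delta^{\beta_j}_{R_j^\perp}-\mu_j)_\limminus^\gamma$ will rescue it. The earlier steps are fine: the reduction, the observation that only $k=1$ in the short direction contributes, and the case where $b_j=\beta_j l_1^{(j)}$ stays away from $0$.

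Here is a counterexample. Take $d=2$, $R_j=(0,\lambda_j^{-1})\times(0,\lambda_j)$ and $\beta_j=\frac12(1-\delta_j)$ with $\delta_j=1/\log\lambda_j$. All hypotheses hold, with $\beta_j/(l_1^{(j)}\lambda_j)=\frac12(1-\delta_j)\to\frac12$ from below. The constant test function gives $\lambda_1(-\Delta^{\beta_j}_{(0,\lambda_j^{-1})})\le 2\beta_j\lambda_j=(1-\delta_j)\lambda_j$. Separation of variables and Robin $\le$ Dirichlet in the long direction then give
\[
\Tr(-\Delta_{R_j}^{\beta_j}-\lambda_j)_\limminus^\gamma \ge \sum_{m\ge 1}\bigl(\delta_j\lambda_j-\pi^2m^2\lambda_j^{-2}\bigr)_\limplus^\gamma \gtrsim_\gamma \lambda_j(\delta_j\lambda_j)^{\gamma+1/2},
\]
so the normalized quantity is $\gtrsim \delta_j^{\gamma+1/2}\lambda_j^{1/2}\to\infty$. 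For $d\ge3$, use the construction in the proof of Theorem~\ref{thm: asymptotics of M}~\ref{itm: almost Neumann} with $\varepsilon$ replaced by $\varepsilon_j\to0$ slowly. For $\gamma=0$, $d=2$, even $\beta_j\equiv\frac12$ fails: then $\mu_j\to\frac1{12}$ and the normalized counting function tends to $2/\sqrt3$.

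Your proposed remedy also rests on a wrong heuristic. In this example $\beta_j/\sqrt{\mu_j}\approx l_1^{(j)}\sqrt{\lambda_j}/(2\sqrt{\delta_j})\to0$, so the transverse problem is Neumann-like, not Dirichlet-like. In any case the boundary condition does not change the leading Weyl term on $R_j^\perp$. What you would actually need is roughly $\mu_j/\lambda_j=o\bigl((l_1^{(j)}\sqrt{\lambda_j})^{2/(2\gamma+d-1)}\bigr)$, and the hypothesis does not provide this.

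The paper's proof has the same hole, hidden in one line. It first replaces $\beta_j$ by $\tilde\beta_j=\min\{\beta_j,\sqrt{\lambda_j}\}$. This is legitimate because Riesz means increase as the Robin parameter decreases, and it forces $l_1^{(j)}\tilde\beta_j\to0$, which replaces your case split on $b_j$. It then uses Lemma~\ref{lem: lambda1 asymptotics} to get $\lambda_1(-\Delta_{R_j}^{\tilde\beta_j})>\frac{2\tilde\beta_j}{l_1^{(j)}}(1+O(l_1^{(j)}\tilde\beta_j))$ and concludes $>\lambda_j$ ``by our assumptions''. That conclusion needs the liminf to be strictly larger than $\frac12$.

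The lemma should be read with $>\frac12$, and then your argument is already complete at its second step. If the liminf is $\frac12+\eta$, then eventually $2\tilde\beta_j/l_1^{(j)}\ge(1+\eta)\lambda_j$. Hence $\lambda_1\ge(1+\eta)(1-Cb_j)\lambda_j>\lambda_j$, and the Riesz mean vanishes, counting function included. Nothing downstream is affected, since Theorems~\ref{thm: asymptotics of M} and~\ref{thm: geometric convergence general} only invoke the lemma when $\beta_j/(l_1^{(j)}\lambda_j)\to\infty$.
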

\begin{remark}
    The assumption that $\liminf_{j\to \infty} \beta_j/(\min_i l_i^{(j)}\lambda_j)\geq 1/2$ is crucial. Indeed, we shall see later on that we can construct sequences of cuboids $\{R_j\}_{j\geq 1}$ that satisfy all the assumptions of the lemma but $\lim_{j\to \infty} \beta_j/(\min_i l_i^{(j)}\lambda_j)< 1/2$ for which
    \begin{equation*}
        \lim_{j\to \infty} \frac{\Tr(-\Delta_{R_j}^{\beta_j}-\lambda_j)_\limminus^\gamma}{L_{\gamma, d}^{\rm sc}|R_j|\lambda_j^{\gamma+d/2}} = \infty\, , 
    \end{equation*}
    this is done in the proof of Theorem~\ref{thm: asymptotics of M} \ref{itm: almost Neumann}.
\end{remark}

\begin{proof}[Proof of Lemma~\ref{lem: super-critical collapse}]
    Without loss of generality we may assume that $\min_i l_i^{(j)}= l_1^{(j)}$ for each $j\geq 1$. By the same rescaling argument as in the proof of Lemma~\ref{lem: collapsing Weyl law} we may assume without loss of generality that $|R_j|=1$ for each $j\geq 1$. Define $\tilde \beta_j = \min\{\beta_j, \sqrt{\lambda_j}\}$. Since Riesz means are non-negative and increase as the Robin parameter is decreasing we have 
    \begin{equation*}
        0 \leq \frac{\Tr(-\Delta_{R_j}^{\beta_j}-\lambda_j)_\limminus^\gamma}{L_{\gamma, d}^{\rm sc}\lambda_j^{\gamma+d/2}} \leq \frac{\Tr(-\Delta_{R_j}^{\tilde\beta_j}-\lambda_j)_\limminus^\gamma}{L_{\gamma, d}^{\rm sc}\lambda_j^{\gamma+d/2}}
    \end{equation*}
    for each $j \geq 1$ and by construction
    \begin{equation*}
        \liminf_{j\to \infty} \frac{\tilde \beta_j}{l_1^{(j)}\lambda_j} \geq\frac{1}{2} \qquad \mbox{and}\qquad \lim_{j\to \infty}l_1^{(j)}\tilde \beta_j =0\, .
    \end{equation*}
    By scaling of the Robin eigenvalues and Lemma~\ref{lem: lambda1 asymptotics}, we obtain 
    \begin{equation*}
        \lambda_1(-\Delta_{R_j}^{\tilde\beta_j}) = \sum_{i=1}^d \frac{\lambda_1(-\Delta_{(0, 1)}^{l_i^{(j)}\tilde\beta_j})}{(l_{i}^{(j)})^{2}}> \frac{\lambda_1(-\Delta_{(0, 1)}^{l_1^{(j)}\tilde\beta_j})}{(l_{1}^{(j)})^{2}} = \frac{2\tilde\beta_j}{l_1^{(j)}}(1 + O(l_1^{(j)}\tilde\beta_j))\, 
    \end{equation*}
    as $j \to \infty$. Consequently, by our assumptions, it holds that $\lambda_1(-\Delta_{R_j}^{\tilde\beta_j})> \lambda_j$ for all large enough $j$ and thus for all such $j$ we have 
    $$
        \Tr(-\Delta_{R_j}^{\beta_j}-\lambda_j)_\limminus^\gamma= \Tr(-\Delta_{R_j}^{\tilde \beta_j}-\lambda_j)_\limminus^\gamma=0\, .
    $$
    This completes the proof of Lemma~\ref{lem: super-critical collapse}.
\end{proof}

\section{Semiclassical inequalities for Robin Laplace operators}
\label{sec: uniform inequalities}

In this section we turn our attention to uniform inequalities for $\Tr(-\Delta_{R}^{\beta\sqrt{\lambda}}-\lambda)_\limminus^\gamma$. Specifically, we are interested in when these Riesz means are bounded from above by the Weyl term $L_{\gamma, d}^{\rm sc}|R|\lambda^{\gamma+d/2}$. Our main goal is to prove Theorem~\ref{thm: Robin BLY intro}, before we do this we introduce some additional notation which will be of use here and in what follows.

For $\gamma \geq 0$, $d\geq 1$ and $\beta>0$, we define
\begin{equation*}
    r_{\gamma, d}(\beta) := \sup\biggl\{ \frac{\Tr(-\Delta_{R}^{\beta\sqrt{\lambda}}-\lambda)_\limminus^\gamma}{L_{\gamma, d}^{\rm sc}|R|\lambda^{\gamma+d/2}}: R \subset \R^d \mbox{ a cuboid and }\lambda >0\biggr\}\, .
\end{equation*}
Since the Riesz means are non-increasing functions of $\beta$ it follows that $\beta\mapsto r_{\gamma, d}(\beta)$ is non-increasing. For $\gamma>0$, $r_{\gamma, d}(\beta)$ is a supremum of a family of continuous functions and therefore $\beta \mapsto r_{\gamma, d}(\beta)$ is lower semicontinuous. A consequence of Weyl's law is that $r_{\gamma, d}(\beta)\geq 1$. 

In terms of $r_{\gamma, d}(\beta)$, Theorem~\ref{thm: Robin BLY intro} states that there exists a $\beta$ sufficiently large such that $r_{\gamma, d}(\beta)=1$. The next result is a slight strengthened version of Theorem~\ref{thm: Robin BLY intro}.
\begin{theorem}\label{thm: Robin BLY}
    Let $d\geq 1, \gamma>0$. There exists a $\beta(\gamma, d)>0$ such that $r_{\gamma, d}(\beta)=1$ if and only if $\beta \geq \beta(\gamma, d)$. Moreover, if $R \subset \R^d$ is a cuboid, $\beta >\beta(\gamma, d)$, and $\lambda>0$, then
    \begin{equation*}
        \Tr(-\Delta_R^{\beta \sqrt{\lambda}}-\lambda)_\limminus^\gamma < L_{\gamma, d}^{\rm sc}|R|\lambda^{\gamma+d/2}\, .
    \end{equation*}
\end{theorem}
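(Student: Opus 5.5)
The plan is to prove the theorem in three steps. First, the set $\{\beta : r_{\gamma,d}(\beta)=1\}$ is a closed half-line. Second, this half-line is nonempty and does not contain points near $0$. Third, strictness holds above the threshold. The heart of the proof is the second step, and specifically the existence of some $\beta$ with $r_{\gamma,d}(\beta)=1$.

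\emph{Structure of the set.} Since $r_{\gamma,d}\geq 1$, is non-increasing, and is lower semicontinuous, the set $\{r_{\gamma,d}=1\}=\{r_{\gamma,d}\leq 1\}$ is closed and upward closed. Once it is shown to be nonempty, it equals $[\beta(\gamma,d),\infty)$ with $\beta(\gamma,d):=\inf\{\beta: r_{\gamma,d}(\beta)=1\}$. To see $\beta(\gamma,d)>0$, take $\beta$ small. Then $L_{\gamma,d-1}(\beta)>0$ by Lemma~\ref{lem: properties of L}. For $d=1$, Theorem~\ref{thm: two-term Weyl 1D} shows that the ratio exceeds $1$ for large $\lambda$; for $d\geq 2$, Corollary~\ref{cor: sc two-term Weyl} applied to a fixed cube with $\lambda\to\infty$ does the same. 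Hence $r_{\gamma,d}(\beta)>1$ for such $\beta$.

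\emph{Strictness.} Let $\beta>\beta(\gamma,d)$ and suppose $\Tr(-\Delta_R^{\beta\sqrt\lambda}-\lambda)_\limminus^\gamma=L^{\rm sc}_{\gamma,d}|R|\lambda^{\gamma+d/2}>0$. Then some eigenvalue lies below $\lambda$. Each eigenvalue of $-\Delta_R^{\beta}$ is a sum of one-dimensional eigenvalues, which are strictly increasing in the Robin parameter by the derivative formula in the proof of Lemma~\ref{lem: lambdak derivative bounds}. So replacing $\beta$ by $\beta'\in(\beta(\gamma,d),\beta)$ strictly increases the Riesz mean. This gives $r_{\gamma,d}(\beta')>1$, a contradiction.

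\emph{Reduction to $d=1$.} Separate variables as $R=(0,l)\times R'$ and write
\[
\Tr(-\Delta_R^{\beta\sqrt\lambda}-\lambda)_\limminus^\gamma=\sum_k \Tr\bigl(-\Delta_{(0,l)}^{\beta\sqrt\lambda}-(\lambda-\mu_k)_\limplus\bigr)_\limminus^\gamma,
\]
where $\mu_k$ are the eigenvalues of $-\Delta_{R'}^{\beta\sqrt\lambda}$. Since $\beta\sqrt\lambda\geq\beta\sqrt{\lambda-\mu_k}$, monotonicity in the Robin parameter bounds each term by $r_{\gamma,1}(\beta)L^{\rm sc}_{\gamma,1}l(\lambda-\mu_k)_\limplus^{\gamma+1/2}$. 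Summing over $k$ and applying the same argument to $\Tr(-\Delta_{R'}^{\beta\sqrt\lambda}-\lambda)_\limminus^{\gamma+1/2}$ yields
\[
r_{\gamma,d}(\beta)\leq r_{\gamma,1}(\beta)\,r_{\gamma+1/2,d-1}(\beta).
\]
By induction it therefore suffices to show that $r_{\gamma,1}(\beta)=1$ for all large $\beta$, for every $\gamma>0$.

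\emph{The one-dimensional bound (main obstacle).} By scaling, the ratio depends only on $t=l\sqrt\lambda$ and $\beta$; it is the Riesz mean of order $\gamma$ at energy $1$ on $(0,t)$ with Robin parameter $\beta$, divided by $L^{\rm sc}_{\gamma,1}t$. I would split into three ranges of $t$.

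\begin{enumerate}
\item \emph{Large $t$.} Fix $\beta_0$ with $L_{\gamma,0}(\beta)\leq -1/2$ for $\beta\geq\beta_0$. Theorem~\ref{thm: two-term Weyl 1D} gives
\[
\text{ratio}\leq 1+\frac{L_{\gamma,0}(\beta)/2+C_\gamma t^{-\kappa_{\gamma,1}}}{L^{\rm sc}_{\gamma,1}t},
\]
which is $<1$ for $t\geq T_0$ with $T_0$ independent of $\beta\geq\beta_0$.
\item \emph{Small $t$.} For $t\leq\pi/2$, Lemma~\ref{lem: arctan bounds lambdak} gives a lowest eigenvalue $\geq(\pi-2\arctan(\pi/(\beta t)))^2/t^2$, which exceeds $1$ once $\beta$ is large, uniformly in such $t$. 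So the Riesz mean vanishes.
\item \emph{Intermediate $t\in[\pi/2,T_0]$.} This is the delicate range, because the two-term asymptotics are not yet effective there. I would argue by compactness. Lemma~\ref{lem: arctan bounds lambdak} shows that the Robin eigenvalues converge to the Dirichlet eigenvalues $\pi^2k^2/t^2$ uniformly for $t$ in this compact range as $\beta\to\infty$. Hence the ratio converges uniformly to the Dirichlet ratio. The one-dimensional Dirichlet ratio is continuous in $t$ and strictly below $1$ on this range: the counting function satisfies $\#\{k:\pi k\leq t\}\leq t/\pi$ (Pólya), and the Aizenman--Lieb lifting, together with the fact that this inequality is strict on a set of energies of positive measure, makes it strict for $\gamma>0$. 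By compactness the Dirichlet ratio is then $\leq 1-c$ there, so the Robin ratio is $<1$ for $\beta$ large.
\end{enumerate}

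Combining the three ranges gives $r_{\gamma,1}(\beta)=1$ for $\beta$ large, which completes the nonemptiness claim.
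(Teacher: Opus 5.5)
Your proposal is correct and follows the paper's proof essentially step by step. It uses the same half-line structure from monotonicity and lower semicontinuity of $r_{\gamma,d}$, and the same product bound $r_{\gamma,d}\le r_{\gamma,1}\,r_{\gamma+1/2,d-1}$ to reduce to $d=1$. It also uses the same small/intermediate/large three-regime analysis on the interval, with the strict Dirichlet inequality (P\'olya plus Aizenman--Lieb) in the intermediate regime, and the same strict-monotonicity argument for strictness.

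The only differences are technical. You obtain the uniform convergence to the Dirichlet Riesz means directly from the explicit $\arctan$ eigenvalue bounds, whereas the paper uses the derivative bound with Arzel\`a--Ascoli. You also handle the extreme regimes with uniform-in-$\beta$ estimates rather than with limits combined with monotonicity in $\beta$.
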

\begin{remark}
    A couple of remarks are in order.
    \begin{enumerate}
    \item The conclusion of Theorem~\ref{thm: Robin BLY} fails for $d=1$ and $\gamma=0$. Indeed, $\lambda_k(-\Delta_{(0, 1)}^\beta) < \lambda_k(-\Delta_{(0, 1)}^{\rm D})= \pi^2k^2$ for any $\beta>0$ and $k\geq 1$. Therefore, for $\lambda \in [\lambda_k(-\Delta_{(0, 1)}^{\beta}), \pi^2 k^2)$ we have $\Tr(-\Delta_{(0, 1)}^{\beta}-\lambda)_\limminus^0 = k >\sqrt{\lambda}/\pi=L_{0, 1}^{\rm sc}\sqrt{\lambda}$.
    \item From the two-term Weyl asymptotics for $\Tr(-\Delta_{R}^{\beta\sqrt{\lambda}}-\lambda)_\limminus^\gamma$ it is clear that we must have $\beta(\gamma, d)\geq \beta_W(\gamma, d-1)$ as otherwise the inequality would fail as $\lambda \to \infty$. As discussed in the introduction, the conjecture that this is the only obstruction to the validity of the inequality, i.e.\ that $\beta(\gamma, d) = \beta_W(\gamma, d-1)$, turns out to be too naive (see Section~\ref{sec: the critical beta are different}). 
    \end{enumerate}
\end{remark}

Before turning to our proof of Theorem~\ref{thm: Robin BLY} we shall deduce some basic properties for the $r_{\gamma, d}(\beta)$. In  particular, we shall show that $r_{\gamma, d}(\beta)$ is always finite.

\begin{lemma}\label{lem: r finite}
    For any $\gamma\geq 0$, $d\geq 1$ and $\beta>0$, we have $r_{\gamma, d}(\beta)<\infty$.
\end{lemma}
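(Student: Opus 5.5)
The plan is to reduce everything to a bound that is uniform in the rescaled shape. By scaling, $\Tr(-\Delta_R^{\beta\sqrt{\lambda}}-\lambda)_\limminus^\gamma = \lambda^\gamma \Tr(-\Delta_{\sqrt{\lambda}R}^{\beta}-1)_\limminus^\gamma$, since $\beta\sqrt\lambda$ times the side length $l_i$ becomes $\beta$ times the rescaled side length $\sqrt\lambda\, l_i$. Hence it suffices to bound $\Tr(-\Delta_{Q}^{\beta}-1)_\limminus^\gamma/|Q|$ uniformly over cuboids $Q$ with side lengths $a_1\le \dots\le a_d$, where $a_i = l_i\sqrt\lambda$. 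We split into two regimes according to whether $a_1$ is large or small compared with a threshold depending only on $\beta$.

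\emph{Regime $a_1\gtrsim 1$.} By monotonicity in the Robin parameter, the Robin Riesz mean is at most the Neumann one. Lemma~\ref{lem: Neumann Riesz mean bound} (with $\lambda=1$) then gives the bound $C_d|Q|(1+a_1^{-d})\le 2C_d|Q|$. This is uniform and does not use $\beta$ at all.

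\emph{Regime $a_1$ small.} Here we exploit $\beta>0$ through the first eigenvalue in direction $1$. By Lemma~\ref{lem: lambda1 asymptotics} and scaling,
\[
\lambda_1(-\Delta^{\beta}_{(0,a_1)})=a_1^{-2}\lambda_1(-\Delta^{\beta a_1}_{(0,1)}) = a_1^{-2}\bigl(2\beta a_1+O(\beta^2a_1^2)\bigr)\approx 2\beta/a_1 .
\]
This exceeds $1$ once $a_1\le c\beta$ for a small constant $c$; one can use monotonicity of $\lambda_1$ in the parameter to make this precise for all $a_1$ below a $\beta$-dependent threshold $\epsilon_\beta$. Since every eigenvalue of $-\Delta_Q^\beta$ is a sum of one-dimensional eigenvalues and these are positive, every eigenvalue exceeds $\lambda_1(-\Delta^\beta_{(0,a_1)})>1$. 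So the Riesz mean vanishes for $a_1\le\epsilon_\beta$, in the same way as in the proof of Lemma~\ref{lem: super-critical collapse}.

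\emph{Intermediate range $\epsilon_\beta<a_1<1$.} This is the main obstacle, because the bound of Lemma~\ref{lem: Neumann Riesz mean bound} then reads $C_d|Q|a_1^{-d}\le C_d|Q|\epsilon_\beta^{-d}$, which is still finite and hence suffices. Combining the three ranges gives
\[
r_{\gamma,d}(\beta)\le C_d\max\{2,\epsilon_\beta^{-d}\}/L^{\rm sc}_{\gamma,d}<\infty .
\]
In fact the Neumann bound alone, with constant $\epsilon_\beta^{-d}$, covers the whole range $a_1>\epsilon_\beta$, so the argument needs only two cases. The case $\gamma=0$ is included, since Lemma~\ref{lem: Neumann Riesz mean bound} holds for $\gamma\ge0$ and the vanishing argument is independent of $\gamma$.
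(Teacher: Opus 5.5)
Your proof is correct and follows the paper's argument: Lemma~\ref{lem: Neumann Riesz mean bound} bounds the Riesz mean whenever $l_1\sqrt{\lambda}$ is bounded below, and Lemma~\ref{lem: lambda1 asymptotics} shows that the Riesz mean vanishes once $l_1\sqrt{\lambda}$ is small enough. Your explicit rescaling to $\lambda=1$ and the threshold $\epsilon_\beta$ just make the paper's terse argument precise. One side remark is wrong: the threshold cannot be taken as $c\beta$ for large $\beta$, since $\lambda_1 \le \pi^2/a_1^2<1$ once $a_1>\pi$. This does not matter, because all you need is a $\beta$-dependent $\epsilon_\beta$, and it exists since $2\beta/a_1+O(\beta^2)\to\infty$ as $a_1\to 0$.
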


\begin{proof}
   By Lemma~\ref{lem: Neumann Riesz mean bound}, 
   \begin{equation*}
        \Tr(-\Delta_R^{\beta \sqrt{\lambda}}-\lambda)_\limminus^{\gamma}\lesssim_d |R|\lambda^{\gamma+d/2}(1+(l_1 \sqrt{\lambda})^{1-d})\, , 
   \end{equation*}
   for any $\lambda >0$ and cuboid $R\subset \R^d$ with shortest side length $l_1$.
   To prove that $r_{\gamma, d}(\beta)<\infty$ it therefore remains to consider the case when $l_1 \sqrt{\lambda}$ is small.

   By Lemma~\ref{lem: lambda1 asymptotics}
   \begin{equation*}
        \lambda_1(-\Delta_{R}^{\beta \sqrt{\lambda}})\geq \lambda_1(-\Delta_{(0, l_1)}^{\beta\sqrt{\lambda}}) = l_1^{-2}\lambda_1(-\Delta_{(0, 1)}^{l_1\beta\sqrt{\lambda}}) = \frac{2\beta}{ l_1 \sqrt{\lambda} } \, \lambda \, (1+o(1))
   \end{equation*}
   as $l_1 \sqrt{\lambda} \to 0$. Therefore, for $l_1\sqrt{\lambda}$ small enough, we have $\lambda_1(-\Delta_{R}^{\beta \sqrt{\lambda}})>\lambda$ and hence $\Tr(-\Delta_{R}^{\beta \sqrt{\lambda}}-\lambda)_\limminus^\gamma=0$. This completes the proof.
\end{proof}

\begin{lemma}\label{lem: r bounds}
    For any $d\geq 2$, $\gamma \geq 0$, $\beta >0$ and $m \in \{1, \ldots, d-1\}$, 
    \begin{equation*}
        r_{\gamma+(d-m)/2, m}(\beta)\leq r_{\gamma, d}(\beta) \leq r_{\gamma, d-m}(\beta)r_{\gamma+(d-m)/2, m}(\beta)\, .
    \end{equation*}
    Moreover, the mapping $\gamma \mapsto r_{\gamma, d}(\beta)$ is non-increasing.
\end{lemma}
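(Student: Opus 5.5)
Both inequalities in Lemma~\ref{lem: r bounds} come from the product structure of cuboids. Split $R=R_1\times R_2$ with $R_1\subset\R^{d-m}$ and $R_2\subset\R^{m}$, and write the spectrum of $-\Delta_R^{\beta\sqrt\lambda}$ as sums of eigenvalues of the two factors, both with Robin parameter $\beta\sqrt\lambda$. The monotonicity in $\gamma$ will then follow from the Aizenman--Lieb identity.

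\emph{Upper bound.} Separation of variables gives
\[
\Tr(-\Delta_R^{\beta\sqrt\lambda}-\lambda)_\limminus^\gamma=\sum_{k\ge1}\Tr\bigl(-\Delta_{R_1}^{\beta\sqrt\lambda}-\mu_k\bigr)_\limminus^\gamma,\qquad \mu_k=(\lambda-\lambda_k(-\Delta_{R_2}^{\beta\sqrt\lambda}))_\limplus .
\]
The Robin parameter in the $R_1$-factor is $\beta\sqrt\lambda\ge\beta\sqrt{\mu_k}$, and Riesz means are non-increasing in the Robin parameter. Hence each summand is at most $\Tr(-\Delta_{R_1}^{\beta\sqrt{\mu_k}}-\mu_k)_\limminus^\gamma\le r_{\gamma,d-m}(\beta)L^{\rm sc}_{\gamma,d-m}|R_1|\mu_k^{\gamma+(d-m)/2}$. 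This monotonicity step is the one that has to go in the right direction, and it does. Summing over $k$ gives $r_{\gamma,d-m}(\beta)L^{\rm sc}_{\gamma,d-m}|R_1|\Tr(-\Delta_{R_2}^{\beta\sqrt\lambda}-\lambda)_\limminus^{\gamma+(d-m)/2}$, which is bounded by $r_{\gamma,d-m}(\beta)\,r_{\gamma+(d-m)/2,m}(\beta)\,L^{\rm sc}_{\gamma,d-m}L^{\rm sc}_{\gamma+(d-m)/2,m}|R_1||R_2|\lambda^{\gamma+d/2}$. The identity $L^{\rm sc}_{\gamma,d-m}L^{\rm sc}_{\gamma+(d-m)/2,m}=L^{\rm sc}_{\gamma,d}$ then yields the upper bound after taking the supremum over $R$ and $\lambda$.

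\emph{Lower bound.} Fix an $m$-dimensional cuboid $R'$ and $\lambda>0$. Apply Lemma~\ref{lem: collapsing Weyl law} with $\lambda_j=\lambda$, $\beta_j=\beta\sqrt{\lambda}$, and cuboids $R_j=R'\times(0,L_j)^{d-m}$ with $L_j\to\infty$. By the scaling invariance of the quotient we may rescale so that the $\lambda$-normalization matches the lemma; the cleanest version is to take $\lambda_j\to\infty$ with $R_j=\lambda_j^{-1/2}(\sqrt\lambda R')\times(0,1)^{d-m}$ and $\beta_j=\beta\sqrt{\lambda_j}$. The conditions of the lemma then hold with $m'=m$ and limiting cuboid $\sqrt\lambda R'$ (if the ordering of sides matters, first permute coordinates). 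The lemma gives that the limit of the $d$-dimensional quotients equals
\[
\frac{\Tr(-\Delta_{\sqrt\lambda R'}^{\beta}-1)_\limminus^{\gamma+(d-m)/2}}{L^{\rm sc}_{\gamma+(d-m)/2,m}|\sqrt\lambda R'|},
\]
which by scaling is the $m$-dimensional quotient for $(R',\lambda)$ of order $\gamma+(d-m)/2$. Since every $d$-dimensional quotient is at most $r_{\gamma,d}(\beta)$, taking the supremum over $R'$ and $\lambda$ gives the lower bound. In fact the lemma's proof yields a genuine limit along a convergent subsequence, which is all that is needed.

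\emph{Monotonicity in $\gamma$.} For $\gamma'>\gamma$, the Aizenman--Lieb identity gives
\[
\Tr(H-\lambda)_\limminus^{\gamma'}=c\int_0^\lambda(\lambda-\tau)^{\gamma'-\gamma-1}\Tr(H-\tau)_\limminus^\gamma\,d\tau,\qquad H=-\Delta_R^{\beta\sqrt\lambda}.
\]
For $\tau\le\lambda$ the Robin parameter $\beta\sqrt\lambda\ge\beta\sqrt\tau$, so by monotonicity in the Robin parameter $\Tr(H-\tau)_\limminus^\gamma\le r_{\gamma,d}(\beta)L^{\rm sc}_{\gamma,d}|R|\tau^{\gamma+d/2}$. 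Integrating this Weyl term reproduces $L^{\rm sc}_{\gamma',d}|R|\lambda^{\gamma'+d/2}$ with the same constant, so $r_{\gamma',d}\le r_{\gamma,d}$. The main subtlety is checking that Lemma~\ref{lem: collapsing Weyl law} applies (including $\gamma=0$), and that lemma was proved for $\gamma\ge0$.
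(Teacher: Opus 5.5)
Your proposal is correct and follows essentially the same route as the paper: the product-structure upper bound using monotonicity in the Robin parameter (the paper phrases this as monotonicity of $\beta\mapsto r_{\gamma,d-m}(\beta)$ after writing $\beta\sqrt{\lambda}=\tilde\beta_k\sqrt{\tilde\lambda_k}$), the collapsing sequence $R_j=(\lambda_j^{-1/2}R)\times(0,1)^{d-m}$ with Lemma~\ref{lem: collapsing Weyl law} plus scaling for the lower bound, and the Aizenman--Lieb identity for monotonicity in $\gamma$. One point needs tidying: in the upper bound, sum only over $k$ with $\mu_k>0$, as the paper does; the other summands vanish anyway. This matters because $r_{\gamma,d-m}$ is defined through $\lambda>0$, and for $\gamma=0$ your comparison operator at $\mu_k=0$ would be the Neumann Laplacian, whose counting function at $0$ equals $1$, not $0$.
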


Lemma~\ref{lem: r bounds} has the following consequence for the numbers $\beta(\gamma, d)$ from Theorem~\ref{thm: Robin BLY}.
\begin{corollary}\label{cor: beta ineqs}
        The numbers $\beta(\gamma, d)$ defined in Theorem~\ref{thm: Robin BLY} satisfy
        \begin{equation*}
            \beta(\gamma+ (d-m)/2, m) \leq \beta(\gamma, d) \leq \beta(\gamma, m)
        \end{equation*}
        for any $d\geq 2$, $\gamma>0$ and $m \in \{1, \ldots, d-1\}$. Moreover, the mapping $\gamma \mapsto \beta(\gamma, d)$ is non-increasing.
\end{corollary}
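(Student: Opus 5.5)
The plan is to read everything off Lemma~\ref{lem: r bounds}, combined with the characterization in Theorem~\ref{thm: Robin BLY}: for $\gamma>0$, $d\geq 1$, we have $r_{\gamma,d}(\beta)=1$ if and only if $\beta\geq \beta(\gamma,d)$. In addition we use the general fact $r_{\gamma,d}(\beta)\geq 1$, a consequence of Weyl's law. Each claimed inequality between critical parameters will be translated into an implication of the form ``$r_{\cdot}(\beta)=1 \Rightarrow r_{\cdot}(\beta)=1$''.

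For the lower bound $\beta(\gamma+(d-m)/2,m)\leq\beta(\gamma,d)$, take $\beta\geq\beta(\gamma,d)$, so that $r_{\gamma,d}(\beta)=1$. The left inequality of Lemma~\ref{lem: r bounds} gives
\begin{equation*}
1\leq r_{\gamma+(d-m)/2,m}(\beta)\leq r_{\gamma,d}(\beta)=1\, ,
\end{equation*}
so $r_{\gamma+(d-m)/2,m}(\beta)=1$. Theorem~\ref{thm: Robin BLY}, applied with order $\gamma+(d-m)/2>0$ in dimension $m$, then yields $\beta\geq\beta(\gamma+(d-m)/2,m)$. Choosing $\beta=\beta(\gamma,d)$ gives the claim.

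For the upper bound $\beta(\gamma,d)\leq\beta(\gamma,m)$, apply the right inequality of Lemma~\ref{lem: r bounds} with $m$ replaced by $d-m$, which is also in $\{1,\ldots,d-1\}$:
\begin{equation*}
r_{\gamma,d}(\beta)\leq r_{\gamma,m}(\beta)\,r_{\gamma+m/2,d-m}(\beta)\, .
\end{equation*}
Take $\beta\geq\beta(\gamma,m)$, so that $r_{\gamma,m}(\beta)=1$. It remains to show $r_{\gamma+m/2,d-m}(\beta)=1$, and this is the only nontrivial step. I would handle it with the monotonicity in $\gamma$ together with the lower bound just proved. From the lower bound (with $d$ replaced by $m$ when $m\geq 2$) one expects $\beta(\gamma+m/2,d-m)\leq\beta(\gamma,m)$. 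I would check this directly instead: by the first part applied in dimension $d$, we have $r_{\gamma+m/2,d-m}(\beta)\leq r_{\gamma,d}(\beta)$, but that is circular.

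A cleaner route is to use the monotonicity $\gamma\mapsto r_{\gamma,d-m}(\beta)$ non-increasing. This gives $r_{\gamma+m/2,d-m}\leq r_{\gamma,d-m}$, which in turn requires comparing $r_{\gamma,d-m}$ with $r_{\gamma,m}$. That comparison again needs the dimension ordering, so I would instead argue by induction on $d$. The claim to prove is that $\beta(\gamma',k)\leq\beta(\gamma,m)$ whenever $\gamma'\geq\gamma$ and the pair satisfies $\gamma'+k/2$ comparable. Expected obstacle: this step. If the ordering is set up suitably, the induction should close using Lemma~\ref{lem: r bounds} repeatedly.

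Finally, monotonicity of $\gamma\mapsto\beta(\gamma,d)$ is immediate. If $\gamma_1\leq\gamma_2$ and $\beta\geq\beta(\gamma_1,d)$, then
\begin{equation*}
1\leq r_{\gamma_2,d}(\beta)\leq r_{\gamma_1,d}(\beta)=1\, ,
\end{equation*}
so $\beta\geq\beta(\gamma_2,d)$ by Theorem~\ref{thm: Robin BLY}.
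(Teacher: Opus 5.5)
You handle two of the three claims correctly. The lower bound $\beta(\gamma+(d-m)/2,m)\leq\beta(\gamma,d)$ and the monotonicity of $\gamma\mapsto\beta(\gamma,d)$ are exactly the intended reading of Lemma~\ref{lem: r bounds} combined with the characterization in Theorem~\ref{thm: Robin BLY}; the paper simply asserts the corollary as a consequence of that lemma.

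\emph{The gap: the upper bound $\beta(\gamma,d)\leq\beta(\gamma,m)$ is not proved.} You reduce it, via $r_{\gamma,d}(\beta)\leq r_{\gamma,m}(\beta)\,r_{\gamma+m/2,d-m}(\beta)$, to showing $\beta(\gamma+m/2,d-m)\leq\beta(\gamma,m)$. For that step you offer only an unspecified induction on an imprecise claim (``$\gamma'+k/2$ comparable''). This inequality does not follow from one application of Lemma~\ref{lem: r bounds}.
\begin{itemize}
\item If $d-m<m$, it follows from your lower bound in dimension $m$, which gives $\beta(\gamma+(2m-d)/2,d-m)\leq\beta(\gamma,m)$, together with monotonicity in $\gamma$.
\item If $d-m=m$, it is just monotonicity in $\gamma$.
\item If $d-m>m$, it requires comparing critical parameters across \emph{increasing} dimension. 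That is precisely the content of the upper bound you are trying to prove, so this case is not closed.
\end{itemize}

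\emph{How to close it.} Prove only the one-step inequality $\beta(\gamma,d)\leq\beta(\gamma,d-1)$ for every $d\geq 2$ and $\gamma>0$, and then chain:
\[
\beta(\gamma,d)\leq\beta(\gamma,d-1)\leq\cdots\leq\beta(\gamma,m).
\]
For the one step, use the right inequality of Lemma~\ref{lem: r bounds} with $m=1$:
\[
r_{\gamma,d}(\beta)\leq r_{\gamma,d-1}(\beta)\,r_{\gamma+(d-1)/2,1}(\beta).
\]
Hence $r_{\gamma,d}(\beta)=1$, and so $\beta\geq\beta(\gamma,d)$, whenever $\beta\geq\max\{\beta(\gamma,d-1),\beta(\gamma+(d-1)/2,1)\}$. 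The second entry is at most the first:
\begin{itemize}
\item For $d=2$ this is monotonicity in $\gamma$ in dimension one. The Aizenman--Lieb argument in Lemma~\ref{lem: r bounds} works verbatim for $d=1$.
\item For $d\geq 3$, your lower bound in dimension $d-1$ with $m=1$ gives $\beta(\gamma+(d-2)/2,1)\leq\beta(\gamma,d-1)$. Monotonicity in $\gamma$ then gives $\beta(\gamma+(d-1)/2,1)\leq\beta(\gamma+(d-2)/2,1)$.
\end{itemize}
The lower bound and the $\gamma$-monotonicity were established independently of the upper bound, so there is no circularity.
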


\begin{proof}[Proof of Lemma~\ref{lem: r bounds}]
    Let $\{\lambda_j\}_{j\geq 1}$ be a non-negative sequence with $\lim_{j\to \infty}\lambda_j=\infty$ and define $R_j = (\lambda_j^{-1/2}R)\times (0, 1)^{d-m}$ for a fixed cuboid $R\in \R^{m}$. By Lemma~\ref{lem: collapsing Weyl law}, 
    \begin{equation*}
    \begin{aligned}
        r_{\gamma, d}(\beta) &\geq \limsup_{j \to \infty}\frac{\Tr(-\Delta_{R_j}^{\beta\sqrt{\lambda_j}}-\lambda_j)_\limminus^\gamma}{L_{\gamma, d}^{\rm sc}|R_j|\lambda_j^{\gamma+d/2}}\\
        &= \frac{\Tr(-\Delta_{R}^{\beta}-1)_\limminus^{\gamma+(d-m)/2}}{L_{\gamma+(d-m)/2, m}^{\rm sc}|R|} \\
        &= \frac{\Tr(-\Delta_{\lambda^{-1/2}R}^{\beta\sqrt{\lambda}}-\lambda)_\limminus^{\gamma+(d-m)/2}}{L_{\gamma+(d-m)/2, m}^{\rm sc}|\lambda^{-1/2}R|\lambda^{\gamma + (d-m)/2 + m/2}}\,  .
    \end{aligned}
    \end{equation*}
    where the last equality holds for any arbitrarily chosen $\lambda>0$ by scaling of the Robin eigenvalues. Since $R \subset \R^{m}$ and $\lambda>0$ are arbitrary, we conclude
    \begin{equation*}
        r_{\gamma, d}(\beta)\geq r_{\gamma+(d-m)/2, m}(\beta)\, , 
    \end{equation*}
    which completes the proof of the first inequality.

    To prove the second inequality claimed in the lemma we use the product structure of $R$. For any $m \in \{1, \ldots, d-1\}$ we can write $R= R'\times R''$ with $R'\subset \R^{d-m}$ and $R''\subset \R^{m}$ both being cuboids. Let $\tilde\lambda_k = (\lambda-\lambda_k(-\Delta_{R''}^{\beta\sqrt{\lambda}}))_\limplus$ and $\tilde\beta_k = \beta \sqrt{\lambda/\tilde\lambda_k}>\beta$ for any $k $ such that $\tilde \lambda_k>0$. Then
    \begin{align*}
        \Tr(-\Delta_{R}^{\beta\sqrt{\lambda}}-\lambda)_\limminus^\gamma
        &= \sum_{k\geq 1}\Tr(-\Delta_{R'}^{\beta \sqrt{\lambda}}-(\lambda-\lambda_k(-\Delta_{R''}^{\beta \sqrt{\lambda}}))_\limplus)_\limminus^\gamma\\
        &=
        \sum_{k: \tilde \lambda_k>0}\Tr(-\Delta_{R'}^{\tilde\beta_k \sqrt{\tilde\lambda_k}}-\tilde \lambda_k)_\limminus^\gamma\\
        &\leq
        \sum_{k: \tilde \lambda_k>0}r_{\gamma, d-m}(\tilde\beta_k)L_{\gamma, d-m}^{\rm sc}|R'|\tilde\lambda_k^{\gamma+(d-m)/2}\\
        &\leq
        r_{\gamma, d-m}(\beta)L_{\gamma, d-m}^{\rm sc}|R'|\Tr(-\Delta_{R''}^{\beta\sqrt{\lambda}}-\lambda)_\limminus^{\gamma+(d-m)/2}\\
        &\leq
        r_{\gamma, d-m}(\beta)r_{\gamma+(d-m)/2, m}(\beta)L_{\gamma, d}^{\rm sc}|R|\lambda^{\gamma+d/2}\, , 
    \end{align*}
    where we used that $\beta \mapsto r_{\gamma, d}(\beta)$ is non-increasing and that $L_{d-m, \gamma}^{\rm sc}L_{m, \gamma+(d-m)/2}^{\rm sc}=L_{\gamma, d}^{\rm sc}$. Since $R \subset \mathbb{R}^d$ and $\lambda>0$ are arbitrary this proves the claimed upper bound for $r_{\gamma, d}(\beta)$.

    That $\gamma \mapsto r_{\gamma, d}(\beta)$ is non-increasing follows from the Aizenman--Lieb identity. Indeed, for any $\gamma' >\gamma \geq 0$ we have
    {\allowdisplaybreaks
    \begin{align*}
        \Tr(-\Delta_{R}^{\beta \sqrt{\lambda}}-\lambda)_\limminus^{\gamma'}
        &= \frac{1}{B(1+\gamma, \gamma'-\gamma)}\int_0^\lambda(\lambda-\tau)^{\gamma'-\gamma-1}\Tr(-\Delta_{R}^{\beta\sqrt{\lambda}}-\tau)_\limminus^\gamma\, d\tau\\
        &=\frac{1}{B(1+\gamma, \gamma'-\gamma)}\int_0^\lambda(\lambda-\tau)^{\gamma'-\gamma-1}\Tr(-\Delta_{R}^{\beta\sqrt{\lambda/\tau}\sqrt{\tau}}-\tau)_\limminus^\gamma\, d\tau\\
        &\leq
        \frac{L_{\gamma, d}^{\rm sc}|R|}{B(1+\gamma, \gamma'-\gamma)}\int_0^\lambda r_{\gamma, d}(\beta\sqrt{\lambda/\tau})(\lambda-\tau)^{\gamma'-\gamma-1}\tau^{\gamma+d/2}\, d\tau\\
        &\leq
        \frac{r_{\gamma, d}(\beta)L_{\gamma, d}^{\rm sc}|R|}{B(1+\gamma, \gamma'-\gamma)}\int_0^\lambda (\lambda-\tau)^{\gamma'-\gamma-1}\tau^{\gamma+d/2}\, d\tau\\
        &=r_{\gamma, d}(\beta)L_{\gamma', d}^{\rm sc}|R|\lambda^{\gamma'+d/2}\, . 
    \end{align*}
    }
    Here, $B(x, y) = \frac{\Gamma(x)\Gamma(y)}{\Gamma(x+y)}$ denotes the Euler Beta function and we used again that $\beta \mapsto r_{\gamma, d}(\beta)$ is non-increasing. This completes the proof of Lemma~\ref{lem: r bounds}.
    \end{proof}

\begin{proof}[Proof of Theorem~\ref{thm: Robin BLY}]
    We split the proof into three steps. First, we show that for any $\gamma>0$ there exists a $\beta(\gamma, 1)>0$ at which $r_{\gamma, 1}(\beta)$ becomes $1$. The second step uses the established existence of $\beta(\gamma, 1)$ to prove that $\beta(\gamma, d)$ is well defined for any $d$. In the third and final step we argue that the claimed strict inequality holds when $\beta >\beta(\gamma, d)$.

    \medskip
    \noindent{\it Step 1: Existence of $\beta(\gamma, 1).$} We begin by proving that for $\gamma>0$ there exists $\beta(\gamma, 1)>0$ so that $r_{\gamma, 1}(\beta)=1$ if and only if $\beta \geq \beta(\gamma, 1)$. Since $\beta \mapsto r_{\gamma, 1}(\beta)$ is non-increasing and lower semicontinuous (i.e.\ right continuous) it suffices to show that there exist $\beta_0$, $\beta_1>0$ with $0<\beta_0<\beta_1$ so that $r_{\gamma, 1}(\beta_0)>1$ and $r_{\gamma, 1}(\beta_1)=1$.

    Let $\gamma>0$ be fixed in the following. The existence of $\beta_0>0$ such that $r_{\gamma, 1}(\beta_0)>1$ follows from the two-term Weyl law. Indeed, for any $0<\beta_0<\beta_W(\gamma, 0)$ and $\varepsilon\in (0, 1)$, Corollary~\ref{cor: sc two-term Weyl} implies that for $\lambda$ sufficiently large, 
    \begin{equation*}
        \frac{\Tr(-\Delta_{(0, 1)}^{\beta_0 \sqrt{\lambda}}-\lambda)_\limminus^\gamma}{L_{\gamma, 1}^{\rm sc}\lambda^{\gamma+1/2}}\geq 1+(1-\varepsilon)\frac{L_{\gamma, 0}(\beta_0)}{2 L_{\gamma, 1}^{\rm sc}}\lambda^{-1/2}>1\, .
    \end{equation*}
    In particular, we conclude that $r_{\gamma, 1}(\beta_0)>1$.

    We now turn to proving that there exists $\beta_1>0$ such that $r_{\gamma, 1}(\beta_1)=1$. By scaling of the Robin Laplacian, 
    \begin{equation*}
        \frac{\Tr(-\Delta^{\beta \sqrt{\lambda}}_{(0, l)}-\lambda)^\gamma_\limminus}{L_{\gamma, 1}^{\rm sc}l\lambda^{\gamma+1/2}} = \frac{\Tr(-\Delta_{(0, 1)}^{\beta \sqrt{l^2\lambda}}-l^2\lambda)_\limminus^\gamma}{L_{\gamma, 1}^{\rm sc}(l^2\lambda)^{\gamma+1/2}}\, .
    \end{equation*}
    Therefore, it is sufficient to consider $R=(0, 1)$.

    For small $\lambda$, we can show the desired inequality as follows. Since by Lemma~\ref{lem: lambda1 asymptotics}, $\lambda_1(-\Delta_{(0, 1)}^{\beta \sqrt{\lambda}}) = 2\beta \sqrt{\lambda} + O(\lambda)$ as $\lambda \to 0$, it follows that for any $\beta >0$ there exists a $\Lambda(\beta)\in (0, \pi^2)$ so that $\lambda_1(-\Delta_{(0, 1)}^{\beta\sqrt{\lambda}})>\lambda$ for $\lambda \in (0, \Lambda(\beta))$. Consequently, for any $\gamma \geq 0$, 
    \begin{equation*}
        \Tr(-\Delta_{(0, 1)}^{\beta \sqrt{\lambda}}-\lambda)_\limminus^\gamma =0 \quad \mbox{for all }\lambda \leq \Lambda(\beta)\, .
    \end{equation*}
    By monotonicity of the Riesz mean with respect to $\beta$ for fixed $\lambda$ it follows that $\beta \mapsto \Lambda(\beta)$ is non-decreasing. Thus for any $\beta \geq \beta'>0$ we have that
    \begin{equation*}
        \Tr(-\Delta_{(0, 1)}^{\beta \sqrt{\lambda}}-\lambda)^\gamma_\limminus = 0 < L_{\gamma, 1}^{\rm sc}\lambda^{\gamma+1/2} \quad \mbox{for all }\lambda \leq \Lambda(\beta')\, .
    \end{equation*}

    For large $\lambda$ we can argue again with the two-term Weyl law. By Corollary~\ref{cor: sc two-term Weyl}, 
    \begin{equation*}
        \lim_{\lambda \to \infty} \frac{\Tr(-\Delta_{(0, 1)}^{\beta \sqrt{\lambda}}-\lambda)^\gamma_\limminus- L_{\gamma, 1}^{\rm sc}\lambda^{\gamma+1/2}}{\lambda^{\gamma}} = \frac{L_{\gamma, 0}(\beta)}{2}\, 
    \end{equation*} 
    where $L_{\gamma, 0}(\beta) < 0$ for $\beta > \beta_W(\gamma, 0)$. Consequently, for any $\beta>\beta_W(\gamma, 0)$ there exists a $\Lambda'(\beta, \gamma)>0$ such that
    \begin{equation*}
        \Tr(-\Delta_{(0, 1)}^{\beta \sqrt{\lambda}}-\lambda)^\gamma_\limminus \leq L_{\gamma, 1}^{\rm sc}\lambda^{\gamma+1/2}+ \frac{L_{\gamma, 0}(\beta)}{4}\lambda^{\gamma} < L_{\gamma, 1}^{\rm sc}\lambda^{\gamma+1/2}\, , 
    \end{equation*}
    for all $\lambda \geq \Lambda'(\beta, \gamma)$.

    The monotonicity of the Riesz mean in $\beta$ for fixed $\lambda$, implies that that for any $\varepsilon>0$ and all $\beta \geq \beta_W(\gamma, 0)+\varepsilon$ it holds that
    \begin{equation*}
        \Tr(-\Delta_{(0, 1)}^{\beta \sqrt{\lambda}}-\lambda)^\gamma_\limminus \leq L_{\gamma, 1}^{\rm sc}\lambda^{\gamma+1/2}
    \end{equation*}
    for all $\lambda \in (0, \Lambda(\beta_W(\gamma, 0)+\varepsilon)] \cup [\Lambda'(\beta_W(\gamma, 0)+\varepsilon, \gamma), \infty)$. It remains to show that the inequality also holds in the intermediate regime $\lambda \in [\Lambda(\beta_W(\gamma, 0)+\varepsilon), \Lambda'(\beta_W(\gamma, 0)+\varepsilon, \gamma)]$ for $\beta$ large enough. To that end, we note that for fixed $\lambda>0$ function $\beta \mapsto \Tr(-\Delta_{(0, 1)}^{\beta \sqrt{\lambda}}-\lambda)_\limminus^\gamma$ is non-increasing and satisfies
    \begin{equation}\label{eq: pointwise conv Riesz mean interval}
        \lim_{\beta \to \infty}\Tr(-\Delta_{(0, 1)}^{\beta\sqrt{\lambda}}-\lambda)^\gamma_\limminus = \Tr(-\Delta_{(0, 1)}^{\rm D}-\lambda)^\gamma_\limminus \, .
    \end{equation}
    Furthermore, it is a consequence of the validity of P\'olya's conjecture for $-\Delta_{(0, 1)}^{\rm D}$ and the Aizenman--Lieb identity (see, e.g., \cite[Theorem 4.5]{FrankLarson_CPAM26}) that for any fixed $\gamma>0$, 
    \begin{equation}\label{eq: strict Berezin Dirichlet}
        \Tr(-\Delta_{(0, 1)}^{\rm D}-\lambda)_\limminus^\gamma< L_{\gamma, 1}^{\rm sc}\lambda^{\gamma+1/2}\, \qquad \text{for all } \lambda >0 .
    \end{equation}
    Since $\Tr(-\Delta^{\beta\sqrt{\lambda}}_{(0, 1)}-\lambda)_\limminus^0 \leq \sqrt{\lambda}/\pi +1$, Lemma~\ref{lem: lambdak derivative bounds} implies that for any fixed $\gamma>0$ the functions $f_{\gamma, \beta}(\lambda)=\Tr(-\Delta^{\beta\sqrt{\lambda}}_{(0, 1)}-\lambda)_\limminus^\gamma$, $\beta \in [0, \infty)$, form a family of uniformly H\"older continuous functions on any compact subset of $(0, \infty)$. Therefore, the Arzel\`a--Ascoli theorem implies that the pointwise convergence in \eqref{eq: pointwise conv Riesz mean interval} is uniform on $[\Lambda(\beta_W(\gamma, 0)+\varepsilon), \Lambda'(\beta_W(\gamma, 0)+\varepsilon, \gamma)]$. Consequently, \eqref{eq: strict Berezin Dirichlet} implies that there exists a $\beta_1>0$ large enough so that for all $\beta \geq \beta_1$ 
    \begin{equation*}
        \Tr(-\Delta_{(0, 1)}^{\beta \sqrt{\lambda}}-\lambda)^\gamma_\limminus \leq L_{\gamma, 1}^{\rm sc}\lambda^{\gamma+1/2} \quad \mbox{for all } \lambda \in [\Lambda(\beta_W(\gamma, 0)+\varepsilon), \Lambda'(\beta_W(\gamma, 0)+\varepsilon, \gamma)]\, .
    \end{equation*}
    
    Therefore, we have established the desired existence of $0<\beta_0<\beta_1$ for which $r_{\gamma, 1}(\beta_0)>1$ and $r_{\gamma, 1}(\beta_1)=1$. As argued at the beginning of the proof, this shows that $\beta(\gamma, 1)$ is well-defined.

    \medskip

    \noindent{\it Step 2: Existence of $\beta(\gamma, d).$} Fix $d\geq 2$ and $\gamma>0$. As in Step 1, the monotonicity and lower semicontinuity of $\beta \mapsto r_{\gamma, d}(\beta)$ implies that it is sufficient to show that there exists $0<\beta_0<\beta_1$ for which $r_{\gamma, d}(\beta_0)>1$ and $r_{\gamma, d}(\beta_1)=1$.

    As in the one-dimensional case we can deduce the existence of the desired $\beta_0$ from the two-term Weyl asymptotics of $\Tr(-\Delta_R^{\beta\sqrt{\lambda}}-\lambda)_\limminus^\gamma$ in Corollary~\ref{cor: sc two-term Weyl} by considering an arbitrary cuboid $R$ and and $\beta_0 \in(0, \beta_W(\gamma, d-1))$ with $\lambda$ sufficiently large.

    To deduce the existence of $\beta_1$ we rely on the existence of $\beta(\gamma, 1)$, for any $\gamma>0$, and Lemma~\ref{lem: r bounds}. By repeated use of the upper bound in Lemma~\ref{lem: r bounds} it follows that for any $d\geq 2, \gamma > 0, \beta>0$
    \begin{equation*}
        r_{\gamma, d}(\beta) 
        \leq \prod_{m=0}^{d-1}r_{\gamma+m/2, 1}(\beta)\, .
    \end{equation*}
    We conclude that $r_{\gamma, d}(\beta)=1$ if $\beta \geq \max_{m=0, \ldots, d-1}\{\beta(\gamma+m/2, 1)\} = \beta(\gamma, 1)$. Thus the existence of $\beta(\gamma, d)$ follows since we may take $\beta_1 = \beta(\gamma, 1)$.

    \medskip

    \noindent{\it Step 3: Strict inequality for $\beta >\beta(\gamma, d).$} If $\lambda_1(-\Delta_{R}^{\beta \sqrt{\lambda}})-\lambda \geq 0$ then $\Tr(-\Delta_{R}^{\beta\sqrt{\lambda}}-\lambda)_\limminus^\gamma=0< L_{\gamma, d}^{\rm sc}|R|\lambda^{\gamma+d/2}$. If instead $\lambda_1(-\Delta_{R}^{\beta \sqrt{\lambda}})-\lambda <0$, then the sum defining the Riesz mean contains at least one non-zero term. In this case, the claimed strict inequality thus follows from the validity of the inequality at $\beta = \beta(\gamma, d)$ and the fact that the functions $\beta \mapsto \lambda_k(-\Delta_{R}^{\beta})$ are strictly increasing with respect to $\beta$. This completes the proof of Theorem~\ref{thm: Robin BLY}.
\end{proof}

\section{The shape optimization problem}
\label{sec: shape optimization}

We have now developed all the tools needed for our analysis of the shape optimization problem. This is the content of the current section and our main goal is to prove Theorem~\ref{thm: main theorem intro}. However, before we turn to studying the asymptotic geometric behavior of sequences of maximizing cuboids we establish a couple of preliminary results concerning the underlying shape optimization problem.

The first result establishes the existence of optimizers for any fixed values of $\beta$ and $\lambda$.
\begin{lemma}\label{lem: existence of optimizer}
    Let $d\geq 2, \gamma > 0, \lambda >0, $ and $\beta>0$. There exists a cuboid $R^* \subset \R^d$ with $|R^*|=1$ such that
    \begin{equation*}
        \Tr(-\Delta_{R^*}^\beta-\lambda)_\limminus^\gamma = M_{\gamma, d}(\lambda, \beta)\, .
    \end{equation*}
\end{lemma}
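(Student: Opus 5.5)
The plan is a standard compactness argument: take a maximizing sequence, show it cannot degenerate, extract a convergent subsequence, and pass to the limit using upper semicontinuity. Let $\{R_n\}_{n\geq1}$ be cuboids with $|R_n|=1$ and
\[
\Tr(-\Delta_{R_n}^\beta-\lambda)_\limminus^\gamma\to M_{\gamma, d}(\lambda,\beta).
\]
We order the side lengths as $l_1^{(n)}\le\dots\le l_d^{(n)}$.

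First, $M_{\gamma, d}(\lambda,\beta)<\infty$. By Lemma~\ref{lem: Neumann Riesz mean bound}, the Riesz mean is bounded by $C_d\lambda^{\gamma+d/2}(1+(l_1\sqrt\lambda)^{-d})$, so it is finite for each cuboid. Uniformity comes from the first eigenvalue. By separation of variables and scaling,
\[
\lambda_1(-\Delta_R^\beta)\ge l_1^{-2}\lambda_1(-\Delta_{(0,1)}^{l_1\beta}).
\]
By Lemma~\ref{lem: lambda1 asymptotics}, the right-hand side equals $\frac{2\beta}{l_1}(1+O(l_1\beta))$, which tends to $\infty$ as $l_1\to0$ with $\beta$ fixed. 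Hence there is $\delta=\delta(\lambda,\beta)>0$ such that $l_1<\delta$ forces $\lambda_1>\lambda$, and then the Riesz mean vanishes. Cuboids with $l_1\ge\delta$ have Riesz mean bounded by $C_d\lambda^{\gamma+d/2}(1+(\delta\sqrt\lambda)^{-d})$. Therefore $M_{\gamma,d}(\lambda,\beta)<\infty$.

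Second, $M_{\gamma,d}(\lambda,\beta)>0$, i.e.\ some unit-measure cuboid has a nonzero Riesz mean. This is the point that needs care, because if $M=0$ the maximizing sequence could degenerate. I would use the unit cube's behaviour under elongation. Take $R=(0,\epsilon)^{d-1}\times(0,\epsilon^{1-d})$: its first eigenvalue is at least $(d-1)\cdot 2\beta/\epsilon$ for small $\epsilon$, so that family does not help. Instead, observe that any cuboid with a nonzero Riesz mean suffices, since every unit cube's Riesz mean is nonnegative. If $M=0$, then every unit-measure cuboid, in particular the unit cube, is already an optimizer and there is nothing to prove. So in fact the positivity step can be avoided entirely: the unit cube attains the supremum when $M=0$.

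Third, assume $M>0$. Along the maximizing sequence, eventually $\Tr(\cdot)>M/2>0$, which forces $l_1^{(n)}\ge\delta$ by the first step. The volume constraint then gives
\[
l_d^{(n)}=\frac{1}{\prod_{i<d}l_i^{(n)}}\le\delta^{1-d},
\]
so all side lengths lie in $[\delta,\delta^{1-d}]$. By compactness, a subsequence has side lengths converging to some $l_i^*>0$. The limit cuboid $R^*=\prod_{i=1}^d(0,l_i^*)$ has $|R^*|=1$.

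Finally, continuity passes the value to the limit. The eigenvalues of $-\Delta_{R}^\beta$ are sums of one-dimensional eigenvalues $l_i^{-2}\lambda_k(-\Delta_{(0,1)}^{l_i\beta})$, which are continuous in $l_i$ by the smoothness established via the implicit function theorem in Section~\ref{sec: basic properties 1D problem}. Only finitely many terms contribute near $\lambda$, since the Dirichlet lower bound $\lambda_k\ge\pi^2(k-1)^2/l_i^2$ holds uniformly for $l_i$ in a compact set. For $\gamma>0$ the map $x\mapsto(\lambda-x)_\limplus^\gamma$ is continuous, so the Riesz mean is continuous in the side lengths. Hence $\Tr(-\Delta_{R^*}^\beta-\lambda)_\limminus^\gamma=M_{\gamma,d}(\lambda,\beta)$.

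The main obstacle is the non-degeneracy step, which is handled by the quantitative first-eigenvalue bound from Lemma~\ref{lem: lambda1 asymptotics}. Making the constant $\delta$ explicit is routine.
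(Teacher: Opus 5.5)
Your argument is correct and follows essentially the same route as the paper: treat $M_{\gamma,d}(\lambda,\beta)=0$ trivially, use Lemma~\ref{lem: lambda1 asymptotics} to keep the shortest side of a maximizing sequence bounded away from zero, extract a convergent subsequence of side lengths, and pass to the limit using continuity of the one-dimensional Robin eigenvalues and a uniformly finite number of contributing terms. Two cosmetic points: the lower bound $\pi^2(k-1)^2/l_i^2$ is the Neumann eigenvalue, not the Dirichlet one, and the abandoned elongated-cuboid discussion in your second step should simply be cut.
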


\begin{proof}
    Without loss of generality we may assume that $\lambda$ is so large that $M_{\gamma, d}(\lambda, \beta)>0$. Indeed, if $M_{\gamma, d}(\lambda, \beta)=0$, any cuboid is a maximizer and the statement of the lemma is trivially true.

    Let $\{R_j\}_{j\geq 1}$ be a maximizing sequence of unit measure cuboids for $M_{\gamma, d}(\lambda, \beta)$. By the invariance under orthogonal transformations we may without loss of generality assume that $R_j = \prod_{i=1}^d (0, l_i^{(j)})$ with $0<l_{1}^{(j)} \leq \ldots \leq l_d^{(j)}$ for each $j\geq 1$. We aim to show that the sequence of vectors $\{(l_1^{(j)}, \ldots, l_d^{(j)})\}_{j\geq 1}$ is uniformly bounded. By the measure constraint, it suffices to prove that $\liminf_{j\to \infty}l_1^{(j)}>0$. To prove this, we argue by contradiction. 
    
    Assume that $\liminf_{j\to \infty}l_1^{(j)}=0$. Along this subsequence it holds that $\lambda_1(-\Delta_{R_j}^\beta) \to \infty$. Indeed, from scaling of the Robin eigenvalues and Lemma~\ref{lem: lambda1 asymptotics}
    \begin{align*}
        \lambda_1(-\Delta_{R_j}^\beta) &= \sum_{i=1}^d \lambda_1(-\Delta_{(0, l_i^{(j)})}^\beta) \\
        &\geq \lambda_1(-\Delta_{(0, l_1^{(j)})}^\beta) \\
        &= (l_1^{(j)})^{-2}\lambda_1(-\Delta_{(0, 1)}^{l_1^{(j)}\beta}) = 2\beta(l_1^{(j)})^{-1} + O(\beta^2)\, , 
    \end{align*}
    this proves the claim. Consequently, there are arbitrarily large $j$ for which $-\Delta_{R_j}^\beta$ has no eigenvalues less than $\lambda$, and thus $\Tr(-\Delta_{R_j}^\beta-\lambda)_\limminus^\gamma =0$. Since we assumed that $M_{\gamma, d}(\lambda, \beta)>0$ this contradicts the maximizing property of $\{R_j\}_{j\geq 1}$. We conclude that for maximizing sequences $\{R_j\}_{j\geq 1}$ we must have $\liminf_{j \to \infty}l_1^{(j)}>0$. 

    By compactness, we can now extract a converging sequence of cuboids from our maximizing sequence, slightly abusing notation we keep denoting this subsequence by $\{R_j\}_{j\geq 1}$. We denote its limit by $R^*$ and aim to show that $R^*$ realizes the supremum defining $M_{\gamma, d}(\lambda, \beta)$. Let
    \begin{equation*}
        l_i^* = \lim_{j\to \infty} l_i^{(j)}
    \end{equation*}
    be the side lengths of $R^*$. Note that $|R_j|=1$ for each $j\geq 1$ implies that $|R^*| =1$. Each eigenvalue of $-\Delta_{R^*}^\beta$ is naturally identified as a limit of eigenvalues of $-\Delta_{R_j}^\beta$. Indeed, each eigenvalue on the cuboid $R=\prod_{i=1}^d (0, l_i)$ is naturally identified with a $d$-tuple of natural numbers $\bar k=(k_1, \ldots, k_d)$ by
    \begin{equation*}
        \lambda_{\bar k}(-\Delta_R^\beta) = \sum_{i=1}^d \lambda_{k_i}(-\Delta_{(0, l_i)}^\beta)= \sum_{i=1}^d l_i^{-2}\lambda_{k_i}(-\Delta_{(0, 1)}^{l_i\beta})\, .
    \end{equation*}
    Thus, in particular, the continuity of $\beta \mapsto \lambda_{k}(-\Delta_{(0, 1)}^\beta)$ for each $k\geq 1$ implies that for any fixed $\bar k =(k_1, \ldots, k_d)\in \N^d$ we have
    \begin{equation}\label{eq: eigenvalue convergence}
        \lim_{j\to \infty} \lambda_{\bar k}(-\Delta_{R_j}^\beta) = \lambda_{\bar k}(-\Delta_{R^*}^\beta)\, .
    \end{equation}
    
    Note that for any fixed $\lambda$ the bound in Lemma~\ref{lem: Neumann Riesz mean bound} implies that the sum defining the Riesz means $\Tr(-\Delta_{R_j}^\beta-\lambda)_\limminus^\gamma$ and $\Tr(-\Delta_{R^*}^\beta-\lambda)_\limminus^\gamma$ contain a uniformly bounded number of non-zero terms. Thus, combining \eqref{eq: eigenvalue convergence} with the fact that  $\mu \mapsto (\lambda-\mu)_\limplus^\gamma$ is upper semicontinuous (even continuous if $\gamma>0$) we find that
    \begin{equation*}
        M_{\gamma, d}(\lambda, \beta)=\lim_{j\to \infty}\Tr(-\Delta_{R_j}^\beta-\lambda)_\limminus^\gamma \leq \Tr(-\Delta_{R^*}^\beta-\lambda)_\limminus^\gamma\, .
    \end{equation*}
    Since $R^*$ is admissible in the supremum defining $M_{\gamma, d}(\lambda, \beta)$ equality must hold in this inequality and we conclude that $R^*$ is a maximizer.
\end{proof}

\subsection{The asymptotic behavior of \texorpdfstring{$M_{\gamma, d}$}{Mdgamma}}

In this subsection we turn our attention to the asymptotic behavior of $M_{\gamma, d}(\beta, \lambda)$. 
The main aim of this subsection is to prove the following theorem.
\begin{theorem}\label{thm: asymptotics of M}
    Let $d\geq 2, \gamma \geq 0$. Let $\{\lambda_j\}_{j\geq 1}$ and $\{\beta_j\}_{j\geq 1}$ be sequences of positive numbers with $\lim_{j\to \infty}\lambda_j=\infty$.
    \begin{enumerate}[label=(\roman*)]
        \item\label{itm: almost Neumann} If $\lim_{j\to \infty}\beta_j/\sqrt{\lambda_j} =0$, then
        \begin{equation*}
            \lim_{j\to \infty}\frac{M_{\gamma, d}(\lambda_j, \beta_j)}{L_{\gamma, d}^{\rm sc}\lambda_j^{\gamma+d/2}} = \infty\, .
        \end{equation*}
        \item\label{itm: positive limit beta} If $\lim_{j\to \infty}\beta_j/\sqrt{\lambda_j} =\beta'>0$, then
        \begin{equation*}
            \lim_{j\to \infty}\frac{M_{\gamma, d}(\lambda_j, \beta_j)}{L_{\gamma, d}^{\rm sc}\lambda_j^{\gamma+d/2}} = r_{\gamma+1/2, d-1}(\beta')\, .
        \end{equation*}
        \item\label{itm: infinite limit beta} If $\lim_{j\to \infty}\beta_j/\sqrt{\lambda_j} =\infty$, then
        \begin{equation*}
            \lim_{j\to \infty}\frac{M_{\gamma, d}(\lambda_j, \beta_j)}{L_{\gamma, d}^{\rm sc}\lambda_j^{\gamma+d/2}} = 1\, .
        \end{equation*}
    \end{enumerate}
\end{theorem}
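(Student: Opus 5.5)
Throughout, set $\beta_j' = \beta_j/\sqrt{\lambda_j}$ and let $R_j$ be a maximizer for $M_{\gamma, d}(\lambda_j,\beta_j)$ (Lemma~\ref{lem: existence of optimizer}), with sides $l_1^{(j)}\le\dots\le l_d^{(j)}$. We prove lower and upper bounds for $M_{\gamma, d}(\lambda_j,\beta_j)/(L^{\rm sc}_{\gamma,d}\lambda_j^{\gamma+d/2})$ separately, splitting the upper bound according to the size of $l_1^{(j)}\sqrt{\lambda_j}$. Since all statements are about limits, it suffices to treat subsequences.

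\emph{Lower bounds.} In case \ref{itm: infinite limit beta} the bound $\liminf\ge 1$ follows from Lemma~\ref{lem: quantitative Weyl law} applied to the unit cube. In case \ref{itm: positive limit beta}, fix any cuboid $R\subset\R^{d-1}$ and test with $R_j=(\lambda_j^{-1/2}R)\times(0,c_j)$, where $c_j$ is chosen to give unit volume; then $c_j\sqrt{\lambda_j}\to\infty$. Lemma~\ref{lem: collapsing Weyl law} with $m=d-1$ gives the limit
\[
\frac{\Tr(-\Delta_R^{\beta'}-1)_\limminus^{\gamma+1/2}}{L^{\rm sc}_{\gamma+1/2,d-1}|R|}.
\]
By scaling, this ratio equals the quotient defining $r_{\gamma+1/2,d-1}(\beta')$ at the pair $(R,\lambda)$, which ranges over all cuboids and all $\lambda>0$. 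Taking the supremum gives $\liminf\ge r_{\gamma+1/2,d-1}(\beta')$.

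In case \ref{itm: almost Neumann}, take $R_j=(0,\epsilon_j)\times Q_j$ with $Q_j$ a $(d-1)$-dimensional cube of volume $1/\epsilon_j$. Choose $\epsilon_j\sqrt{\lambda_j}\to0$ but $\beta_j/(\epsilon_j\lambda_j)\to0$; for instance $\epsilon_j=(\beta_j'/\sqrt{\lambda_j})^{1/2}$ works because $\beta_j'\to0$. Lemma~\ref{lem: lambda1 asymptotics} gives $\lambda_1(-\Delta^{\beta_j}_{(0,\epsilon_j)})\approx 2\beta_j/\epsilon_j=o(\lambda_j)$. The higher interval eigenvalues are at least $\pi^2/\epsilon_j^2\gg\lambda_j$, so only the first mode survives. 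The Riesz mean is therefore essentially
\[
\Tr(-\Delta_{Q_j}^{\beta_j}-\lambda_j(1-o(1)))_\limminus^\gamma\sim L^{\rm sc}_{\gamma,d-1}\epsilon_j^{-1}\lambda_j^{\gamma+(d-1)/2},
\]
using Lemma~\ref{lem: quantitative Weyl law}. Divided by $\lambda_j^{\gamma+d/2}$, this is $\sim(\epsilon_j\sqrt{\lambda_j})^{-1}\to\infty$.

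\emph{Upper bounds.} Pass to a subsequence along which $l_1^{(j)}\sqrt{\lambda_j}$ converges in $[0,\infty]$, and consider three regimes.
\begin{enumerate}
\item If $l_1^{(j)}\sqrt{\lambda_j}\to\infty$, the Riesz mean is bounded by the Weyl term plus the second term. By Corollary~\ref{cor: sc two-term Weyl} (or Theorem~\ref{thm: two-term Weyl} when $\beta_j'$ is unbounded) and Lemma~\ref{lem: quantitative Weyl law}, the ratio is $1+O\bigl(\Haus^{d-1}(\partial R_j)/\sqrt{\lambda_j}\bigr)$, and $\Haus^{d-1}(\partial R_j)\lesssim 1/l_1^{(j)}$, so the ratio tends to $1$.
\item If $l_1^{(j)}\sqrt{\lambda_j}\to l\in(0,\infty)$, Lemma~\ref{lem: collapsing Weyl law} gives the limit $\Tr(-\Delta_{R'}^{\beta'}-1)^{\gamma+(d-m)/2}_\limminus/(L^{\rm sc}|R'|)$. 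This is at most $r_{\gamma+(d-m)/2,m}(\beta')$, which is at most $r_{\gamma+1/2,d-1}(\beta')$ by Lemma~\ref{lem: r bounds} (the lower bound with the roles chosen appropriately). When $\beta'=\infty$, the Dirichlet limit satisfies the strict Berezin bound \eqref{eq: strict Berezin Dirichlet}, lifted to cuboids, and so is at most $1$.
\item If $l_1^{(j)}\sqrt{\lambda_j}\to0$ and $\beta'>0$ or $\beta'=\infty$, then $\beta_j/(l_1^{(j)}\lambda_j)=\beta_j'/(l_1^{(j)}\sqrt{\lambda_j})\to\infty$. Lemma~\ref{lem: super-critical collapse} then gives the limit $0$.
\end{enumerate}
Since $r_{\gamma+1/2,d-1}\ge1$, this yields $\limsup\le r_{\gamma+1/2,d-1}(\beta')$ in case \ref{itm: positive limit beta} and $\limsup\le1$ in case \ref{itm: infinite limit beta}.

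\emph{Main obstacle.} The hardest step is the uniform upper bound in the regime $l_1^{(j)}\sqrt{\lambda_j}\to\infty$ when $\beta_j'\to\infty$. The remainder in Theorem~\ref{thm: two-term Weyl} is uniform in $\beta$, and $|L_{\gamma,d-1}|$ is bounded, so this should go through. A second delicate point is case $\gamma=0$, where Corollary~\ref{cor: sc two-term Weyl} is unavailable. There we would use Lemma~\ref{lem: quantitative Weyl law} alone, which already gives the ratio $1+O\bigl((l_1\sqrt{\lambda})^{-1}\bigr)$. In the collapsing regime with $m$ side lengths finite, we would use that $r_{\gamma+(d-m)/2,m}$ is monotone via Lemma~\ref{lem: r bounds}.
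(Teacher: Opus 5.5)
Your argument for parts (ii) and (iii) matches the paper's proof essentially step for step:
\begin{itemize}
\item the lower bound in (ii) uses the same trial sequence $(\lambda_j^{-1/2}R)\times(0,c_j)$ together with Lemma~\ref{lem: collapsing Weyl law};
\item for maximizers you use the same trichotomy $l_1^{(j)}\sqrt{\lambda_j}\to\infty$, $\to l\in(0,\infty)$, $\to 0$;
\item these three regimes are handled, respectively, by Lemma~\ref{lem: quantitative Weyl law}, by Lemma~\ref{lem: collapsing Weyl law} combined with Lemma~\ref{lem: r bounds} (or the Dirichlet Berezin--P\'olya bound when $\beta'=\infty$), and by Lemma~\ref{lem: super-critical collapse}.
\end{itemize}
Lemma~\ref{lem: quantitative Weyl law} alone settles the first regime uniformly in $\beta$, so the two-term expansion is not needed there.

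In part (i), however, your explicit choice of $\epsilon_j$ is miscomputed, and as written the construction fails. You have $\epsilon_j=(\beta_j'/\sqrt{\lambda_j})^{1/2}=(\beta_j/\lambda_j)^{1/2}$, which gives $\epsilon_j\sqrt{\lambda_j}=\sqrt{\beta_j}$. This need not tend to $0$. For $\beta_j\equiv 1$ you get $\epsilon_j\sqrt{\lambda_j}=1$. That is exactly the critical collapse of Lemma~\ref{lem: collapsing Weyl law} with $\beta'=0$, and the ratio converges to a finite Neumann quantity rather than to $\infty$. For $\beta_j=\lambda_j^{1/4}$ you even get $\epsilon_j\sqrt{\lambda_j}\to\infty$, and the ratio tends to $1$.

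The two requirements you identified, $\epsilon_j\sqrt{\lambda_j}\to 0$ and $\beta_j'/(\epsilon_j\sqrt{\lambda_j})\to 0$, are the right ones. They are met by $\epsilon_j=\sqrt{\beta_j'/\lambda_j}$, i.e.\ $\epsilon_j\sqrt{\lambda_j}=\sqrt{\beta_j'}$. With this choice $\epsilon_j\beta_j=(\beta_j')^{3/2}\to 0$ and $\lambda_1(-\Delta^{\beta_j}_{(0,\epsilon_j)})\approx 2\sqrt{\beta_j'}\,\lambda_j=o(\lambda_j)$. The rest of your argument then goes through and gives a ratio of order $(\beta_j')^{-1/2}\to\infty$. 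Your use of the Robin Weyl law on the cross-section is valid, since Lemma~\ref{lem: quantitative Weyl law} is uniform in $\beta$.

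The paper takes a different thin side, $l_j=(2+\varepsilon)\beta_j/\lambda_j$. With that choice the first interval eigenvalue is about $\frac{2}{2+\varepsilon}\lambda_j$, not $o(\lambda_j)$, so only a fixed fraction of the spectral parameter is left for the cross-section, which the paper compares with Dirichlet. In exchange:
\begin{itemize}
\item the blow-up rate is larger, of order $(\beta_j')^{-1}$;
\item the construction shows that the threshold $1/2$ in Lemma~\ref{lem: super-critical collapse} is sharp, as claimed in the remark following that lemma.
\end{itemize}
Your corrected variant avoids the $\varepsilon$ bookkeeping but does not give this sharpness.
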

\begin{remark}
    Note that since $r_{\gamma+1/2, d-1}(\beta')=1$ if and only if $\beta' \geq \beta(\gamma+1/2, d-1)$ the asymptotics of $M_{\gamma, d}(\lambda_j, \beta_j)$ match the Weyl asymptotics for any fixed cuboid if and only if $\beta' \geq \beta(\gamma+1/2, d-1)$. 
\end{remark}

\begin{proof}[Proof of Theorem~\ref{thm: asymptotics of M}]
We split the proof according to the asymptotic behavior of $\beta_j/\sqrt{\lambda_j}$. We begin with proving~\ref{itm: almost Neumann} and then proceed to prove~\ref{itm: positive limit beta} and~\ref{itm: infinite limit beta}.

\medskip
\noindent{\it Part 1: Proof of~\ref{itm: almost Neumann}.} In this regime, we prove the claim by constructing a suitable sequence of unit measure cuboids $R_j$ for which
\begin{equation*}
    \lim_{j\to \infty}\frac{\Tr(-\Delta_{R_j}^{\beta_j}-\lambda_j)_\limminus^\gamma}{\lambda_j^{\gamma+d/2}}= \infty\, .
\end{equation*}
Since $M_{\gamma, d}(\lambda_j, \beta_j)\geq \Tr(-\Delta_{R_j}^{\beta_j}-\lambda_j)_\limminus^\gamma$ for each $j$ this implies the claim in~\ref{itm: almost Neumann}.

Consider the sequence of cuboids defined by $R_j = (0, l_j) \times (0, l_j^{-1/(d-1)})^{d-1}$ with $l_j =(2+\varepsilon)\beta_j\lambda_j^{-1}$ for any fixed $\varepsilon>0$. Note that with this choice of $R_j$, we have $|R_j|=1$ and since $\lim_{j\to \infty}\beta_j/\sqrt{\lambda_j}=0$ it holds that
\begin{equation*}
    \lim_{j\to \infty} l_j \sqrt{\lambda_j} =0\, , \quad \lim_{j\to \infty}\beta_j l_j = 0\, , \quad\lim_{j\to \infty} \lambda_j l_j^{-2/(d-1)}=\infty\, , \quad \mbox{and }\lim_{j\to \infty} \frac{\beta_j}{l_j\lambda_j} = \frac{1}{2+\varepsilon}< \frac{1}{2}\, .
\end{equation*}
In particular, as $\varepsilon>0$ is arbitrary, this sequence falls just outside of when Lemma~\ref{lem: super-critical collapse} applies.

By Lemma~\ref{lem: lambda1 asymptotics} and the choice of $l_j$ we have that 
\begin{equation*}
    \lambda_1(-\Delta_{(0, l_j)}^{\beta_j})= \frac{2\beta_j}{l_j}(1+ o(1)) = \frac{2\lambda_j}{2+\varepsilon} (1+o(1))
\end{equation*}
as $j \to \infty$. Thus 
\begin{equation*}
    \lambda_j-\lambda_1(-\Delta_{(0, l_j)}^{\beta_j}) \geq \frac{\varepsilon \lambda_j}{2+2\varepsilon}
\end{equation*}
for any $j$ sufficiently large. Therefore, setting $R_j'= (0, l_j^{-1/(d-1)})^{d-1} = l_j^{-1/(d-1)}Q$ where $Q=(0, 1)^{d-1}\subset \R^{d-1}$ we have by Weyl's law for $-\Delta_Q^{\rm D}$
{\allowdisplaybreaks
\begin{align*}
    \lim_{j\to \infty}\frac{\Tr(-\Delta_{R_j}^{\beta_j}-\lambda_j)_\limminus^\gamma}{\lambda_j^{\gamma+d/2}}
    &=
    \lim_{j\to \infty}\frac{1}{\lambda_j^{\gamma+d/2}}\sum_{k\geq 1}\Tr(-\Delta_{R_j'}^{\beta_j}-(\lambda_j-\lambda_k(-\Delta_{(0, l_j)}^{\beta_j})))_\limminus^\gamma\\
    &\geq 
    \lim_{j\to \infty}\frac{1}{\lambda_j^{\gamma+d/2}}\Tr\Bigl(-\Delta_{R_j'}^{\rm D}-\frac{\varepsilon \lambda_j}{2+2\varepsilon}\Bigr)_\limminus^\gamma\\
    &=
    \lim_{j\to \infty}\frac{l_j^{2\gamma/(d-1)}}{\lambda_j^{\gamma+d/2}}\Tr\Bigl(-\Delta_{Q}^{\rm D}-\frac{\varepsilon }{2+2\varepsilon}\lambda_jl_j^{-2/(d-1)}\Bigr)_\limminus^\gamma\\
    &=
    \lim_{j\to \infty}L_{\gamma, d-1}^{\mathrm sc}\Bigl(\frac{\varepsilon }{2+2\varepsilon}\Bigr)^{\gamma+(d-1)/2}(l_j\sqrt{\lambda_j})^{-1}\\
    &=\infty\, .
\end{align*}
}
This completes the proof of Theorem~\ref{thm: asymptotics of M} \ref{itm: almost Neumann}.

\medskip
\noindent{\it Part 2: Proof of~\ref{itm: positive limit beta} and~\ref{itm: infinite limit beta}.} By Lemma~\ref{lem: existence of optimizer} there exists a sequence of unit measure $d$-dimensional cuboids $\{R_j\}_{j\geq 1}$ such that $M_{\gamma, d}(\lambda_j, \beta_j) = \Tr(-\Delta_{R_j}^{\beta_j}-\lambda_j)_\limminus^\gamma$ for each $j$. Without loss of generality we may assume that the side lengths $l_1^{(j)}, \ldots, l_d^{(j)}$ of $R_j$ satisfy that $l_1^{(j)}\leq l_2^{(j)}\leq \ldots \leq l_d^{(j)}$.

If $\lim_{j \to \infty} l_1^{(j)}\sqrt{\lambda_j} = \infty$, then by Lemma~\ref{lem: quantitative Weyl law} it holds that
\begin{equation*}
    \limsup_{j\to \infty} \frac{\Tr(-\Delta_{R_j}^{\beta_j}-\lambda_j)_\limminus^\gamma}{L_{\gamma, d}^{\rm sc}\lambda_j^{\gamma+d/2}} = 1\, .
\end{equation*}
If $\lim_{j\to \infty} l_1^{(j)}\sqrt{\lambda_j}=0$, then since $\liminf_{j\geq 1} \frac{\beta_j}{l_1^{(j)}\lambda_j} = \infty$ as $\lim_{j\to \infty}\beta_j/\sqrt{\lambda_j}>0$ it follows from Lemma~\ref{lem: super-critical collapse} that
\begin{equation*}
    \limsup_{j\to \infty} \frac{\Tr(-\Delta_{R_j}^{\beta_j}-\lambda_j)_\limminus^\gamma}{L_{\gamma, d}^{\rm sc}\lambda_j^{\gamma+d/2}} = 0\, .
\end{equation*}
If instead $0<\lim_{j\to \infty}l_1^{(j)}\sqrt{\lambda_j}<\infty$, then Lemma~\ref{lem: collapsing Weyl law} implies that there exist $m \in \{1, \ldots, d-1\}$ and an $m$-dimensional cuboid $R'$ such that
\begin{equation*}
    \limsup_{j\to \infty} \frac{\Tr(-\Delta_{R_j}^{\beta_j}-\lambda_j)_\limminus^\gamma}{L_{\gamma, d}^{\rm sc}\lambda_j^{\gamma+d/2}} = 
    \begin{cases}
    \displaystyle \frac{\Tr(-\Delta_{R'}^{\beta'}-1)_\limminus^{\gamma+ (d-m)/2}}{L_{\gamma+(d-m)/2, m}^{\rm sc}|R'|} & \mbox{in case \ref{itm: positive limit beta}}\, , \\[12pt]
    \displaystyle \frac{\Tr(-\Delta_{R'}^{\rm D}-1)_\limminus^{\gamma+ (d-m)/2}}{L_{\gamma+(d-m)/2, m}^{\rm sc}|R'|} & \mbox{in case \ref{itm: infinite limit beta}}\, .
    \end{cases}
\end{equation*}

By the validity of P\'olya's conjecture for $-\Delta_{R'}^{\rm D}$ we conclude that under the assumptions of~\ref{itm: infinite limit beta} it holds that
\begin{equation*}
    \frac{\Tr(-\Delta_{R'}^{\rm D}-1)_\limminus^{\gamma+ (d-m)/2}}{L_{\gamma+(d-m)/2, m}^{\rm sc}|R'|}\leq 1\, .
\end{equation*}
Therefore, in the setting of~\ref{itm: infinite limit beta} we conclude that
\begin{equation*}
     \limsup_{j\to \infty} \frac{\Tr(-\Delta_{R_j}^{\beta_j}-\lambda_j)_\limminus^\gamma}{L_{\gamma, d}^{\rm sc}\lambda_j^{\gamma+d/2}} \leq 1\, .
\end{equation*}

In the setting of~\ref{itm: positive limit beta}, using the definition of $r_{\gamma, d}(\beta')$ and the fact that each of these numbers are not smaller than $1$, we conclude that
\begin{equation*}
    \limsup_{j\to \infty}\frac{\Tr(-\Delta_{R_j}^{\beta_j}-\lambda_j)_\limminus^\gamma}{L_{\gamma, d}^{\rm sc}\lambda_j^{\gamma+d/2}} \leq \max_{m=1, \ldots, d-1} \;r_{\gamma+(d-m)/2, m}(\beta') = r_{\gamma+1/2, d-1}(\beta')\, , 
\end{equation*}
where the final equality follows from Lemma~\ref{lem: r bounds}. 

To complete the proof of~\ref{itm: positive limit beta} and~\ref{itm: infinite limit beta} it remains to prove a matching lower bound. For the lower bound we construct suitable trial sequences of cuboids. 

In the setting of~\ref{itm: infinite limit beta} it suffices to take the constant sequence of $d$-dimensional unit measure cuboids and use Lemma~\ref{lem: quantitative Weyl law}. The same trial sequence works in the setting of~\ref{itm: positive limit beta} whenever $r_{\gamma, d}(\beta')=1$.

To construct a trial sequence in the setting of~\ref{itm: positive limit beta}, we can use the same construction as in the proof of Lemma~\ref{lem: r bounds}. For $\beta'>0$ we may choose any fixed cuboid $R \subset \R^{d-1}$ to define a trial sequence of $d$-dimensional cuboids by $R_j = (\lambda_j^{-1/2}R)\times (0, |R|^{-1}\lambda_j^{(d-1)/2})$. Note that by construction $|R_j|=1$ for any $j\geq 1$. By Lemma~\ref{lem: collapsing Weyl law} applied along a subsequence for which the $\liminf$ is attained, we conclude that  
\begin{equation*}
    \liminf_{j\to \infty} \frac{M_{\gamma, d}(\lambda_j, \beta_j)}{L_{\gamma, d}^{\rm sc}\lambda_j^{\gamma+d/2}}
    \geq
    \liminf_{j\to \infty} \frac{\Tr(-\Delta_{R_j}^{\beta_j}-\lambda_j)_\limminus^\gamma}{L_{\gamma, d}^{\rm sc}\lambda_j^{\gamma+d/2}}
    = 
    \frac{\Tr(-\Delta_{R}^{\beta'}-1)^{\gamma+1/2}}{L_{\gamma+1/2, d-1}^{\rm sc}|R|}\, .
\end{equation*}
As $R$ was arbitrary we can make the right-hand side as close to $r_{\gamma+1/2, d-1}(\beta')$ as we wish. Combining the obtained upper and lower bounds completes the proof of Theorem~\ref{thm: asymptotics of M} \ref{itm: positive limit beta} and \ref{itm: infinite limit beta}, and therefore the proof of Theorem~\ref{thm: asymptotics of M}.
\end{proof}

\subsection{The asymptotic behavior of almost maximizers for Riesz means}

In this subsection we finally get to the proof of our main result concerning the asymptotic behavior of cuboids which (almost) maximize Riesz means in the limit of the spectral parameter tending to infinity. Our goal is to prove the following theorem of which Theorem~\ref{thm: main theorem intro} is a special case.
\begin{theorem}\label{thm: geometric convergence general}
    Fix $d\geq 2$, $\gamma>0$. Let $\{\lambda_j\}_{j\geq 1}$, $\{\beta_j\}_{j\geq 1}$ be sequences of positive numbers, and $\{R_j\}_{j\geq 1}$ be a sequence of cuboids in $\R^d$. Assume that $|R_j|=1$ for each $j\geq 1$,  
    \begin{equation*}
        \lim_{j\to \infty}\lambda_j =\infty \, , \quad \mbox{and}\quad \lim_{j\to \infty} \frac{\Tr(-\Delta_{R_j}^{\beta_j}-\lambda_j)_\limminus^\gamma-M_{\gamma, d}(\lambda_j, \beta_j)}{\lambda_j^{\gamma+(d-1)/2}}=0\, .
    \end{equation*}
    \begin{enumerate}[label=(\roman*)]
        \item\label{itm: geometric convergence subcritical} If $\limsup_{j \to \infty} \beta_j/\sqrt{\lambda_j}<\beta(\gamma+1/2, d-1)$, then the sequence $\{R_j\}_{j\geq 1}$ has no converging subsequences.
        \item\label{itm: geometric convergence supercritical} If $\liminf_{j \to \infty} \beta_j/\sqrt{\lambda_j}>\beta(\gamma+1/2, d-1)$, then the sequence $\{R_j\}_{j\geq 1}$ converges to the unit cube as $j\to \infty$.
    \end{enumerate}
\end{theorem}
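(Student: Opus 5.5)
The plan is to compare the almost-maximizers $R_j$ with explicit trial cuboids, using Theorem~\ref{thm: asymptotics of M} at leading order and Theorem~\ref{thm: two-term Weyl} at second order. Throughout, side lengths are ordered, $l_1^{(j)}\le\dots\le l_d^{(j)}$, and we write $b_j=\beta_j/\sqrt{\lambda_j}$ and $\beta^*=\beta(\gamma+1/2,d-1)$. The almost-maximality hypothesis says the Riesz mean of $R_j$ equals $M_{\gamma,d}(\lambda_j,\beta_j)+o(\lambda_j^{\gamma+(d-1)/2})$. In particular the ratios $\Tr(-\Delta_{R_j}^{\beta_j}-\lambda_j)_\limminus^\gamma/(L^{\rm sc}_{\gamma,d}\lambda_j^{\gamma+d/2})$ and $M_{\gamma,d}(\lambda_j,\beta_j)/(L^{\rm sc}_{\gamma,d}\lambda_j^{\gamma+d/2})$ have the same limit points.

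\emph{Part \ref{itm: geometric convergence subcritical}.} Suppose a subsequence of $\{R_j\}$ converges to a unit-measure cuboid $R$. Its sides are bounded below, so $l_1^{(j)}\sqrt{\lambda_j}\to\infty$. Lemma~\ref{lem: quantitative Weyl law} then shows the normalized Riesz means tend to $1$ along the subsequence. Since $\limsup b_j<\beta^*$, the $b_j$ are bounded, and we may pass to a further subsequence with $b_j\to\beta'\in[0,\beta^*)$.
\begin{itemize}
\item If $\beta'=0$, Theorem~\ref{thm: asymptotics of M}\ref{itm: almost Neumann} says the normalized maximum tends to $\infty$.
\item If $\beta'>0$, Theorem~\ref{thm: asymptotics of M}\ref{itm: positive limit beta} says it tends to $r_{\gamma+1/2,d-1}(\beta')$, which exceeds $1$ by Theorem~\ref{thm: Robin BLY} because $\beta'<\beta^*$.
\end{itemize}
Either way this contradicts the equality of limit points noted above.

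\emph{Part \ref{itm: geometric convergence supercritical}, step 1: no collapse.} Fix $\beta''$ with $\beta^*<\beta''<\liminf b_j$. Here Theorem~\ref{thm: asymptotics of M}\ref{itm: positive limit beta},\ref{itm: infinite limit beta} gives normalized maximum $\to1$ along every subsequence with $b_j$ convergent (possibly to $\infty$). I would show $l_1^{(j)}\sqrt{\lambda_j}\to\infty$ by contradiction, splitting into two cases.
\begin{itemize}
\item If $l_1^{(j)}\sqrt{\lambda_j}\to0$ along a subsequence, then $\beta_j/(l_1^{(j)}\lambda_j)\to\infty$. Lemma~\ref{lem: super-critical collapse} then gives normalized Riesz mean $\to0$, a contradiction.
\item If $l_1^{(j)}\sqrt{\lambda_j}$ stays bounded and bounded away from $0$, pass to a subsequence where $b_j\to\beta'\in[\beta'',\infty]$. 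Lemma~\ref{lem: collapsing Weyl law} identifies the limit of the normalized Riesz mean as $\Tr(-\Delta_{R'}^{\beta'}-1)_\limminus^{\gamma+(d-m)/2}/(L^{\rm sc}_{\gamma+(d-m)/2,m}|R'|)$, with the Dirichlet operator replacing the Robin one if $\beta'=\infty$. I claim this is strictly less than $1$:
\begin{itemize}
\item If $\beta'<\infty$: Corollary~\ref{cor: beta ineqs} and monotonicity in $\gamma$ give $\beta(\gamma+(d-m)/2,m)\le\beta(\gamma+1/2,d-1)<\beta'$ (for $m=d-1$ this is equality of the first two quantities). So the strict inequality in Theorem~\ref{thm: Robin BLY} applies, with $\lambda=1$.
\item If $\beta'=\infty$: use the strict Berezin/Pólya inequality for Dirichlet cuboids, as in \eqref{eq: strict Berezin Dirichlet}.
\end{itemize}
A limit strictly below $1$ contradicts the normalized maximum tending to $1$.
\end{itemize}

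\emph{Step 2: perimeter control.} Now $l_1^{(j)}\sqrt{\lambda_j}\to\infty$, so Theorem~\ref{thm: two-term Weyl} gives
\[
\Tr(-\Delta_{R_j}^{\beta_j}-\lambda_j)_\limminus^\gamma=L^{\rm sc}_{\gamma,d}\lambda_j^{\gamma+d/2}+\tfrac14\bigl(L_{\gamma,d-1}(b_j)+o(1)\bigr)\Haus^{d-1}(\partial R_j)\lambda_j^{\gamma+(d-1)/2}.
\]
Testing $M_{\gamma,d}$ with the unit cube $Q$ yields the same formula with $\Haus^{d-1}(\partial Q)=2d$ in place of $\Haus^{d-1}(\partial R_j)$. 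Combining this with the almost-maximality hypothesis, and dividing by $\lambda_j^{\gamma+(d-1)/2}$, gives
\[
\bigl(L_{\gamma,d-1}(b_j)+o(1)\bigr)\Haus^{d-1}(\partial R_j)\ge 2d\,L_{\gamma,d-1}(b_j)+o(1).
\]
The key input is that $L_{\gamma,d-1}(b_j)\le L_{\gamma,d-1}(\beta'')<0$, uniformly in $j$. This holds because $\beta^*=\beta(\gamma+1/2,d-1)\ge\beta_W(\gamma+1/2,d-2)=\beta_W(\gamma,d-1)$, by the remark after Theorem~\ref{thm: Robin BLY}; for $d=2$ read this as $\beta(\gamma+1/2,1)\ge\beta_W(\gamma+1/2,0)$. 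Since also $L_{\gamma,d-1}\ge -L^{\rm sc}_{\gamma,d-1}$, dividing through by the negative quantity yields $\limsup_j\Haus^{d-1}(\partial R_j)\le 2d$.

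\emph{Step 3: conclusion.} By the isoperimetric inequality among cuboids, $\Haus^{d-1}(\partial R_j)\ge 2d$, so the perimeter tends to $2d$. Perimeter bounded by $2d$ together with unit measure confines the side-length vectors to a compact set. Uniqueness of the unit cube as the isoperimetric cuboid then forces $R_j\to(0,1)^d$.

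The main obstacle is step 1. It is what fixes the threshold at $\beta^*$ rather than at $\beta_W$, and it depends on the strict inequality for the collapsed lower-dimensional problem holding for \emph{all} $m$, which is where Corollary~\ref{cor: beta ineqs} enters.
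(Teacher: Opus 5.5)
Your proposal is correct and follows the paper's proof essentially step by step. For (i), you argue by contradiction using Theorem~\ref{thm: asymptotics of M} together with Lemma~\ref{lem: quantitative Weyl law}; for (ii), you exclude collapse via Lemmas~\ref{lem: super-critical collapse} and~\ref{lem: collapsing Weyl law} combined with Corollary~\ref{cor: beta ineqs} and Theorem~\ref{thm: Robin BLY}, then compare two-term asymptotics with the unit cube and invoke the isoperimetric property of the cube. Your small deviations are harmless: you pass to a subsequence with convergent $\beta_j/\sqrt{\lambda_j}$ and treat the limit $0$ separately via Theorem~\ref{thm: asymptotics of M}\ref{itm: almost Neumann}, and you use the strict Dirichlet Berezin--P\'olya bound instead of monotonicity in $\beta$ plus Theorem~\ref{thm: Robin BLY} when $\beta_j/\sqrt{\lambda_j}\to\infty$.
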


\begin{proof}
    We split the proof according to the two claims. Without loss of generality we shall always assume that $R_j$ has side lengths $l_1^{(j)}, \ldots, l_d^{(j)}$ which are ordered so that $l_1^{(j)}\leq l_2^{(j)}\leq \ldots \leq l_d^{(j)}$ for each $j\geq 1$.

    \medskip
    \noindent{\it Part 1: Proof of~\ref{itm: geometric convergence subcritical}.} The claim is equivalent to showing that
    \begin{equation*}
        \limsup_{j\to \infty} l_1^{(j)} = 0\, .
    \end{equation*}
    We argue by contradiction, showing that $\limsup_{j\to \infty}l_1^{(j)}= l>0$ contradicts the almost maximizing property of $\{R_j\}_{j\geq 1}$. By passing to a subsequence we may assume without loss of generality that $\lim_{j\to \infty}l_1^{(j)}=l$. 
    
    Let $\beta' = \limsup_{j \to \infty} \beta_j/\sqrt{\lambda_j}$. By Theorem~\ref{thm: asymptotics of M} \ref{itm: positive limit beta}, the assumptions on $\{R_j\}_{j\geq 1}$ yield
    \begin{equation}\label{eq: supercritical asymptotics}
        \liminf_{j\to \infty}\frac{\Tr(-\Delta_{R_j}^{\beta_j}-\lambda_j)_\limminus^{\gamma}}{L_{\gamma, d}^{\rm sc}\lambda_j^{\gamma+d/2}} \geq r_{\gamma+1/2, d-1}(\beta') >1\, , 
    \end{equation}
    where the last inequality is due to $\beta'< \beta(\gamma+1/2, d-1)$.
    On the other hand, because $\lim_{j\to \infty}l_1^{(j)}=l$, and $|R_j|=1$, we have that $\Haus^{d-1}(\partial R_j)$ remains uniformly bounded from above. Since $\lim_{j\to \infty}\lambda_j =\infty$, it thus follows from Lemma~\ref{lem: quantitative Weyl law} that
    \begin{equation*}
        \lim_{j\to \infty}\frac{\Tr(-\Delta_{R_j}^{\beta_j}-\lambda_j)_\limminus^{\gamma}}{L_{\gamma, d}^{\rm sc}\lambda_j^{\gamma+d/2}}=1\, , 
    \end{equation*}
    which contradicts \eqref{eq: supercritical asymptotics}. Therefore, we conclude that for any $\{R_j\}_{j\geq 1}$ as in the statement of the theorem, we must have $\limsup_{j\to \infty}l_1^{(j)}=0$ and therefore the sequence of cuboids cannot contain any convergent subsequences. This completes the proof of Theorem~\ref{thm: geometric convergence general} \ref{itm: geometric convergence subcritical}.

    \medskip
    \noindent{\it Part 2: Proof of \ref{itm: geometric convergence supercritical}.} By Theorem~\ref{thm: asymptotics of M}, the assumptions on $\{R_j\}_{j\geq 1}$ imply that
    \begin{equation}\label{eq: subcritical asymptotics}
        \lim_{j\to \infty}\frac{\Tr(-\Delta_{R_j}^{\beta_j}-\lambda_j)_\limminus^{\gamma}}{L_{\gamma, d}^{\rm sc}\lambda_j^{\gamma+d/2}}=1\, .
    \end{equation}
    
    We begin by arguing that \eqref{eq: subcritical asymptotics} together with the assumption that $\liminf_{j\to \infty}\beta_j/\sqrt{\lambda_j}>\beta(\gamma+1/2, d-1)$ implies that
    \begin{equation*}
        \lim_{j\to \infty} l_1^{(j)}\sqrt{\lambda_j} = \infty\, .
    \end{equation*}
    We argue by contradiction, assume that $\liminf_{j\to \infty}l_1^{(j)}\sqrt{\lambda_j}< \infty$. By passing to a subsequence we may assume without loss of generality that the limit $\lim_{j\to \infty}l_1^{(j)}\sqrt{\lambda_j}$ exists and is finite and that $\lim_{j\to \infty}\beta_j/\sqrt{\lambda_j}$ either exists or is infinite.

    If $\lim_{j\to \infty}l_1^{(j)}\sqrt{\lambda_j}=0$, then since $|R_j|=1, \lim_{j\to \infty}\lambda_j = \infty, $ and $\liminf_{j\to \infty}\beta_j/\sqrt{\lambda_j}>0$ we can apply Lemma~\ref{lem: super-critical collapse} concluding that
    \begin{equation*}
        \lim_{j\to \infty}\frac{\Tr(-\Delta_{R_j}^{\beta_j}-\lambda_j)_\limminus^{\gamma}}{L_{\gamma, d}^{\rm sc}\lambda_j^{\gamma+d/2}}=0\, , 
    \end{equation*}
    which contradicts \eqref{eq: subcritical asymptotics}. If instead $0<\lim_{j\to \infty}l_1^{(j)}\sqrt{\lambda_j}<\infty$, then we can apply Lemma~\ref{lem: collapsing Weyl law} to deduce that there exist an integer $m \in \{1, \ldots, d-1\}$ and a cuboid $R' \subset \R^m$ such that
    \begin{equation*}
        \lim_{j\to \infty}\frac{\Tr(-\Delta_{R_j}^{\beta_j}-\lambda_j)_\limminus^{\gamma}}{L_{\gamma, d}^{\rm sc}\lambda_j^{\gamma+d/2}}=\begin{cases}
        \displaystyle\frac{\Tr(-\Delta_{R'}^{\beta'}-1)_\limminus^{\gamma+(d-m)/2}}{L_{\gamma+(d-m)/2, m}^{\rm sc}|R'|}& \mbox{if }\lim_{j\to \infty}\beta_j/\sqrt{\lambda_j} = \beta'\, , \\[12pt]
        \displaystyle\frac{\Tr(-\Delta_{R'}^{\rm D}-1)_\limminus^{\gamma+(d-m)/2}}{L_{\gamma+(d-m)/2, m}^{\rm sc}|R'|}& \mbox{if }\lim_{j\to \infty}\beta_j/\sqrt{\lambda_j} = \infty\, .
        \end{cases}
    \end{equation*}
    By Corollary~\ref{cor: beta ineqs}, $\beta(\gamma+1/2, d-1) \geq \beta(\gamma+(d-m)/2, m)$ for each $m \in \{1, \ldots, d-1\}$, and therefore Theorem~\ref{thm: Robin BLY} together with monotonicity of $\beta \mapsto \Tr(-\Delta_{R'}^{\beta}-1)_\limminus^{\gamma}$ tells us that
    \begin{equation*}
        \lim_{j\to \infty}\frac{\Tr(-\Delta_{R_j}^{\beta_j}-\lambda_j)_\limminus^{\gamma}}{L_{\gamma, d}^{\rm sc}\lambda_j^{\gamma+d/2}}<1\, , 
    \end{equation*}
    which again contradicts \eqref{eq: subcritical asymptotics}. Therefore, we conclude that $\lim_{j\to \infty}l_1^{(j)}\sqrt{\lambda_j}=\infty$. 

    Since the unit cube $Q = \prod_{i=1}^d(0, 1)$ is admissible in the optimization problem defining $M_{\gamma, d}(\lambda, \beta)$, we have by Theorem~\ref{thm: two-term Weyl} that
    \begin{equation}\label{eq: M two-term below}
    \begin{aligned}
        M_{\gamma, d}(\lambda_j, \beta_j) &\geq \Tr(-\Delta_{Q}^{\beta_j}-\lambda_j)_\limminus^\gamma\\
        &= L_{\gamma, d}^{\rm sc}\lambda_j^{\gamma+d/2}+ \frac{1}{4}L_{\gamma, d-1}(\beta_j/\sqrt{\lambda_j})\Haus^{d-1}(\partial Q)\lambda_j^{\gamma+(d-1)/2} + o(\lambda_j^{\gamma+(d-1)/2})\, , 
    \end{aligned}
    \end{equation}
    as $j \to \infty$.

    Similarly, our assumptions on $\{R_j\}_{j\geq 1}$ together with Theorem~\ref{thm: two-term Weyl} yields that
    \begin{equation}\label{eq: M two-term above}
    \begin{aligned}
        M_{\gamma, d}(\lambda_j, \beta_j) &= \Tr(-\Delta_{R_j}^{\beta_j}-\lambda_j)_\limminus^\gamma+o(\lambda_j^{\gamma+(d-1)/2})\\
        &\leq L_{\gamma, d}^{\rm sc}\lambda_j^{\gamma+d/2}+ \frac{1}{4}L_{\gamma, d-1}(\beta_j/\sqrt{\lambda_j})\Haus^{d-1}(\partial R_j)\lambda_j^{\gamma+(d-1)/2}\\
        &\quad + C_{\gamma, d}\Haus^{d-1}(\partial R_j)\lambda_j^{\gamma+(d-1)/2}((l_1^{(j)}\sqrt{\lambda_j})^{-\kappa_{\gamma, d}} +(l_1^{(j)}\sqrt{\lambda_j})^{1-d})\\
        &\quad + o(\lambda_j^{\gamma+(d-1)/2})\\
        &=L_{\gamma, d}^{\rm sc}\lambda_j^{\gamma+d/2}+ \frac{1}{4}L_{\gamma, d-1}(\beta_j/\sqrt{\lambda_j})\Haus^{d-1}(\partial R_j)\lambda_j^{\gamma+(d-1)/2}\\
        &\quad +o(\Haus^{d-1}(\partial R_j)\lambda_j^{\gamma+(d-1)/2})\, .
    \end{aligned}
    \end{equation}
    In the third step, we used that $\lim_{j\to \infty}l_1^{(j)}\sqrt{\lambda_j}=\infty$ and the bound $\Haus^{d-1}(\partial R_j) \gtrsim_d 1$. 
    
    Since $\beta(\gamma+1/2, d-1)\geq \beta_W(\gamma+1/2, d-2)= \beta_W(\gamma, d-1)$ the assumption that $\liminf_{j\to \infty}\beta_j/\sqrt{\lambda_j}>\beta(\gamma+1/2, d-1)$ together with the fact that $\beta \mapsto L_{\gamma, d-1}(\beta)$ is smooth and decreasing ensures that $\limsup_{j\to \infty}L_{\gamma, d-1}(\beta_j/\sqrt{\lambda_j})<0$. Combining this observation with \eqref{eq: M two-term below} and \eqref{eq: M two-term above}, we conclude that
    \begin{equation*}
        \Haus^{d-1}(\partial Q) \geq \limsup_{j\to \infty}\Haus^{d-1}(R_j)(1+o(1))\, .
    \end{equation*}
    Since the perimeter among unit measure cuboids is uniquely minimized by the cube, we must have $\lim_{j\to \infty}\Haus^{d-1}(R_j)= \Haus^{d-1}(\partial Q)$ and the sequence $\{R_j\}_{j\geq 1}$ converges to the unit cube. This completes the proof of Theorem~\ref{thm: geometric convergence general}.
\end{proof}

\section{Comparison of \texorpdfstring{$\beta_W$}{bW} and \texorpdfstring{$\beta^*$}{b*}}
\label{sec: the critical beta are different}

We close the paper by analyzing the two values of $\beta$ which play a central role in our analysis, namely, the critical Robin parameter $\beta(\gamma, d)$ from Theorem~\ref{thm: Robin BLY} and the point where the second term in the two-term Weyl formula changes sign, $\beta_W(\gamma, d)$. Our main purpose is to demonstrate that, as we claimed in the introduction, there are $d, \gamma, $ and $\beta$ for which the heuristic argument for the asymptotic behavior of maximizers of $M_{\gamma, d}(\lambda, \beta \sqrt{\lambda})$ based on two-term asymptotics fails. In other words, we aim to show that there exist pairs $d\geq 1$ and $\gamma>0$ for which $\beta(\gamma, d)>\beta_W(\gamma, d-1) $. In fact, we believe that this inequality always holds, although we are currently unable to prove this conjecture.
\begin{conjecture}\label{conj: betas}
    For any $d\geq 1$ and $\gamma > 0$, 
    $$\beta(\gamma, d)>\beta_W(\gamma, d-1) \, .$$
\end{conjecture}

Recall that $\beta_W(\gamma, d-1) = \beta_W(\gamma+(d-1)/2, 0)$ and by Corollary~\ref{cor: beta ineqs}
\begin{equation*}
    \beta(\gamma, d) \geq \beta(\gamma+(d-1)/2, 1)  \, .
\end{equation*}
Therefore, if we find some $\gamma>0$ such that $\beta(\gamma, 1)>\beta_W(\gamma, 0)$, we also have that
for each $d\leq 2\gamma+1$
\begin{equation*}
    \beta(\gamma-(d-1)/2, d) \geq\beta(\gamma, 1) > \beta_W(\gamma, 0) = \beta_W(\gamma-(d-1)/2, d-1)\, .
\end{equation*}
Hence, to prove the above conjecture, it is enough to consider $d=1$.

The main support of Conjecture \ref{conj: betas} comes from the belief that for $\gamma>0$ and as $\lambda \to \infty$, 
\begin{equation}\label{eq: conj oscillatory term}
    \Tr(-\Delta_{(0, 1)}^{\beta \sqrt{\lambda}}-\lambda)_\limminus^\gamma = L_{\gamma, 1}^{\rm sc}\lambda^{\gamma+1/2}+ \frac{1}{2}L_{\gamma, 0}(\beta) \lambda^\gamma + G_{\gamma, \beta}(\sqrt{\lambda})\lambda^{\gamma/2} + o(\lambda^{\gamma/2})\, , 
\end{equation}
where $G_{\gamma, \beta}$ is a (non-trivial) $\pi$-periodic function whose integral over a period is zero. In particular, for $\beta = \beta_W(\gamma, 0)$ the second term vanishes and the existence of an oscillatory term of the suggested form would imply that the leading Weyl term can not be an upper bound for the Riesz mean for all $\lambda>0$. However, at the moment we only know how to justify the validity of such an asymptotic expansion in the limiting cases of Dirichlet or Neumann boundary conditions. Numerical support of \eqref{eq: conj oscillatory term} is presented in Figure~\ref{fig:asymptotic oscillations} in the next subsection.

In order to justify our claim that $\beta(\gamma, 1) >\beta_W(\gamma, 0)$ for some range of $\gamma$, let us first record a few facts about $\beta_W(\gamma, 0)$. 

\begin{lemma} \label{lem: betaW_properties}\ 
\begin{enumerate}[label=(\roman*)]
    \item\label{itm: betaW property 1} The map $\gamma \mapsto \beta_W(\gamma, 0)$ is smooth and decreasing.
    \item\label{itm: betaW property 2} $\beta_W(0, 0) = 1$ and $\beta_W(1/2, 0)= 3/4$.
    \item\label{itm: betaW property 3} It holds that
    \begin{align*}
        \lim_{\gamma \to \infty}\beta_W(\gamma, 0)\sqrt{\gamma} = c_*\, , 
    \end{align*}
    where $c_*$ is the unique solution of $2e^{x^2}\operatorname{erfc}(x)=1$.
\end{enumerate}
\end{lemma}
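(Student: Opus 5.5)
Throughout I work with the alternative representation of Lemma~\ref{lem: properties of L} with $d=0$, writing $\mu := \gamma$:
\begin{equation*}
    F(\beta,\mu) := \frac{4}{\pi}\int_0^1 (1-s^2)^{\mu}\frac{\beta}{\beta^2+s^2}\, ds ,
\end{equation*}
so that $L_{\gamma,0}(\beta) = F(\beta,\gamma)-1$ and $\beta_W(\gamma,0)$ is the unique solution of $F(\beta,\gamma)=1$.

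\textbf{Part \ref{itm: betaW property 1}.} $F$ is smooth on $(0,\infty)\times[0,\infty)$, and $\partial_\beta F<0$ because $L_{\gamma,0}$ is decreasing by Lemma~\ref{lem: properties of L}. To have a nonvanishing derivative rather than mere monotonicity, I will differentiate under the integral:
\begin{equation*}
    \partial_\beta \frac{\beta}{\beta^2+s^2} = \frac{s^2-\beta^2}{(\beta^2+s^2)^2}.
\end{equation*}
Its sign is not immediately clear, so I use the arctan form $\frac{8\mu}{\pi}\int_0^1 x(1-x^2)^{\mu-1}\arctan(x/\beta)\,dx$ instead. The $\beta$-derivative is then strictly negative for $\mu>0$. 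For $\mu=0$ one computes directly $F(\beta,0)=\frac4\pi\arctan(1/\beta)$. The implicit function theorem then gives smoothness of $\gamma\mapsto\beta_W(\gamma,0)$. Since $(1-s^2)^\mu$ is strictly decreasing in $\mu$ for $s\in(0,1)$, we have $\partial_\mu F<0$. Hence
\begin{equation*}
    \beta_W' = -\frac{\partial_\mu F}{\partial_\beta F} < 0 .
\end{equation*}

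\textbf{Part \ref{itm: betaW property 2}.} For $\mu=0$ the equation $\frac4\pi\arctan(1/\beta)=1$ gives $\beta=1$. For $\mu=1/2$ I compute $\int_0^1 \sqrt{1-s^2}\,\frac{\beta}{\beta^2+s^2}\,ds$ via the substitution $s=\sin\theta$ and the standard formula
\begin{equation*}
    \int_0^{\pi/2}\frac{\cos^2\theta}{\beta^2+\sin^2\theta}\,d\theta=\frac{\pi}{2}\cdot\frac{\sqrt{1+\beta^2}-\beta}{\beta}.
\end{equation*}
This yields $F=2(\sqrt{1+\beta^2}-\beta)$. Setting $F=1$ gives $\sqrt{1+\beta^2}=\beta+1/2$, so $\beta=3/4$.

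\textbf{Part \ref{itm: betaW property 3}.} This is the main work. I will substitute $\beta=c/\sqrt{\mu}$ and $s=t/\sqrt{\mu}$, which gives
\begin{equation*}
    F = \frac{4}{\pi}\int_0^{\sqrt{\mu}}\Bigl(1-\frac{t^2}{\mu}\Bigr)^{\mu}\frac{c}{c^2+t^2}\,dt .
\end{equation*}
Using $(1-t^2/\mu)^\mu\le e^{-t^2}$ and $\frac{c}{c^2+t^2}\le 1/c$, dominated convergence shows that this converges, locally uniformly in $c$ on compact subsets of $(0,\infty)$, to
\begin{equation*}
    G(c)=\frac4\pi\int_0^\infty e^{-t^2}\frac{c}{c^2+t^2}\,dt .
\end{equation*}
The classical identity $\int_0^\infty \frac{e^{-t^2}}{c^2+t^2}\,dt=\frac{\pi}{2c}e^{c^2}\operatorname{erfc}(c)$ then gives $G(c)=2e^{c^2}\operatorname{erfc}(c)$.

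It remains to see that $G$ is strictly decreasing from $2$ to $0$. Strictly decreasing holds because $c/(c^2+t^2)$ becomes, after substituting $t=cu$, the expression $\int e^{-c^2u^2}/(1+u^2)\,du$, which is decreasing in $c$. So $G(c)=1$ has the unique root $c_*$. Monotonicity of $F$ in $c$ together with local uniform convergence then traps $c_\mu := \sqrt{\mu}\,\beta_W(\mu,0)$: for any $\varepsilon>0$ and $\mu$ large, $F$ exceeds $1$ at $c_*-\varepsilon$ and is below $1$ at $c_*+\varepsilon$. Hence $c_\mu\to c_*$.

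The main obstacle is justifying the uniform convergence near $c$ small and excluding that $c_\mu\to0$ or $c_\mu\to\infty$. Both are handled by this monotone trapping argument, which only needs convergence at the two fixed points $c_*\pm\varepsilon$.
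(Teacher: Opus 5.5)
Your proposal is correct and follows essentially the same route as the paper. For (i) you use the implicit function theorem with $F$ decreasing in both variables; for (ii) the closed forms $\frac{4}{\pi}\arctan(1/\beta)$ and $2(\sqrt{1+\beta^2}-\beta)$; and for (iii) the $\sqrt{\gamma}$-rescaling to the Gaussian limit $2e^{c^2}\operatorname{erfc}(c)$ via the same erfc integral identity. The differences are only technical: you use the $\beta/(\beta^2+s^2)$ form with dominated convergence instead of the arctan form with monotone convergence and an integration by parts, and your monotone trapping at $c_*\pm\varepsilon$ handles the moving root $\sqrt{\gamma}\,\beta_W(\gamma,0)$ more carefully than the paper's direct substitution.
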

\begin{remark}
    Numerically one can verify that $c_*  \approx 0.769$.
\end{remark}

\begin{proof}[Proof of Lemma~\ref{lem: betaW_properties}]
The properties of $\gamma \mapsto \beta_W(\gamma, 0)$ listed in \ref{itm: betaW property 1} follow from the implicit function theorem combined with the smoothness of the map
$$
    (\beta, \gamma) \mapsto \frac{4}{\pi}\int_0^1(1-s^2)^{\gamma}\frac{\beta}{\beta^2+s^2}\, ds -1= \frac{8\gamma}{\pi}\int_0^1x(1-x^2)^{\gamma-1}\arctan\Bigl(\frac{x}{\beta}\Bigr)\, dx -1
$$
and the fact that this function is decreasing in both variables. Here we used the alternative expression for the integral proved in Lemma~\ref{lem: properties of L}.

The values of $\beta_W(\gamma, 0)$ listed in \ref{itm: betaW property 2} follow from that the integral defining $L_{\gamma, 0}(\beta)$ can be explicitly computed in these particular $\gamma$. Indeed, 
\begin{equation*}
    \frac{4}{\pi}\int_0^1(1-s^2)^{0}\frac{\beta}{\beta^2+s^2}\, ds  = \frac{4}{\pi}\arctan\Bigl(\frac{1}{\beta}\Bigr)
\end{equation*}
and
\begin{equation*}
    \frac{4}{\pi}\int_0^1(1-s^2)^{1/2}\frac{\beta}{\beta^2+s^2}\, ds  = 2\sqrt{1+\beta^2}-2\beta\, .
\end{equation*}
Using this one readily verifies the claimed values of $\beta_W(\gamma, 0)$.

It remains to prove \ref{itm: betaW property 3}. By Lemma~\ref{lem: properties of L} we have that $\beta_W(\gamma, 0)$ satisfies
\begin{equation*}
    \frac{8\gamma}{\pi}\int_0^1x(1-x^2)^{\gamma-1}\arctan\Bigl(\frac{x}{\beta_W(\gamma, 0)}\Bigr)\, dx = 1\, .
\end{equation*}
By changing variables in the integral to $x=t/\sqrt{\gamma-1}$ the equation becomes
\begin{equation*}
    \frac{8}{\pi}\frac{\gamma}{\gamma-1}\int_0^\infty t\Bigl(1-\frac{t^2}{\gamma-1}\Bigr)_\limplus^{\gamma-1}\arctan\Bigl(\frac{t}{\beta_W(\gamma, 0)\sqrt{\gamma-1}}\Bigr)\, dt = 1\, .
\end{equation*}
Note that $\gamma \mapsto (1-t^2/(\gamma-1))_\limplus^{\gamma-1}$ is monotone increasing and $\lim_{\gamma\to \infty}(1-t^2/(\gamma-1))_\limplus^{\gamma-1}= e^{-t^2}$ for each $t>0$. It follows from the monotone convergence theorem that
\begin{align*}
    \frac{8}{\pi}\frac{\gamma}{\gamma-1}&\int_0^\infty t\Bigl(1-\frac{t^2}{\gamma-1}\Bigr)_\limplus^{\gamma-1}\arctan\Bigl(\frac{t}{\beta_W(\gamma, 0)\sqrt{\gamma-1}}\Bigr)\, dt\\
    &= 
    \frac{8}{\pi}\int_0^\infty te^{-t^2}\arctan\Bigl(\frac{t}{\beta_W(\gamma, 0)\sqrt{\gamma} \, (1+o(1))}\Bigr)\, dt + o(1)
\end{align*}
as $\gamma \to \infty$. By an integration by parts, a substitution of variables, and using \cite[eq. 7.7.1]{NIST}, 
\begin{align*}
    \frac{8}{\pi}\int_0^\infty te^{-t^2}\arctan\Bigl(\frac{t}{B}\Bigr)\, dt
    =
    \frac{4}{B\pi}\int_0^\infty \frac{e^{-t^2}}{1+\bigl(\frac{t}{B}\bigr)^2}\, dt
    = 2 e^{B^2}\operatorname{erfc}(B)\, , 
\end{align*}
for any $B>0$. The integral representation of $e^{B^2}\operatorname{erfc}(B)$ used in the final step also implies that this is a smooth strictly decreasing function of $B>0$. 

We conclude that as $\gamma \to \infty$, 
\begin{equation*}
    2 e^{ \beta_W(\gamma, 0)^2 \gamma}\operatorname{erfc}(\beta_W(\gamma, 0)\sqrt{\gamma}) = 1 + o(1)\, .
\end{equation*}
The smoothness and monotonicity of $(0, \infty)\ni x \mapsto e^{x^2}\operatorname{erfc}(x)$ implies that as claimed 
\begin{equation*}
    \lim_{\gamma \to \infty} \beta_W(\gamma, 0) \sqrt{\gamma} = c^*\, .
\end{equation*}
This completes the proof of the lemma.
\end{proof}

Let us now take a closer look at $\beta(\gamma, 1)$. We define for $k \in \N$ and $\gamma > 0$
\begin{equation*}
    \beta^{(k)}(\gamma):= \inf\Bigl\{\beta>0 : \Tr(-\Delta_{(0, 1)}^{\beta \sqrt{\lambda}}-\lambda)_\limminus^\gamma \leq L_{\gamma, 1}^{\rm sc}\lambda^{\gamma+1/2}, \forall \lambda \in (\pi^2 (k-1)^2, \pi^2k^2]\Bigr\}\, .
\end{equation*}
Note that $\beta^{(k)}(\gamma)$ is positive and finite for each $k, \gamma$. Indeed, this follows from the continuity and monotonicity of $\beta \mapsto \Tr(-\Delta_{(0, 1)}^{\beta \sqrt{\lambda}}-\lambda)_\limminus^\gamma$ and the facts that as $\beta \to 0^\limplus$ the Riesz mean converges to that of the Neumann problem for which we know that the inequality fails, and conversely as $\beta \to \infty$ the Riesz mean converges to that of the Dirichlet problem for which the inequality holds. 
Clearly, $\beta(\gamma, 1) = \sup_{k\geq 1}\beta^{(k)}(\gamma)$. 

The advantage in studying $\beta^{(k)}(\gamma)$ is two-fold. First, restricting attention to a finite interval of $\lambda$ enables performing reliable numerics. Second, on each of the intervals we know exactly which eigenvalues give a non-trivial contribution to the Riesz mean. We believe that a difficulty in settling Conjecture~\ref{conj: betas} lies in that the quantity
\begin{equation*}
    \inf\bigl\{\lambda >0: \Tr(-\Delta_{(0, 1)}^{\beta(\gamma, 1)\sqrt{\lambda} }-\lambda)_\limminus^\gamma = L_{\gamma, 1}^{\rm sc}\lambda^{\gamma+d/2}\bigr\}
\end{equation*}
tends to infinity as $\gamma \to \infty$. As a consequence, we expect that one needs to understand a growing number of $\beta^{(k)}(\gamma)$ as $\gamma$ becomes large to settle the conjecture.

By Lemma~\ref{lem: betaW_properties} we have that $\beta_W(\gamma, 0)\leq 1$ and thus the following lemma shows that at least for small $\gamma$, the inequality $\beta(\gamma, 1)>\beta_W(\gamma, 0)$ must hold. 
\begin{lemma}\label{lem: betak limit}
    For each $k\geq 1$, 
    \begin{equation*}
        \lim_{\gamma \to 0^\limplus} \beta^{(k)}(\gamma) = \infty\, .
    \end{equation*}
    In particular, $\lim_{\gamma \to 0^\limplus}\beta(\gamma, 1) = \infty$.
\end{lemma}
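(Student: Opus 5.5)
The plan is to show that for every fixed $\beta>0$ and $k\geq 1$ the inequality defining $\beta^{(k)}(\gamma)$ fails at some $\lambda$ in the $k$-th window once $\gamma$ is small enough. The mechanism is that as $\gamma\to 0^\limplus$ the Riesz mean degenerates to a counting function, and for counting functions the Pólya-type bound fails just below every Dirichlet eigenvalue (as in the first remark after Theorem~\ref{thm: Robin BLY}).

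\emph{Choice of $\lambda_0$.} Fix $k\geq 1$ and $\beta>0$. Since $\lambda_k(-\Delta_{(0,1)}^{b})<\pi^2k^2$ for every $b>0$, the number $\mu:=\lambda_k(-\Delta_{(0,1)}^{\beta\pi k})$ satisfies $\mu<\pi^2k^2$. Choose
\[
\lambda_0\in\bigl(\max\{\mu,\pi^2(k-1)^2\},\,\pi^2k^2\bigr).
\]
Because $\sqrt{\lambda_0}\leq \pi k$ and $b\mapsto\lambda_k(-\Delta_{(0,1)}^{b})$ is non-decreasing, we get
\[
\lambda_k\bigl(-\Delta_{(0,1)}^{\beta\sqrt{\lambda_0}}\bigr)\leq \mu<\lambda_0.
\]
Hence the first $k$ eigenvalues of $-\Delta_{(0,1)}^{\beta\sqrt{\lambda_0}}$ lie strictly below $\lambda_0$.

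\emph{Limit $\gamma\to0^\limplus$ at fixed $\lambda_0$.} The sum defining the Riesz mean has finitely many nonzero terms, and each term with $\lambda_i<\lambda_0$ satisfies $(\lambda_0-\lambda_i)^\gamma\to1$. Therefore
\[
\liminf_{\gamma\to0^\limplus}\Tr\bigl(-\Delta_{(0,1)}^{\beta\sqrt{\lambda_0}}-\lambda_0\bigr)_\limminus^\gamma\geq k.
\]
On the other side,
\[
L_{\gamma,1}^{\rm sc}\lambda_0^{\gamma+1/2}\to L_{0,1}^{\rm sc}\sqrt{\lambda_0}=\frac{\sqrt{\lambda_0}}{\pi}<k.
\]
So there is $\gamma_0(k,\beta)>0$ such that for all $\gamma\in(0,\gamma_0)$ the inequality $\Tr(-\Delta_{(0,1)}^{\beta\sqrt{\lambda_0}}-\lambda_0)_\limminus^\gamma\leq L_{\gamma,1}^{\rm sc}\lambda_0^{\gamma+1/2}$ fails.

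\emph{Conclusion.} The Riesz mean is non-increasing in the Robin parameter, so the same failure at $\lambda_0$ persists for every $\beta'\leq\beta$. Thus no $\beta'\leq\beta$ belongs to the set whose infimum defines $\beta^{(k)}(\gamma)$, and $\beta^{(k)}(\gamma)\geq\beta$ for $\gamma<\gamma_0$. Since $\beta$ is arbitrary, $\beta^{(k)}(\gamma)\to\infty$. Finally, $\beta(\gamma,1)=\sup_k\beta^{(k)}(\gamma)\geq\beta^{(1)}(\gamma)$, which gives the last claim.

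The only delicate point is choosing $\lambda_0$ so that the counting excess survives even though the Robin parameter $\beta\sqrt{\lambda}$ itself depends on $\lambda$. The monotonicity comparison with the fixed parameter $\beta\pi k$ handles this, and it needs only the strict inequality $\lambda_k<\pi^2k^2$ recorded in Section~\ref{sec: basic properties 1D problem}.
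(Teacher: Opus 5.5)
Your proof is correct and follows essentially the same route as the paper. Both arguments pick $\lambda$ in $[\lambda_k(-\Delta_{(0,1)}^{\beta\pi k}),\pi^2k^2)$, so that the counting function equals $k>\sqrt{\lambda}/\pi=L_{0,1}^{\rm sc}\lambda^{1/2}$, and then let $\gamma\to0^\limplus$ using continuity of $\gamma\mapsto L_{\gamma,1}^{\rm sc}$. Compared with the paper, you write out two steps it leaves implicit: the monotonicity comparison with the fixed parameter $\beta\pi k$, and the passage from failure at one $\beta$ to all $\beta'\leq\beta$.
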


\begin{proof}
    For any fixed $\lambda, \beta>0$ such that $\lambda$ is not an eigenvalue of $-\Delta_{(0, 1)}^{\beta \sqrt{\lambda}}$ it holds that
    \begin{equation*}
        \lim_{\gamma \to 0^\limplus} \Tr(-\Delta_{(0, 1)}^{\beta \sqrt{\lambda}}-\lambda)_\limminus^\gamma = \Tr(-\Delta_{(0, 1)}^{\beta \sqrt{\lambda}}-\lambda)_\limminus^0\, .
    \end{equation*}
    For any $k\in \N$ and each $\lambda \in [\lambda_k(-\Delta_{(0, 1)}^{\beta \pi k}), \pi^2k^2)$ we have that
    \begin{equation*}
        \Tr(-\Delta_{(0, 1)}^{\beta \sqrt{\lambda}}-\lambda)_\limminus^0 = k> \frac{1}{\pi}\lambda^{1/2}= L_{0, 1}^{\rm sc}\lambda^{1/2}\, . 
    \end{equation*} 
    Since $\gamma \mapsto L_{\gamma, 1}^{\rm sc}$ is continuous, we conclude that for any fixed $\beta$ there exists a $\gamma>0$ so small that $\beta^{(k)}(\gamma)\geq \beta$. This implies the desired conclusion.
\end{proof}

\subsection{Numerical results}

In Figures~\ref{fig:betaW_betak}--\ref{fig:asymptotic oscillations} we display numerical evidence that supports Conjecture~\ref{conj: betas}.

In Figure~\ref{fig:betaW_betak}, we show numerically obtained values for $\beta_W(\gamma, 0)$ and $\beta^{(k)}(\gamma)$ for $k=1, 2, 3$ in the range $\gamma \in [0, 20]$. Observe that for any $\gamma$ in the plotted range, at least one of the curves $\beta^{(k)}(\gamma)$ lies above $\beta_W(\gamma, 0)$ which numerically verifies Conjecture~\ref{conj: betas} up to $\gamma=20$ for $d=1$. It is also seen that $\beta^{(1)}(\gamma)$ exceeds $\beta^{(2)}(\gamma)$ and $\beta^{(3)}(\gamma)$ at first for small $\gamma$, but is surpassed by $\beta^{(2)}(\gamma)$ around $\gamma=2.5$. Similarly, $\beta^{(2)}(\gamma)$ is then surpassed by $\beta^{(3)}(\gamma)$ around $\gamma=14$. We expect that the $k$'s such that $\beta^{(k)}(\gamma)=\beta(\gamma, 1)$ grow as $\gamma \to \infty$.

\begin{figure}[t]
    \centering
    \begin{subfigure}[t]{0.48\textwidth}
        \centering
        \includegraphics[width=\textwidth]{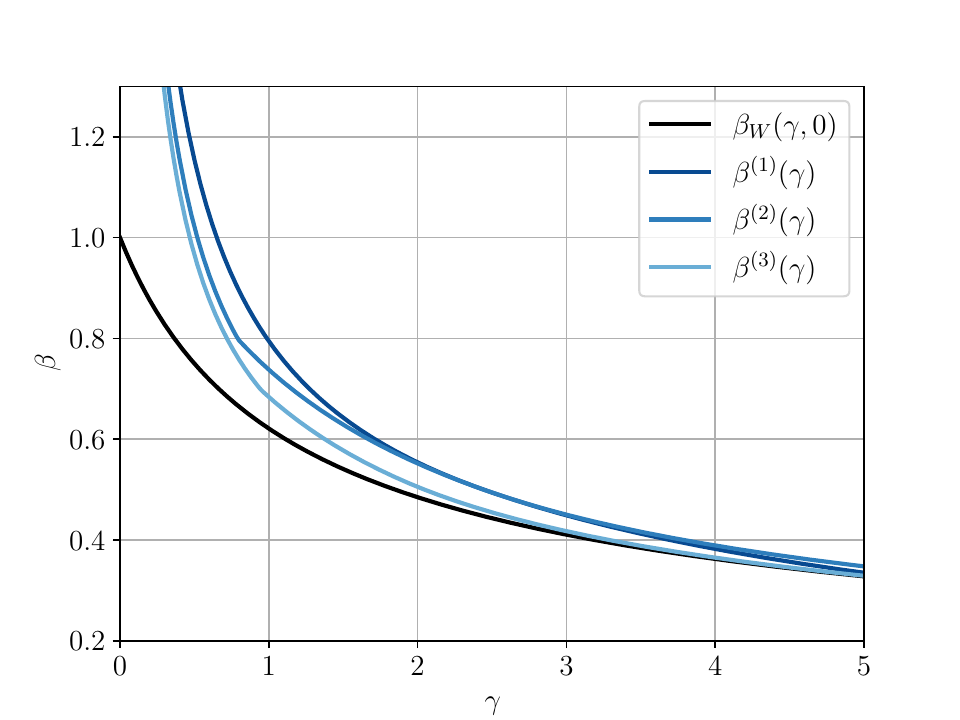}
        \caption{$\gamma \in [0, 5]$}
    \end{subfigure}\hfill
    \begin{subfigure}[t]{0.48\textwidth}
        \centering
        \includegraphics[width=\textwidth]{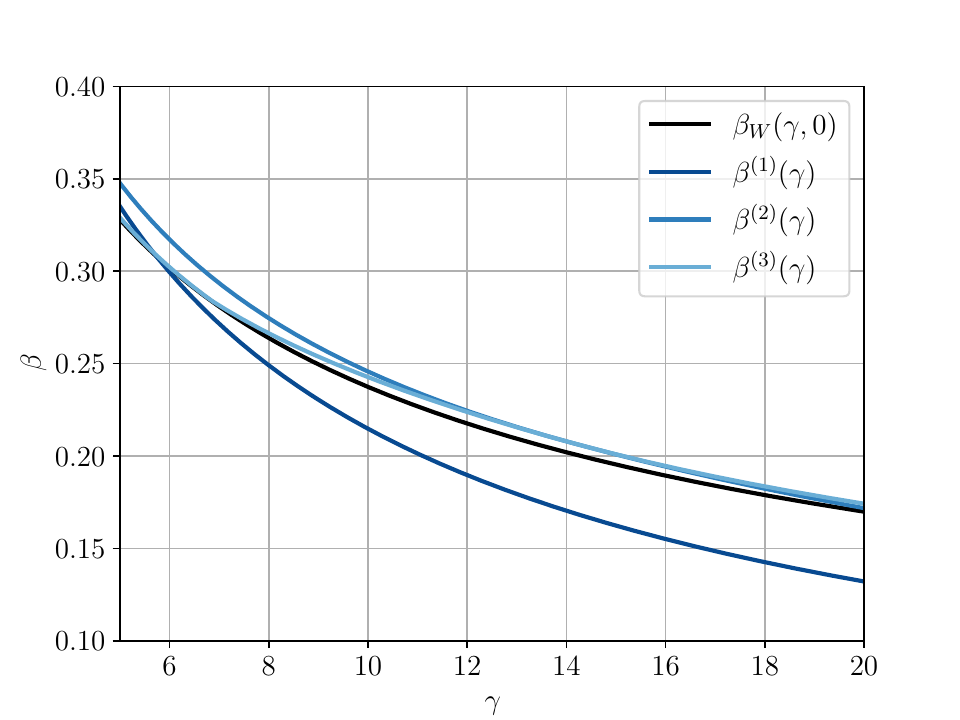}
        \caption{$\gamma \in [5, 20]$}
    \end{subfigure}
    \vspace{-5pt}
    \caption{$\beta_W(\gamma, 0)$ and $\beta^{(k)}(\gamma)$ for $k=1, 2, 3$.} \label{fig:betaW_betak} 
\end{figure}

Figure \ref{fig:thresholdnumbers_gamma} shows the difference
\begin{equation}\label{eq: normalized Weyl difference}
    \lambda^{-\gamma} \Bigl(L_{\gamma, 1}^{\rm sc} \lambda^{\gamma+1/2} -  \mathrm{Tr}(-\Delta_{(0, 1)}^{\beta \sqrt{\lambda}}  - \lambda )_\limminus^\gamma\Bigr)
\end{equation}
plotted against $\lambda$ for different combinations of $\beta$ and $\gamma$. By Corollary~\ref{cor: sc two-term Weyl}, the quantity in \eqref{eq: normalized Weyl difference}
converges to $-\frac{1}{2}L_{\gamma, 0}(\beta)$ as $\lambda \to \infty$. This convergence can be seen in Figure~\ref{fig:thresholdnumbers_gamma}, in particular, at $\beta = \beta_W(\gamma, 0)$ the difference appears to converge to zero as expected. Furthermore, the inequality $\mathrm{Tr}(-\Delta_{(0, 1)}^{\beta \sqrt{\lambda}}  - \lambda )_\limminus^\gamma \leq L_{\gamma, 1}^{\rm sc} \lambda^{\gamma+1/2}$ fails for any $\lambda$ such that the plotted curve drops below zero. In support of Conjecture~\ref{conj: betas}, this can be seen to happen in both plots when $\beta=\beta_W(\gamma, 0)$ but seizes to happen as $\beta$ is increased. The critical value $\beta(\gamma, 1)$ is defined so that the corresponding curve touches but never crosses zero. The plotted curves (specifically, the red curves) indicate that $\beta(\gamma, 1) = \beta^{(1)}(\gamma)$ when $\gamma=1$ and $\beta(\gamma, 1) = \beta^{(2)}(\gamma)$ when $\gamma=10$.

\begin{figure}[t]
    \centering
    \begin{subfigure}[t]{0.48\textwidth}
        \centering
        \includegraphics[width=\linewidth]{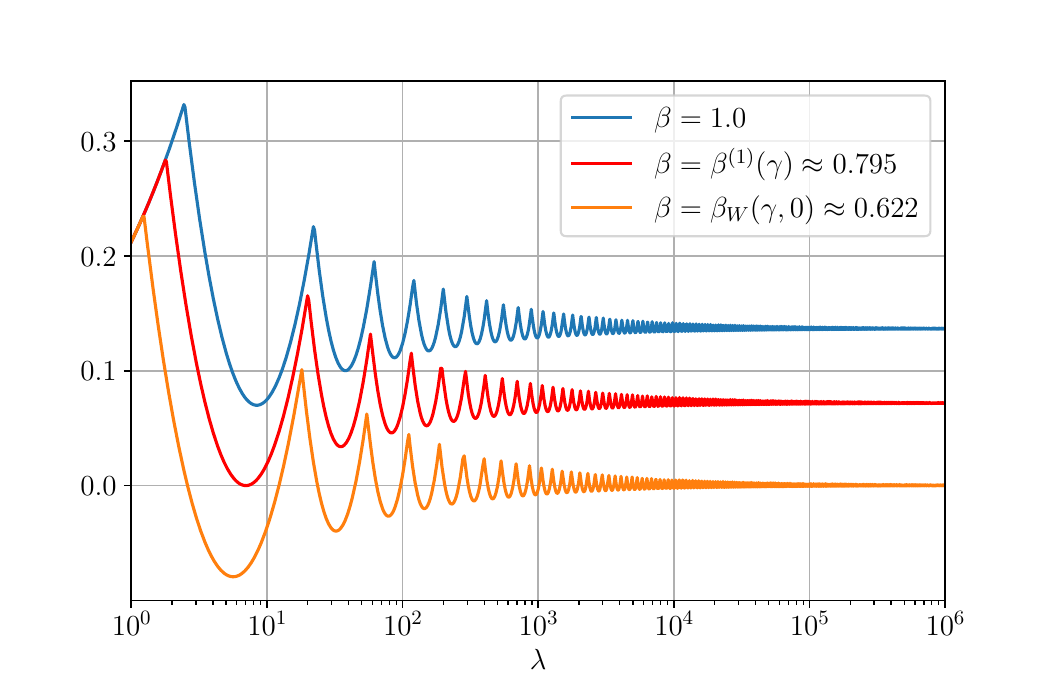}
    \caption{$\gamma = 1$}
    \end{subfigure}
    \hfill
    \begin{subfigure}[t]{0.48\textwidth}
        \centering
        \includegraphics[width=\linewidth]{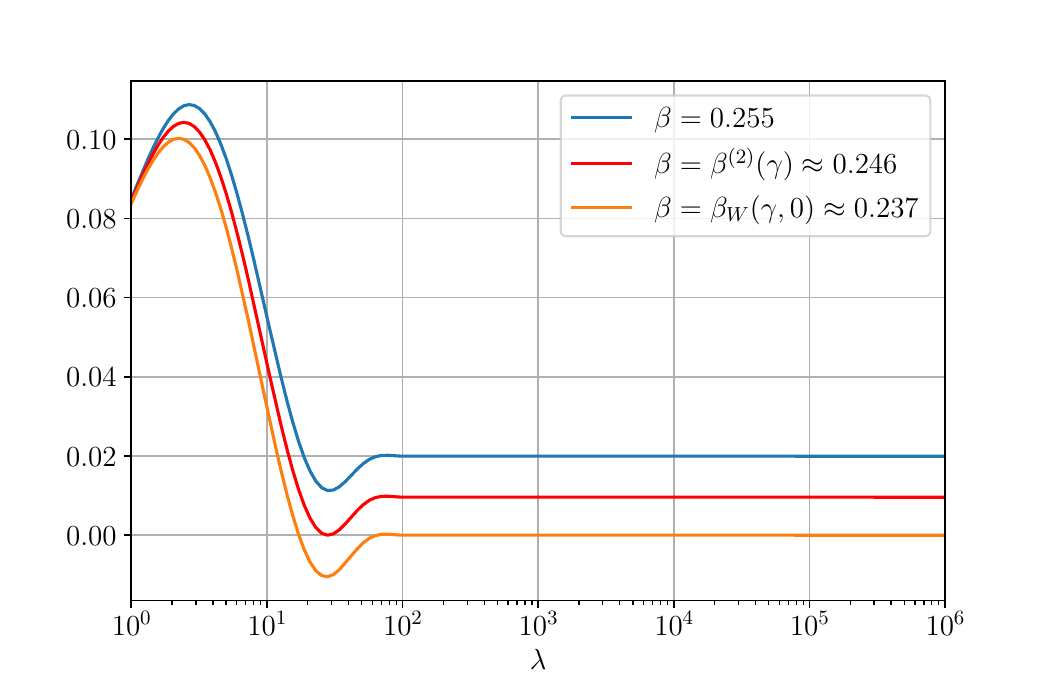}
    \caption{$\gamma = 10$}
    \end{subfigure}
    \vspace{-5pt}
    \caption{The quantity in \eqref{eq: normalized Weyl difference} shown for several combinations of $\gamma$ and~$\beta$.}  \label{fig:thresholdnumbers_gamma}
\end{figure}

\begin{figure}
    \centering
    \begin{subfigure}[t]{0.48\textwidth}
        \centering
        \includegraphics[width=\linewidth]{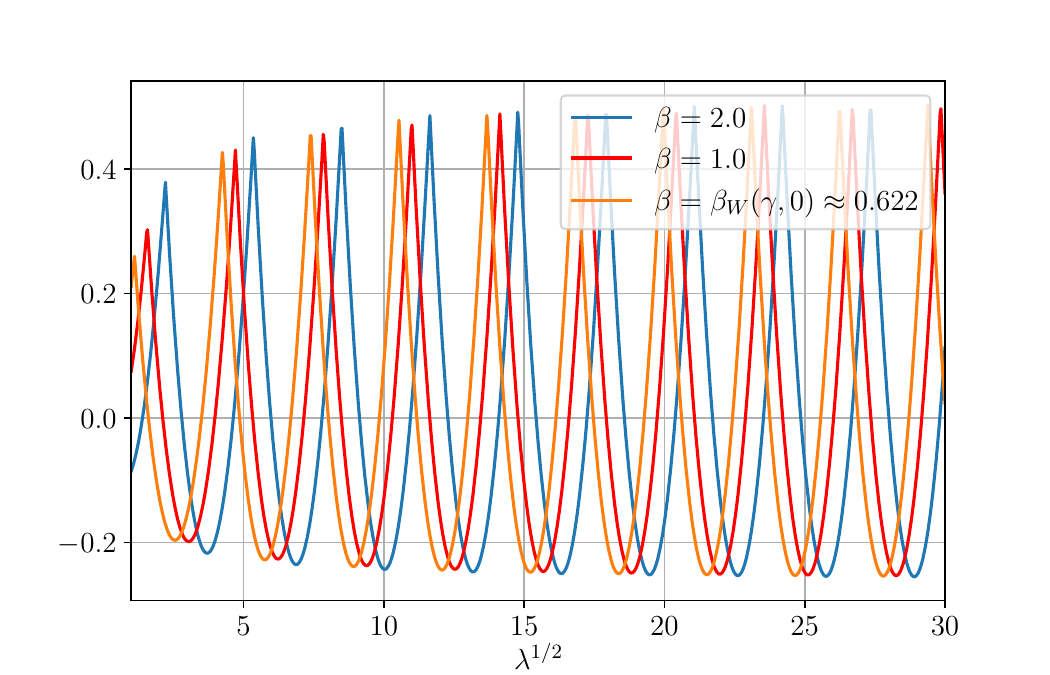}
    \caption{$\gamma = 1$}
    
    \end{subfigure}
    \hfill
    \begin{subfigure}[t]{0.48\textwidth}
        \centering
        \includegraphics[width=\linewidth]{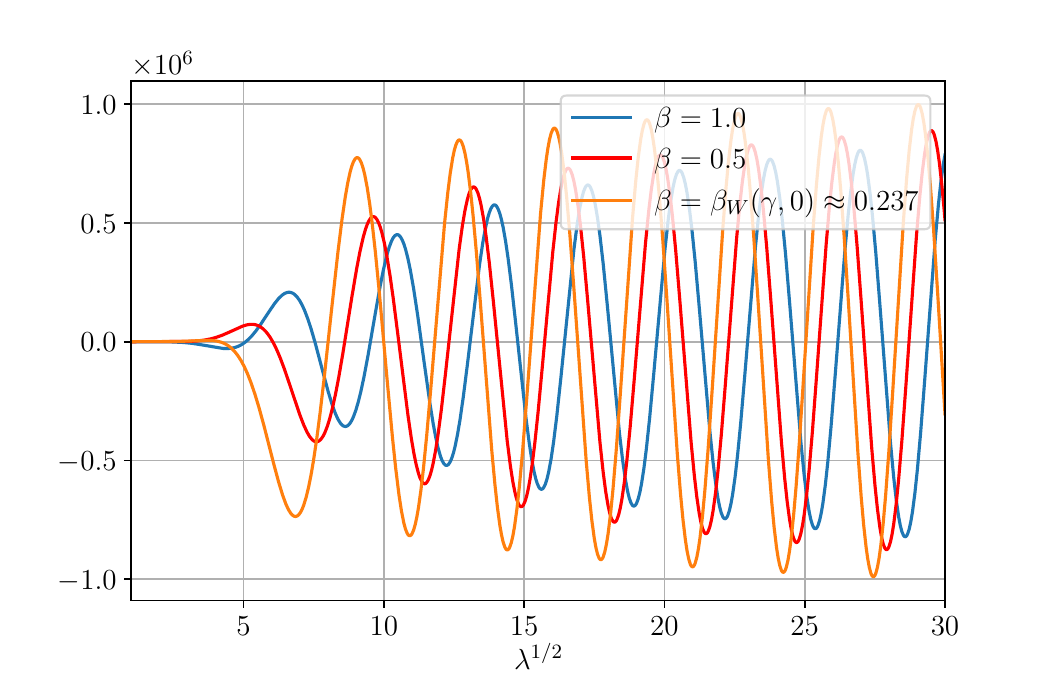}
    \caption{$\gamma = 10$}

    \end{subfigure}
    \vspace{-5pt}
    \caption{The quantity \eqref{eq: normalized Weyl difference2} shown for different combinations of $\gamma$ and $\beta$, illustrating the suggested periodic oscillations in $\sqrt{\lambda}$.} \label{fig:asymptotic oscillations}
\end{figure}

Finally, Figure \ref{fig:asymptotic oscillations} shows the quantity
\begin{equation}\label{eq: normalized Weyl difference2}
    \lambda^{-\gamma/2} \Bigl(L_{\gamma, 1}^{\rm sc} \lambda^{\gamma+1/2} + \frac{1}{2} L_{\gamma, 0}(\beta) \lambda^{\gamma}-  \mathrm{Tr}(-\Delta_{(0, 1)}^{\beta \sqrt{\lambda}}  - \lambda )_\limminus^\gamma\Bigr)
\end{equation}
plotted against $\lambda$ for several combinations of $\beta$ and $\gamma$. In line with the refined asymptotic expansion suggested in \eqref{eq: conj oscillatory term} the curves in the plot appear to oscillate around zero as $\lambda$ becomes large. When $\beta=\beta_W(\gamma, 0)$ the inequality $\mathrm{Tr}(-\Delta_{(0, 1)}^{\beta \sqrt{\lambda}}  - \lambda )_\limminus^\gamma \leq L_{\gamma, 1}^{\rm sc} \lambda^{\gamma+1/2}$ again fails for any $\lambda$ such that the plotted curve drops below zero. The plots show that for both $\gamma=1$ and $\gamma=10$ this failure appears to happen periodically in $\sqrt{\lambda}$.

\FloatBarrier


\bibliographystyle{amsplain}

\def\myarXiv#1#2{\href{http://arxiv.org/abs/#1}{\texttt{arXiv:#1\, [#2]}}}

\end{document}